\newcommand{\reallywidehat}[1]{%
	\savestack{\tmpbox}{\stretchto{%
			\scaleto{%
				\scalerel*[\widthof{\ensuremath{#1}}]{\kern-.6pt\bigwedge\kern-.6pt}%
				{\rule[-\textheight/2]{1ex}{\textheight}}%WIDTH-LIMITED BIG WEDGE
			}{\textheight}% 
		}{0.5ex}}%
	\stackon[1pt]{#1}{\tmpbox}%
}
\newcommand{\boldj}{{\ensuremath{\boldsymbol{j}}}}
\newcommand{\boldr}{{\ensuremath{\boldsymbol{r}}}}
\newcommand{\boldx}{{\ensuremath{\boldsymbol{x}}}}
\newcommand{\boldF}{{\ensuremath{\boldsymbol{F}}}}
\numberwithin{equation}{section}
\theoremstyle{plain}
\newtheorem{satz}{Theorem}[section]
\newtheorem{defi}[satz]{Definition}
\newtheorem{cor}[satz]{Corollary}
\newtheorem{lem}[satz]{Lemma}
\newtheorem{prop}[satz]{Proposition}
\newtheorem{rem}[satz]{Remark}
\newcommand{\zb}[1]{\ensuremath{\boldsymbol{#1}}}
\newcommand{\re}{\ensuremath{\mathbb{R}}}\newcommand{\N}{\ensuremath{\mathbb{N}}}
\newcommand{\zz}{\ensuremath{\mathbb{Z}}}\newcommand{\C}{\ensuremath{\mathbb{C}}}
\newcommand{\T}{\ensuremath{\mathbb{T}^d}}\newcommand{\tor}{\ensuremath{\mathbb{T}}}
\newcommand{\Z}{{\ensuremath{\zz}^d}}
\newcommand{\R}{\ensuremath{{\re}^d}}
\newcommand{\id}{\ensuremath{\mbox{id}}}
\newcommand{\svrpw}{\ensuremath{S^{\zb r}_{p}W(\T)}}
\newcommand{\srptfu}{\ensuremath{F^{r}_{p,\theta}(\tor)}}
\newcommand{\srptbu}{\ensuremath{B^{r}_{p,\theta}(\tor)}}
\newcommand{\sabpqf}{\ensuremath{\srptf}}
\newcommand{\srptf}{\ensuremath{S^{\zb r}_{p,\theta}F(\T)}}
\newcommand{\srptb}{\ensuremath{S^{\zb r}_{p,\theta}B(\T)}}
\newcommand{\srpib}{\ensuremath{S^{\zb r}_{p,\infty}B(\T)}}
\newcommand{\srpw}{\ensuremath{S^{\zb r}_{p}W(\T)}}
\newcommand{\srpqb}{\ensuremath{S^{r}_{p,q}B}(\T)}
\newcommand{\rank}{{\rm rank \, }}
\newcommand{\supp}{{\rm supp \, }}
\newcommand{\sinc}{{\rm sinc}\,}
\newcommand{\bproof}{\begin{proof}}
\newcommand{\eproof}{\end{proof}}
\newlength{\fixboxwidth}
\newcommand{\be}{\begin{equation}}
\newcommand{\ee}{\end{equation}}
\newcommand{\beq}{\begin{eqnarray}}
\newcommand{\beqq}{\begin{eqnarray*}}
\newcommand{\eeq}{\end{eqnarray}}
\newcommand{\eeqq}{\end{eqnarray*}}
\def\Id{\mbox{Id}}
\begin{document}
\title{Optimal sampling recovery of mixed order Sobolev embeddings via discrete Littlewood-Paley
	type characterizations}

\author{Glenn Byrenheid$^a$\footnote{Corresponding author. Email: byrenheid.glenn@gmail.com}, Tino Ullrich$^a$ \\\\
	$^a$Hausdorff-Center for Mathematics/Institute for Numerical Simulation\\Endenicher Allee 62, 53115 Bonn, Germany\\
}
%%%%%%%%%%%%%%%%%%%%%%%%%%%%%%%%%%%%%%%%%%%%%%%%%%%%%%%%%%%%%%%%%%%%%%%%%%%%%%%%
%%%%%
%%%%%%%%%%%%%%%%%%%%%%%%%%%%%%%%%%%%%%%%%%%%%%%%%%%%%%%%%%%%%%%%%%%%%%%%%%%%%%%%
%%%%%

\date{\today}

\maketitle

\begin{abstract} In this paper we consider the $L_q$-approximation of multivariate periodic 
functions $f$ with $L_p$-bounded mixed derivative (difference). The (possibly non-linear) reconstruction algorithm is supposed to recover the
	function from function values, sampled on a discrete set of $n$ sampling nodes. The general performance is measured in terms of 
	(non-)linear sampling widths $\varrho_n$. We conduct a systematic analysis 
	of Smolyak type interpolation algorithms in the framework of Besov-Lizorkin-Triebel spaces of dominating mixed 
	smoothness based on specifically tailored discrete Littlewood-Paley type characterizations. 
	As a consequence, we provide sharp upper bounds for the asymptotic order of the (non-)linear sampling widths in
	various situations and close some gaps in the existing literature. 
	For example, in case $2\leq p<q<\infty$ and $r>1/p$ the linear sampling widths 
	$\varrho_n^{\text{lin}}(S^r_pW(\tor^d),L_q(\tor^d))$ and 
	$\varrho^{\text{lin}}_n(S^r_{p,\infty}B(\tor^d),L_q(\tor^d))$
	show the asymptotic behavior of the corresponding Gelfand $n$-widths, whereas
	in case $1 < p  < q \leq 2$ and $r>1/p$ the linear sampling widths match 
 	the corresponding linear widths. In the mentioned cases linear Smolyak interpolation based on 
 	univariate classical trigonometric interpolation turns out to be optimal.
\end{abstract}
\smallskip
\noindent \textbf{MSC 2010.} 42A10, 42A15, 41A46, 41A58, 41A63, 41A17, 41A25, 42B25, 42B35\\
\noindent \textbf{Keywords.} sampling recovery, sparse grids, sampling representations,\\ Besov-Triebel-Lizorkin 
spaces of mixed smoothness, Smolyak algorithm, Gelfand $n$-widths, linear widths 
%%%%%%%%%%%%%%%%%%%%%%%%%%%%%%%%%%%%%%%%%%%%%%%%%%%%%%%%%%%%%%%%%%%%%%%%%%%%%%%%
%%%%%
%%%%%%%%%%%%%%%%%%%%%%%%%%%%%%%%%%%%%%%%%%%%%%%%%%%%%%%%%%%%%%%%%%%%%%%%%%%%%%%%
%%%%%

%&&&&&&&&&&&&&&&&&&&&&&&&&&&&&&&&&&&&&&&&&&&&&&&&&&&&&&&&&&&&&&&&&&&&&&&&&&&&&
%&&&&&&&&&&&&&&&&&&&&&&&&&&&&&&&&&&&&&&&&&&&&&&&&&&&&&&&&&&&&&&&&&&&&&&&&&&&&&

%&&&&&&&&&&&&&&&&&&&&&&&&&&&&&&&&&&&&&&&&&&&&&&&&&&&&&&&&&&&&&&&&&&&&&&&&&&&&&
%&&&&&&&&&&&&&&&&&&&&&&&&&&&&&&&&&&&&&&&&&&&&&&&&&&&&&&&&&&&&&&&&&&&&&&&&&&&&&				
\section{Introduction}

This paper is concerned with the problem of optimal sampling recovery in classes of
multivariate functions. We consider the approximation of $d$-variate functions $f$ from classes with $L_p$-bounded mixed derivative (difference) in
$L_q$. We aim for the exact asymptotic order of the sampling widths which measure the minimal worst-case error for the (linear) sampling recovery problem with
$n$ points. To be more precise, we measure the performance of an optimal sampling algorithm with the {\em linear sampling widths}
\begin{equation}\label{samplingnumber}
	\varrho_n^{\text{lin}}(\boldF,Y) := \inf\limits_{X_n}\inf\limits_{\Psi_n}\sup\limits_{\|f|\boldF\| \leq 1}
	\Big\| f-\sum\limits_{i=1}^n f(\boldx^i)\psi_i(\cdot)\Big\|_Y\quad,\quad n\in \N\,,
\end{equation}
where the sampling nodes $X_n:=\{\boldx^i\}_{i=1}^n
\subset \tor^d$ and associated (continuous) functions $\Psi_n:=\{\psi_i\}_{i=1}^n$ determine a linear sampling recovery algorithm which 
is fixed in advance for a class $\mathbf{F}$ of multivariate functions  on the $d$-torus $\T$. Here the error is measured in $Y = L_q$. Let us emphasize that 
in \eqref{samplingnumber} we restrict to {\em linear} recovery algorithms, whereas we admit general recovery algorithms $\varphi:\C^n \to L_q$ in the definition of the 
(non-linear) {\em sampling widths}
\begin{equation}\label{sampling}
	\varrho_n(\mathbf{F},Y):=\inf_{\varphi,X_n}\sup_{\|f|\mathbf{F}\|\leq 1}\|f-\varphi(X_n(f))\|_Y,
\end{equation}
which is also denoted as the worst-case error for {\em standard information}, see \cite[Sect.\ 4.1]{NoWo08}. We are particularly interested in optimal point sets $X_n$ and corresponding recovery algorithms which yield the correct asymptotic order of \eqref{samplingnumber} and 
\eqref{sampling}. 

The interest in this topic
goes back to 1963 and started with Smolyak \cite{Smolyak63} who considered uniform approximation of 
multivariate functions with
mixed smoothness on the basis of function values. He used an influential construction which is nowadays known as
Smolyak's algorithm
\begin{equation}\label{f0}
	T_mf := \sum\limits_{\substack{\boldj \in \N_0^d\\|\boldj|_1\leq m}} (L_{j_1}-L_{j_1-1})\otimes...\otimes
	(L_{j_d}-L_{j_d-1})f\quad,\quad m\in
	\N\,,
\end{equation}
where the $(L_{j})_{j\in \N_0}$ represent univariate approximation operators (put $L_{-1}:=0$). For more historical comments see \cite[Sect.\ 5]{DTU16}.
When applied to a univariate sampling
(interpolation) scheme $(I_j)_j$ this construction yields a powerful sampling (interpolation) algorithm for the multivariate case taking 
points from a so-called {\it sparse grid}.
%Let us emphasize that in the univariate
% setting an optimal algorithm in the sense of sampling widths for the space $W^r_p$ consists in the standard
% equidistant interpolation method. When measuring the error in $L_p$ such an optimal linear sampling algorithm
% is also optimal in the sense of linear widths. However, when considering multivariate functions the notion of
% ``equidistant'' is not clear anymore. In other words, we are looking for optimal point sets $X_m$ in the $d$-dimensional
% cube to sample the function and build optimal sampling algorithms. The Smolyak algorithm
% \eqref{f0} represents one powerful method for proving upper bounds which is still the method of choice, since we so
% far have not encountered any substantially better (in
% the sense of order) algorithm in this framework. 

\subsection{New matching bounds for (non-)linear sampling recovery}

In this paper we investigate the optimal sampling recovery problem for the 
embedding 
\begin{equation}\label{emb1} 
	\Id:S^{\boldr}_{p,\theta}F(\tor^d) \to L_q(\tor^d)\,,
\end{equation}
where $0 < p< q \leq \infty$, $0<\theta\leq \infty$ and $\boldr>1/p$. Without loss of generality we assume 
\begin{equation}\label{eq:smoothnessvector}
	r = r_1=\ldots=r_{\mu} <r_{\mu+1}\leq \ldots\leq r_{d}<\infty\quad,\quad \mu\leq d.
\end{equation}
The main goal of this paper is to present a systematic framework towards new upper bounds for sampling recovery 
on Smolyak grids in Sobolev-Triebel-Lizorkin spaces $S^{\boldr}_{p,\theta}F(\tor^d)$ with dominating mixed smoothness. 
One of the main results in this paper is the sharp rate of convergence
\begin{equation}\label{f04_1}
	\varrho_n^{\text{lin}}(S^{\boldr}_{p,\theta}F(\tor^d), L_q(\tor^d)) \asymp \Big(\frac{(\log
		n)^{\mu-1}}{n}\Big)^{r-1/p+1/q}\quad,\quad n\in
	\N\,,
\end{equation}
whenever $1<p<q\leq 2$, $1\leq \theta \leq \infty$ or $2\leq p<q <\infty$, $2\leq \theta \leq \infty$ and $r>1/p$, see Corollary \ref{cor:sharp} below. 
Our main contribution is the constructive upper bound which holds true whenever 
$0<p<q<\infty$, $0<\theta \leq \infty$ and $r>1/p$. This is complemented by (see Theorem \ref{satz:linftysamplingmu})
\begin{equation}\label{f04_1b}
	\varrho_n^{\text{lin}}(S^{\boldr}_{p,\theta}F(\tor^d), L_{\infty}(\tor^d)) \lesssim \Big(\frac{(\log
		n)^{\mu-1}}{n}\Big)^{r-1/p}(\log n)^{(\mu-1)(1-1/p)_+}\quad,\quad n\in
	\N\,.
\end{equation}

The upper bounds are realized by
an explicit family of interpolation operators $T^L_m$ using $n \asymp 2^m m^{\mu-1}$ 
function values on a (anisotropic) Smolyak grid, where the parameter $L\in \N$ refers to the polynomial 
decay of the univariate fundamental interpolant ($L=1$ Dirichlet kernel, $L=2$ de la 
Vall{\'e}e Poussin type kernels, $L>2$ higher order kernels). It turned out that, for the sampling recovery problem \eqref{emb1} 
and the upper bounds in \eqref{f04_1}, \eqref{f04_1b}, \eqref{f04}, \eqref{f04_2}, the condition $L>1/q$ is sufficient, 
which means that Smolyak's algorithm \eqref{f0} applied to the classical 
trigonometric interpolation (based on the Dirichlet kernel \eqref{Dirichlet}) does the job. For $\theta = p = 2$ in \eqref{f04_1b}
this has been already observed in \cite[Rem.\ 6.12]{BDSU15}.

Let us emphasize the important special case ($\theta =2$), where it holds the identification $S^{\boldr}_{p,\theta}F(\tor^d) = S^{\boldr}_pW(\tor^d)$ 
with the space of functions with bounded mixed derivative. As a corollary from \eqref{f04_1}
we obtain the new sharp rate of convergence
\begin{equation}\label{f04}
	\varrho_n^{\text{lin}}(\srpw, L_q(\tor^d)) \asymp \Big(\frac{(\log
		n)^{\mu-1}}{n}\Big)^{r-1/p+1/q}\quad,\quad n\in
	\N\,,
\end{equation}
in case $1<p<q\leq2$ or $2\leq p<q<\infty$ and $\boldr>1/p$ which was unknown before. The upper bound is achieved with 
sparse grid interpolation based on classical univariate trigonometric interpolation. In particular, this improves on the bounds stated by Triebel in 
\cite[Thm.\ 4.15, Cor.\ 4.16]{Tr10} in case $r=1$. The parameter domain where 
\eqref{f04} holds is shown in the left diagram, where the parameters $\alpha$ and $\beta$ refer to the following rate of convergence 
$$
\varrho_n(\mathbf{F},L_q) \asymp \Big(\frac{(\log n)^{\mu-1}}{n}\Big)^{\alpha}(\log n)^{(\mu-1)\beta}. 
$$
The precise statements can be found in Sections \ref{sec:sharplin}, \ref{sec:sharpgelf}. 
\begin{figure}[H]
	\begin{minipage}{0.48\textwidth}
		\begin{tikzpicture}[scale=2.7]
		% Background grid
		%\draw[style=help lines, step=0.5cm] (-0.1,-1.1) grid (1.1,1.1);
		\filldraw[lightgray] (0.3,2.1) -- (1.1,2.1) -- (1.1,2.3) -- (0.3,2.3);
		
		% Axes
		\draw (-0.1,0.0) -- (0,0 );
		\draw[->] (2,0) -- (2.1,0.0) node[right] {$\frac{1}{p}$};
		\draw[->] (0,2) -- (0.0,2.1) node[above] {$\frac{1}{q}$};
		
		\draw[dashed](0,0) -- (0.0,2);
		\draw (0,-0.1) -- (0,0);
		% Ticks
		\draw (1.0,0.03) -- (1.0,-0.03) node [below] {$\frac{1}{2}$};
		\draw (0.03,1) -- (-0.03,1) node [left] {$\frac{1}{2}$};
		
		\node at (1.6,2.2) {$\varrho_n^{\text{lin}}(S^{\zb r}_pW,L_q)$};
		\node at (0.7,2.2) {$\varrho_n(S^{\zb r}_pW,L_q)$};

		% Plot
		\draw[dashed] (0,2) -- (2,2);
		\draw (1,1) -- (2,1);

		\draw (1,0) -- (1,1);
		\draw (1,1) -- (2,1);
		\draw[dashed] (2,2) -- (2,0);
		
		\filldraw[lightgray] (0,0) -- (1,1) -- (1,0);
		\filldraw[lightgray] (0,0) -- (1,1) -- (1,0);
		\node at (.6,0.3) {\tiny  $\beta=0$};
		\node at (.6,0.15) {\tiny  $\alpha=r-\frac{1}{p}+\frac{1}{q}$};
		\node at (1.62,1.35) {\tiny  $\beta=0$};
		\node at (1.62,1.2) {\tiny  $\alpha=r-\frac{1}{p}+\frac{1}{q}$};
		
		\draw[dashed] (0,0) -- (2,2);
		\draw[dashed] (0,0) -- (2,0);
		
		%\node at (1.5,0.65) { $\gnsim 1$};
		%\node at (1.5,0.3) {\bf open};
		
		\node at (0.65,1.45) {\huge ?};
		
		%\node at (0.85,1.2) {\red{``$\geq $''  ?}};

		%\node at (1.8,1.5) {``$=$''};
		%\node at (1.8,0.8) {``$>$''};
		%\node at (0.8,0.6) {``$=$''};
		\draw (2,0.03) -- (2,-0.03) node [below] {$1$};
		\draw (0.03,2) -- (-0.03,2) node [left] {$1$};
		\node at (1.5,0.5) { $\lambda_n=o(\varrho_n)$};
		\end{tikzpicture}
	\end{minipage}
	\begin{minipage}{0.48\textwidth}
		\begin{tikzpicture}[scale=2.7]
		% Background grid
		%\draw[style=help lines, step=0.5cm] (-0.1,-1.1) grid (1.1,1.1);
		\draw[->] (0,2) -- (0.0,2.1) node[above] {$\frac{1}{q}$};
		\filldraw[lightgray] (0.15,2.1) -- (1.05,2.1) -- (1.05,2.3) -- (0.15,2.3);
		\draw[dashed](0,0) -- (0.0,2);
		\draw (0,-0.1) -- (0,0);
		% Axes
		\draw (-0.1,0.0) -- (0,0.0); 
		\draw[dashed] (0,0.0) -- (2,0.0);
		\draw[->] (2,0.0) -- (2.1,0.0) node[right] {$\frac{1}{p}$};
		\draw[->] (0.0,-.1) -- (0.0,2.1) node[above] {$\frac{1}{q}$};
		
		% Ticks
		\draw (1.0,0.03) -- (1.0,-0.03) node [below] {$\frac{1}{2}$};
		\draw (0.03,1) -- (-0.03,1) node [left] {$\frac{1}{2}$};
		
		\node at (1.6,2.2) {$\varrho_n^{\text{lin}}(S^{\zb r}_{p,\infty}B,L_q)$};
		\node at (0.6,2.2) {$\varrho_n(S^{\zb r}_{p,\infty}B,L_q)$};
		
		% Plot
		\draw[dashed] (0,2) -- (2,2);
		\draw (1,1) -- (2,1);

		\draw (1,0) -- (1,1);
		\draw (1,1) -- (2,1);
		\draw[dashed] (2,2) -- (2,0);
		
		\filldraw[lightgray] (0,0) -- (1,1) -- (1,0);
		\node at (.6,0.3) {\tiny  $\beta=\frac{1}{q}$};
		\node at (.6,0.15) {\tiny  $\alpha=r-\frac{1}{p}+\frac{1}{q}$};
		\node at (1.62,1.35) {\tiny  $\beta=\frac{1}{q}$};
		\node at (1.62,1.2) {\tiny  $\alpha=r-\frac{1}{p}+\frac{1}{q}$};
		
		\draw[dashed] (0,0) -- (2,2);

		%\node at (1.5,0.65) { $\gnsim 1$};
		%\node at (1.5,0.3) {\bf open};
		
		\node at (0.65,1.45) {\huge ?};
		
		%\node at (0.85,1.2) {\red{``$\geq $''  ?}};

		%\node at (1.8,1.5) {``$=$''};
		%\node at (1.8,0.8) {``$>$''};
		%\node at (0.8,0.6) {``$=$''};
		\draw (2,0.03) -- (2,-0.03) node [below] {$1$};
		\draw (0.03,2) -- (-0.03,2) node [left] {$1$};
		\node at (1.5,0.5) {$\lambda_n=o(\varrho_n)$};
		\end{tikzpicture}
	\end{minipage}
	\caption{Linear and non-linear sampling widths.}
	\label{fig1}
\end{figure}
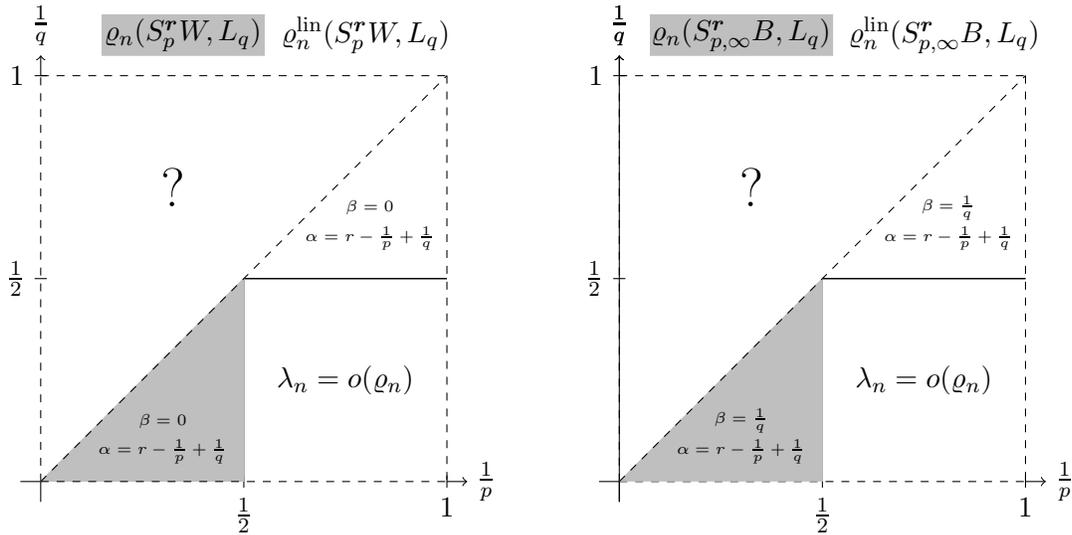
We mainly contributed to the upper bounds in the left figure. 
Most of the results illustrated in the right figure for H\"older-Nikolskij spaces $S^{\boldr}_{p,\infty}B(\tor^d)$ of mixed smoothness 
are well-known. Note that Open Problem 5.3 in \cite{DTU16} refers to the lower triangle in the right Figure \ref{fig1}.
A new approach of Malykhin and Ryutin \cite{MaRu16} settled this question for linear sampling recovery, cf. Corollary \ref{cor:sharpboundnikolskij}. We observed that their method also bounds the Gelfand widths from below, 
cf. Theorem \ref{satz:gelfandnikolskij} and Corollary \ref{cor:nonlinsampeqnonlinappr}. This yields 
the optimal order also for the non-linear sampling widths \eqref{sampling}, which is illustrated by the shaded lower triangles in Figure \ref{fig1}. 
The matching bound in the right upper triangle  for $\srpib$ were obtained by Dinh D\~ung \cite{Di91, Di92}. 
The necessary benchmark results on linear widths were obtained by Galeev \cite{Ga87, Ga96}, Romanyuk \cite{Ro01, Ro08}, and the recent paper by Malykhin and Ryutin \cite{MaRu16}. 
Note, that all sharp upper bounds can be realized by Smolyak type operators \eqref{f0}, i.e. via linear interpolation on sparse grids based on univariate Dirichlet interpolation, 
\eqref{I_j}, \eqref{Dirichlet}.
What concerns Besov spaces with bounded mixed difference $S^{\boldr}_{p,\theta}B(\tor^d)$ it is known that 
\begin{equation}\label{f04_2}
	\varrho_n^{\text{lin}}(S^{\boldr}_{p,\theta}B(\tor^d), L_q(\tor^d)) \asymp \Big(\frac{(\log
		n)^{\mu-1}}{n}\Big)^{r-1/p+1/q}(\log n)^{(\mu-1)(1/q-1/\theta)_+}\quad,\quad n\in
	\N\,,
\end{equation}
if $1<p<q\leq 2$, $1\leq \theta <\infty$ and $\boldr>1/p$, see \cite[Thm.\ 4.47, 5.15]{DTU16} and the references therein. 
With our method we can show the upper bound in case $0<p<q \leq \infty$, $0<\theta\leq \infty$ and $r>1/p$, see Theorem \ref{satz:lqsampanisonikolskij}, 
with interpolation operators providing $L>1/q$.
Comparing to \eqref{f04_1} there is an extra $\log$-term in \eqref{f04_2} in case of ``large'' $\theta>q$. There are still many open cases in this framework which actually 
lack the suitable lower bounds.

Let us refer to 
the works by Temlyakov \cite{Tem93complexity,Tem93} and the more recent papers Sickel, Ullrich \cite{Si06,SiUl07,Ul08}, Dinh D\~ung \cite{Di11,Di15}, \cite{BDSU15}, as well as
\cite{DTU16} and the references therein for upper bounds in case $p\geq q$ and the question-marked region. We emphasize that our technique allows to reproduce all those results, including the upper bound in \cite{Tem93complexity}, within a few lines of proof.

In Open Problem 18 in \cite[Sect.\ 4.2.4]{NoWo08} the authors conjecture the equivalence $\varrho_n^{\text{lin}}\asymp \varrho_n$ for all parameters $1< p,q<\infty$ 
in case of isotropic Sobolev spaces $W^{r}_p(\Omega)$ on bounded Lipschitz domains $\Omega$, see also Novak, Triebel \cite{NoTr05} and Heinrich \cite[Thms.\ 5.2, 5.3]{He09}. In the present paper
we consider mixed smoothness periodic Sobolev embeddings. In our case, the conjecture is true if $2\leq p< q <\infty$ for both Sobolev and H\"older-Nikolskij spaces, see the shaded regions in the 
diagrams above. In all other cases it is not known. Our results also support the above conjecture in the mixed smoothness setting. 
A similar statement as in \cite[Rem. 4.18]{NoWo08}, namely the equivalence $\lambda_n\asymp \varrho_n^{\text{lin}}$ if $p<q$ are on the same side of $2$ and 
$\lambda_n = o(\varrho_n^{\text{lin}})$ if $p<2<q$ is also true in our case.

% For a rather complete solution of the sampling recovery problem for isotropic Besov spaces
% $B^r_{p,\theta}(\Omega)$ and Triebel-Lizorkin spaces $F^s_{p,\theta}(\Omega)$ in $L_q(\Omega)$ (and other smoothness
% spaces) we refer to Novak, Triebel \cite{NoTr05} and Vyb\'iral \cite{Vy07}. 
% However, when it comes to optimality in the sense of \eqref{sampling-number} and \eqref{sampling} several questions remain unsolved. After more than 30 years of
% intense research the sampling recovery problem for function spaces of mixed smoothness seems to be rather hard and is
% far from being completely understood. 

\subsection{Discrete Littlewood-Paley type characterizations}

In Definition \ref{def:srptf} below we introduce Besov-Lizorkin-Triebel spaces of mixed smoothness 
via Fourier analytic building blocks $\delta_{\zb j}[f]$
generated by a dyadic decomposition of unity. 
In this paper we aim for function space characterizations where we replace the building blocks $\delta_{\zb j}[f]$
by the blocks
\begin{equation}\label{f2}
	q_{\boldj}[f] = (I_{j_1}-I_{j_1-1})\otimes...\otimes (I_{j_d}-I_{j_d-1})f\quad,\quad \boldj \in \N_0^d\,,
\end{equation}
used in the classical Smolyak algorithm (see \eqref{f0} above). Here the operators $(I_j)_j$ are univariate interpolation operators
\be\label{I_j}
I^L_j[f]=\sum_{u=0}^{2^j-1}f\Big(\frac{2\pi u}{2^j}\Big)K_{\pi,j}^L\Big(\cdot-\frac{2\pi u}{2^j}\Big)\,.
\ee
In a way we replace the usual convolution by a discrete one such that the building blocks
$q_{\boldj}^L[f]$ are constructed out of $\asymp 2^{|\boldj|_1}$ function values.
The parameter $L\in \N$ refers to the decay of the fundamental interpolant $K_{\pi,j}^L$, which 
represents a suitable trigonometric polynomial of degree $2^j$ and will be explicitly constructed 
in Section \ref{chapter:kernel}. In case $L=1$ we have the classical univariate nested 
trigonometric interpolation, where $K^1_{\pi,j} := 2^{-j}\mathcal{D}^1_{j}$ with $\mathcal{D}^1_0 \equiv 1$ and 
\be\label{Dirichlet}
\mathcal{D}^1_{j}(x) := \mathcal{D}_{2^{j-1}}(x)-e^{i2^{j-1}x} = e^{-i(2^{j-1}-1)x}\frac{e^{i2^jx} - 1}{e^{ix} - 1}\quad,\quad x\in \tor\,,
\ee
for $j\in \N$. The parameter $L=2$ refers to de la Vall{\'e}e Poussin type operators and $L>2$ to higher order kernels.  

We will prove the following
characterization for Sobolev spaces of mixed smoothness if $\boldr > \max\{1/p,1/2\}$ and $1<p<\infty$  
\begin{equation}\label{f02}
	\|f|\srpw\| \asymp \Big\|\Big(\sum\limits_{\boldj \in \N_0^d} 2^{2\boldj\cdot
		\boldr}|q_{\boldj}^L[f](\cdot)|^2\Big)^{1/2}\Big\|_p\,,
\end{equation}
where we may use $L \geq 1$, i.e. Dirichlet type characterizations are admitted. This result provides a powerful tool to
deal with Sobolev embeddings $S^r_pW(\tor^d)$ in $L_q(\tor^d)$\,. Analyzing Smolyak's algorithm \eqref{f0} in this
context has been a technical issue in the past. With \eqref{f02} and its counterpart for Triebel-Lizorkin spaces \eqref{f4} 
it becomes a straight-forward computation. 

For Triebel-Lizorkin spaces we obtain the representation (see Theorem \ref{maincharF})
\begin{equation}\label{f4}
	\|f|S^{\boldr}_{p,\theta}F(\tor^d)\| \asymp \Big\|\Big(\sum\limits_{\boldj \in \N_0^d} 2^{\boldr \cdot \boldj
		\theta}|q_{\boldj}^L[f](\cdot)|^{\theta}\Big)^{1/\theta}\Big\|_p
\end{equation}
in case $0<p<\infty$, $0<\theta\leq \infty$, $\boldr>\max\{1/p,1/\theta\}$ and $L>\max\{1/p,1/\theta\}$ (except in the 
case $\theta = \infty$ where $L \geq 2$)\,. 
Note, that we encounter the well-known (and infamous) condition
$\boldr, L >\max\{1/p,1/\theta\}$ (see also \eqref{f02} for $\theta = 2$), which is relevant if $p>\theta$. However, this condition is most likely optimal for the
respective sampling characterization. Note, that when replacing the classical smooth dyadic decomposition of unity (see Def.\ \ref{def:varphi}) 
in the definition of the spaces (see Def.\ \ref{def:srptf}) by a non-smooth variant like de la Vall{\'e}e Poussin means, we would encounter the 
same condition on $L$, which may not be improved as the recent findings in \cite{ChrSe07,SeUl15_1,SeUl15_2} indicate. In addition, 
Note, that in case of quasi-Banach spaces, where $\min\{p,\theta\} <1$, we need to use sampling kernels \eqref{I_j} of higher order $L$ as the 
condition $L>\max\{1/p,1/\theta\}$ indicates. Surprisingly, the de la Vall{\'e}e Poussin type kernels work well for the characterization \eqref{f4} if  $1/2<p,\theta<\infty$. 

Interestingly, the bounds \eqref{f04_1} as well as \eqref{f04} are valid for all $r>1/p$, in particular for the case of
``small smoothness'', i.e., $1/p<r\leq 1/\theta$ if $p>\theta$. Note, that in this range the characterization \eqref{f02} does not help. That is why 
we develop a complete theory of sampling representations also for more general Triebel-Lizorkin spaces in this paper, which allows to analyze the larger 
space $S^r_{p,\theta^{\ast}}F(\T)$ instead with $\theta^{\ast} > p$, which works since 
$\theta$ plays no role in \eqref{f04_1}. Approximation operators realizing the upper bounds in \eqref{f04_1}, \eqref{f04} and \eqref{f04_2} 
only depend on the integrability $q$ in the target space $L_q(\T)$, that is $L>1/q$ will be sufficient. Hence, we observe a
certain universality property of the Smolyak interpolation operators based on classical 
trigonometric interpolation which work well also in the
quasi-Banach situation whenever the integrability in the target space satisfies $q>1$, cf. Theorem \ref{satz:lqsampaniso} and 
\ref{satz:linftysamplingmu}. 

Finally, let us mention that in  
\cite{ByUl17} we provide similar characterizations for non-periodic Triebel-Lizorkin spaces using time local building blocks, i.e. the 
Faber-Schauder system. Recently, Dinh D\~ung \cite{Di16} extended this idea to prove B-spline quasi-interpolant characterizations in Sobolev spaces $\svrpw$.

{\bf Outline.} The paper is organized as follows. Section \ref{chapter:kernel} deals with the construction of
univariate interpolation operators and corresponding kernels necessary for \eqref{f2}. 
In Section \ref{chapter:functionspaces} we define and
discuss several characterizations of Besov-Triebel-Lizorkin spaces. In Section 4 we provide tools for estimating the norms of superpositions
of dyadic trigonometric polynomials in Besov-Triebel-Lizorkin spaces. Section 5 gives a proof for the sampling
representations, see \eqref{f4}, which are used in Section 6 to analyze Smolyak's
algorithm in this context. In the last  section we compare these results  with linear and Gelfand $n$-widths. Finally,
in the appendix we recall some basic facts and known results on maximal inequalities, Fourier transform, Fourier series,
exponential sums and $n$-widths.

{\bf Notation.} As usual $\N$ denotes the natural numbers, $\N_0:=\N\cup\{0\}$, $\zz$ denotes the integers, 
$\re$ the real numbers, and $\C$ the complex numbers. The letter $d$ is always
reserved for the underlying dimension in $\R, \Z$ etc. With $\T\;$  we denote the torus represented by the interval $[-\pi,\pi]^d$, where opposite points are identified. Elements
$\zb x,\zb y, \zb r \in \R$ are always typesetted in bold face.  We denote
with $\zb x\cdot \zb y$ the usual Euclidean inner
product in $\R$. For
$a\in \re$ we denote $a_+ := \max\{a,0\}$. 
For $0<p\leq \infty$ and $\zb x\in \R$ we denote $|\zb x|_p := (\sum_{i=1}^d
|x_i|^p)^{1/p}$ with the usual modification in the case $p=\infty$. By
$\zb x = (x_1,\ldots,x_d)>0$ we mean that each coordinate is positive. For $\zb j\in\N_0^d$ we use the notation $\zb 2^{\zb j}=(2^{j_1},\ldots,2^{j_d})$, $ 2^{\zb j}=2^{j_1}\cdot\ldots\cdot 2^{j_d}.$
If $X$ and $Y$ are two (quasi-)normed spaces, the (quasi-)norm
of an element $x$ in $X$ will be denoted by $\|x|X\|$. If $T:X\to Y$ is a continuous operator we write $T\in
\mathcal{L}(X,Y)$. The symbol $X \hookrightarrow Y$ indicates that the
identity operator from $X$ to $Y$ is continuous. For two sequences $(a_n)_{n=1}^{\infty},(b_n)_{n=1}^{\infty}\subset \re$  we
write $a_n \lesssim b_n$ if there exists a constant $c>0$ such that $a_n \leq
c\,b_n$ for all $n$. We will write $a_n \asymp b_n$ if $a_n \lesssim b_n$ and
$b_n \lesssim a_n$ and use the Landau symbol $(a_n)_n=o((b_n)_n) :\Longleftrightarrow \lim_{n\to\infty}a_n/b_n=0.$ In addition, we use the following notation $[d]:=\{1,\ldots,d\}$, $\Z(e):=\{\zb k\in\Z:k_i=0:\quad
i\notin e\}$, $\N^d_0(e):=\{\zb k\in\N_0^d:k_i=0:\quad i\notin e\}$ where $e\subset[d]$,
$\sigma_{p}:=\max\{0,\frac{1}{p}-1\}$, $\sigma_{p,\theta}:=\max\{0,\frac{1}{p}-1,\frac{1}{\theta}-1\}.$
For $\zb \ell,\zb j\in \Z$, $\zb s\in\R$ we use the notation $\zb \ell>a :\Longleftrightarrow \ell_i>a \mbox{ for all } i\in[d]$ and
$\zb \ell>\zb j :\Longleftrightarrow \ell_i>j_i \mbox{ for all } i\in[d].$ For $\Omega\subset\R$  the set of all
bounded and continuous functions $f:\Omega\to \C$ is denoted by $C(\Omega)$.  We denote by $L_p(\Omega)$,
$0<p\leq \infty$, the space of all measurable functions $f:\Omega\to \C$ where $\|f\|_p:=(\int_{\Omega} |f(\zb x)|^pd\zb x)^{1/p}$
is finite (with the usual modification if $p=\infty$).
For $f\in L_1(\R)$ and $\zb x,\zb \xi\in \R$ we define the Fourier transform and its inverse by
\be\mathcal{F}f(\zb \xi):=\frac{1}{(2\pi)^{\frac{d}{2}}}\int_{\R}f(\zb x)e^{-i\zb \xi \cdot \zb x}d\zb x\quad \mbox{and}\quad \mathcal{F}^{-1}f(\zb x):=\frac{1}{(2\pi)^{\frac{d}{2}}}\int_{\R}f(\zb x)e^{i\zb \xi \cdot \zb x}d\zb \xi\label{eq:fouriertransform}.\ee
For $f\in L_1(\T)$ the $\zb k$-th Fourier coefficient is defined by $\hat{f}(\zb k):=1/(2\pi)^d\int_{\T}f(\zb x)e^{-i\zb k\cdot \zb x}d\zb x.$ 
The convolution is always defined as $f\ast g(\zb x):=\int f(\zb y)g(\zb x- \zb y)\,d\zb y$.
\section{Frequency-limited fundamental interpolants} \label{chapter:kernel}
\subsection{Univariate fundamental interpolants}
In this section we construct univariate sampling operators of type \eqref{I_j} 
based on bandlimited kernels $K:\re \to \C$ with suitable decay. Here $K^L_{\pi,j}$ denotes the $2\pi$-periodization of $K^L(2^j(\cdot))$
which we will call fundamental interpolant. 
The following construction allows to
arrange any prescribed polynomial decay (of order $L$) of the kernel $K$, which is crucial for our analysis. In addition the operator
$I_{j}^L$ is supposed to reproduce trigonometric polynomials of a degree related to $\asymp 2^j$.
The sampling kernels we study are constructed from a finite product of dilated $\sinc$ functions.
As a starting point we define for $L\in\N,$
$$K^L(x):=\prod_{\ell=1}^{L} \sinc(2^{-\ell}x), \quad x\in\re,$$
with
$$\sinc(x):=\begin{cases}
\frac{\sin(x)}{x}&:\quad x\neq 0,\\
1&:\quad otherwise.
\end{cases}$$
The next step is a $2\pi$-periodization of dyadic dilations of $K^L(x)$ given by 
\be K_{\pi,j}^L(x):=\sum_{k=-\infty}^{\infty}K^L(2^j(x+2\pi k))\label{def:samplingkernel}.\ee
In case $L=1$ the summation in \eqref{def:samplingkernel} does not make sense. Instead we use a direct definition
\be\nonumber
K^1_{\pi,j}(x):=2^{-j}\mathcal{D}^1_j(x):=2^{-j}\sum_{k=-2^{j-1}}^{2^{j-1}-1}e^{ikx}.
\ee
This kernel represents an exception and requires some extra attention in the whole paper. It is a convenient modification of the classical Dirichlet kernel that provides a nested set of zeros as $j$ increases, cf. \cite[(2.6)]{DTU16}.
\noindent For $j\in \N_0$ we define the interpolation operator
$$I_{j}^L[f](x):=\sum_{u=-2^{j-1}}^{2^{j-1}-1}f\Big(\frac{2\pi u}{2^j}\Big)K^L_{\pi,j}\Big(x-\frac{2\pi u}{2^j}\Big),$$
where in case $j=0$ we put $I_{0}^L[f](x):=f(0)K^L_{\pi,0}(x).$
The kernel defined in \eqref{def:samplingkernel} consists of a sum with infinitely many summands. For practical reasons such a definition is not useful. For every fixed $L\in\N$ we can compute an explicit representation of the kernel. Beginning from the definition we obtain the following identity
\beqq
K_{\pi,j}^L(x)=\sum_{k=-\infty}^{\infty}\prod_{\ell=1}^{L}\frac{\sin(2^{j-\ell}(x+2\pi k))}{ 2^{j-\ell} (x+2\pi k)}.
\eeqq
Obviously, in case $x \mod \pi = 0$ we obtain
$$K^L_{\pi,j}(0)=K^L(0)=1.$$
In case $0<|x|<\pi$ an elementary calculation shows
\beq
K_{\pi,j}^L(x)&=&\sum_{k=-\infty}^{\infty}\prod_{\ell=1}^{L}\frac{\sin(2^{j-\ell}x)\cos(2^{\ell+j+1}\pi k)+\sin(2^{\ell+j+1}\pi k)\cos(2^{j-\ell}x)}{ 2^{j-\ell} (x+2\pi k)}\nonumber\\
&=&\sum_{k=-\infty}^{\infty}\prod_{\ell=1}^{L}\frac{\sin(2^{j-\ell}x)}{ 2^{j-\ell} (x+2\pi k)}\nonumber
=\frac{\sin(2^{j-1}x)\ldots\sin (2^{j-L}x)}{2^{jL}2^{-\frac{(L+1)L}{2}}}\sum_{k=-\infty}^{\infty}\frac{1}{(x+2\pi k)^{L}}.\label{eq:notclosedformula}
\eeq
Using the so-called Herglotz-trick (cf. \cite{AiZi01}) we find
\be \frac{1}{2}\cot\Big(\frac{x}{2}\Big)=\sum_{k=-\infty}^{\infty}\frac{1}{x+2\pi k}.\ee
Taking $ L-1$ derivatives yields
$$ \Big[\frac{1}{2}\cot\Big(\frac{\cdot}{2}\Big)\Big]^{(L-1)}(x)=(-1)^{L-1} (L-1)! \sum_{k=-\infty}^{\infty}\frac{1}{(x+2\pi k)^L}.$$
Computing $\Big[\frac{1}{2}\cot\Big(\frac{\cdot}{2}\Big)\Big]^{(L-1)}$ and inserting this identity in \eqref{eq:notclosedformula} gives us a closed representation of the kernel $K^L_{\pi,j}(x)$. For $L=2$ and $L=3$ we obtain the explicit representations
\be K^2_{\pi,j}(x)=\begin{cases}
	\frac{2\sin(2^{j-1}x)\sin(2^{j-2}x)}{2^{2j}\sin^2(\frac{x}{2})}&:\quad x\mod 2\pi \neq 0,\\
	1&:\quad otherwise,
\end{cases}\label{eq:k2pj}\ee
and
$$K^3_{\pi,j}(x)=\begin{cases}
8\frac{\sin(2^{j-1}x)\sin(2^{j-2}x)\sin(2^{j-3}x)\cos(\frac{x}{2})}{2^{3j}\sin^3(\frac{x}{2})}&:\quad x\mod 2\pi \neq 0,\\
1&:\quad otherwise.
\end{cases}$$
\begin{rem}\label{rem:ksupp}
	$K^L$, $L>1$, consists of products of dilated $\sinc$ functions. The convolution property of the Fourier transform yields
	\be K^L(x)=\prod_{\ell=1}^L \sinc(2^{-\ell}x)=\sqrt{2\pi}\mathcal{F}\Big[\chi_{[-2^{-1},2^{-1}]}\ast \ldots\ast 2^{L-1}\chi_{[-2^{-L},2^{-L}]}(\cdot)\Big](x)\label{eq:kmfouriertrans}\ee
	Altogether $\mathcal{F}K^L$ is a locally supported $L-2$ times continuously differentiable function fulfilling
	\beqq
	\mathcal{F}K^L(\xi)=\begin{cases}
		\sqrt{2\pi}&:|\xi|\leq \frac{1}{2^L},\\
		0&: |\xi|\geq 1-\frac{1}{2^L}.\\
	\end{cases}
	\eeqq
	\begin{figure}[H]
		\centering
		\begin{tikzpicture}
		\begin{scope}[thick,font=\scriptsize]
		% Axes:
		% Are simply drawn using line with the `->` option to make them arrows:
		% The main labels of the axes can be places using `node`s:
		\draw[->] (-6,0) -- (6,0) node[right] {$\xi$};
		\tikzset{swapaxes/.style = {rotate=90,yscale=-1}}
		\draw[->] (0,-0.5) -- (0,2) node[above] {$\mathcal{F}K^L$};
		\draw (-2,1.5) -- (2, 1.5);
		\node at (0,1.5) [above right] {$\sqrt{2\pi}$};
		\draw[dashed] (-2,0) -- (-2,1.5);
		\draw[dashed] (2,0) -- (2,1.5);
		\draw (-2,0.1) -- (-2,-0.1)
		node[below] {$-\frac{1}{2^L}$};
		\draw (2,0.1) -- (2,-0.1) node[below] {$\frac{1}{2^L}$};
		\draw (-5,0.1) -- (-5,-0.1) node[below] {$-1+\frac{1}{2^L}$};
		\draw (5,0.1) -- (5,-0.1) node[below] {$1-\frac{1}{2^L}$};
		\draw[scale=1,domain=-5:-2,smooth,variable=\x] plot ({\x},{-0.1111*\x*\x*\x-1.16667*\x*\x-3.3333*\x-1.38889});
		\draw[scale=1,domain=2:5,smooth,variable=\x] plot ({\x},{0.1111*\x*\x*\x-1.16667*\x*\x+3.3333*\x-1.38889});
		\end{scope}
		% The circle is drawn with `(x,y) circle (radius)`
		% You can draw the outer border and fill the inner area differently.
		% Here I use gray, semitransparent filling to not cover the axes below the circle
		
		% Place the equation into the circle:
		\end{tikzpicture}
		%\caption{Example TikZ picture that has nothing special to offer.}
	\end{figure}
\end{rem}
\begin{lem}\label{lem:fourierformula}
	Let $L \geq 1$, $j\in\N_0$ and $f\in C(\tor)$.
	\begin{enumerate}
		\item Then for $\ell\in\zz$  	\be\widehat{I_{j}^L[f]}(\ell)=\frac{1}{\sqrt{2\pi}}\mathcal{F}K^L\Big(\frac{\ell}{2^j}\Big)\sum_{u=-2^{j-1}}^{2^{j-1}-1}f\Big(\frac{2\pi u}{2^j}\Big)e^{-i\frac{2\pi u}{2^j}\ell}\label{eq:fseries}\ee
		holds true. 
		\item If additionally $\sum_{\ell\in\zz}|\widehat{f}(\ell)|<\infty$ is fulfilled. Then
		$$\widehat{I_{j}^L[f]}(\ell)=\frac{1}{\sqrt{2\pi}}\mathcal{F}K^L\Big(\frac{\ell}{2^j}\Big)\sum_{k\in\zz}\widehat{f}(\ell+2^jk)$$
		holds.
	\end{enumerate}
\end{lem}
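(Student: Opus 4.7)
The plan is to derive both identities from direct computation of the Fourier coefficients of the kernel $K^L_{\pi,j}$. For (i), by linearity and the translation rule $\widehat{g(\cdot - a)}(\ell) = e^{-i\ell a}\widehat{g}(\ell)$, one has
$$\widehat{I_j^L[f]}(\ell) \;=\; \widehat{K^L_{\pi,j}}(\ell)\sum_{u=-2^{j-1}}^{2^{j-1}-1} f\Big(\frac{2\pi u}{2^j}\Big)\, e^{-i 2\pi u \ell/2^j}\,,$$
so everything reduces to identifying $\widehat{K^L_{\pi,j}}(\ell)$ with $(\sqrt{2\pi})^{-1}\mathcal{F}K^L(\ell/2^j)$ up to the natural $2^j$-scaling from the substitution. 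For $L\geq 2$, I would unfold the $2\pi$-periodization: using that $e^{-i\ell x}$ is $2\pi$-periodic for integer $\ell$ and that $K^L$ has enough decay (Remark \ref{rem:ksupp}) to justify a Fubini-type interchange, a shift-of-interval argument gives
$$\widehat{K^L_{\pi,j}}(\ell) \;=\; \frac{1}{2\pi}\int_{\re} K^L(2^j y)\, e^{-i\ell y}\,dy\,,$$
and the substitution $z=2^j y$ together with definition \eqref{eq:fouriertransform} of $\mathcal{F}$ identifies this with the desired expression. The exceptional case $L=1$ has to be handled separately from the explicit formula $K^1_{\pi,j}=2^{-j}\sum_{k=-2^{j-1}}^{2^{j-1}-1}e^{ikx}$, whose Fourier coefficients can be read off directly; consistency is then checked against $\mathcal{F}K^1=\sqrt{2\pi}\chi_{[-1/2,1/2]}$ from Remark \ref{rem:ksupp}.

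Part (ii) then follows from (i) by expanding $f$ in its Fourier series, which converges absolutely and uniformly under $\sum_{\ell}|\widehat f(\ell)|<\infty$. In particular
$$f\Big(\frac{2\pi u}{2^j}\Big) \;=\; \sum_{k\in\zz}\widehat f(k)\, e^{i 2\pi u k/2^j}\,,$$
and inserting this into the formula of (i) with the two sums exchanged (permissible by absolute convergence) produces a double sum in which the inner geometric sum $\sum_{u=-2^{j-1}}^{2^{j-1}-1} e^{i 2\pi u(k-\ell)/2^j}$ equals $2^j$ whenever $k\equiv \ell\pmod{2^j}$ and vanishes otherwise. Collapsing to $2^j\sum_{m\in\zz}\widehat f(\ell+2^j m)$ and combining with the prefactor from (i) yields the aliasing identity claimed in (ii).

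The only genuinely delicate point is the $L=1$ case in part (i): the periodization series defining $K^L_{\pi,j}$ in \eqref{def:samplingkernel} is not absolutely convergent for $L=1$ (which is precisely why the paper replaced it by an ad-hoc direct formula), so the unfolding argument used for $L\geq 2$ does not literally apply and must be substituted by the Dirichlet-kernel computation sketched above. Everything else amounts to careful bookkeeping involving the unitary Fourier transform convention from \eqref{eq:fouriertransform} and the discrete orthogonality $\sum_{u=0}^{N-1}e^{2\pi i um/N}$, which equals $N$ when $N$ divides $m$ and vanishes otherwise.
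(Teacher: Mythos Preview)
Your proposal is correct and follows essentially the same route as the paper: linearity plus the translation rule reduce (i) to computing $\widehat{K^L_{\pi,j}}(\ell)$, and then part (ii) follows by inserting the absolutely convergent Fourier series, interchanging sums, and evaluating the geometric sum. The only cosmetic difference is that where you unfold the periodization by a shift-of-interval argument, the paper simply cites Poisson summation (Lemma~\ref{lem:poissonsum}); your explicit treatment of the $L=1$ exception is in fact more careful than the paper's own proof, which leaves that case implicit.
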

\begin{proof}
	We compute the $\ell$-th Fourier coefficient of $f$ and obtain by the translation property the following identity
	\beqq\widehat{I_{j}^Lf}(\ell)&=&\sum_{u=-2^{j-1}}^{2^{j-1}-1}f\Big(\frac{2\pi u}{2^j}\Big)\widehat{K^{L}_{\pi,j}\Big(\cdot-\frac{2\pi u}{2^j}\Big)}(\ell)
	=\widehat{K^{L}_{\pi,j}}(\ell)\sum_{u=-2^{j-1}}^{2^{j-1}-1}f\Big(\frac{2\pi u}{2^j}\Big)e^{-i\frac{2\pi u}{2^j}\ell}.\eeqq
	Lemma \ref{lem:poissonsum} together with the dilation property of the Fourier transform yields
	\beqq
	\widehat{I_{j}^Lf}(\ell) &=&\frac{1}{\sqrt{2\pi}2^j}\mathcal{F}K^L\Big(\frac{\ell}{2^j}\Big)\sum_{u=-2^{j-1}}^{2^{j-1}-1}f\Big(\frac{2\pi u}{2^j}\Big)e^{-i\frac{2\pi u}{2^j}\ell}.\eeqq
	If the Fourier coefficients are absolutely summable we get
	\beqq
	\widehat{I_{j}^Lf}(\ell)	 &=&\frac{1}{\sqrt{2\pi}2^j}\mathcal{F}K^L\Big(\frac{\ell}{2^j}\Big)\sum_{u=-2^{j-1}}^{2^{j-1}-1}\Bigg(\sum_{k\in \zz}\widehat{f}(k)e^{ik\frac{2\pi u}{2^j}}\Bigg)e^{-i\frac{2\pi u}{2^j}\ell}.\eeqq
	Interchanging the order of summation yields
	\beqq
	\widehat{I_{j}^Lf}(\ell)	 &=&\frac{1}{\sqrt{2\pi}2^j}\mathcal{F}K^L\Big(\frac{\ell}{2^j}\Big)\sum_{k\in \zz}\widehat{f}(k)\sum_{u=-2^{j-1}}^{2^{j-1}-1}e^{i\frac{2\pi u}{2^j}(k-\ell)}.\eeqq
	The formula for geometric partial sums tells us
	$$\sum_{u=-2^{j-1}}^{2^{j-1}-1}e^{i\frac{2\pi u}{2^j}(k-\ell)}=\begin{cases}
	2^j&:\quad k-\ell \mod{2^j} =0\\
	0  &:\quad \mbox{otherwise}.
	\end{cases}$$
	Finally, we obtain
	\beqq
	\widehat{I_{j}^Lf}(\ell)	 &=&\frac{1}{\sqrt{2\pi}}\mathcal{F}K^L\Big(\frac{\ell}{2^j}\Big)\sum_{k\in \zz}\widehat{f}(\ell+2^{j}k).\eeqq
\end{proof}

\begin{defi}
	We define for 
	$j,L\in \N_0$ the dyadic blocks
	\be\mathcal{P}^L_j:=\Big\{k\in\zz: |k|\leq\frac{1}{2^L}2^j\Big\}.\label{unidyadicblock}\ee
	Additionally, we denote the set of trigonometric polynomials with frequencies in $\mathcal{P}^L_j$ by
	$$\mathcal{T}^L_j:=\mbox{span}\{e^{ikx}:k\in\mathcal{P}^L_j\}.$$
\end{defi}
\begin{cor}\label{lem:trigreprod}	Let $L\in\N$ and $f\in C(\tor)$.
	\begin{enumerate}
		\item 	Then it holds $I_{j}^L[f]\in \mathcal{T}^{0}_j$.
		\item If additionally  $f\in\mathcal{T}^L_j$ then
		$ I_{j}^L[f]=f$.
	\end{enumerate}

\end{cor}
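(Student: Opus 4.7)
The proof of both parts reduces to computing $\widehat{I_j^L[f]}(\ell)$ via Lemma \ref{lem:fourierformula} and exploiting the Fourier support of $\mathcal{F}K^L$ from Remark \ref{rem:ksupp}. For part (i), Lemma \ref{lem:fourierformula}(i) gives
$\widehat{I_j^L[f]}(\ell) = (2\pi)^{-1/2}\,\mathcal{F}K^L(\ell/2^j)\,\sigma_{j,\ell}(f)$,
where $\sigma_{j,\ell}(f)$ denotes the discrete exponential sum over the sample values of $f$. By Remark \ref{rem:ksupp} the factor $\mathcal{F}K^L(\ell/2^j)$ vanishes as soon as $|\ell/2^j|\geq 1-2^{-L}$, and in particular whenever $|\ell|>2^j$, so all nontrivial Fourier frequencies of $I_j^L[f]$ lie inside $\mathcal{P}^0_j$. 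The case $L=1$ is immediate from the explicit finite expansion of $K^1_{\pi,j}$, whose Fourier support is contained in $\{-2^{j-1},\ldots,2^{j-1}-1\}\subset\mathcal{P}^0_j$.

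For part (ii), any $f\in\mathcal{T}^L_j$ is a trigonometric polynomial, so $\sum_{\ell}|\hat f(\ell)|<\infty$ and Lemma \ref{lem:fourierformula}(ii) applies, yielding
$$
\widehat{I_j^L[f]}(\ell)=\frac{1}{\sqrt{2\pi}}\,\mathcal{F}K^L\!\Big(\frac{\ell}{2^j}\Big)\sum_{k\in\zz}\hat f(\ell+2^jk).
$$
My plan is to split the analysis by the location of $\ell$. For $|\ell|\leq 2^{j-L}$ the plateau clause of Remark \ref{rem:ksupp} gives $\mathcal{F}K^L(\ell/2^j)=\sqrt{2\pi}$, while for every $k\neq 0$ the shifted index satisfies $|\ell+2^jk|\geq 2^j-2^{j-L}>2^{j-L}$ (for $L\geq 2$), so $\hat f(\ell+2^jk)=0$ by $\supp\hat f\subset\mathcal{P}^L_j$. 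Only $k=0$ survives, giving $\widehat{I_j^L[f]}(\ell)=\hat f(\ell)$. For $|\ell|>2^{j-L}$ one has $\hat f(\ell)=0$; if in addition $|\ell|\geq 2^j(1-2^{-L})$ the outer factor $\mathcal{F}K^L(\ell/2^j)$ vanishes, and in the intermediate range every term in the aliasing sum still has $|\ell+2^jk|>2^{j-L}$, so the sum is zero. Hence $\widehat{I_j^L[f]}(\ell)=\hat f(\ell)$ for all $\ell\in\zz$, and therefore $I_j^L[f]=f$.

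The main technical nuisance is the exceptional case $L=1$, which the paper consistently flags as special: here \eqref{eq:kmfouriertrans} does not apply and the discrete Dirichlet kernel $K^1_{\pi,j}$ is supported asymmetrically on $\{-2^{j-1},\ldots,2^{j-1}-1\}$. I would treat this case by expanding $K^1_{\pi,j}$ directly and invoking the discrete orthogonality identity
$\sum_{u=-2^{j-1}}^{2^{j-1}-1}e^{2\pi i(m-k)u/2^j}=2^j\,\mathbf{1}_{m\equiv k\,(\mathrm{mod}\,2^j)}$
to verify the reproduction property $I_j^1[e^{im\cdot}]=e^{im\cdot}$ on the frequency set compatible with the image of $I_j^1$, in the sense intended by the authors' convention for $\mathcal{T}^1_j$.
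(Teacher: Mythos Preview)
Your proof is correct and follows essentially the same route as the paper: both parts rest on Lemma~\ref{lem:fourierformula} combined with the support description of $\mathcal{F}K^L$ from Remark~\ref{rem:ksupp}, and the paper's own argument for (ii) is just the one-line assertion that the aliasing formula collapses to $\widehat{f}(\ell)$, which you have spelled out via the natural case split. Your flagging of the $L=1$ boundary asymmetry is appropriate and mirrors the paper's own treatment of that kernel as an exceptional case.
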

\begin{proof}
	Assertion (i) is an easy consequence of \eqref{eq:fseries} together with the support properties 
	of $K^L$. For assertion (ii) we may use 
	\beqq\widehat{I_{j}^L[f]}(\ell)=\frac{1}{\sqrt{2\pi}}\mathcal{F}K^L\Big(\frac{\ell}{2^j}\Big)
	\sum\limits_{k\in \zz}\widehat{f}(\ell+2^jk)\eeqq
	which equals $\widehat{f}(\ell)$ for all $\ell$ if $f\in \mathcal{T}^L_j$.
\end{proof}

The next lemma provides the reason for calling $K^{L}_{\pi,j}$ a fundamental interpolant for the equidistant grid $\mathcal{G}^1_j:=\{\frac{-2\pi 2^{j-1}}{2^j},\ldots, \frac{2\pi (2^{j-1}-1)}{2^j}\}$.
\begin{lem}\label{lem:interpolation}
	Let $f\in C(\tor)$ and  $L\geq 1$. Then
	$$f\Big(\frac{2\pi u}{2^j}\Big)= I^L_j f\Big(\frac{2\pi u}{2^j}\Big)\quad,\quad 
	u\in \{-2^{j-1},\ldots,{2^{j-1}-1}\}\,.$$
\end{lem}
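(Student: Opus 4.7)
The plan is to reduce the interpolation property to the single fact that $K^L_{\pi,j}$ vanishes at the non-zero nodes of the grid $\mathcal{G}^1_j$ and equals one at the origin, i.e.\ that
\[
K^L_{\pi,j}\!\left(\frac{2\pi m}{2^j}\right) = \delta_{m,0}, \qquad m\in\{-(2^j-1),\ldots,2^j-1\}.
\]
Once this is established, plugging $x = 2\pi u/2^j$ into the definition of $I^L_j$ gives
\[
I^L_j[f]\!\left(\frac{2\pi u}{2^j}\right)
= \sum_{v=-2^{j-1}}^{2^{j-1}-1} f\!\left(\frac{2\pi v}{2^j}\right) K^L_{\pi,j}\!\left(\frac{2\pi(u-v)}{2^j}\right)
= f\!\left(\frac{2\pi u}{2^j}\right),
\]
since $u-v$ ranges over $\{-(2^j-1),\ldots,2^j-1\}$ as $v$ runs through the sampling set and $u$ is fixed in the same set.

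For $m=0$ the identity $K^L_{\pi,j}(0)=1$ is already noted in the text (it follows from $K^L(0)=1$ together with the fact that the remaining terms in the periodization vanish at $0$). So it remains to verify $K^L_{\pi,j}(2\pi m/2^j)=0$ whenever $0<|m|<2^j$. I would split into two cases according to the construction of the kernel.

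In the case $L=1$ I would appeal directly to the explicit Dirichlet-type formula $K^1_{\pi,j}(x)=2^{-j}\sum_{k=-2^{j-1}}^{2^{j-1}-1}e^{ikx}$. Evaluating at $x=2\pi m/2^j$ and summing the resulting geometric series with ratio $z=e^{i2\pi m/2^j}$ gives
\[
2^{-j}\,z^{-2^{j-1}}\,\frac{z^{2^j}-1}{z-1},
\]
and since $z^{2^j}=e^{i2\pi m}=1$ while $z\neq 1$ (because $0<|m|<2^j$), the numerator vanishes and the denominator does not. Hence the value is $0$.

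In the case $L\geq 2$ I would invoke the closed form \eqref{eq:notclosedformula}, writing
\[
K^L_{\pi,j}(x) = \frac{\sin(2^{j-1}x)\cdots\sin(2^{j-L}x)}{2^{jL}2^{-(L+1)L/2}} \sum_{k\in\zz}\frac{1}{(x+2\pi k)^L}.
\]
At $x=2\pi m/2^j$ with $0<|m|<2^j$, the first factor of the sine product is $\sin(2^{j-1}\cdot 2\pi m/2^j)=\sin(\pi m)=0$, while the cotangent-derivative series on the right is finite since $x\not\equiv 0\bmod 2\pi$. Therefore $K^L_{\pi,j}(2\pi m/2^j)=0$, completing the proof. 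The only mildly delicate point is ensuring that the range of $m=u-v$ stays strictly inside $(-2^j,2^j)$ so that the argument avoids the true poles of the periodized kernel; this is immediate from $u,v\in\{-2^{j-1},\ldots,2^{j-1}-1\}$.
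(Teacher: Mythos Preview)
Your proof is correct and follows the same overall structure as the paper's: both reduce the interpolation property to showing $K^L_{\pi,j}(2\pi m/2^j)=\delta_{m,0}$, and both handle $L=1$ via the explicit Dirichlet sum. For $L\geq 2$ the arguments diverge slightly. The paper evaluates the periodization directly,
\[
K^L_{\pi,j}\Big(\frac{2\pi u}{2^j}\Big)=\sum_{k\in\zz}K^L\big(2\pi(u+2^jk)\big)=\delta_{0,u},
\]
using only that $K^L(2\pi n)=\prod_{\ell}\sinc(2^{1-\ell}\pi n)=\delta_{n,0}$ (the factor $\ell=1$ is $\sinc(\pi n)$). You instead invoke the closed form \eqref{eq:notclosedformula} and kill the expression via $\sin(2^{j-1}\cdot 2\pi m/2^j)=\sin(\pi m)=0$. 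Both work; the paper's route is a touch shorter and stays at the level of the definition. One small point to tidy up in your version: the closed form \eqref{eq:notclosedformula} is stated in the text for $0<|x|<\pi$, whereas your range $|m|\leq 2^j-1$ allows $|2\pi m/2^j|$ up to nearly $2\pi$. You should either remark that the derivation of \eqref{eq:notclosedformula} actually goes through for every $x\notin 2\pi\zz$, or use the $2\pi$-periodicity of $K^L_{\pi,j}$ to reduce to the stated range; alternatively, adopting the paper's direct periodization argument avoids this altogether.
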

\begin{proof}
	Obviously, it is sufficient to proof
	
	$$K^L_{\pi,j}\Big(\frac{2\pi u}{2^j}\Big)=\delta_{0,u}.$$
	In case $L=1$ this is a trivial consequence of \eqref{Dirichlet}. 
	In case $L>1$ this we have according to our definition for $u\in \{-2^{j-1},\ldots,{2^{j-1}-1}\}$
	$$
	K^L_{\pi,j}\Big(\frac{2\pi u}{2^j}\Big)=\sum_{k=-\infty}^{\infty}K^L(2\pi (u+2^{j} k)) = \delta_{0,u}\,.
	$$
\end{proof}

\begin{lem}\label{lem:coredecreasing}Let $j\in \N_0$ and  $L>1$. Then there are constants $C,C^*>0$ (independent of $x$ and $j$) such that
	$$|K^L_{\pi,j}(x)|\leq C \min\Big\{\frac{1}{|2^jx|^L},1\Big\}\leq C^* \frac{1}{(1+2^j|x|)^L}$$
	holds for all $x\in [-\pi,\pi]$.
\end{lem}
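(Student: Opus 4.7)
The strategy is to first obtain a pointwise bound on the non-periodized kernel $K^L$, then control the periodization sum in \eqref{def:samplingkernel}, isolating the $k=0$ term as the main contribution and summing the tail using $L>1$.

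\textbf{Step 1 (pointwise estimate for $K^L$).} Using $|\sinc(t)|\le 1$ trivially and, on the other hand,
\[
|\sinc(t)|=\Big|\tfrac{\sin t}{t}\Big|\le \tfrac{1}{|t|},\qquad t\neq 0,
\]
I would multiply the two inequalities selectively to obtain, for all $y\in\re$,
\[
|K^L(y)|=\prod_{\ell=1}^L|\sinc(2^{-\ell}y)|\le \min\Bigl\{1,\,\prod_{\ell=1}^L\tfrac{2^\ell}{|y|}\Bigr\}= \min\Bigl\{1,\,\tfrac{2^{L(L+1)/2}}{|y|^L}\Bigr\}\le C_L\,\min\{1,|y|^{-L}\}.
\]

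\textbf{Step 2 (splitting the periodization).} Plugging $y=2^j(x+2\pi k)$ into \eqref{def:samplingkernel} and applying Step 1 termwise yields
\[
|K^L_{\pi,j}(x)|\le C_L\min\{1,|2^jx|^{-L}\}+C_L\sum_{k\neq 0}\min\{1,|2^j(x+2\pi k)|^{-L}\}.
\]
The first summand already has the desired form. For the tail, note that for $x\in[-\pi,\pi]$ and $k\neq 0$ one has $|x+2\pi k|\ge \pi|k|$, so
\[
\sum_{k\neq 0}\min\{1,|2^j(x+2\pi k)|^{-L}\}\le 2^{-jL}\sum_{k\neq 0}(\pi|k|)^{-L}=:C'_L\,2^{-jL},
\]
where the convergence of the last series uses the assumption $L>1$.

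\textbf{Step 3 (absorbing the tail into the main term).} I would then verify $2^{-jL}\lesssim \min\{1,|2^jx|^{-L}\}$ for $x\in[-\pi,\pi]$ by case distinction: if $|2^jx|\le 1$ the right-hand side equals $1\ge 2^{-jL}$; if $|2^jx|>1$ then $|2^jx|^{-L}=2^{-jL}|x|^{-L}\ge \pi^{-L}2^{-jL}$. Combining with Step 2 gives the first inequality
\[
|K^L_{\pi,j}(x)|\le C\min\{1,|2^jx|^{-L}\}.
\]

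\textbf{Step 4 (equivalence with $(1+2^j|x|)^{-L}$).} The second inequality is purely elementary: for $t\ge 0$, one checks $\min\{1,t^{-L}\}\le 2^L(1+t)^{-L}$ by distinguishing $t\le 1$ (then $(1+t)^{-L}\ge 2^{-L}$) and $t>1$ (then $(1+t)^{-L}\ge (2t)^{-L}$). Setting $t=2^j|x|$ concludes.

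The only non-routine point is Step 3, i.e.\ making sure the contribution of the $k\neq 0$ shifts is dominated by the $k=0$ term uniformly in $j$ and $x\in[-\pi,\pi]$; this is where both $L>1$ (for summability of the tail) and the bound $|x|\le \pi$ (to convert $2^{-jL}$ into $|2^jx|^{-L}$) enter in an essential way.
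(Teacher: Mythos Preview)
Your proof is correct and follows essentially the same approach as the paper: bound $|K^L(y)|\le C_L\min\{1,|y|^{-L}\}$, split the periodization into the $k=0$ term and the tail $\sum_{k\neq 0}$, control the tail via $|x+2\pi k|\ge \pi|k|$ and the summability $\sum_{k\neq 0}|k|^{-L}<\infty$ (this is where $L>1$ is used), and then absorb the resulting $2^{-jL}$ into $\min\{1,|2^jx|^{-L}\}$ using $|x|\le\pi$. Your organization is in fact slightly cleaner than the paper's, which carries out the ``bounded by $1$'' and ``bounded by $|2^jx|^{-L}$'' estimates separately rather than packaging them into a single $\min$ at the outset.
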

\begin{proof}
	The second inequality of the chain is trivial. We prove the first one.
	Starting for $x\in[-\pi,\pi]$ the estimate with
	\beq
	|K^L_{\pi,j}(x)|&=&\Big|\sum_{k=-\infty}^{\infty}K^L(2^j(x+2\pi k))\Big|\nonumber
	\leq|K^L(2^jx)|+\sum_{|k|>0} |K^L(2^j(x+2\pi k))|\nonumber\\
	&\lesssim& \prod_{\ell=1}^L|\sinc(2^{j-\ell}x)|+\sum_{|k|>0} \frac{1}{2^{jL}|x+2\pi k|^L}
	\label{eq:bound1}.\eeq
	Clearly, the first summand is uniformly bounded.   Estimating the second summand in \eqref{eq:bound1} we use the fact that $|x|\leq \pi$ implies $|2\pi k+x|\geq |\pi k|$ for every integer $k\in \mathbb{Z}$ and obtain
	\beqq
	\sum_{|k|>0} \frac{1}{2^{jL}|x+2\pi k|^L}\leq \frac{1}{2^{jL}\pi^L} \sum_{|k|>0} \frac{1}{|k|^L},
	\eeqq
	which is known to be finite for $L\geq 2$. Using $|x|\leq\pi$ yields
	\beqq
	\sum_{|k|>0} \frac{1}{2^{jL}|x+2\pi k|^L}\lesssim  \frac{1}{2^{jL}|x|^L}.
	\eeqq 
	Considering again the first summand in \eqref{eq:bound1} gives
	\beqq
	\prod_{\ell=1}^L|\sinc(2^{j-\ell}x)|&\leq&\frac{1}{2^{(L+1)\frac{L}{2}}}\frac{1}{2^{jL}|x|^L},
	\eeqq
	which concludes the proof.
\end{proof}
\subsection{Multivariate interpolation}\label{sec:22}
Based on the univariate interpolation scheme from the previous subsection we are now able to define the building blocks used for the Smolyak algorithm, cf. \eqref{f0},
\be q^L_{\zb j}[f](\zb x):=\Big(\bigotimes_{i=1}^d\eta^L_{j_i}\Big)[f](\zb x)\label{eq:qkf}
\quad \mbox{with} \quad
\eta^L_{j_i}:=\begin{cases}
	I^L_{j_i}-I^L_{j_i-1}&:\quad j_i>0,\\
	I^L_{0} &:\quad j_i=0.
\end{cases}\ee
We may write $q^L_{\zb j}[f]$ as follows
\be\label{eq:qjasijinterpret}
q^L_{\zb j}[f] = \sum_{\zb b\in \{-1,0\}^d}\varepsilon_{\zb b}I^L_{{\zb j+\zb b}}[f]
\ee
with suitable signs $\varepsilon_{\zb b}$. The definition of the operators $I^L_{{\zb j+\zb b}}[f](\zb x)$ requires some more notation. 
\beqq \zb  x_{\zb u}^{\zb j}=\Big(x^{j_1}_{u_1},\ldots,x^{j_d}_{u_d}\Big)\quad,\quad \zb u \in \Z\,,\eeqq
where $x_u^j = 2\pi u/2^j$ for $u\in \zz$\,. For $\zb x \in \R$ let further
\be  A_{\zb j}(\zb x):=A_{j_1}(x_1)\times ... \times A_{j_d}(x_d)\label{Aj(x)} \ee
with $A_{j}(x) = \{u\in \zz~:~x_u^j \in [x-\pi, x+\pi)\}$ and put $A_{\zb j}:=A_{\zb j}(\zb 0)$. 
We further let 
$$K^L_{\pi^d, \zb j}:= \prod_{i=1}^d K^L_{\pi,j_i}(x_i)$$
and define the tensorized interpolation operator by \beqq I^L_{{ \zb j}}[f]=\sum_{\zb u\in A_{{\zb j}}}f(\zb x^{{\zb j}}_{\zb u})K^L_{\pi^d,\zb {\zb j}}(\zb x-\zb x^{{\zb j}}_u)\,.  \eeqq
\begin{lem}\label{lem:trigreprod_hyp} Let $\Delta\subset \N_0^d$ be a solid finite set meaning
	that 
	$\zb j \in \Delta$ and $\zb k \leq \zb j$ implies $\zb k \in \Delta$. Then
	$\sum_{\zb j \in \Delta}q^L_{\zb j}[f]$ reproduces trigonometric polynomials with frequencies in 
	$$
	\mathcal{H}^{L}_{\Delta}:=\bigcup_{\zb j\in \Delta}\mathcal{P}^{L}_{\zb j}\,,
	$$
	
\end{lem}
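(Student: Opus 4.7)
The plan is to verify the reproduction property monomial by monomial: by linearity it suffices to show
$\sum_{\zb j \in \Delta} q^L_{\zb j}[e^{i\zb k \cdot \zb x}] = e^{i\zb k \cdot \zb x}$ for every $\zb k \in \mathcal{H}^L_{\Delta}$. For each coordinate $i$ I would introduce the threshold
$$j^*_i(k_i) := \min\{\,j\in \N_0 : |k_i| \leq 2^{j-L}\,\},$$
so that, by Corollary \ref{lem:trigreprod}(ii), the univariate interpolant $I^L_{j_i}$ reproduces $e^{ik_i x_i}$ precisely when $j_i \geq j^*_i(k_i)$.

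Next I would exploit the hypothesis: since $\zb k \in \mathcal{H}^L_{\Delta}$, there exists $\tilde{\zb j}\in\Delta$ with $\zb k \in \mathcal{P}^L_{\tilde{\zb j}}$, which forces $\zb j^*(\zb k) \leq \tilde{\zb j}$ componentwise. Solidness of $\Delta$ then yields $\{\zb j \in \N_0^d : \zb j \leq \zb j^*(\zb k)\}\subset \Delta$; this is the structural input that makes the identity work.

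Third, I would observe the key cancellation: whenever $j_i > j^*_i(k_i)$, both $I^L_{j_i}$ and $I^L_{j_i-1}$ reproduce $e^{ik_i x_i}$, hence $\eta^L_{j_i}[e^{ik_i x_i}] = 0$. By the tensor product definition \eqref{eq:qkf} of $q^L_{\zb j}$, this forces $q^L_{\zb j}[e^{i\zb k \cdot \zb x}] = 0$ unless $\zb j \leq \zb j^*(\zb k)$. Combined with the previous step, the sum therefore collapses to
$$\sum_{\zb j \in \Delta} q^L_{\zb j}[e^{i\zb k \cdot \zb x}] = \sum_{\zb j \leq \zb j^*(\zb k)} q^L_{\zb j}[e^{i\zb k \cdot \zb x}].$$

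Finally, the product structure factorizes the right-hand side as
$$\prod_{i=1}^d \sum_{j_i=0}^{j^*_i(k_i)} (I^L_{j_i}-I^L_{j_i-1})[e^{ik_i x_i}] = \prod_{i=1}^d I^L_{j^*_i(k_i)}[e^{ik_i x_i}] = e^{i\zb k \cdot \zb x},$$
where the inner equality is univariate telescoping and the last one is again Corollary \ref{lem:trigreprod}(ii). The only delicate point is the interplay between solidness of $\Delta$ and the tensor structure, which ensures that the truncated index set $\{\zb j \leq \zb j^*(\zb k)\}$ lies entirely inside $\Delta$ so that the sum factorizes cleanly; once this is in place, the argument is purely algebraic.
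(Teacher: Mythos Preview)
Your argument is correct and is the standard self-contained proof of this fact: reduce to a single exponential $e^{i\zb k\cdot\zb x}$, exploit the tensor structure of $q^L_{\zb j}$ to kill all terms with some $j_i>j_i^*(k_i)$, use solidness of $\Delta$ to ensure the surviving index box $\{\zb j\leq \zb j^*(\zb k)\}$ lies entirely in $\Delta$, then factorize and telescope. The paper itself does not give an independent proof here but merely cites \cite[Lem.\ 6.1]{BDSU15}; your write-up is exactly the argument one finds behind such references.

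One cosmetic point: Corollary~\ref{lem:trigreprod}(ii) only states the ``if'' direction, so your phrase ``precisely when $j_i\geq j_i^*(k_i)$'' slightly overstates what is available. This is harmless, since your proof only ever uses that $j_i\geq j_i^*(k_i)$ implies reproduction (for the telescoping step and for the vanishing of $\eta^L_{j_i}$ when $j_i>j_i^*(k_i)$), never the converse.
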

\begin{proof}
	We refer to \cite[Lem.\ 6.1]{BDSU15}.
\end{proof}
\begin{lem}\label{lem:triginterpol} Let $\Delta\subset \N_0^d$ be a solid finite set (i.e. $\zb k \leq \zb j$ and $\zb j \in \Delta$ 
	implies $\zb k \in \Delta$). Then
	$T^L_{\Delta}f:=\sum_{\zb j \in \Delta}q^L_{\zb j}[f]$ interpolates $f$ on the grid
	\be G^{d}_{\Delta}:=\bigcup_{\zb j\in \Delta}\{\zb x^{\zb j}_{\zb u}~:~\zb u \in A_{\zb j}\;\}\quad,\quad\ee
	that means
	$$f(\zb x)=T^L_{\Delta}f(\zb x)$$
	for all $\zb x\in G^{d}_{\Delta}.$
	\begin{proof}
		The interpolation property of the univariate operator $I^L_j$ in Lemma \ref{lem:interpolation} immediately 
		gives an interpolation property of the multivariate sampling operator $I^L_{(m_1,\ldots,m_d)}$ on 
		a ``full grid'' $\mathcal{G}^d_{\{\zb j \leq \zb m\}}$. Choosing $\zb m$ such that $\Delta \subset \{\zb j \leq \zb m\}$ and 
		arguing similar as in Lemma \cite[Lem.\ 4.3]{SiUl11} gives the result.
	\end{proof}
	
\end{lem}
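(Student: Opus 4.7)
The plan is to fix an arbitrary point $\zb x^{\zb k}_{\zb u} \in G^d_\Delta$ with $\zb k \in \Delta$ and $\zb u \in A_{\zb k}$, and establish $T^L_\Delta f(\zb x^{\zb k}_{\zb u}) = f(\zb x^{\zb k}_{\zb u})$ via a vanishing-plus-telescoping argument. The first step is a block-vanishing claim: whenever $\zb j \in \N_0^d$ has some coordinate $j_{i_0} > k_{i_0}$, one has $q^L_{\zb j}[f](\zb x^{\zb k}_{\zb u}) = 0$. To see this, write $q^L_{\zb j}$ as the iterated application of univariate operators $\eta^L_{j_1} \otimes \cdots \otimes \eta^L_{j_d}$ and carry out the application in the $i_0$-th coordinate first, keeping the other variables frozen at $x_{u_i}^{k_i}$. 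Since $\eta^L_{j_{i_0}} = I^L_{j_{i_0}} - I^L_{j_{i_0}-1}$ and $j_{i_0}-1 \geq k_{i_0}$, the point $x_{u_{i_0}}^{k_{i_0}}$ lies on the interpolation grids of both $I^L_{j_{i_0}}$ and $I^L_{j_{i_0}-1}$, and Lemma \ref{lem:interpolation} forces them to agree with the univariate slice of $f$ there. Hence the tensor product vanishes at $\zb x^{\zb k}_{\zb u}$, regardless of the remaining factors.

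Next, I would invoke the solidity hypothesis on $\Delta$: since $\zb k \in \Delta$, we have $\{\zb j \in \Delta : \zb j \leq \zb k\} = \{\zb j \in \N_0^d : \zb j \leq \zb k\}$. Combined with the vanishing property, this reduces the sparse sum to a full rectangular sum:
\[
T^L_\Delta f(\zb x^{\zb k}_{\zb u}) \;=\; \sum_{\zb j \in \Delta} q^L_{\zb j}[f](\zb x^{\zb k}_{\zb u}) \;=\; \sum_{\zb j \leq \zb k} q^L_{\zb j}[f](\zb x^{\zb k}_{\zb u}).
\]
A coordinatewise telescoping of $\eta^L_{j_i} = I^L_{j_i} - I^L_{j_i-1}$ over $j_i \in \{0, \dots, k_i\}$ now collapses the rectangular sum into a single tensor-product interpolant:
\[
\sum_{\zb j \leq \zb k} q^L_{\zb j}[f] \;=\; \bigotimes_{i=1}^d \sum_{j_i=0}^{k_i} \eta^L_{j_i}[f] \;=\; \bigotimes_{i=1}^d I^L_{k_i}[f] \;=\; I^L_{\zb k}[f].
\]
A final coordinatewise application of Lemma \ref{lem:interpolation} gives $I^L_{\zb k}[f](\zb x^{\zb k}_{\zb u}) = f(\zb x^{\zb k}_{\zb u})$, which completes the argument.

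The main technical point is the propagation of the univariate cancellation across the tensor product in the first step; this is not deep but must be done carefully because the multivariate $f$ does not factorize. Handling the frozen coordinates explicitly (or, equivalently, expanding $q^L_{\zb j}$ via \eqref{eq:qjasijinterpret} and pairing up the $2^d$ tensor-product interpolants by the bad coordinate) makes the cancellation transparent. The solidity of $\Delta$ is genuinely needed: it is precisely what allows the sparse sum restricted to $\{\zb j \leq \zb k\}$ to coincide with the full box and hence to telescope; dropping this hypothesis would break the reduction. The remaining ingredients — the univariate interpolation property \eqref{I_j}/Lemma \ref{lem:interpolation} and the tensor-product definition \eqref{eq:qkf} — are directly available from the preceding material.
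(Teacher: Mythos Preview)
Your proof is correct and is essentially the detailed version of what the paper sketches: the paper invokes the full-grid interpolation of $I^L_{\zb m}$ and refers to \cite[Lem.\ 4.3]{SiUl11} for the passage from the full box to the solid set $\Delta$, which is precisely your vanishing-plus-telescoping argument carried out explicitly. The only cosmetic difference is that the paper embeds $\Delta$ in a large box $\{\zb j\leq \zb m\}$ first, whereas you telescope directly over $\{\zb j\leq \zb k\}$; both reductions rest on the same block-vanishing observation.
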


% 										Formally this kind of tensor product operators are defined for trigonometric polynomials
% 												$$f=\sum_{\zb k\in\Lambda }a_{\zb k}e^{i\zb k \cdot \zb x},\;\Lambda\subset\Z,\;|\Lambda|<\infty$$
% 										 by
% 										\be
% 										  \Big(\bigotimes_{i=1}^d\eta^L_{j_i}\Big)[f](\zb x)\label{eq:qkf}:= \sum_{\zb k\in \Lambda} a_{\zb k} q^L_{j_1}[e^{ik_1\cdot}](x_1)\ldots q^L_{j_d}[e^{ik_d\cdot}](x_d).
% 										 \ee 
% 										 Since trigonometric polynomials are dense in $C(\T)$ and the operator \eqref{eq:qkf} has a bounded norm on the trigonometric polynomials 
% 										 we can uniquely extend this operator to $C(\T)$.
\begin{defi}
	For $\zb j\in\N_0^d$ and $L\in\N$ we tensorize the dyadic blocks defined in \eqref{unidyadicblock} by
	\beqq\mathcal{P}^L_{\zb j}:= \mathcal{P}^L_{j_1} \cdot \ldots \cdot \mathcal{P}^L_{j_d},\eeqq
	and define the set of trigonometric polynomials with frequencies in $\mathcal{P}^L_{\zb j}$ by
	$$\mathcal{T}^L_{\zb j}:=\mbox{span } \{e^{i\zb k \cdot\zb x}:\zb k\in \mathcal{P}^L_{\zb j}\}.$$ 
\end{defi}
\begin{prop}\label{lem:qkfreprod}
	Let $L\in\N$ and $f\in\mathcal{T}^L_{\zb \ell}$ then
	$q^L_{\zb j}[f]\neq 0$
	implies $\zb \ell \geq \zb j $. 
\end{prop}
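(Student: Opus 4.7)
The plan is to reduce the statement, by multilinearity and the tensor product structure of $q^L_{\boldj}$, to a single-variable assertion about pure exponentials, and then to deduce that assertion directly from the polynomial reproduction property of $I^L_j$ stated in Corollary \ref{lem:trigreprod}(ii).

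First I would argue the contrapositive: assume there exists an index $i$ with $\ell_i<j_i$ (so in particular $j_i\geq 1$), and show that $q^L_{\boldj}[f]\equiv 0$. Since $f\in\mathcal{T}^L_{\boldell}$ admits the finite expansion
\[
f(\boldx)=\sum_{\boldk\in\mathcal{P}^L_{\boldell}} c_{\boldk}\,e^{i\boldk\cdot\boldx}=\sum_{\boldk\in\mathcal{P}^L_{\boldell}} c_{\boldk}\prod_{s=1}^d e^{ik_s x_s},
\]
and since $\eta^L_{j_s}$ acts in the $s$-th variable only, the tensor product structure \eqref{eq:qkf} gives
\[
q^L_{\boldj}[f](\boldx)=\sum_{\boldk\in\mathcal{P}^L_{\boldell}}c_{\boldk}\prod_{s=1}^d \eta^L_{j_s}\bigl[e^{ik_s\cdot}\bigr](x_s).
\]
Hence it suffices to show that the $i$-th factor $\eta^L_{j_i}[e^{ik_i\cdot}]$ vanishes for every $k_i\in\mathcal{P}^L_{\ell_i}$, because one vanishing factor kills the whole product.

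For the univariate claim, I would use the monotonicity of the dyadic blocks: since $\ell_i\leq j_i-1\leq j_i$, the chain
\[
\mathcal{P}^L_{\ell_i}\subset\mathcal{P}^L_{j_i-1}\subset\mathcal{P}^L_{j_i}
\]
holds, and therefore $e^{ik_i\cdot}\in\mathcal{T}^L_{j_i-1}\cap\mathcal{T}^L_{j_i}$ whenever $k_i\in\mathcal{P}^L_{\ell_i}$. Now Corollary \ref{lem:trigreprod}(ii) applied twice yields $I^L_{j_i-1}[e^{ik_i\cdot}]=e^{ik_i\cdot}=I^L_{j_i}[e^{ik_i\cdot}]$, so
\[
\eta^L_{j_i}[e^{ik_i\cdot}]=I^L_{j_i}[e^{ik_i\cdot}]-I^L_{j_i-1}[e^{ik_i\cdot}]=0.
\]
Plugging this into the product above gives $q^L_{\boldj}[f]\equiv 0$, contradicting the hypothesis, and hence $\boldell\geq\boldj$.

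There is essentially no obstacle here beyond bookkeeping: one only has to confirm the trivial monotonicity $\mathcal{P}^L_{\ell}\subset\mathcal{P}^L_{j}$ when $\ell\leq j$ and to note that the boundary case $j_i=0$ cannot arise under the assumption $\ell_i<j_i$ because $\boldell\in\N_0^d$. Everything else is an immediate consequence of linearity, the tensor product definition of $q^L_{\boldj}$, and the reproduction identity for $I^L_j$ on $\mathcal{T}^L_j$.
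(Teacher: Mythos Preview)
Your proof is correct and follows exactly the approach the paper indicates: the paper's proof is a single sentence saying the result ``follows immediately from the definition of $q^L_{\zb j}[f]$ in \eqref{eq:qkf} and the univariate reproduction property in Corollary \ref{lem:trigreprod}'', and your argument is precisely the unpacking of that sentence via the contrapositive, the tensor expansion into pure exponentials, and two applications of Corollary~\ref{lem:trigreprod}(ii). There is no substantive difference in method.
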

\begin{proof}
	The proof follows immediately from the definition of $q^L_{\zb j}[f]$ in \eqref{eq:qkf} and the univariate reproduction property in Corollary \ref{lem:trigreprod}.
\end{proof}
\section{Besov-Triebel-Lizorkin spaces of mixed smoothness}\label{chapter:functionspaces}
In this section we give the classical Fourier analytical definition of periodic Triebel-Lizorkin spaces $\srptf$ and Besov spaces $\srptb$ with dominating mixed smoothness. We start introducing vector-valued Lebesgue spaces.  
\begin{defi}We define for $0<p,\theta\leq \infty$ the spaces $L_p(\ell_{\theta}(\N^d_0))$ and $\ell_{\theta}(L_p(\T))$ as the space of all sequences of functions $(f_{\zb j})_{\zb j\in\N_0^d}\subset L_p(\T)$ 
	with finite (quasi)-norm
	$$\|f_{\zb j}|L_p(\tor^d,\ell_{\theta}(\N^d_0))\|:=\begin{cases}
	\Big\|\Big(\sum_{\zb j\in\N_0^d}|f_{\zb j}|^{\theta}\Big)^{\frac{1}{\theta}}\Big\|_p&:\quad 0<\theta<\infty,\\
	\Big\|\sup_{\zb j\in\N_0^d}|f_{\zb j}|\Big\|_p&:\quad \theta=\infty,
	\end{cases}$$
	and
	$$\|f_{\zb j}|\ell_{\theta}(L_p(\T))\|:=\begin{cases}
	\Big(\sum_{\zb j\in\N_0^d}\|f_{\zb j}\|_p^{\theta}\Big)^{\frac{1}{\theta}}&:\quad 0<\theta<\infty,\\
	\sup_{\zb j\in\N_0^d}\|f_{\zb j}\|_p&:\quad \theta=\infty,
	\end{cases}$$
	respectively.
\end{defi}
For $0<p,\theta\leq \infty$ the (quasi-)norms $\|\cdot|L_p(\tor^d,\ell_{\theta}(\N^d_0))\|$ and $\|\cdot|\ell_{\theta}(L_p(\T))\|$ fulfill a $\mu$-triangle inequality with $\mu=\min\{p,\theta,1\}.$
\begin{defi}\label{def:varphi}
	A system $\varphi=(\varphi_j)_{j=0}^{\infty}\subset C_0^{\infty}(\re)$ belongs to the class $\Phi(\re)$ if and only if
	\begin{enumerate}
		\item It exists $A>0$ such that $\supp \varphi_0 \subset [-A,A].$
		\item There are constants $0<B<C$, such that $\supp \varphi_j\subset \{\xi\in\re:\;B2^j\leq |\xi|\leq C2^j\}$.
		\item For all $r\in\N_0$ holds
		$$\sup_{\xi\in\re,j\in\N_0}2^{jr}|D^{r}\varphi_j(\xi)|\leq c_{r}<\infty\mbox{ and}$$
		\item $$\sum_{j=0}^{\infty}\varphi_j(\xi)=1.$$ 
	\end{enumerate}
\end{defi}
\noindent Because of (iv) in Definition \ref{def:varphi} we obtain the following decomposition of $f\in D'(\T)$. Here, $D'(\T)$ denotes the space of distributions on $\T$. For more details on periodic distributions we refer to \cite[Section 3.2]{ST87}. Let
\be \delta_{\zb j}[f](\zb x):=\sum_{\zb k\in\Z}\varphi_{j_1}(k_1)\cdot\ldots\cdot\varphi_{j_d}(k_d)\hat{f}(\zb k)e^{i \zb k\cdot \zb x} \label{eq:fourierdecomp}\ee
be a trigonometric polynomial. Then it holds
\be\label{decunity}
f=\sum_{\zb j\in \N_0^d}\delta_{\zb j}[f]
\ee
with convergence in $D'(\T)$. We introduce the function spaces $\srptf$ and $\srptb$ using these Fourier-analytic building blocks.
\begin{defi}\label{def:srptf}
	Let $\varphi=\{\varphi_j(x)\}_{j=0}^{\infty}\in \Phi(\re)$, and $\zb r\in\R$. Let further
	\begin{enumerate}
		\item 	$0<p<\infty$ and $0<\theta\leq \infty$. Then $$S^{\zb r}_{p,\theta}F(\T):=\Big\{f\in D'(\T):\|f|\srptf\|<\infty\Big\},
		$$
		where $\|f|\srptf\|:=\|2^{\zb r \cdot \zb j}\delta_{\zb j}[f]|L_p(\ell_{\theta}(\N^d_0))\|$.
		\item $0<p,\theta\leq\infty$. Then
		$$S^{\zb r}_{p,\theta}B(\T):=\Big\{f\in D'(\T):\|f|\srpqb\|<\infty\Big\},$$
		where $\|f|\srptb\|:=\|2^{\zb r \cdot \zb j}\delta_{\zb j}[f]|\ell_{\theta}(L_p(\N^d_0)).$
	\end{enumerate}
\end{defi}
\begin{rem}\label{rem:propfspace}
	\begin{enumerate}
		\item 	Different resolutions of unity $\varphi,\psi \in \Phi(\re)$ used in \eqref{eq:fourierdecomp} generate equivalent norms in $\srptf$ and $\srptb$, respectively.
		\item  When replacing the smooth dyadic decomposition of unity by a less regular window function
		(like de la Vall{\'e}e Poussin means) we encounter the same condition on the decay $L$ of the 
		convolution kernel as we observe for the sampling representations below. It is open whether this is optimal. However, taking
		\cite{ChrSe07,SeUl15_1,SeUl15_2} into account we strongly conjecture the optimality.
		\item 	It was proved in \cite[Thms.\ 3.8.1, 3.8.2]{TDiff06} that $B$-spaces defined via modulus 
		of continuity (bounded mixed difference, see for instance \cite{Di91,Di92,Tem93}) and 
		the Fourier-analytical approach in the present paper coincide in case $p\geq 1$ and $\boldr>0$. 
		For $p<1$ one does not even know the equivalence if $\boldr>\sigma_p$. In Theorem \cite[Thm.\ 3.8.3]{TDiff06} 
		we need $\boldr>1/p$, see also \cite[Thm.\ 2.3.4/2, Rem.\ 2.3.4/2]{ST87}. In case 
		$0<\boldr\leq \sigma_p$ both approaches may yield different spaces.\\
	\end{enumerate}
\end{rem}
In case $d=1$ the concepts of dominating mixed smoothness and isotropic smoothness coincide. We use the notation
$$F^r_{p,\theta}(\tor):=S^r_{p,\theta}F(\tor) \quad \mbox{and}\quad B^r_{p,\theta}(\tor):=S^r_{p,\theta}B(\tor). $$ 
In case $\theta=2$ with $1<p<\infty $ the space $\srptf$ coincides with the Sobolev space of dominating mixed smoothness $\srpw$ including $L_p(\T)$ if $\zb r=0$. $\srpw$ is classically normed by
\beqq \|f|\srpw\|':= \Big\|\sum_{\zb k\in\Z}\widehat{f}(\zb k) \prod_{i=1}^d(1+|k_i|^2)^{\frac{r_i}{2}}e^{i\zb k \cdot \zb x}\Big\|_p.\label{eq:classsobnorm}\eeqq
We state the following embedding results without proof. For a reference see \cite{ST87,Vy06} and \cite{HaVyb09}. For a complete history of the non-trivial embedding in Lemma \ref{lem:jf} we refer to \cite[Remark 3.8]{DTU16}.
\begin{lem} \label{lem:embeddings}
	\begin{enumerate}
		\item Let $0<p\leq\infty$ ($F$-case: $p<\infty$), $0<\theta\leq \infty$, $\zb r >\sigma_{p}$. Then
		$$ S^{\zb r}_{p,\theta}F(\T)\hookrightarrow L_{\max\{p,1\}}(\T)\quad\mbox{and}\quad
		S^{\zb r}_{p,\theta}B(\T)\hookrightarrow L_{\max\{p,1\}}(\T)\,,$$
		which means $S^{\zb r}_{p,\theta}F(\T)$ and $S^{\zb r}_{p,\theta}B(\T)$ consist of regular distributions that allow an interpretation as functions.
		\item Let $0<p\leq\infty$ ($F$-case: $p<\infty$), $0<\theta\leq \infty$, $\zb r >\frac{1}{p}$. Then
		$$ S^{\zb r}_{p,\theta}F(\T)\hookrightarrow C(\T)
		\quad\mbox{and}\quad
		S^{\zb r}_{p,\theta}B(\T)\hookrightarrow C(\T)\,,$$
		which means that we find in every equivalence class of $S^{\zb r}_{p,\theta}F(\T)$ and $S^{\zb r}_{p,\theta}B(\T)$ a unique continuous representative making discrete point evaluations possible.
		\item Let $0<q<p\leq \infty$ (F-case: $p<\infty$), $0<\theta\leq\infty$ and $\zb r\in\R$. Then
		$$\srptf \hookrightarrow S^{\zb r}_{q,\theta}F(\T)
		\quad\mbox{and}\quad
		\srptb \hookrightarrow S^{\zb r}_{q,\theta}B(\T).$$
		\item Let $0<p\leq \infty$ (F-case: $p<\infty$), $0<\theta_1<\theta_2\leq\infty$ and $\zb r\in\R$. Then
		$$S^{\zb r}_{p,\theta_1}F(\T) \hookrightarrow S^{\zb r}_{p,\theta_2}F(\T)
		\quad\mbox{and}\quad
		S^{\zb r}_{p,\theta_1}B(\T) \hookrightarrow S^{\zb r}_{p,\theta_2}B(\T).$$
		\item 
		Let $0<p<\infty$, $0<\theta\leq \infty$ and $\zb r\in\R$. Then
		
		$$ S^{\zb r}_{p,\min\{p,\theta\}}B(\T)\hookrightarrow	S^{\zb r}_{p,\theta}F(\T) \hookrightarrow 	S^{\zb r}_{p,\max\{p,\theta\}}B(\T).$$
		\item 	 Let $0<p\leq\infty$ ($F$: $p<\infty$), $0<\theta,\nu\leq \infty$ and $\zb r_1,\zb r_2\in\R$ with $\zb r_1>\zb r_2$ 
		Then
		$$ S^{\zb r_1}_{p,\theta}F(\T)\hookrightarrow	S^{\zb r_2}_{p,\nu}F(\T)\quad\mbox{and}\quad S^{\zb r_1}_{p,\theta}B(\T)\hookrightarrow	S^{\zb r_2}_{p,\nu}B(\T).$$
		\item 	Let $0<p<q<\infty$, $0<\theta,\nu\leq \infty$ and $\zb r_1,\zb r_2\in\R$ with $\zb r_1>\zb r_2$ fulfilling
		$$\zb r_1-\frac{1}{p}=\zb r_2-\frac{1}{q}.$$
		Then
		$$ S^{\zb r_1}_{p,\theta}F(\T)\hookrightarrow	S^{\zb r_2}_{q,\nu}F(\T)\quad\mbox{and}\quad S^{\zb r_1}_{p,\theta}B(\T)\hookrightarrow	S^{\zb r_2}_{q,\theta}B(\T).$$
	\end{enumerate}
\end{lem}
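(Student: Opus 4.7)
The plan is to treat the seven assertions by a common template: reduce everything to properties of the Fourier--analytic blocks $\delta_{\zb j}[f]$, which are trigonometric polynomials with frequency support in a rectangle of side lengths $\asymp 2^{j_i}$, and then exploit (a) the Nikolskij inequality for such polynomials, (b) monotonicity and Minkowski-type inequalities for the mixed sequence spaces $L_p(\ell_\theta)$ and $\ell_\theta(L_p)$, and (c) geometric summation in $\zb j$. This is standard Littlewood--Paley methodology; I describe how each item fits.

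For (i) and (ii) the Nikolskij inequality gives $\|\delta_{\zb j}[f]\|_{\max\{p,1\}}\lesssim 2^{|\zb j|_1\sigma_p}\|\delta_{\zb j}[f]\|_p$ and $\|\delta_{\zb j}[f]\|_\infty\lesssim 2^{|\zb j|_1/p}\|\delta_{\zb j}[f]\|_p$. Multiplying by the weight $2^{\zb r\cdot\zb j}$ and summing in $\zb j$ via H\"older in $\theta$ (and using the embedding $S^{\zb r}_{p,\theta}F\hookrightarrow S^{\zb r}_{p,\infty}B$ from (v) if one wishes to reduce the F-case to the B-case) produces an absolutely convergent series of continuous trigonometric polynomials as soon as $\zb r>\sigma_p$ (resp.\ $\zb r>1/p$); the limit is the unique continuous representative. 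Parts (iii) and (iv) are immediate from the monotonicity $L_p(\tor^d)\hookrightarrow L_q(\tor^d)$ for $q<p$ and $\ell_{\theta_1}\hookrightarrow\ell_{\theta_2}$ for $\theta_1<\theta_2$, applied blockwise to the sequence $(2^{\zb r\cdot\zb j}\delta_{\zb j}[f])_{\zb j}$. Part (v) is the discrete/continuous Minkowski inequality
\begin{equation*}
\bigl\|(a_{\zb j})_{\zb j}\bigm|\ell_{\min\{p,\theta\}}(L_p)\bigr\|
\;\geq\;\bigl\|(a_{\zb j})_{\zb j}\bigm|L_p(\ell_\theta)\bigr\|
\;\geq\;\bigl\|(a_{\zb j})_{\zb j}\bigm|\ell_{\max\{p,\theta\}}(L_p)\bigr\|,
\end{equation*}
valid for $0<p,\theta\leq\infty$. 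Part (vi) uses $\zb r_1>\zb r_2$ to produce a factor $2^{-(\zb r_1-\zb r_2)\cdot\zb j}$ which, combined with H\"older in $\theta$ vs.\ $\nu$, yields a convergent geometric tail, so any loss on the microscopic parameter is absorbed.

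The main obstacle is (vii), the Jawerth--Franke type Sobolev embedding across different integrabilities. For the B-case with $\nu=\theta$ the Nikolskij inequality applied coordinatewise gives
\begin{equation*}
\|\delta_{\zb j}[f]\|_q\;\lesssim\; 2^{|\zb j|_1(1/p-1/q)}\|\delta_{\zb j}[f]\|_p,
\end{equation*}
and the hypothesis $r_{1,i}-1/p=r_{2,i}-1/q$ converts the weight $2^{\zb r_1\cdot\zb j}$ into $2^{\zb r_2\cdot\zb j}$ exactly, so the B-norm is preserved. For the F-case with arbitrary fine parameter $\nu$ one has to be more careful: the naive blockwise application of Nikolskij does not interact well with the inner $\ell_\theta$-quasi-norm inside $L_p$. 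The standard route is to pass through B-spaces via (v): given $\nu$, pick $u$ with $0<u\leq\min\{\theta,\nu,p,q\}$, estimate
\begin{equation*}
S^{\zb r_1}_{p,\theta}F(\T)\hookrightarrow S^{\zb r_1}_{p,u}B(\T)\hookrightarrow S^{\zb r_2}_{q,u}B(\T)\hookrightarrow S^{\zb r_2}_{q,\nu}F(\T),
\end{equation*}
where the first and last embeddings come from (v) and (iv), and the middle one is the B-case just proved. Alternatively one may invoke the vector-valued Fefferman--Stein maximal inequality to justify the pointwise Peetre-maximal replacement of $\delta_{\zb j}[f]$ and then carry the Nikolskij argument directly inside the F-quasi-norm; this second route is cleaner but requires the maximal inequality cited in the appendix. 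Either way, the conclusion follows, and the B-statement with $\nu=\theta$ is obtained as a by-product of the first step.
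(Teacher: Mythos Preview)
The paper does not actually prove this lemma; it merely cites \cite{ST87,Vy06,HaVyb09}. Your sketches for parts (i)--(vi) and for the $B$-case of (vii) are the standard arguments and are fine.

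Your primary argument for the $F$-case of (vii), however, contains a genuine error. You propose the chain
\[
S^{\zb r_1}_{p,\theta}F(\T)\hookrightarrow S^{\zb r_1}_{p,u}B(\T)\hookrightarrow S^{\zb r_2}_{q,u}B(\T)\hookrightarrow S^{\zb r_2}_{q,\nu}F(\T)
\]
with $u\leq\min\{p,\theta,q,\nu\}$, invoking (v) for the outer embeddings. But (v) only yields $S^{\zb r}_{p,\theta}F\hookrightarrow S^{\zb r}_{p,\max\{p,\theta\}}B$: when passing from $F$ to $B$ at the same smoothness and integrability you can only \emph{enlarge} the fine index, not shrink it. Hence the first embedding forces $u\geq\max\{p,\theta\}$, while the last embedding forces $u\leq\min\{q,\nu\}$. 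These constraints are incompatible whenever $\max\{p,\theta\}>\min\{q,\nu\}$ (e.g.\ $p=2$, $q=4$, $\theta=\nu=1$), so the chain breaks down precisely in the cases that make the $F$-statement interesting. The independence of the fine index in the diagonal $F$-embedding is exactly the non-trivial content of (vii) and cannot be obtained from (v) plus the $B$-case.

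A correct route is via the Jawerth--Franke embeddings (stated separately as Lemma~\ref{lem:jf} in the paper): pick any $p<s<q$, set $\zb r_3=\zb r_1-1/p+1/s$, and chain
\[
S^{\zb r_1}_{p,\theta}F(\T)\hookrightarrow S^{\zb r_3}_{s,p}B(\T)\hookrightarrow S^{\zb r_3}_{s,q}B(\T)\hookrightarrow S^{\zb r_2}_{q,\nu}F(\T),
\]
using Lemma~\ref{lem:jf}(i), then (iv), then Lemma~\ref{lem:jf}(ii). Alternatively one can prove directly the vector-valued Nikolskij inequality
\[
\Big\|\Big(\sum_{\zb j}|f_{\zb j}|^{\nu}\Big)^{1/\nu}\Big\|_q\;\lesssim\;\Big\|\Big(\sum_{\zb j}2^{\theta|\zb j|_1(1/p-1/q)}|f_{\zb j}|^{\theta}\Big)^{1/\theta}\Big\|_p
\]
for band-limited $f_{\zb j}$, which is precisely Lemma~\ref{lem:qjdiagembedding} in the paper (again taken from \cite{ST87}). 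Your ``alternative'' points in this direction but does not supply the argument; this inequality is itself the substantive step and is not a consequence of the scalar Nikolskij inequality together with Fefferman--Stein in any obvious way.
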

Observe that, in contrast to the diagonal Besov embedding in Lemma \ref{lem:embeddings}, (vi), the fine index $\theta$ and $\nu$ play no role for the $F$-case.
\begin{lem}[Jawerth-Franke embedding]\label{lem:jf}
	Let $0<p<q\leq\infty$, $0<\theta\leq \infty$, $\zb r_1,\zb r_2\in\R$ such that
	$$\zb r_1-\frac{1}{p}=\zb r_2-\frac{1}{q}$$
	is fulfilled.
	\begin{enumerate}
		
		\item 	Then	$$S^{\zb r_1}_{p,\theta}F(\T)\hookrightarrow S^{\zb r_2}_{q,p}B(\T).$$ 
		\item If additionally $q<\infty$ then
		$$S^{\zb r_1}_{p,q}B(\T)\hookrightarrow S^{\zb r_2}_{q,\theta}F(\T).$$ 
	\end{enumerate}
\end{lem}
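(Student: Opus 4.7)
My strategy is to reduce both embeddings to discrete inequalities at the level of the Fourier decomposition \eqref{decunity}, and then to combine a Nikolskij-type estimate with a Peetre maximal function / Fefferman-Stein argument. This is the classical Jawerth-Franke scheme, carried out in the mixed-smoothness setting in \cite{Vy06,HaVyb09}, which the present paper cites.

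For (i), by Definition~\ref{def:srptf} the claim is equivalent to
\begin{equation*}
\Bigl(\sum_{\boldj\in \N_0^d}2^{p\,\boldr_2\cdot \boldj}\|\delta_{\boldj}[f]\|_q^p\Bigr)^{1/p}\lesssim \Bigl\|\Bigl(\sum_{\boldj\in\N_0^d}2^{\theta\,\boldr_1\cdot \boldj}|\delta_{\boldj}[f]|^{\theta}\Bigr)^{1/\theta}\Bigr\|_p.
\end{equation*}
Since $\delta_{\boldj}[f]$ is a trigonometric polynomial with spectrum in the rectangle $\prod_i\{|k_i|\asymp 2^{j_i}\}$, the Nikolskij inequality yields $\|\delta_{\boldj}[f]\|_q \lesssim 2^{\boldj(1/p-1/q)}\|\delta_{\boldj}[f]\|_p$, and the balance condition $\boldr_1-1/p = \boldr_2-1/q$ then reduces matters to $S^{\boldr_1}_{p,\theta}F\hookrightarrow S^{\boldr_2}_{q,\theta}B$, which follows from Lemma~\ref{lem:embeddings}. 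The genuine content, and the main obstacle, is the improvement of the fine index from $\theta$ to the typically smaller value $p$. Following Jawerth, I would decompose each $\delta_{\boldj}[f]$ along a dyadic rectangular grid of sidelengths $2^{-j_i}$, bound $\|\delta_{\boldj}[f]\|_q^p$ pointwise by an integral of the Peetre maximal function $M_{\boldj}(\delta_{\boldj}[f])^p$ (using an $L_p$-$L_q$ rearrangement estimate on each rectangle together with Nikolskij), sum over $\boldj$, and close the argument with the vector-valued Fefferman-Stein inequality applied in the index pair $(p,\theta)$; thanks to the Sobolev balance the $2^{\boldj}$-weights recombine into exactly the $F$-norm on the right-hand side.

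For (ii), with $q<\infty$, the easy range $\theta\geq q$ is immediate from (i) together with Lemma~\ref{lem:embeddings}\,(v). The hard case $\theta<q$ can either be treated by the analogous discrete argument --- Nikolskij to trade integrability for smoothness, then Fefferman-Stein in the index pair $(q,\theta)$ --- or deduced from (i) by real interpolation of the shifted embeddings corresponding to smoothness parameters $\boldr_1\pm\varepsilon$, $\boldr_2\pm\varepsilon$, using that the real interpolation couple $(S^{\boldr_2-\varepsilon}_{q,\theta}F, S^{\boldr_2+\varepsilon}_{q,\theta}F)_{1/2,\theta}$ reproduces $S^{\boldr_2}_{q,\theta}F$. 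In the Banach range $1<p,q,\theta<\infty$, duality between $F$-spaces directly transfers (i) into (ii). In all cases the decisive tool is the vector-valued Fefferman-Stein maximal inequality, whose applicability is precisely what the Sobolev-type balance $\boldr_1-1/p=\boldr_2-1/q$ guarantees.
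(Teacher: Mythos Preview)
The paper does not give its own proof of Lemma~\ref{lem:jf}; it is stated without proof with a pointer to \cite{Vy06,HaVyb09} and the historical remark \cite[Remark~3.8]{DTU16}. Your plan is essentially the Jawerth--Franke scheme carried out in those very references, so in spirit you are aligned with what the paper relies on.

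Two points deserve correction. First, the interpolation route you sketch for (ii) does not work as stated: real interpolation of $F$-spaces with different smoothness parameters produces Besov spaces, i.e.\ $(S^{\boldr_2-\varepsilon}_{q,\theta}F,\,S^{\boldr_2+\varepsilon}_{q,\theta}F)_{1/2,\theta}=S^{\boldr_2}_{q,\theta}B$, not the $F$-space you need on the target side. The duality argument you mention is the correct shortcut in the Banach range, and outside it one has to run the direct maximal-function argument as in \cite{HaVyb09}. Second, your ``easy reduction'' via Nikolskij plus Lemma~\ref{lem:embeddings} only yields the embedding $S^{\boldr_1}_{p,\theta}F\hookrightarrow S^{\boldr_2}_{q,\max\{p,\theta\}}B$, not fine index $\theta$ in general; this is precisely why the Jawerth improvement to fine index $p$ is the substantive content, as you correctly emphasise afterwards.
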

\begin{defi}
	For a univariate function $f:\tor\to\C$ we introduce the following difference operator. Let $h\in\re$ and $m\in\N$. Then for $x\in\T$
	\be \Delta_h^mf(x):=\sum_{k=0}^m (-1)^{m-k}\binom{m}{k}f(x+kh).\label{eq:diffi}\ee
\end{defi}
\begin{defi} For a multivariate function $f:\T\to \C$ and
	$m\in\N$, $h\in\re$, $i\in[d]$ we denote with $\Delta^{m,i}_hf(\zb x)$ the operator from \eqref{eq:diffi} applied to the $i$-th direction.
	This allows us to define for
	$e\subset[d]$, $\zb m\in \N^d$ and $\zb h\in \R$ the operator
	$$\Delta_{\zb h}^{\zb m,e}f(\zb x):=\begin{cases}
	\Big(\prod_{i\in e}\Delta^{m_i,i}_{h_i}\Big) f(\zb x),&:\quad|e|>0,\\
	f(\zb x)&:\quad \mbox{otherwise}.
	\end{cases} $$
	
\end{defi}

\begin{satz}\label{satz:differences}
	Let $0<p<\infty$, $0<\theta\leq \infty$, $\zb r\in\R$ and $\zb m\in\N^d$ such that $\sigma_{p,\theta} <\zb r<\zb m$ is fulfilled. Then 
	$$\|f|\srptf\|\asymp \sum_{e\subset [d]}\|f|\srptf\|_{e,\zb m}$$
	holds with
	$$\|f|\srptf\|_{e,\zb m}:=
	\Bigg\|\Big[\sum_{\zb j\in\N_0^d(e)}2^{\theta\zb r \cdot \zb j}\Big(\Big(\prod_{i\in e}2^{j_i}\Big)\int_{\substack{|h_i|\leq\tiny 2^{-j_i}\\i\in[d]}}|\Delta^{\zb m,e}_{\zb h}f(\cdot)|d\zb h\Big)^{\theta}\Big]^{\frac{1}{\theta}}\Bigg\|_p$$
	and the usual modification in case $\theta=\infty$.
\end{satz}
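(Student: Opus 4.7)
The plan is to establish the two inequalities separately using the Fourier-analytic decomposition $f=\sum_{\zb j}\delta_{\zb j}[f]$ from \eqref{decunity}, combined with Peetre-type maximal inequalities and the Fefferman--Stein vector-valued maximal inequality for $L_p(\ell_\theta(\N_0^d))$.

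For the upper bound $\sum_{e\subset[d]}\|f|\srptf\|_{e,\zb m}\lesssim\|f|\srptf\|$, fix $e\subset[d]$ and expand each difference into dyadic pieces. The crucial pointwise estimate is
$$
|\Delta^{\zb m,e}_{\zb h}\delta_{\zb k}[f](\zb x)|\lesssim\prod_{i\in e}\min\{1,(2^{k_i}|h_i|)^{m_i}\}\cdot M_{\zb k}[f](\zb x),
$$
where $M_{\zb k}[f]$ is the Peetre maximal function associated with $\delta_{\zb k}[f]$. This follows from the spectral localization of $\delta_{\zb k}[f]$ together with $|e^{ih\xi}-1|^m\leq\min\{2^m,|h\xi|^m\}$ and standard Fourier-multiplier estimates on band-limited functions. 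Integrating over $|h_i|\leq 2^{-j_i}$, multiplying by the weights $\prod_{i\in e}2^{j_i}$, and summing over $\zb j\in\N_0^d(e)$ with the factor $2^{\zb r\cdot\zb j}$ produces a double-indexed discrete convolution of $M_{\zb k}[f]$ against a product kernel $\prod_i\gamma_{j_i,k_i}$. A coordinate-by-coordinate Schur test gives uniform $\ell_1$-summability of $\gamma_{j_i,k_i}$ in each variable provided $r_i<m_i$ for $i\in e$ and $r_i>0$ for $i\notin e$; both are contained in the hypothesis $\sigma_{p,\theta}<\zb r<\zb m$. The resulting $L_p(\ell_\theta)$ bound on $2^{\zb r\cdot\zb k}M_{\zb k}[f]$ is controlled by $\|f|\srptf\|$ via Fefferman--Stein, which is applicable because $\zb r>\sigma_{p,\theta}$ permits the Peetre exponent to be chosen in the admissible range.

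For the lower bound $\|f|\srptf\|\lesssim\sum_e\|f|\srptf\|_{e,\zb m}$, I would partition $\N_0^d=\bigsqcup_{e\subset[d]}N_e$ with $N_e:=\{\zb j:\,j_i>0\Leftrightarrow i\in e\}\subset\N_0^d(e)$, and for each $\zb j\in N_e$ produce a kernel representation
$$
\delta_{\zb j}[f](\zb x)=\int_{\re^d}\kappa^{\zb m}_{\zb j}(\zb h)\,\Delta^{\zb m,e}_{\zb h}f(\zb x)\,d\zb h
$$
with $\kappa^{\zb m}_{\zb j}$ essentially supported in $\{|h_i|\lesssim 2^{-j_i}:i\in e\}$ and of uniformly bounded $L_1$-norm. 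Such a kernel exists because for $i\in e$ the factor $\varphi_{j_i}(\xi_i)$ is supported in a dyadic annulus on which $\xi_i\mapsto(e^{ih_i\xi_i}-1)^{-m_i}$ is a smooth bounded multiplier uniformly in $|h_i|\leq c\,2^{-j_i}$; averaging this inverse against a suitable weight in $\zb h$ recovers $\varphi_{\zb j}$ on the spectrum. Pointwise domination of $|\delta_{\zb j}[f](\zb x)|$ (up to a Peetre maximal factor) by the integrand defining $\|f|\srptf\|_{e,\zb m}$, followed by Fefferman--Stein applied in the same manner as before, yields the desired bound.

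The main obstacle is the lower bound in the quasi-Banach regime $\min\{p,\theta\}<1$, where duality is unavailable and the proof must proceed entirely via pointwise domination by Peetre maximal functions combined with Fefferman--Stein; this is precisely where the hypothesis $\zb r>\sigma_{p,\theta}$ is indispensable, as it guarantees that the Peetre exponent can be chosen so that the vector-valued maximal inequality still applies. A secondary technical subtlety is the case $\theta=\infty$, where the $\ell_\theta$-aggregation is replaced by a supremum; once the pointwise estimates are in place this case reduces to controlling each block individually and follows by the same scheme.
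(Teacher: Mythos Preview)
The paper does not give a self-contained proof of this statement; it simply refers to \cite[Theorem 3.7]{NUU15} for the constant-smoothness case and declares the anisotropic modifications straightforward. Your proposal therefore cannot be compared against a proof in the paper, but it is worth noting that your outline is essentially the standard argument underlying the referenced result (and, ultimately, the argument in \cite{TDiff06}): pointwise control of $\Delta^{\zb m,e}_{\zb h}\delta_{\zb k}[f]$ via the band-limited estimate of Lemma~\ref{lem:diffversuspeetre}, reduction to a convolution in the dyadic index which is bounded in $\ell_\theta$ by a Schur test using $\zb r<\zb m$ and $\zb r>0$, and finally the Peetre/Fefferman--Stein machinery (Theorems~\ref{satz:peetremaximalineq} and \ref{thm:feffermanstein}) to pass from maximal functions back to $\delta_{\zb k}[f]$ under the hypothesis $\zb r>\sigma_{p,\theta}$.

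Two places in your lower-bound sketch deserve a sharper statement. First, the kernel representation $\delta_{\zb j}[f]=\int\kappa^{\zb m}_{\zb j}(\zb h)\,\Delta^{\zb m,e}_{\zb h}f\,d\zb h$ is the right idea, but to land on the quantity $\Big(\prod_{i\in e}2^{j_i}\Big)\int_{|h_i|\leq 2^{-j_i}}|\Delta^{\zb m,e}_{\zb h}f|\,d\zb h$ that actually appears in the norm you must show that $\kappa^{\zb m}_{\zb j}$ can be taken essentially as $\prod_{i\in e}2^{j_i}$ times a uniformly bounded function supported in $\{|h_i|\lesssim 2^{-j_i}\}$; in practice one writes $\varphi_{j_i}(\xi_i)=\psi_{j_i}(\xi_i)\big(\tfrac{1}{2^{-j_i}}\int_{|h_i|\leq 2^{-j_i}}(e^{ih_i\xi_i}-1)^{m_i}\,dh_i\big)$ with $\psi_{j_i}$ a uniform H\"ormander multiplier, and this is where the annulus support of $\varphi_{j_i}$ (forcing $|\xi_i|\asymp 2^{j_i}$) is used. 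Second, after this step one still has a convolution against $\cfi\psi_{\zb j}$, so the pointwise domination by the averaged difference must be accompanied by a Peetre maximal function of that average; the condition $\zb r>\sigma_{p,\theta}$ is again what allows the Peetre exponent $a>\max\{1/p,1/\theta\}$ while keeping the geometric sums convergent. With these two points made explicit, your sketch is correct and matches the argument behind the reference.
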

\begin{proof}
	We refer to \cite[Theorem 3.7]{NUU15}. There the case for constant smoothness vector $\zb r=(r,\ldots,r)$ has been considered. The necessary modifications are straight forward. 
\end{proof}
\begin{satz}\label{satz:bdifferences}
	Let $0<p,\theta\leq \infty$, $\zb r\in\R$ and $\zb m\in\N^d$ such that $\sigma_{p} <\zb r<\zb m$ is fulfilled. Then 
	$$\|f|\srptb\|\asymp \sum_{e\subset [d]}\|f|\srptb\|_{e,\zb m}$$
	holds with
	$$\|f|\srptb\|_{e,\zb m}:=
	\Bigg[\sum_{\zb j\in\N_0^d(e)}2^{\theta\zb r \cdot \zb j}\Big\|\Big(\prod_{i\in e}2^{j_i}\Big)\int_{\substack{|h_i|\leq\tiny 2^{-j_i}\\i\in [d]}}|\Delta^{\zb m,e}_{\zb h}f(\cdot)|d\zb h\Big\|_p^{\theta}\Bigg]^{\frac{1}{\theta}} $$
	and the usual modification in case $\theta=\infty$.
\end{satz}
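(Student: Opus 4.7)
The plan is to mirror the proof of Theorem \ref{satz:differences} (the $F$-space case, taken from \cite[Thm.\ 3.7]{NUU15}) while exploiting a decisive simplification: for Besov spaces the $\ell_\theta$-quasi-norm sits outside the $L_p$-norm, so one never needs a vector-valued Fefferman--Stein inequality, only the scalar Hardy--Littlewood maximal inequality (through Peetre's maximal function). This is precisely the reason the smoothness assumption can be relaxed from $\sigma_{p,\theta}<\zb r$ (as in the $F$-case) to $\sigma_p<\zb r$.

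For the bound $\|f|\srptb\|_{e,\zb m}\lesssim \|f|\srptb\|$ I would fix $e\subset[d]$, decompose $f=\sum_{\zb \ell}\delta_{\zb \ell}[f]$ and apply $\Delta^{\zb m,e}_{\zb h}$ termwise. The estimate then proceeds coordinate-by-coordinate in $i\in e$: when $\ell_i\leq j_i$ the $m_i$-fold vanishing-moment property of the difference produces a gain of $(2^{\ell_i}|h_i|)^{m_i}$, whereas when $\ell_i>j_i$ I use the direction-wise bound $|\Delta^{m_i,i}_{h_i} g|\lesssim P^\ast_{\ell_i,p} g$ via a Peetre maximal function in the $i$-th variable only. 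After integration over $|h_i|\leq 2^{-j_i}$, taking $L_p$-norms (valid because $\zb r>\sigma_p$ allows us to control the scalar maximal functions), and finally passing to the $\ell_\theta(2^{\zb r\cdot \zb j})$-norm, the resulting discrete convolution in $\zb j$ converges by Young's inequality thanks to the two-sided condition $0<\zb r<\zb m$ on every coordinate of $e$.

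For the converse estimate $\|f|\srptb\|\lesssim \sum_e\|f|\srptb\|_{e,\zb m}$ I would reproduce each block $\delta_{\zb j}[f]$ by an integrated mixed difference. Writing $e=\{i:j_i>0\}$, the idea is to factor the symbol of $\delta_{\zb j}$ as a tensor product where each active coordinate $i\in e$ is matched with a smooth compactly supported Fourier multiplier that realizes the symbol of $\Delta^{m_i}_{h_i}$ averaged over $|h_i|\leq 2^{-j_i}$; inactive coordinates contribute an ordinary Littlewood--Paley piece. A standard Peetre maximal function argument for band-limited functions then yields a pointwise bound of $|\delta_{\zb j}[f](\zb x)|$ by the integrated difference, after which summation in $L_p$ and then in $\ell_\theta$ completes the argument.

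The hardest part will be the direction-wise bookkeeping in the first estimate: splitting each coordinate $i\in e$ into the sub-cases $\ell_i\leq j_i$ versus $\ell_i>j_i$ produces $2^{|e|}$ terms that must be summed separately and then recombined. However, because every maximal function acts in only one variable at a time, the whole argument stays scalar, and no vector-valued machinery beyond what is already implicit in the $F$-case treatment is required.
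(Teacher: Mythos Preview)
Your sketch is correct and follows the standard route for this result. Note, however, that the paper does not actually prove this theorem: its proof consists of the single sentence ``We refer to \cite[Theorem 3.7.1 and Remark 3.7.1]{TDiff06}. There the outer sum is an integral. By discretizing this into dyadic parts one obtains the form stated above.'' What you have outlined is essentially the argument that lies behind that citation --- the scalar Peetre maximal function replacing the vector-valued Fefferman--Stein machinery is exactly the mechanism by which the condition relaxes from $\sigma_{p,\theta}$ to $\sigma_p$ --- so your proposal is compatible with, and more explicit than, what the paper provides.
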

\begin{proof}
	We refer to \cite[Theorem 3.7.1 and Remark 3.7.1]{TDiff06}. There the outer sum is an integral. By discretizing this into dyadic parts one obtains the form stated above.   
\end{proof}
\section{Sums of trigonometric polynomials}\label{superposition}
In this section we want to estimate the norm of a superposition of trigonometric polynomials $$f=\sum_{\zb j\in\N_0^d}f_{\zb j}$$ 
where $f_{\zb j}$ are trigonometric polynomials of degree $\asymp \zb 2^{\zb j}$. In contrast to the usual Littlewood-Paley 
building blocks $\delta_{\zb j}[f]$ which are `almost` orthogonal, we only need to restrict the degree of the polynomial in the sequel. 

As a main tool we introduce the following componentwise variant of the Hardy-Littlewood maximal operator, see \cite[(1.14),(1.15)]{Vy06}, \cite[(10)]{TDiff06} 
and the references therein. 
\begin{defi}\label{dirHL}
	Let $i\in [d]$ and $f\in L_1^{loc}(\T)$ then we define the Hardy-Littlewood maximal operator in the $i$-th direction as
	\begin{equation}\label{HLdir}
		M_if(\zb x):=\sup_{t>0}\frac{1}{2t}\int_{-t}^{t}|f(x_1,\ldots,x_{i-1},x_i+y,x_{i+1},\ldots,x_d)|dy.
	\end{equation}
	
\end{defi}
There is a corresponding variant of the Fefferman-Stein theorem, see \cite[Thm.\ 4.1.2]{TDiff06} and the references therein. 
\begin{satz}\label{satz:feffstein2}
	Let $1<p,q<\infty$ and $(f_{\zb k})_{\zb k}\subset L_p(\tor^d,\ell_{\theta})$ and $i\in [d]$. Then we have
	$$\|M_if_{\zb k}|L_p(\tor^d,\ell_{\theta})\|\lesssim \|f_{\zb k}|L_p(\tor^d,\ell_{\theta})\|.$$
\end{satz}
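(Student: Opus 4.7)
The plan is to reduce the $d$-dimensional directional assertion to the classical one-dimensional vector-valued Fefferman-Stein inequality by freezing all coordinates except the $i$-th and then applying Fubini. Let me fix $i\in [d]$ and, for brevity, write $\tbar=(x_1,\ldots,x_{i-1},x_{i+1},\ldots,x_d)\in\tor^{d-1}$ for the frozen variables. For each such $\tbar$ and every index $\boldk$ define the univariate slice
\[
g_{\boldk}^{\tbar}(y) := f_{\boldk}(x_1,\ldots,x_{i-1},y,x_{i+1},\ldots,x_d)\quad,\quad y\in\tor\,.
\]
By \eqref{HLdir} we have $M_i f_{\boldk}(\boldx) = M\bigl[g_{\boldk}^{\tbar}\bigr](x_i)$, where $M$ denotes the classical one-dimensional Hardy-Littlewood maximal operator on $\tor$ acting in the variable $y$. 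Thus the inequality we want is, after taking $p$-th powers and using Fubini, equivalent to an integrated family of one-dimensional estimates in the slice variable.

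First I would invoke the scalar-valued vector extension of the classical Fefferman-Stein maximal inequality in the periodic univariate setting: for $1<p,\theta<\infty$ there is a constant $C=C(p,\theta)$ such that for every sequence $(h_{\boldk})_{\boldk}\subset L_p(\tor,\ell_\theta)$
\[
\int_{\tor}\Big(\sum_{\boldk} |Mh_{\boldk}(y)|^{\theta}\Big)^{p/\theta}\,dy \;\leq\; C \int_{\tor}\Big(\sum_{\boldk} |h_{\boldk}(y)|^{\theta}\Big)^{p/\theta}\,dy\,.
\]
This is standard; it follows either from the Euclidean version of Fefferman-Stein together with a periodization argument, or directly via the $A_p$-weight machinery applied componentwise. (In the statement of the theorem the second index should read $\theta$ rather than $q$; this is evidently a typo since $q$ does not appear on either side of the claimed inequality.)

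Applied for a.e.\ fixed $\tbar$ to the family $h_{\boldk}:=g_{\boldk}^{\tbar}$ this yields, with $C$ independent of $\tbar$,
\[
\int_{\tor}\Big(\sum_{\boldk} |M_i f_{\boldk}(\boldx)|^{\theta}\Big)^{p/\theta}\,dx_i \;\leq\; C \int_{\tor}\Big(\sum_{\boldk} |f_{\boldk}(\boldx)|^{\theta}\Big)^{p/\theta}\,dx_i\,.
\]
Integrating this inequality over $\tbar\in\tor^{d-1}$ via Fubini and taking the $p$-th root produces the desired bound $\|M_i f_{\boldk}\,|\,L_p(\tor^d,\ell_\theta)\|\ls \|f_{\boldk}\,|\,L_p(\tor^d,\ell_\theta)\|$.

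The only nontrivial point is the application of the vector-valued maximal inequality for a.e.\ slice, together with the measurability of $(y,\tbar)\mapsto (\sum_{\boldk}|Mg_{\boldk}^{\tbar}(y)|^\theta)^{1/\theta}$ that makes Fubini legitimate. Measurability is standard since $M$ is realized by a countable supremum of continuous averaging operators. Hence the hard part is really just invoking the one-dimensional Fefferman-Stein inequality in the vector-valued periodic form; once this is cited, the passage to $d$ dimensions and the directional operator $M_i$ is immediate by Fubini. The modification in the case $\theta=\infty$ or $p=\infty$ is not required since both exponents are assumed strictly between $1$ and $\infty$.
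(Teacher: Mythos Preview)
Your argument is correct. The paper does not actually supply a proof of this statement; it simply records it as a known variant of the Fefferman--Stein theorem and refers to \cite[Thm.~4.1.2]{TDiff06} for details. Your slice-and-Fubini reduction to the one-dimensional vector-valued maximal inequality is exactly the standard route to such directional inequalities and is precisely what one finds in the cited source. You also correctly identify the typo $q\to\theta$ in the hypothesis. One minor remark: the use of Fubini for the nonnegative integrands here is really Tonelli, so no separate integrability justification is needed before integrating over $\tbar$.
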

\begin{defi}[Mixed Peetre maximal function]\label{def:peetremax}
	Let $a>0$ and $\zb b>0$ then we define for $f\in C(\T)$ 
	$$P_{\zb b,a}f(\zb x):=\sup_{\zb y\in \R}\frac{|f(\zb x+\zb y)|}{(1+b_1|y_1|)^{a}\ldots (1+b_d|y_d|)^{a}}.$$
\end{defi}
The mixed Peetre maximal function can be pointwise estimated by an iteration of \eqref{HLdir}, see 
\cite[Lem.\ 3.1.1]{TDiff06}, which results in the following
maximal inequalities for the mixed Peetre maximal function. 
\begin{satz}\label{satz:peetremaximalineqnonvec}
	Let $0<p\leq\infty$ and $f$ be a trigonometric polynomial
	with
	$$f=\sum_{\substack{|k_i|\leq b_i\\i=1,\ldots,d}}\hat{f}(\zb k)e^{ i \zb k \cdot \zb x}$$
	and $a>\frac{1}{p}$.
	Then there is a constant $C>0$ (independent of $f$ and $\zb b$) such that
	$$\|P_{\zb b,a}f\|_p\leq C \|f\|_p$$
	holds.
\end{satz}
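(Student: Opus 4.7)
The plan is to reduce the Peetre maximal bound to iterated applications of the directional Hardy-Littlewood maximal operator from Definition \ref{dirHL}, via a pointwise Nikol'skii-type majorant that converts the restriction on the spectrum of $f$ into a local averaging bound. The case $p=\infty$ is trivial (taking the denominator to be $\geq 1$ gives $P_{\boldb,a}f(\boldx)\leq \|f\|_\infty$), so I concentrate on $0<p<\infty$.

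Since $a>1/p$, I first choose an auxiliary exponent $r>0$ with $1/a < r < p$, so that $ar>1$ and $p/r>1$. The key ingredient (this is the pointwise estimate cited just before the statement, namely \cite[Lem.\ 3.1.1]{TDiff06}, and is a direct consequence of the spectral assumption $\supp \widehat{f}\subset \prod_{i=1}^d[-b_i,b_i]$ together with the Plancherel-Polya / Nikol'skii mean value inequality applied one coordinate at a time) is a pointwise bound of the form
\begin{equation*}
[P_{\boldb,a}f(\boldx)]^{r} \;\leq\; C\,(M_1\circ M_2 \circ \cdots \circ M_d)\bigl(|f|^{r}\bigr)(\boldx)\quad,\quad \boldx\in\tor^d,
\end{equation*}
with a constant $C$ depending only on $a$, $r$ and $d$. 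The factors $(1+b_i|y_i|)^{a}$ in the definition of $P_{\boldb,a}f$ exactly absorb the dyadic-shell summation that appears when one writes $f$ through its reproducing formula of scale $b_i$ in the $i$-th variable, and the exponent $ar>1$ is needed for the geometric series to converge; this is precisely the reason we cannot simply take $r=1$ when $p\leq 1$.

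Once the pointwise majorant is in hand, I pass to $L_{p/r}(\tor^d)$, where $p/r>1$, and apply the scalar Hardy-Littlewood maximal theorem in each fixed direction (this is the $\theta=\infty$ or, equivalently, scalar version of Theorem \ref{satz:feffstein2}, applied iteratively). Since each $M_i$ is bounded on $L_{p/r}(\tor^d)$ for $1<p/r\leq \infty$, iterating once per direction yields
\begin{equation*}
\bigl\| (M_1\circ \cdots\circ M_d)(|f|^{r}) \bigr\|_{p/r} \;\leq\; C'\, \bigl\| |f|^{r}\bigr\|_{p/r} \;=\; C'\,\|f\|_p^{\,r}.
\end{equation*}
Taking $r$-th roots and combining with the pointwise estimate gives
\begin{equation*}
\|P_{\boldb,a}f\|_p \;=\; \bigl\| [P_{\boldb,a}f]^{r}\bigr\|_{p/r}^{1/r} \;\leq\; (C\,C')^{1/r}\,\|f\|_p,
\end{equation*}
which is the assertion.

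The main obstacle, and the only substantive part of the argument, is the quasi-Banach range $p\leq 1$: there $M_i$ is not bounded on $L_p$, so the direct route via Fefferman-Stein fails. The Nikol'skii pointwise majorant with the auxiliary exponent $r$ is what circumvents this, using the assumption $a>1/p$ to guarantee the existence of an admissible $r$. All remaining steps are routine once the pointwise bound is accepted from \cite[Lem.\ 3.1.1]{TDiff06}.
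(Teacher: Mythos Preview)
Your argument is correct and is precisely the standard route that the paper invokes by reference: the paper does not give its own proof but points to \cite[Thm.\ 4.1.3]{TDiff06}, and the strategy there is exactly the one you outline --- the pointwise majorant $[P_{\boldb,a}f]^{r}\lesssim (M_1\circ\cdots\circ M_d)(|f|^{r})$ from \cite[Lem.\ 3.1.1]{TDiff06} (already cited in the paper just before the statement) followed by $d$ iterations of the scalar directional Hardy--Littlewood inequality in $L_{p/r}$. One small remark: the scalar boundedness of $M_i$ on $L_{p/r}(\tor^d)$ for $p/r>1$ is not literally the $\theta=\infty$ case of Theorem~\ref{satz:feffstein2} (which is stated for $1<\theta<\infty$), but rather follows from the one-dimensional Hardy--Littlewood theorem (Theorem~\ref{satz:hardylittlewood}) together with Fubini; this is a cosmetic point and does not affect the argument.
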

\begin{proof}
	We refer to \cite[Thm.\ 4.1.3]{TDiff06}.
\end{proof}
\begin{satz}\label{satz:peetremaximalineq}
	Let $0<p<\infty$, $0<\theta\leq \infty$ and $(f_{\zb j})_{\zb j\in \N^d_0}$ be a sequence of trigonometric polynomials
	with
	$$f_{\zb j}=\sum_{\substack{|k_i|\leq b^{\zb j}_i\\i=1,\ldots,d}}\hat{f}(\zb k)e^{ i \zb k \cdot \zb x}$$
	and $a>\max\{\frac{1}{p},\frac{1}{\theta}\}$. 	Then there is a constant $C>0$ (independent of $f$ and ${\zb
		b}_{\zb j}$) such that
	$$\|P_{{\zb b}_{\zb j},a}f_{\zb j}|L_p(\ell_{\theta})\|\leq C \|f_{\zb j}|L_p(\ell_{\theta})\|$$
	holds.
\end{satz}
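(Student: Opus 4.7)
The plan is to reduce the vector-valued Peetre maximal inequality to the vector-valued version of the directional Fefferman-Stein inequality (Theorem \ref{satz:feffstein2}) via a pointwise domination of $P_{\boldb_{\boldj},a} f_{\boldj}$ by an iteration of the directional Hardy-Littlewood operators $M_1,\dots,M_d$ applied to a suitable power $|f_{\boldj}|^r$. This is precisely the strategy used in the scalar case (Theorem \ref{satz:peetremaximalineqnonvec}, based on \cite[Lem.\ 3.1.1]{TDiff06}) and it lifts to the sequence-valued setting as soon as we can legitimately invoke Theorem \ref{satz:feffstein2} in each coordinate.

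First I would choose the auxiliary exponent $r$. By the hypothesis $a>\max\{1/p,1/\theta\}$ we have $1/a<\min\{p,\theta\}$, so we may fix $r$ with
\[
\tfrac{1}{a}<r<\min\{p,\theta,1\}\cdot\tfrac{\min\{p,\theta\}}{\min\{p,\theta,1\}},
\]
or more simply any $r$ with $1/a<r<\min\{p,\theta\}$; the point is that both $p/r>1$ and $\theta/r>1$, and $ar>1$. Next, using that $f_{\boldj}$ is a trigonometric polynomial with spectrum contained in the rectangle $\{|k_i|\leq b^{\boldj}_i\}_{i=1}^d$, the iterated mean-value argument from \cite[Lem.\ 3.1.1]{TDiff06} (applied direction by direction and exploiting that $ar>1$ to sum up the tails) yields the pointwise estimate
\[
P_{\boldb_{\boldj},a} f_{\boldj}(\boldx)\;\lesssim\;\bigl(M_1\circ M_2\circ\cdots\circ M_d\bigl[|f_{\boldj}|^r\bigr](\boldx)\bigr)^{1/r}
\]
with a constant independent of $\boldj$ and $\boldx$. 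This is the only place where the spectral restriction on $f_{\boldj}$ enters.

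With this pointwise bound in hand, the vector-valued inequality follows from $d$ successive applications of Theorem \ref{satz:feffstein2}. Writing
\[
\bigl\|P_{\boldb_{\boldj},a}f_{\boldj}\,\bigl|\,L_p(\ell_{\theta})\bigr\|
\;\lesssim\;
\bigl\|\,M_1\cdots M_d|f_{\boldj}|^r\,\bigl|\,L_{p/r}(\ell_{\theta/r})\bigr\|^{1/r},
\]
we peel off the operators $M_i$ one at a time. Since $p/r,\theta/r\in(1,\infty]$ (with the case $\theta=\infty$ handled by the usual modification), Theorem \ref{satz:feffstein2} gives at each step
\[
\bigl\|M_i g_{\boldj}\,\bigl|\,L_{p/r}(\ell_{\theta/r})\bigr\|\lesssim\bigl\|g_{\boldj}\,\bigl|\,L_{p/r}(\ell_{\theta/r})\bigr\|,
\]
and after $d$ iterations the right-hand side becomes $\bigl\||f_{\boldj}|^r\,|\,L_{p/r}(\ell_{\theta/r})\bigr\|=\bigl\|f_{\boldj}\,|\,L_p(\ell_\theta)\bigr\|^r$. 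Taking the $1/r$ power concludes the proof.

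The main technical obstacle is the pointwise domination in the middle paragraph: one has to prove it in a form which is uniform in the sequence index $\boldj$, and the delicate part is converting the polynomial decay of the Peetre weight (controlled by $a$) into enough integrability to cover the tail terms after the shift to convolution with Fejér-type kernels associated to the $b^{\boldj}_i$. The condition $ar>1$ is precisely what makes this tail summation finite, and it is ultimately the reason the sharp threshold $a>\max\{1/p,1/\theta\}$ appears in the statement.
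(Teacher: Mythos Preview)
Your proof is correct and follows precisely the route that the paper itself indicates: the paper does not give an independent argument but simply refers to \cite[Thm.\ 4.1.3]{TDiff06}, after noting (just before Theorem~\ref{satz:peetremaximalineqnonvec}) that the mixed Peetre maximal function is pointwise dominated by an iteration of the directional Hardy--Littlewood operators via \cite[Lem.\ 3.1.1]{TDiff06}, whence the vector-valued maximal inequality follows from Theorem~\ref{satz:feffstein2}. Your write-up spells out exactly this reduction, including the standard choice of $r$ with $1/a<r<\min\{p,\theta\}$ and the $d$-fold application of the directional Fefferman--Stein inequality to $L_{p/r}(\ell_{\theta/r})$; the only cosmetic point is that the first displayed constraint on $r$ is redundant (it collapses to $\min\{p,\theta\}$, as you yourself note), and the case $\theta=\infty$ is handled by the trivial observation $\sup_{\boldj} M_i g_{\boldj}\le M_i(\sup_{\boldj} g_{\boldj})$ together with the scalar boundedness of $M_i$.
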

\begin{proof}
	We refer to \cite[Thm.\ 4.1.3]{TDiff06}.
\end{proof}
\noindent Let us now state the main result of this subsection.
\begin{satz}\label{satz:bandlimrep1}
	Let $0<p<\infty$, $0<\theta\leq \infty$, $\zb r\in \R$ with $\zb r>\sigma_{p,\theta}$ and $(f_{\zb j})_{\zb j \in
		\N_0^d}$ such that $f_{\zb j}\in \mathcal{T}^0_{\zb j}$ and $\big\|2^{\zb r \cdot \zb j}f_{\zb
		j}\big|L_p(\ell_{\theta})\big\|<\infty$.
	Then
	\begin{enumerate}
		\item $\sum_{\zb j\in\N_0^d}f_{\zb j}$ converges unconditionally in $\srptf$ if $\theta<\infty$ and in every
		$S^{\bf {\tilde{r}}}_{p,\nu}F(\T)$ with $0<\nu\leq \infty$ and $\zb {\tilde{r}}< \zb r.$
		\item There is a constant $C>0$ (independent of $f$) such that
		$$ \Big\|\sum_{\zb j\in\N_0^d}f_{\zb j}\Big|\srptf\Big\|\leq C \big\|2^{\zb r \cdot \zb j}f_{\zb j}\big|L_p(\ell_{\theta})\big\|$$
		holds.
	\end{enumerate}
\end{satz}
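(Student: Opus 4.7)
The plan is to first prove (ii), the norm estimate, and then derive the convergence assertions in (i) from it by a standard tail argument. The main tools are the spectral localisation of the $f_{\zb j}$, a pointwise bound for $\delta_{\zb \ell}[f_{\zb j}]$ involving the directional maximal operators $M_i$ of Definition \ref{dirHL}, the Fefferman--Stein Theorem \ref{satz:feffstein2}, and the $\mu$-triangle inequality in $L_p(\ell_\theta)$ with $\mu = \min(1,p,\theta)$. I would start by applying the Littlewood--Paley block $\delta_{\zb \ell}$ to the formal sum. Since $\widehat{f_{\zb j}}$ is supported in $\{|k_i|\leq 2^{j_i}\}$ (because $f_{\zb j}\in\mathcal{T}^0_{\zb j}$), while the multiplier $\varphi_{\ell_i}$ from Definition \ref{def:varphi} lives essentially on $\{|\xi|\asymp 2^{\ell_i}\}$ for $\ell_i\geq 1$, only indices with $j_i\geq \ell_i - c_0$ (for some fixed $c_0=c_0(\varphi)$) can contribute, so that
\begin{equation*}
\delta_{\zb \ell}[f] \;=\; \sum_{\zb j\geq \zb \ell - c_0\mathbf{1}} \delta_{\zb \ell}[f_{\zb j}].
\end{equation*}

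Next I would establish a pointwise estimate. Writing $\delta_{\zb \ell}$ as convolution with a tensorised Schwartz-type kernel at scale $2^{\zb \ell}$ and invoking the Peetre--Plancherel--P\'olya inequality applied to the trigonometric polynomial $\delta_{\zb \ell}[f_{\zb j}]$, I would obtain, for $\zb j\geq \zb \ell - c_0\mathbf{1}$ and every $0<\tau<\min(p,\theta)$,
\begin{equation*}
|\delta_{\zb \ell}[f_{\zb j}](\zb x)| \;\lesssim\; (M_1\cdots M_d[\,|f_{\zb j}|^\tau\,])^{1/\tau}(\zb x),
\end{equation*}
with a constant depending only on $\tau$ and $\varphi$. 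The $\tau$-power trick is essential: it shifts the Fefferman--Stein exponents $p/\tau$ and $\theta/\tau$ into the admissible range $(1,\infty)$.

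Substituting $\zb k = \zb j - \zb \ell$ and applying the $\mu$-triangle inequality in $L_p(\ell_\theta)$ then yields
\begin{equation*}
\|2^{\zb r\cdot\zb \ell}\delta_{\zb \ell}[f]\,|\,L_p(\ell_\theta)\|^{\mu}
\;\leq\; \sum_{\zb k\geq -c_0\mathbf{1}} 2^{-\mu\,\zb r\cdot\zb k}\,
\|2^{\zb r\cdot\zb j}(M_1\cdots M_d |f_{\zb j}|^\tau)^{1/\tau}\,|\,L_p(\ell_\theta)\|^{\mu}.
\end{equation*}
Pulling the $\tau$-power outside the norm and applying Theorem \ref{satz:feffstein2} in $L_{p/\tau}(\ell_{\theta/\tau})$ successively in each of the $d$ directions bounds the inner norm by $\|2^{\zb r\cdot \zb j}f_{\zb j}\,|\,L_p(\ell_\theta)\|$. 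The remaining geometric series $\sum_{\zb k\geq -c_0\mathbf{1}} 2^{-\mu\,\zb r\cdot\zb k}$ is finite since $\zb r>0$, which is guaranteed by $\zb r > \sigma_{p,\theta}\geq 0$. This proves (ii).

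Part (i) follows from (ii) applied to tail sums: for $\theta<\infty$, replacing $f_{\zb j}$ by $f_{\zb j}\mathbf{1}_{\zb j\notin F}$ for a finite set $F\subset\N_0^d$ and using dominated convergence in the right-hand side, the partial sums form a Cauchy net in $\srptf$. For $S^{\tilde{\zb r}}_{p,\nu}F(\T)$ with $\tilde{\zb r}<\zb r$ and arbitrary $\nu\leq\infty$, a H\"older-type estimate exploits the slack $\zb r-\tilde{\zb r}>0$ to dominate the $\nu$-weighted quasi-norm of $(2^{\tilde{\zb r}\cdot\zb j}f_{\zb j})$ by the $\theta$-weighted one, and (ii) applies with $(\tilde{\zb r},\nu)$ in place of $(\zb r,\theta)$. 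The main obstacle is the pointwise bound in the second paragraph: one must verify it with constants uniform in $\zb \ell,\zb j$ across the allowed range, treat the boundary cases $\ell_i=0$ (where $\varphi_0$ does not vanish at the origin) separately, and tune the decay parameter of the Peetre maximal function so that the Peetre scale matches the spectrum $2^{\zb \ell}$ of $\delta_{\zb \ell}[f_{\zb j}]$ rather than the larger scale $2^{\zb j}$, preventing a growing factor that would destroy the summability obtained from $\zb r>0$.
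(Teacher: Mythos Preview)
Your Fourier-analytic route differs from the paper's proof, which works through the \emph{difference} characterisation of Theorem~\ref{satz:differences}. More importantly, your argument has a genuine gap at the pointwise bound.

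The claimed estimate
\[
|\delta_{\zb \ell}[f_{\zb j}](\zb x)| \;\lesssim\; \big(M_1\cdots M_d[\,|f_{\zb j}|^\tau\,](\zb x)\big)^{1/\tau}
\]
with a constant independent of $\zb \ell,\zb j$ is \emph{false} for $\tau<1$. Take $d=1$ and $f_j(x)=2^j K(2^j x)$ (periodised) with $K$ a Schwartz bump, $\widehat K(0)=1$; then for $\ell\ll j$ one has $|\delta_\ell[f_j](x)|\asymp 2^\ell$ near $|x|\asymp 2^{-\ell}$, while $(M|f_j|^\tau(x))^{1/\tau}\asymp 2^{j(1-1/\tau)}2^{\ell/\tau}$, so the ratio behaves like $2^{(j-\ell)(1/\tau-1)}\to\infty$. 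Your own warning at the end identifies the issue but the proposed fix cannot work: the Peetre--Plancherel--P\'olya step $P_{b,a}g\lesssim (M|g|^\tau)^{1/\tau}$ requires the Peetre scale $b$ to match the \emph{spectrum} of $g$, which for $f_{\zb j}$ is $2^{\zb j}$, not $2^{\zb\ell}$. Matching to $2^{\zb\ell}$ forces you to pass through $M|\delta_{\zb\ell}[f_{\zb j}]|^\tau$, and there is no way to dominate this by $M|f_{\zb j}|^\tau$ (reverse Jensen fails). If instead you accept the unavoidable factor $2^{(\zb j-\zb\ell)\cdot a\zb 1}$ with $a>1/\tau>1/\min(p,\theta)$, the geometric series forces $\zb r>1/\min(p,\theta)$, which is strictly stronger than the hypothesis $\zb r>\sigma_{p,\theta}$ when $\min(p,\theta)\le 1$. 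Note that if your uniform bound held you would in fact prove the statement for all $\zb r>0$, contradicting the known sharpness of $\sigma_{p,\theta}$.

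The paper circumvents this by switching to the difference norm. There the building block is the \emph{integral mean} $2^{j}\int_{|h|\le 2^{-j}}|\Delta_h^m f_{j+\ell}|\,dh$, and one writes $|\Delta_h^m f_{j+\ell}|=|\Delta_h^m f_{j+\ell}|^{1-\lambda}\,|\Delta_h^m f_{j+\ell}|^{\lambda}$. The crucial point is that the second factor, once integrated in $h$, is \emph{directly} an average of $|f_{j+\ell}|^\lambda$ over translates and is therefore dominated by $M|f_{j+\ell}|^\lambda$; no reverse Jensen is needed. The first factor is controlled by Lemma~\ref{lem:diffversuspeetre} via the Peetre maximal function at the correct spectral scale $2^{j+\ell}$, contributing a factor $2^{(1-\lambda)a|\ell|}$. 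Choosing $\lambda$ just below $\min(p,\theta)$ and $a$ just above $1/\min(p,\theta)$ makes $(1-\lambda)a$ arbitrarily close to $\sigma_{p,\theta}$, so the sum over $\ell$ converges precisely under $\zb r>\sigma_{p,\theta}$. This $\lambda$-split trick on an averaged quantity is exactly what your Fourier-analytic route is missing.
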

\begin{proof}
	{\em Step 1.} We assume the unconditional convergence of $\sum_{\zb \ell\in\N_0^d}f_{\zb \ell}$ in $\srptf$ (or in case  $\theta =\infty$ at least in $S^{\bf {\tilde{r}}}_{p,\nu}F(\T)$) and prove the inequality $$\Big\|\sum_{\zb \ell\in\N_0^d}f_{\zb \ell}\Big|S^r_{p,\theta}F(\T)\Big\|\lesssim \big\|2^{\zb r \cdot \zb j}f_{\zb j}\big|L_p(\ell_{\theta}(\N_0^d))\big\|.$$ We mimic Step 1 of the proof of \cite[Theorem 3.4.1]{TDiff06}. This is rather technical in the multivariate situation. For that reason we give a proof for the univariate situation first. Later we explain the necessary modifications for the multivariate situation.
	We prove
	$$\|f|F^r_{p,\theta}(\tor)\|\lesssim \big\|2^{ r j}f_j\big|L_p(\ell_{\theta}(\N_0))\big\|$$
	by using methods from difference characterization of Triebel-Lizorkin spaces. We start by switching to the difference norm in $F^r_{p,\theta}(\tor)$ with $m>r$
	\be
	\Big\|\sum_{\ell\in\N_0}f_{\ell}\Big|\srptfu\Big\|\asymp\Big\|\sum_{\ell\in\N_0}f_{\ell}\Big\|_p+\Big\|\Big[\sum_{j=0}^{\infty}2^{\theta j r}\Big(2^j\int_{-2^{-j}}^{2^{-j}}\Big|\Delta^m_h \Big[\sum_{\ell\in\N_0}f_{\ell}\Big]\Big|dh\Big)^{\theta}\Big]^{\frac{1}{\theta}}\Big\|_p\label{eq:h2}.
	\ee
	First we estimate the $L_p$-norm of $f$ and obtain trivially using either H\"older's inequality (in case $\theta\geq 1$) or the embedding $\ell_{\theta}\hookrightarrow \ell_{1}$ (in case $0<\theta<1$) the estimate 
	$$\Big\|\sum_{\ell\in\N_0}f_{\ell}\Big\|_p\lesssim \Big\|\Big(\sum_{j\in\N_0}2^{\theta r j}|f_j|^{\theta}\Big)^{\frac{1}{\theta}}\Big\|_p. $$
	Let  $a>0$ be a positive real number such that $a>\max\{\frac{1}{p},\frac{1}{\theta}\}$ is fulfilled. Additionally choose in case $\min\{p,\theta\}\leq 1$
	\be 0<\lambda< \min\{p,\theta\} \label{eq:lambda}\ee such that 
	\be r>(1-\lambda)a>\sigma_{p,\theta}.\label{eq:lambda2}\ee
	This is possible since
	\beqq
	(1-\lambda)a&>&(1-\lambda)\max\Big\{\frac{1}{p},\frac{1}{\theta}\Big\}\\
	&\geq&(1-\min\{p,\theta,1\})\max\Big\{\frac{1}{p},\frac{1}{\theta}\Big\}=\sigma_{p,\theta}.
	\eeqq
	In case $\min\{p,\theta\}>1$ we simply choose $\lambda=1$.
	Fix $j\in\N_0$ and use the identity
	$$\sum_{\ell\in\N_0}f_{\ell}=\sum_{\ell\in\zz}f_{j+\ell} $$
	with
	$f_{j+\ell}=0$
	for $j+\ell<0$. The unconditional convergence of $\sum_{\ell\in\zz}f_{j+\ell}$ in $\srptf$ implies (by Lemma \ref{lem:embeddings}) an unconditional convergence also in $L_1(\T)$. Therefore we can estimate the integral means as follows
	\be
	2^j\int_{-2^{-j}}^{2^{-j}}\Big|\Delta^m_h \Big[\sum_{\ell\in\N_0}f_{j+\ell}\Big]\Big|dh\leq \sum_{\ell\in \zz} 2^j\int_{-2^{-j}}^{2^{-j}}|\Delta^m_h f_{j+\ell}(x)|dh.\label{eq:est1n}
	\ee
	We split the sum over $\ell$
	\beq
	\sum_{\ell\in \zz} 2^j\int_{-2^{-j}}^{2^{-j}}|\Delta^m_h f_{j+\ell}(x)|dh=\sum_{\ell\geq 0} 2^j\int_{-2^{-j}}^{2^{-j}}|\Delta^m_h f_{j+\ell}(x)|dh+\sum_{\ell<0} 2^j\int_{-2^{-j}}^{2^{-j}}|\Delta^m_h f_{j+\ell}(x)|dh
	\label{eq:decomp1}\eeq
	and prove 
	\be
	2^j\int_{-2^{-j}}^{2^{-j}}|\Delta^m_h f_{j+\ell}(x)|dh\lesssim \begin{cases}
		2^{\ell m}P_{2^{j+\ell},a}f_{j+\ell}&:\quad\ell\geq 0,\\
		2^{(1-\lambda)\ell a}[P_{2^{j+\ell},a}f_{j+\ell}]^{1-\lambda}M|f_{j+\ell}|^{\lambda}&:\quad\ell<0.
	\end{cases}\label{eq:esth1}
	\ee
	First we prove the case $\ell>0$. Applying Lemma \ref{lem:diffversuspeetre} immediately gives
	\beqq
	2^j\int_{-2^{-j}}^{2^{-j}}|\Delta^m_h f_{j+\ell}(x)|dh&\lesssim & 2^{\ell m}P_{2^{j+\ell},a}(x).
	\eeqq In case $\ell<0$ with $\lambda<1$ we estimate as follows
	\beqq
	2^j\int_{-2^{-j}}^{2^{-j}}|\Delta^m_h f_{j+\ell}(x)|dh&\lesssim&2^j\int_{-2^{-j}}^{2^{-j}}|\Delta^m_h f_{j+\ell}(x)|^{\lambda}|\Delta^m_h f_{j+\ell}(x)|^{1-\lambda}dh.
	\eeqq
	Applying Lemma \ref{lem:diffversuspeetre} to the second factor yields
	\beqq
	2^j\int_{-2^{-j}}^{2^{-j}}|\Delta^m_h f_{j+\ell}(x)|dh&\lesssim&2^{\ell (1-\lambda)a}[P_{2^{j+\ell},a}f_{j+\ell}(x)]^{1-\lambda}2^j\int_{-2^{-j}}^{2^{-j}}|\Delta^m_h f_{j+\ell}(x)|^{\lambda}dh\nonumber\\
	&\lesssim&  2^{\ell (1-\lambda)a}[P_{2^{j+\ell},a}f_{j+\ell}(x)]^{1-\lambda}M|f_{j+\ell}|^{\lambda}(x).
	\eeqq
	Attention in case $\min\{p,\theta\}>1$ with $\lambda=1$ the estimate in case $\ell<0$ simplifies to the Hardy-Littlewood maximal function of $|f_{j+\ell}|$.
	Inserting the decomposition in \eqref{eq:decomp1} together with the estimates obtained in \eqref{eq:esth1}
	into the last term on the right hand side of \eqref{eq:h2} then we obtain by $\mu$-triangle inequality in $L_p(\ell_{\theta}(\N))$ with $\mu:=\min\{p,\theta,1\}$
	\beq
	&&\Big\|\Big[\sum_{j=0}^{\infty}2^{\theta j r}\Big(2^j\int_{-2^{-j}}^{2^{-j}}\Big|\Delta^m_h \Big[\sum_{\ell\in\N_0}f_{j+\ell}\Big]\Big|dh\Big)^{\theta}\Big]^{\frac{1}{\theta}}\Big\|_p\nonumber\\
	&&\lesssim \Big[\sum_{\ell\geq 0}2^{\mu\ell (m-r)}\|2^{(j+\ell)r}P_{2^{j+\ell},a}f_{j+\ell}(x)|L_p(\ell_{\theta}(\N_0))\|^{\mu}\nonumber
	+\sum_{\ell<0}2^{\mu \ell [a(1-\lambda)-r]}\|2^{(j+\ell)r}[P_{2^{j+\ell},a}f_{2^{j+\ell}}(x)]^{1-\lambda}(x)\\&&\quad\times M|f_{j+\ell}|^{\lambda}(x)|L_p(\ell_{\theta}(\N_0))\|^{\mu}\Big]^{\frac{1}{\mu}}.\nonumber\\\label{eq:h7}
	\eeq
	To estimate the first summand we apply Theorem \ref{satz:peetremaximalineq}, which gives
	$$
	\|2^{(j+\ell)r}P_{2^{j+\ell},a}f_{j+\ell}(x)|L_p(\ell_{\theta}(\N_0))\|\lesssim \|2^{(j+\ell)r}f_{j+\ell}(x)|L_p(\ell_{\theta}(\N_0))\|.
	$$
	An index shift yields
	\be
	\|2^{(j+\ell)r}P_{2^{j+\ell},a}f_{j+\ell}(x)|L_p(\ell_{\theta}(\N_0))\|\lesssim \|2^{jr}f_{j}(x)|L_p(\ell_{\theta}(\N_0))\|.\label{eq:h13}
	\ee
	In case $\min\{p,\theta\}\leq 1$ with $\lambda<1$ we apply to the norm expression in \eqref{eq:h7} H\"older's inequality with $\frac{1}{1-\lambda}$,  $\frac{1}{\lambda}$ twice and obtain
	\beq
	\|2^{(j+\ell)r}[P_{2^{j+\ell},a}f_{j+\ell}(x)]^{1-\lambda}M|f_{j+\ell}|^{\lambda}(x)|L_p(\ell_{\theta}(\N_0))\|&\leq&
	\|2^{(j+\ell)r}P_{2^{j+\ell},a}f_{j+\ell}(x)|L_p(\ell_{\theta}(\N_0))\|^{1-\lambda}\nonumber\\
	&&\times\|2^{(j+\ell)r}(M|f_{j+\ell}|^{\lambda}(x))^{\frac{1}{\lambda}}|L_p(\ell_{\theta}(\N_0))\|^{\lambda}.\nonumber\\
	\label{eq:h9}
	\eeq
	We skip this in case $\lambda=1$.
	Considering the factors in \eqref{eq:h9} separately we obtain by applying Theorem \ref{satz:peetremaximalineq}
	\be
	\|2^{(j+\ell)r}P_{2^{j+\ell},a}f_{j+\ell}(x)|L_p(\ell_{\theta}(\N_0))\|\lesssim \|2^{(j+\ell)r}f_{j+\ell}(x)|L_p(\ell_{\theta}(\N_0))\|. \label{eq:h10}
	\ee
	For the second factor we rewrite the $L_p(\ell_{\theta}(\N_0))$-norm as a $L_{\frac{p}{\lambda}}(\ell_{\frac{\theta}{\lambda}}(\N_0))$-norm. This allows for applying Theorem \ref{thm:feffermanstein}. 
	\beq
	\|2^{(j+\ell)r}(M|f_{j+\ell}|^{\lambda}(x))^{\frac{1}{\lambda}}|L_p(\ell_{\theta}(\N_0))\|&=& \|2^{(j+\ell)r\lambda}M|f_{j+\ell}|^{\lambda}(x)|L_{\frac{p}{\lambda}}(\ell_{\frac{\theta}{\lambda}}(\N_0))\|^{\frac{1}{\lambda}}\nonumber\\
	&\lesssim&\|2^{(j+\ell)r\lambda}|f_{j+\ell}(x)|^{\lambda}|L_{\frac{p}{\lambda}}(\ell_{\frac{\theta}{\lambda}}(\N_0))\|^{\frac{1}{\lambda}}\nonumber\\
	&=&\|2^{(j+\ell)r}f_{j+\ell}(x)|L_p(\ell_{\theta}(\N_0))\|\label{eq:h11}.
	\eeq
	Inserting the estimates from \eqref{eq:h10} and \eqref{eq:h11} into \eqref{eq:h9} implies
	\beqq
	\|2^{(j+\ell)r}[P_{2^{j+\ell},a}f_{j+\ell}(x)]^{1-\lambda}M|f_{j+\ell}|^{\lambda}(x)|L_p(\ell_{\theta}(\N_0))\|\leq\|2^{(j+\ell)r}f_{j+\ell}(x)|L_p(\ell_{\theta}(\N_0))\|.
	\eeqq
	A similar index shift as above yields
	\be
	\|2^{(j+\ell)r}[P_{2^{j+\ell},a}f_{j+\ell}(x)]^{1-\lambda}M|f_{j+\ell}|^{\lambda}(x)|L_p(\ell_{\theta}(\N_0))\|\leq\|2^{jr}f_{j}(x)|L_p(\ell_{\theta}(\N_0))\|\label{eq:h12}.
	\ee
	We continue estimating \eqref{eq:h7} and insert \eqref{eq:h13} and \eqref{eq:h12} to obtain
	\be
	\Big\|\Big[\sum_{j=0}^{\infty}2^{\theta j r}\Big(2^j\int_{-2^{-j}}^{2^{-j}}\Big|\Delta^m_h \Big[\sum_{\ell\in\N_0}f_{j+\ell}\Big]\Big|dh\Big)^{\theta}\Big]^{\frac{1}{\theta}}\Big\|_p\lesssim\|2^{jr}f_{j}|L_p(\ell_{\theta}(\N_0))\|\Big[\sum_{\ell\geq 0}2^{\mu\ell (m-r)}
	+\sum_{\ell<0}2^{\mu \ell [a(1-\lambda)-r]}\Big]^{\frac{1}{\mu}}\label{eq:hhh14}.
	\ee
	Finally, the choice of the parameters $m, a, \lambda$ in \eqref{eq:lambda2} yields that the series in \eqref{eq:hhh14} converge to a constant. Altogether we obtain the desired bound
	\be
	\Big\|\sum_{\ell\in\N_0}f_{\ell}\Big|\srptfu\Big\|\lesssim \|2^{jr}f_{j}|L_p(\ell_{\theta}(\N_0))\|.
	\ee
	{\em Step 2.} We explain the modifications in the multivariate situation. This time we start computing the norm of $\sum_{\zb \ell \in \N_0^d}f_{\zb \ell}\in\srptf$ in terms of differences, cf. Theorem \ref{satz:differences},
	$$\Big\|\sum_{\zb \ell \in \N_0^d}f_{\zb \ell}\Big|\srptf\Big\|\asymp \sum_{e\subset [d]}\|f|\srptf\|_{e,\zb m}.$$
	For each $e\subset[d]$ we have to show that
	$$\Big\|\sum_{\zb \ell \in \N_0^d}f_{\zb \ell}\Big|\srptf\Big\|_{e,\zb m}\lesssim \|2^{\zb  r \zb j}f_{\zb j}|L_p(\ell_{\theta}(\N_0^d))\|$$
	holds. A full proof consists in applying the arguments from above to every single direction contained in $e$. Here the directionwise Hardy-Littlewood maximal function and
	corresponding maximal inequality come into play, see Definition\ \ref{dirHL} and Thms.\ \ref{satz:feffstein2}, 
	\ref{satz:peetremaximalineq}.
	Since this requires an extensive case study in $e$ and $\zb \ell$ we refer to the proof given in detail in 
	\cite[Thm. 3.4.1, Step 1]{TDiff06} where we have to replace the decomposition of $f$ used there by the representation $\sum_{\zb \ell\in\Z}f_{\zb j+\zb \ell}$.\newline
	{\em Step 3.} We prove (i) in case $\theta<\infty$. To begin with, we define the set of sequences with finite index sets given by
	$$\mathfrak{E}:=\Big\{\mathcal{E}=(\mathcal{E}_n)_{n\in\N}:\;\mathcal{E}_n\subset \N_0^d,\;|\mathcal{E}_n|=n,\;\mathcal{E}_n\subset \mathcal{E}_{n+1} \mbox{ for all }n\in \N \mbox{, and }\bigcup_{n=1}^{\infty}\mathcal{E}_n=\N_0^d \Big\}.$$
	Every sequence in $\mathfrak{E}$ defines an order of summation.
	Furthermore for $\mathcal{E}\in\mathfrak{E}$ we define
	$F_{\mathcal{E}_n}:=\sum_{\zb j\in \mathcal{E}_n}f_{\zb j}$. We take a second sequence $A\in \mathfrak{E}$ and consider $F_{\mathcal{E}_n}-F_{\mathcal{A}_m}$. This difference can be written as a sum with finitely many $f_{\zb j}$. This fulfills the assumptions necessary in Step 1 and yields
	$$\|F_{\mathcal{E}_n}-F_{\mathcal{A}_m}|\srptf\|\lesssim \Big\|\Big(\sum_{\zb j \in (\mathcal{E}_n\cup \mathcal{A}_m)\backslash(\mathcal{E}_n\cap \mathcal{A}_m)}2^{\zb r \cdot \zb j \theta}|f_{\zb j}|^{\theta}\Big)^{\frac{1}{\theta}}\Big\|_p.$$ 
	Obviously,
	$$\Big(\sum_{\zb j \in (\mathcal{E}_n\cup \mathcal{A}_m)\backslash(\mathcal{E}_n\cap \mathcal{A}_m)}2^{\zb r \cdot \zb j \theta}|f_{\zb j}|^{\theta}\Big)^{\frac{1}{\theta}}\leq \Big(\sum_{\zb j \in \N_0^d}2^{\zb r \cdot \zb j \theta}|f_{\zb j}|^{\theta}\Big)^{\frac{1}{\theta}}\in L_p(\T)$$
	holds almost everywhere. Therefore Lebesgue's dominated convergence theorem yields that we find for every $\varepsilon>0$ a $n_0\in\N$ such that
	$$\|F_{\mathcal{E}_n}-F_{\mathcal{A}_m}|\srptf\|\leq \varepsilon$$
	holds for all $m,n>n_0$. Finally this implies unconditional convergence in \srptf. In case $\theta=\infty$ we stress on the embeddings
	$$S^{\zb s}_{p,1}F(\T) \hookrightarrow S^{\zb {\tilde{r}}}_{p,\nu}F(\T)$$ 
	and
	$$ \|2^{\zb s \cdot \zb j}f_j|L_p(\ell_{1})\| \lesssim \|2^{\zb r \cdot \zb j}f_j|L_p(\ell_{\infty})\|,$$ 
	where $\zb r>\zb s>\sigma_{p,\nu}$, $\zb s >\zb {\tilde{r}}$ and $0<\nu\leq\infty$. Applying the arguments from above to $S^{\zb s}_{p,1}F(\T)$ yields the result for  $S^{\zb {\tilde{r}}}_{p,\nu}F(\T)$.
\end{proof}

We will also need the following diagonal embedding relation which is the periodic counterpart of
\cite[Prop.\ 2.4.1]{ST87}, see also the diagonal embedding in Lemma \ref{lem:embeddings}, (vi) above.

\begin{lem}\label{lem:qjdiagembedding}
	Let $0<p<q<\infty$ and $0<\theta,\nu\leq \infty$. Then
	\beqq \Big\|\Big(\sum_{\zb j \in \N_0^d}|f_{\zb j}|^{\nu}\Big)^{\frac{1}{\nu}}\Big\|_q\lesssim
	\Big\|\Big(\sum_{\zb j \in \N_0^d}2^{\theta |\zb
		j|_1(1/p-1/q)}|f_{\zb j}|^{\theta}\Big)^{\frac{1}{\theta}}\Big\|_p
	\eeqq
	holds for all $(f_{\zb j})_{\zb j \in
		\N_0^d}$ such that $f_{\zb j}\in \mathcal{T}^0_{\zb j}$\,.
\end{lem}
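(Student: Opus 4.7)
The plan is to reduce the inequality to the Triebel--Lizorkin Sobolev embedding (Lemma \ref{lem:embeddings}, item (vii)) by constructing a single distribution $g \in D'(\T)$ whose Littlewood--Paley blocks are essentially the $f_{\zb j}$'s. Since the $f_{\zb j}$ have overlapping spectra (each lies in the full cube $\{|k_i|\le 2^{j_i}\}$, not a dyadic annulus), I would first modulate them so that their spectra land in disjoint dyadic annuli.

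Concretely, I would fix a smooth dyadic resolution $\varphi\in\Phi(\re)$ and a shift parameter $N\in\N$ large enough that, for every $j\in\N_0$, the interval $[2^{N+j}-2^{j},\,2^{N+j}+2^{j}]$ lies in the level set $\{\varphi_{N+j}=1\}$ and is disjoint from $\supp\varphi_\ell$ for all $|\ell-(N+j)|\ge 2$; by Remark \ref{rem:propfspace}(i) working with a specific $\varphi$ is harmless. I would then set
\[
g_{\zb j}(\zb x) := f_{\zb j}(\zb x)\, e^{\,i\,\zb c_{\zb j}\cdot \zb x},\quad \zb c_{\zb j} := (2^{N+j_1},\ldots,2^{N+j_d}), \qquad g := \sum_{\zb j\in\N_0^d} g_{\zb j}.
\]
Since the spectra of distinct $g_{\zb j}$'s are pairwise disjoint, $g$ is a well-defined periodic distribution (its Fourier coefficients grow polynomially thanks to a Nikolskii bound applied to each $f_{\zb j}$), and for $\zb k\ge N\cdot\zb 1$ the block $\delta_{\zb k}[g]$ reduces to $g_{\zb k-N\zb 1}$ modulo at most $3^d-1$ neighbours $g_{\zb j}$ with $|\zb j-(\zb k-N\zb 1)|_\infty\le 1$. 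Setting $\zb\alpha:=(1/p-1/q)\cdot\zb 1$, the definitions of the $F$-(quasi-)norms together with the $\mu$-triangle inequality (with $\mu=\min\{p,\theta,1\}$, respectively $\mu=\min\{q,\nu,1\}$) then yield the two equivalences
\[
\|g\mid S^{\zb\alpha}_{p,\theta}F(\T)\| \asymp \Big\|\Big(\sum_{\zb j\in\N_0^d} 2^{\theta|\zb j|_1(1/p-1/q)}|f_{\zb j}|^\theta\Big)^{1/\theta}\Big\|_p, \qquad \|g\mid S^{\zb 0}_{q,\nu}F(\T)\| \asymp \Big\|\Big(\sum_{\zb j\in\N_0^d} |f_{\zb j}|^\nu\Big)^{1/\nu}\Big\|_q.
\]

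Finally, I would invoke Lemma \ref{lem:embeddings}(vii) with $\zb r_1=\zb\alpha$, $\zb r_2=\zb 0$ (the required conditions $\zb r_1>\zb r_2$ and $\zb r_1-1/p=-1/q=\zb r_2-1/q$ hold automatically because $p<q$), obtaining $\|g\mid S^{\zb 0}_{q,\nu}F\|\lesssim \|g\mid S^{\zb\alpha}_{p,\theta}F\|$; chaining this with the two equivalences above gives the claim. The only nontrivial step will be the bookkeeping around the choice of $\varphi$ and $N$: one has to guarantee clean spectrum separation so that each $\delta_{\zb k}[g]$ picks up only $O(1)$ neighbouring $g_{\zb j}$'s with bounded, index-independent coefficients. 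Once this is in place, the rest of the argument is a one-line consequence of the Sobolev embedding for mixed Triebel--Lizorkin spaces.
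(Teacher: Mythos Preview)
Your approach is correct and elegant, but it takes a different route from what the paper has in mind. The paper does not spell out a proof; it simply declares the lemma to be ``the periodic counterpart of \cite[Prop.\ 2.4.1]{ST87}'', where the inequality is established \emph{directly} for sequences of band-limited functions via Nikol'skii's inequality combined with a Peetre maximal function argument. In that tradition, inequalities of this type are the \emph{building blocks} from which the diagonal $F$-embedding (Lemma~\ref{lem:embeddings}(vii)) is ultimately derived. Your argument runs in the opposite direction: you treat Lemma~\ref{lem:embeddings}(vii) as a black box and encode the sequence $(f_{\zb j})$ into a single distribution $g$ whose Littlewood--Paley blocks are (modulated copies of) the $f_{\zb j}$.

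This works, with two remarks. First, your ``clean spectrum separation'' claim can indeed be realised: if you pick $\varphi_\ell(\xi)=\Psi(2^{-\ell}\xi)-\Psi(2^{-\ell+1}\xi)$ with $\Psi\in C_0^\infty$ equal to $1$ on $[-5/4,5/4]$ and supported in $[-7/4,7/4]$, then $\{\varphi_\ell=1\}\supset[\tfrac{7}{8}\,2^\ell,\tfrac{5}{4}\,2^\ell]$, and for $N\ge 4$ the spectrum of $g_{\zb j}$ lies entirely inside $\{\varphi_{\zb j+N\zb 1}=1\}$ and outside $\supp\varphi_{\zb k}$ for every $\zb k\neq \zb j+N\zb 1$. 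Thus $\delta_{\zb k}[g]=g_{\zb k-N\zb 1}$ \emph{exactly}, not merely up to neighbours, and both equivalences you state become equalities up to a fixed constant factor $2^{Nd(1/p-1/q)}$. Second, one should be aware that in the literature Lemma~\ref{lem:embeddings}(vii) is often proved precisely via the non-periodic analogue of the present lemma; so while your reduction is perfectly valid within the logical framework of this paper (where Lemma~\ref{lem:embeddings} is quoted from \cite{ST87,Vy06,HaVyb09}), it would become circular if one tried to use it as an independent proof from first principles.
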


Let us finally state the counterpart of Theorem \ref{satz:bandlimrep1} for the $B$-case. 
\begin{satz}\label{satz:bandlimrep11}
	Let $0<p\leq \infty$, $0<\theta\leq \infty$, $\zb r\in \R$ with $\zb r>\sigma_{p}$ and $(f_{\zb j})_{\zb j \in \N_0^d}$ such that $f_{\zb j}\in \mathcal{T}^0_{\zb j}$ and $\big\|2^{\zb r \cdot \zb j}f_{\zb j}\big|\ell_{\theta}(L_p)\big\|<\infty$.
	Then
	\begin{enumerate}
		\item $\sum_{j\in\N_0^d}f_{\zb j}$ converges unconditionally in $\srptb$ if $\max\{p,\theta\}<\infty$ and in every
		$S^{\bf {\tilde{r}}}_{p,\nu}B(\T)$ with $0<\nu\leq \infty$ and $\zb {\tilde{r}}< \zb r.$
		\item it holds
		\beqq \Big\|\sum_{\zb j\in\N_0^d}f_{\zb j}\Big|\srptb\Big\|\lesssim \big\|2^{\zb r \cdot \zb j}f_{\zb
			j}\big|\ell_{\theta}(L_p(\T)\big\|.\eeqq
	\end{enumerate}
\end{satz}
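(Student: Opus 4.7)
The plan is to mirror the proof of Theorem \ref{satz:bandlimrep1}, systematically replacing the mixed $L_p(\ell_\theta)$ quasi-norm by $\ell_\theta(L_p)$ and the vector-valued Fefferman-Stein inequality by the scalar directional Hardy-Littlewood maximal inequality. As in the $F$-case, I first establish (ii) under the temporary hypothesis that $\sum_{\zb \ell} f_{\zb \ell}$ converges unconditionally in $\srptb$ (or at least in the weaker space appearing in (i) when $\theta = \infty$); the unconditional convergence itself is then deduced from the established estimate. To bound the $\srptb$-norm of the sum I pass to the difference characterization in Theorem \ref{satz:bdifferences} with $\zb m \in \N^d$ chosen so that $\sigma_p < \zb r < \zb m$, giving
\[
\Big\|\sum_{\zb \ell} f_{\zb \ell}\Big|\srptb\Big\| \asymp \sum_{e \subset [d]} \Big\|\sum_{\zb \ell} f_{\zb \ell}\Big|\srptb\Big\|_{e,\zb m}.
\]
For each $e$ and each $\zb j \in \N_0^d(e)$, I reindex $\sum_{\zb \ell \in \N_0^d} f_{\zb \ell} = \sum_{\zb \ell \in \zz^d} f_{\zb j + \zb \ell}$ (extended by zero where coordinates become negative) and treat each summand separately.

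The core step is to iterate the directionwise Peetre-versus-difference lemma (Lemma \ref{lem:diffversuspeetre}) over $i \in e$. For a component $\ell_i \geq 0$ smoothness of $f_{\zb j + \zb \ell}$ produces the gain $2^{\ell_i m_i}$ in front of the Peetre function $P_{\zb 2^{\zb j + \zb \ell},a} f_{\zb j + \zb \ell}$, while for $\ell_i < 0$ the standard $\lambda$-trick with a parameter $0 < \lambda < \min\{p,1\}$ produces the gain $2^{\ell_i a(1-\lambda)}$ in front of $[P_{\zb 2^{\zb j + \zb \ell},a} f_{\zb j + \zb \ell}]^{1-\lambda}$ multiplied by $(M_i |f_{\zb j + \zb \ell}|^\lambda)^{1/\lambda}$, where $M_i$ is the directional maximal operator from Definition \ref{dirHL}. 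The parameter $a$ is fixed with $a > 1/p$, and for $p \leq 1$ the value of $\lambda$ is chosen so small that $r > (1-\lambda)a > \sigma_p$, which is possible exactly because $\zb r > \sigma_p$; in the Banach range $p > 1$ one simply puts $\lambda = 1$. Taking $L_p$-norms, iterating H\"older's inequality with exponents $1/(1-\lambda)$ and $1/\lambda$, and applying the scalar Peetre maximal inequality of Theorem \ref{satz:peetremaximalineqnonvec} together with the boundedness of each $M_i$ on $L_{p/\lambda}(\T)$ (valid since $p/\lambda > 1$) yields
\[
2^{\zb r \cdot \zb j}\Big\|\Big(\prod_{i \in e} 2^{j_i}\Big)\int |\Delta^{\zb m,e}_{\zb h} f_{\zb j + \zb \ell}|\,d\zb h\Big\|_p \lesssim \Big(\prod_{i \in e} c_{\ell_i}\Big)\cdot 2^{\zb r \cdot (\zb j + \zb \ell)} \|f_{\zb j + \zb \ell}\|_p,
\]
with $c_{\ell_i} = 2^{\ell_i(m_i - r_i)}$ for $\ell_i \geq 0$, $c_{\ell_i} = 2^{\ell_i(a(1-\lambda) - r_i)}$ for $\ell_i < 0$, and $c_{\ell_i} = 1$ for $i \notin e$. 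Summing over $\zb j$ via the $\mu$-triangle inequality in $\ell_\theta(L_p)$ with $\mu = \min\{p,\theta,1\}$, performing the index shift $\zb j \mapsto \zb j + \zb \ell$, and carrying out the $\zb \ell$-summation (an absolutely convergent geometric series in each coordinate thanks to $m_i > r_i$ and $r_i > a(1-\lambda)$) delivers
\[
\|f|\srptb\|_{e,\zb m} \lesssim \|2^{\zb r \cdot \zb j} f_{\zb j} | \ell_\theta(L_p)\|,
\]
which upon summation in $e$ gives (ii).

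Part (i) then follows by the same Cauchy-sequence argument as in Step 3 of the proof of Theorem \ref{satz:bandlimrep1}: for two cofinal chains $\mathcal{E}, \mathcal{A} \in \mathfrak{E}$ the difference $F_{\mathcal{E}_n} - F_{\mathcal{A}_m}$ is a finite sum to which (ii) applies, producing
\[
\|F_{\mathcal{E}_n} - F_{\mathcal{A}_m}|\srptb\| \lesssim \Big\| 2^{\zb r \cdot \zb j} f_{\zb j}\, \mathbf{1}_{(\mathcal{E}_n \cup \mathcal{A}_m)\setminus(\mathcal{E}_n \cap \mathcal{A}_m)}\Big|\ell_\theta(L_p)\Big\|,
\]
which tends to zero as $n,m \to \infty$ by dominated convergence whenever $\max\{p,\theta\} < \infty$. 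For $\theta = \infty$ or $p = \infty$ I would use the embedding chain $S^{\zb s}_{p,1}B(\T) \hookrightarrow S^{\zb {\tilde r}}_{p,\nu}B(\T)$ with $\zb r > \zb s > \sigma_p$ and $\zb s > \zb {\tilde r}$, together with the weighted inclusion $\|2^{\zb s \cdot \zb j} f_{\zb j}|\ell_1\| \lesssim \|2^{\zb r \cdot \zb j} f_{\zb j}|\ell_\infty\|$, reducing to the already settled case with fine index $1 < \infty$. The main technical obstacle is the bookkeeping of the multidimensional case analysis in both $e \subset [d]$ and the signs of the components of $\zb \ell$: one has to iterate the univariate argument direction by direction, tracking exponents carefully, but no new ingredients beyond those already present in the proof of Theorem \ref{satz:bandlimrep1} are needed.
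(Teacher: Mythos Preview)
Your proposal is correct and follows essentially the same approach as the paper's own proof: mirror the argument of Theorem \ref{satz:bandlimrep1} line by line, replacing $L_p(\ell_\theta)$ by $\ell_\theta(L_p)$, using the difference characterization of Theorem \ref{satz:bdifferences}, the same choice of parameters $a>1/p$ and $\lambda$, and substituting the scalar Peetre and Hardy--Littlewood maximal inequalities (Theorems \ref{satz:peetremaximalineqnonvec} and \ref{satz:hardylittlewood}) for the vector-valued Fefferman--Stein inequality. The convergence argument in (i) via the Cauchy-sequence reasoning and the embedding chain for $\max\{p,\theta\}=\infty$ likewise matches the paper.
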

\begin{proof}
	We follow the proof of Theorem \ref{satz:bandlimrep1} line by line and point out the necessary modifications for the $B$-case. To convince the reader we explain this modifications for (ii) in the univariate case.  Again, we prove
	$$\Big\|\sum_{\ell\in\N_0}f_{\ell}\Big|B^r_{p,\theta}(\tor)\Big\|\lesssim \big\|2^{ r j}f_j\big|L_p(\ell_{\theta}(\N_0))\big\|$$
	by using methods from difference characterization. We start by switching to the difference norm in $\srptbu$ with $m>r$
	\be
	\Big\|\sum_{\ell\in\N_0}f_{\ell}\Big|\srptbu\Big\|\asymp\Big\|\sum_{\ell\in\N_0}f_{\ell}\Big\|_p+\Big(\sum_{j=0}^{\infty}2^{\theta j r}\Big\|2^j\int_{-2^{-j}}^{2^{-j}}\Big|\Delta^m_h \Big[\sum_{\ell\in\N_0}f_{\ell}\Big]\Big|dh\Big\|^{\theta}_p\Big)^{\frac{1}{\theta}}\label{eq:h2h}.
	\ee
	First we estimate the $L_p$-norm of $f$ and obtain trivially using either H\"older's inequality (in case $\min\{p,\theta\}>1$ or $p<\min\{1,\theta\}$) or the embedding $\ell_{\theta}\hookrightarrow \ell_{1}$ (otherwise) the estimate 
	$$\Big\|\sum_{\ell\in\N_0}f_{\ell}\Big\|_p\lesssim \Big(\sum_{j\in\N_0}2^{\theta r j}\|f_j\|_p^{\theta}\Big)^{\frac{1}{\theta}}. $$
	Let  $a>0$ be a positive real number such that $a>\frac{1}{p}$ is fulfilled. Additionally, in case $p>1$ we choose $\lambda=1$. Whereas  in case $p\leq 1$ we choose
	$$ 0<\lambda< p $$ such that 
	$$ r>(1-\lambda)a>(1-\lambda)\frac{1}{p}\geq(1-p)\frac{1}{p}=\sigma_{p}.$$
	For the second term in \eqref{eq:h2h} the estimates in \eqref{eq:est1n}, \eqref{eq:decomp1} and \eqref{eq:esth1} yield
	\beq
	&&\Big(\sum_{j=0}^{\infty}2^{\theta j r}\Big\|2^j\int_{-2^{-j}}^{2^{-j}}\Big|\Delta^m_h \Big[\sum_{\ell\in \Z}f_{j+\ell}\Big]\Big|dh\Big\|^{\theta}_p\Big)^{\frac{1}{\theta}}\nonumber\\
	&&\lesssim \Big[\sum_{\ell\geq 0}2^{\mu\ell (m-r)}\|2^{(j+\ell)r}P_{2^{j+\ell},a}f_{j+\ell}(x)|\ell_\theta(L_p(\tor))\|^{\mu}\label{eq:h7222}\\
	&&\quad+\sum_{\ell<0}2^{\mu \ell [a(1-\lambda)-r]}\|2^{(j+\ell)r}[P_{2^{j+\ell},a}f_{2^{j+\ell}}(x)]^{1-\lambda}\times M|f_{j+\ell}|^{\lambda}(x)|\ell_\theta(L_p(\tor))\|^{\mu}\Big]^{\frac{1}{\mu}}\nonumber
	\eeq
	with $\mu=\min\{p,\theta,1\}$.
	The $L_p(\tor)$-norm is now the inner norm in the sequence spaces. For that reason it suffices to use simpler (non-vector valued) maximal inequalities.  
	We apply Theorem \ref{satz:peetremaximalineqnonvec} to the first summand, which gives
	\beqq
	\|2^{(j+\ell)r}P_{2^{j+\ell},a}f_{j+\ell}(x)|\ell_{\theta}(L_{p}(\tor))\|\lesssim \|2^{(j+\ell)r}f_{j+\ell}(x)|\ell_{\theta}(L_p(\tor))\|.
	\eeqq
	An index shift yields
	\be
	\|2^{(j+\ell)r}P_{2^{j+\ell},a}f_{j+\ell}(x)|\ell_{\theta}(L_{p}(\tor))\|\lesssim \|2^{jr}f_{j}(x)|\ell_{\theta}(L_{p}(\tor))\|.\label{eq:h1333}
	\ee
	In case $p\leq 1$ we apply H\"older's inequality to the second summand in \eqref{eq:h7222}  with $\frac{1}{1-\lambda}$,  $\frac{1}{\lambda}$ and obtain
	\beqq
	\|2^{(j+\ell)r}[P_{2^{j+\ell},a}f_{j+\ell}(x)]^{1-\lambda}M|f_{j+\ell}|^{\lambda}(x)|\ell_{\theta}(L_{p}(\tor))\|^{\theta}&\leq&\Big(\sum_{j\in\N_0}2^{\theta(j+\ell)r}
	\|P_{2^{j+\ell},a}f_{j+\ell}(x)|L_p(\tor)\|^{(1-\lambda)\theta}\nonumber\\
	&&\times\|(M|f_{j+\ell}|^{\lambda}(x))^{\frac{1}{\lambda}}|L_p(\tor)\|^{\lambda\theta}\Big)^{\frac{1}{\theta}}.\nonumber\\
	\eeqq
	This can be skipped in case $p>1$.
	Applying the maximal inequalities stated in Theorem \ref{satz:peetremaximalineqnonvec}  and Theorem \ref{satz:hardylittlewood} (together with a trick similar to \eqref{eq:h11})  yields
	\beqq
	\|2^{(j+\ell)r}[P_{2^{j+\ell},a}f_{j+\ell}(x)]^{1-\lambda}M|f_{j+\ell}|^{\lambda}(x)|\ell_{\theta}(L_{p}(\tor))\|&\lesssim&\Big(\sum_{j\in\N_0}2^{\theta(j+\ell)r}
	\|f_{j+\ell}(x)|L_p(\tor)\|^{\theta}\Big)^{\frac{1}{\theta}}\nonumber\\
	&\lesssim& \|2^{jr}f_{j}(x)|\ell_{\theta}(L_{p}(\tor))\|.\nonumber
	\label{eq:h999}
	\eeqq
	Hence, the estimates from \eqref{eq:h1333} and \eqref{eq:h999} imply
	\beq
	&&\Big(\sum_{j=0}^{\infty}2^{\theta j r}\Big\|2^j\int_{-2^{-j}}^{2^{-j}}\Big|\Delta^m_h \Big[\sum_{\ell\in \Z}f_{j+\ell}\Big]\Big|dh\Big\|^{\theta}_p\Big)^{\frac{1}{\theta}}\\
	&&\lesssim \Big[\sum_{\ell\geq 0}2^{\mu\ell (m-r)}\nonumber
	+\sum_{\ell<0}2^{\mu \ell [a(1-\lambda)-r]}\Big]^{\frac{1}{\mu}}\|2^{jr}f_{j}(x)|\ell_{\theta}(L_{p}(\tor))\|.\nonumber
	\eeq
	The choice of $\lambda, a$ and $m$ relatively to $r$ ensures the convergence of the series to an absolute constant.
	This concludes the proof in the univariate case. For the multivariate situation see the comments in Step 2 of Theorem \ref{satz:bandlimrep1}. 
\end{proof}
\section{Sampling representations}
In this section we provide theorems that allow for replacing the Fourier analytic building blocks $\delta_{\zb j}[f]$ used to define the spaces $\srptf$ and $\srptb$ by building blocks $q^L_{\zb j}[f]$, cf. Definition \ref{def:srptf}. 
Here we will prove the following main results.
\begin{satz}\label{maincharF}
	Let $0<p<\infty$, $0<\theta\leq \infty$, $L>\max\{\frac{1}{p},\frac{1}{\theta}\}$ ($L=1$ requires $\theta<\infty$) and $\zb r >\max\{\frac{1}{p},\frac{1}{\theta}\}$ then the (quasi-)norms 
	$$\|f|\srptf\|\asymp \|2^{\zb r \cdot \zb j}q^L_{\zb j}(f)|L_p(\ell_{\theta})\|$$ 
	are equivalent for all $f\in\srptf$.
\end{satz}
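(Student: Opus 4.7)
My plan is to prove the two directions of the equivalence separately.

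For the easier direction $\|f|\srptf\|\lesssim \|2^{\boldr\cdot\boldj}q^L_{\boldj}[f]|L_p(\ell_\theta)\|$, I apply Theorem \ref{satz:bandlimrep1} to the sequence $f_{\boldj}:=q^L_{\boldj}[f]$. By construction each $q^L_{\boldj}[f]$ is a trigonometric polynomial of coordinate-wise degree $\lesssim 2^{j_i}$, so it lies in $\mathcal{T}^0_{\boldj+\zb{c}}$ for a fixed shift vector $\zb{c}$; a harmless index shift brings it into the form required by Theorem \ref{satz:bandlimrep1}. The telescoping identity $I^L_{\zb{m}}[f]=\sum_{\boldj\leq\zb{m}}q^L_{\boldj}[f]$ combined with the embedding $\srptf\hookrightarrow C(\tor^d)$ from Lemma \ref{lem:embeddings} (ii) (valid because $\boldr>1/p$) identifies $f=\sum_{\boldj\in\N_0^d}q^L_{\boldj}[f]$ in the appropriate sense, after which Theorem \ref{satz:bandlimrep1} immediately produces the desired inequality. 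The required smoothness condition $\boldr>\sigma_{p,\theta}$ follows from the standing assumption $\boldr>\max\{1/p,1/\theta\}$.

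For the harder direction $\|2^{\boldr\cdot\boldj}q^L_{\boldj}[f]|L_p(\ell_\theta)\|\lesssim \|f|\srptf\|$, I decompose $f=\sum_{\boldk\in\N_0^d}\delta_{\boldk}[f]$ and estimate each contribution $q^L_{\boldj}[\delta_{\boldk}[f]]$ pointwise. The key ingredients are the sample-value bound
\begin{equation*}
|\delta_{\boldk}[f](\boldx^{\boldj}_{\boldu})|\leq P_{2^{\boldk},a}\delta_{\boldk}[f](\boldx)\,\prod_{i=1}^d(1+2^{k_i}|x_i-x^{j_i}_{u_i}|)^a
\end{equation*}
built from Definition \ref{def:peetremax}, together with the kernel decay $|K^L_{\pi,j_i}(x_i-x^{j_i}_{u_i})|\lesssim (1+2^{j_i}|x_i-x^{j_i}_{u_i}|)^{-L}$ from Lemma \ref{lem:coredecreasing}. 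A coordinate-wise case analysis --- using Proposition \ref{lem:qkfreprod} to force $q^L_{\boldj}[\delta_{\boldk}[f]]=0$ whenever $k_i<j_i-L$ in some coordinate, and exploiting the surplus kernel decay against the Peetre blow-up in the coordinates where $k_i>j_i$ --- should yield a pointwise bound of the schematic form
\begin{equation*}
|q^L_{\boldj}[\delta_{\boldk}[f]](\boldx)|\lesssim \omega(\boldj,\boldk)\, P_{2^{\max(\boldj,\boldk)},a}\delta_{\boldk}[f](\boldx),
\end{equation*}
where $\omega(\boldj,\boldk)$ is a tensor-product weight decaying geometrically in $|\boldj-\boldk|$. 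Weighting by $2^{\boldr\cdot\boldj}$, summing over $\boldk$ via the quasi-triangle inequality in $L_p(\ell_\theta)$, performing an index shift, and finally invoking the vector-valued Peetre maximal inequality Theorem \ref{satz:peetremaximalineq} with an exponent $a\in(\max\{1/p,1/\theta\},\min\{L,r_1,\ldots,r_d\})$ collapses the right-hand side to $\|2^{\boldr\cdot\boldk}\delta_{\boldk}[f]|L_p(\ell_\theta)\|=\|f|\srptf\|$.

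The main obstacle is precisely the pointwise inequality above: the coordinate-wise case split produces $2^d$ distinct regimes according to the sign of $k_i-j_i$, and in each regime one must carefully balance the polynomial blow-up of the Peetre envelope against the kernel decay and the reproduction-induced vanishing. This is exactly what forces the thresholds $L,\boldr>\max\{1/p,1/\theta\}$: one needs $a>\max\{1/p,1/\theta\}$ for the vector-valued maximal inequality and simultaneously $L>a$ and $r_i>a$ in every coordinate for the geometric sums over $\boldk-\boldj$ to converge. The exceptional case $L=1$ must be treated separately because the Dirichlet kernel $K^1_{\pi,j}$ lacks the polynomial decay of Lemma \ref{lem:coredecreasing}; instead one exploits its exact Fourier support via the identity in Lemma \ref{lem:fourierformula}, and this orthogonality-based argument is what forces the additional restriction $\theta<\infty$.
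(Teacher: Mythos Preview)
Your treatment of the easy direction is correct and matches the paper: apply Theorem~\ref{satz:bandlimrep1} to the bandlimited blocks $q^L_{\boldj}[f]$.

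For the hard direction with $L\geq 2$, your strategy is the right one in outline, but the pointwise bound you sketch is too crude to reach the stated threshold on $L$. After inserting $|\delta_{\boldk}[f](\boldx^{\boldj}_{\boldu})|\le P_{2^{\boldk},a}\delta_{\boldk}[f](\boldx)\prod_i(1+2^{k_i}|x_i-x^{j_i}_{u_i}|)^a$ and the kernel decay, you still have to control $\sum_{\boldu}\prod_i(1+2^{j_i}|x_i-x^{j_i}_{u_i}|)^{a-L}$, which is uniformly bounded in $\boldj$ only when $L>a+1$. Since you also need $a>\max\{1/p,1/\theta\}$ for Theorem~\ref{satz:peetremaximalineq}, your argument forces $L>\max\{1/p,1/\theta\}+1$, strictly stronger than the hypothesis $L>\max\{1/p,1/\theta\}$ (for instance it excludes the de la Vall\'ee Poussin kernel $L=2$ when $\min\{p,\theta\}\le 1$). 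The paper closes this gap by not bounding the lattice sum directly: Lemma~\ref{lem:kyriazis} converts $\sum_{\boldu}|\lambda_{\boldu}|\prod_i(1+2^{j_i}|x_i-x^{j_i}_{u_i}|)^{-L}$ into $\big[M|\sum_{\boldu}\lambda_{\boldu}\chi_{\boldj,\boldu}|^\lambda\big]^{1/\lambda}$ for any $\lambda$ with $L>1/\lambda$, and Proposition~\ref{lem:samplingvspeetre} then gives $|q^L_{\boldj}[f]|\lesssim 2^{a|\boldell|_1}\big[M|P_{2^{\boldj+\boldell},a}f|^\lambda\big]^{1/\lambda}$. One must subsequently apply \emph{both} the Fefferman--Stein inequality (Theorem~\ref{thm:feffermanstein}, to remove $M$) and the Peetre inequality (Theorem~\ref{satz:peetremaximalineq}, to remove $P$); the Hardy--Littlewood layer is precisely what lets you take $\lambda$ close to $\min\{p,\theta\}$ and hence $L$ down to $\max\{1/p,1/\theta\}$.

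Your description of the $L=1$ case is also off. The Dirichlet kernel is not handled by any ``orthogonality-based argument'' from Lemma~\ref{lem:fourierformula}; rather, the paper uses the factorisation $I^1_{\boldj}f=(2\pi)^{-d}\mathcal{D}^1_{\boldj}\ast \tilde{I}^2_{\boldj}f$ of Lemma~\ref{lem:i1rep} to replace the slowly decaying Dirichlet kernel by the quadratically decaying auxiliary kernel $\tilde{K}^2$, and then removes the outer Dirichlet convolution via a Lizorkin-type vector-valued multiplier theorem on $L_p(\ell_\theta)$. It is this multiplier step that forces $1<p,\theta<\infty$ (hence in particular $\theta<\infty$), not orthogonality.
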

\begin{proof}
	The result is a consequence of 
	Theorem \ref{satz:bandlimpres2} together with Theorem \ref{satz:bandlimrep1}. For the case $L=1$ we refer to Theorem \ref{satz:bandlimpresdirichletf}.
\end{proof}

\begin{rem} We strongly conjecture the optimality of the condition on $L$ in the above theorems, 
	see also Remark \ref{rem:propfspace},(ii) above.
\end{rem}

\noindent For the $B$-case weaker conditions on $\boldr$ and $L$ are sufficient. 
\begin{satz}\label{maincharB}
	Let $0<p,\theta\leq \infty$, $L>\frac{1}{p}$ ($L=1$ requires $p<\infty$) and $\zb r >\frac{1}{p}$ then the (quasi-)norms 
	$$\|f|\srptb\|\asymp \|2^{\zb r \cdot \zb j}q^L_{\zb j}(f)|\ell_\theta(L_{p})\|$$ 
	are equivalent for all $f\in\srptb$.
\end{satz}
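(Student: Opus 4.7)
The plan is to follow the proof of Theorem \ref{maincharF} line by line, replacing every vector-valued step by its scalar counterpart in the inner $L_p$-norm and invoking Theorem \ref{satz:bandlimrep11} at the place where its $F$-analogue Theorem \ref{satz:bandlimrep1} was used. The equivalence splits into two one-sided inequalities.

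\textbf{Upper bound for the sampling norm.} To prove $\|2^{\zb r\cdot\zb j}q^L_{\zb j}[f]|\ell_\theta(L_p)\|\lesssim \|f|\srptb\|$, I would decompose $f=\sum_{\zb \ell\in\N_0^d}\delta_{\zb \ell}[f]$ and, by linearity of $q^L_{\zb j}$, write $q^L_{\zb j}[f]=\sum_{\zb \ell}q^L_{\zb j}[\delta_{\zb \ell}[f]]$. The central step is a pointwise estimate
\begin{equation*}
|q^L_{\zb j}[\delta_{\zb \ell}[f]](\zb x)|\lesssim \Bigl(\prod_{i=1}^{d} 2^{-\alpha_i|\ell_i-j_i|}\Bigr)\, P_{\zb b,a}\delta_{\zb \ell}[f](\zb x)\,,
\end{equation*}
with $b_i=2^{\max(j_i,\ell_i)}$, where $\alpha_i=L$ when $\ell_i>j_i$ (obtained from the polynomial decay of $K^L_{\pi,j_i}$ in Lemma \ref{lem:coredecreasing} together with a Riemann-sum control of the discrete convolution defining $I^L_{j_i}$), and $\alpha_i>0$ when $\ell_i\leq j_i$ (exploiting the bandlimitedness of $\delta_{\zb \ell}[f]$ and the reproduction property in Corollary \ref{lem:trigreprod}). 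The scalar Peetre maximal inequality (Theorem \ref{satz:peetremaximalineqnonvec}) applied in the inner $L_p$ with $a>1/p$ yields $\|q^L_{\zb j}[\delta_{\zb \ell}[f]]\|_p\lesssim \prod_i 2^{-\alpha_i|\ell_i-j_i|}\,\|\delta_{\zb \ell}[f]\|_p$. Taking the outer $\ell_\theta$ quasi-norm, applying a $\mu$-triangle inequality with $\mu=\min\{\theta,1\}$, and summing the resulting multidimensional geometric series in $\zb \ell-\zb j$ (after an index shift) closes the estimate; absolute convergence of that series forces exactly $L>1/p$ (in the ``upper'' directions $\ell_i>j_i$) and $\zb r>1/p$ (in the ``lower'' directions, where the smoothness weight $2^{\zb r\cdot\zb j}$ has to beat the bandlimitedness factor).

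\textbf{Lower bound for the sampling norm.} I would first establish the sampling identity $f=\sum_{\zb j\in\N_0^d}q^L_{\zb j}[f]$, with unconditional convergence in $\srptb$ when $\max\{p,\theta\}<\infty$, and in a slightly weaker Besov space $S^{\tilde{\zb r}}_{p,\nu}B(\T)$ (with $\tilde{\zb r}<\zb r$, $0<\nu\leq\infty$) otherwise. This follows by applying Lemma \ref{lem:triginterpol} to the solid rectangles $\Delta_m=\{\zb j\in\N_0^d:\zb j\leq m\boldone\}$, on whose grid $G^d_{\Delta_m}$ the Smolyak interpolant $T^L_{\Delta_m}f$ agrees with $f$, and letting $m\to\infty$ using that $f$ is continuous by Lemma \ref{lem:embeddings}(ii) (which requires $\zb r>1/p$). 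Since every $q^L_{\zb j}[f]$ lies in $\mathcal{T}^0_{\zb j}$, Theorem \ref{satz:bandlimrep11} applies directly and delivers
\begin{equation*}
\|f|\srptb\|=\Bigl\|\sum_{\zb j\in\N_0^d}q^L_{\zb j}[f]\,\Big|\,\srptb\Bigr\|\lesssim \bigl\|2^{\zb r\cdot\zb j}q^L_{\zb j}[f]\,\big|\,\ell_\theta(L_p)\bigr\|\,.
\end{equation*}

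\textbf{Main obstacle.} The principal technical difficulty lies in the endpoint case $L=1$, where Lemma \ref{lem:coredecreasing} fails and the polynomial-decay argument driving the upper bound is unavailable. Here one must replace it by an argument exploiting the oscillation and nested-zero structure of the Dirichlet-type kernel \eqref{Dirichlet}, in analogy with the $F$-case treatment given by Theorem \ref{satz:bandlimpresdirichletf}. Adapting that argument to $\ell_\theta(L_p)$ instead of $L_p(\ell_\theta)$ is routine, the only substantive change being the use of the scalar Hardy-Littlewood maximal inequality (Theorem \ref{satz:hardylittlewood}) in place of its vector-valued counterpart; the side condition ``$L=1$ requires $p<\infty$'' arises precisely because that scalar maximal bound is available only for $p<\infty$.
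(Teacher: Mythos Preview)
Your overall plan matches the paper's: combine Theorem~\ref{satz:bandlimrep11} for the lower bound with a Besov analogue of Theorem~\ref{satz:bandlimpres2} for the upper bound (this is Theorem~\ref{satz:bandlimpresbesov}), treating $L=1$ separately (Theorem~\ref{satz:bandlimpresdirichletb}). However, the mechanism you describe for the upper bound is not right.

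The claimed pointwise decay $\alpha_i=L$ in the ``upper'' directions $\ell_i>j_i$ does not follow from Lemma~\ref{lem:coredecreasing}. Sampling a function bandlimited at level $2^{\ell}$ on a grid of mesh $2^{-j}$ with $\ell>j$ produces aliasing, and the aliased output carries no smallness of order $2^{-L(\ell-j)}$; what the kernel decay actually yields, via Lemma~\ref{lem:kyriazis} and Proposition~\ref{lem:samplingvspeetre}, is a \emph{growth} factor $2^{a|\zb\ell|_1}$ in front of the Peetre maximal function of $\delta_{\zb j+\zb\ell}[f]$. The paper proceeds differently: Proposition~\ref{lem:qkfreprod} forces $q^L_{\zb j}[\delta_{\zb j+\zb\ell}[f]]=0$ unless $\zb\ell\ge\zb 0$, so your ``lower'' directions vanish identically and need no separate estimate. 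For the surviving $\zb\ell\ge\zb 0$ one has the growth $2^{a|\zb\ell|_1}$, and the decay that makes the $\zb\ell$-series converge comes from the smoothness weight: writing $2^{\zb r\cdot\zb j}=2^{-\zb r\cdot\zb\ell}\,2^{\zb r\cdot(\zb j+\zb\ell)}$ and shifting indices leaves a net factor $2^{(a\boldone-\zb r)\cdot\zb\ell}$, summable precisely when one can choose $a$ with $1/p<a<r_1$. Thus $\zb r>1/p$ enters so that such an $a$ exists, while $L>1/p$ enters only as the hypothesis under which the pointwise bound of Proposition~\ref{lem:samplingvspeetre} is available, not as a decay rate.

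Two smaller remarks. Your identification of the limit via the interpolation property is a legitimate alternative to the paper's density argument, but you must first invoke Theorem~\ref{satz:bandlimrep11} to know the partial sums converge in some $S^{\tilde{\zb r}}_{p,\nu}B(\T)\hookrightarrow C(\T)$, and only then conclude that the continuous limit agrees with $f$ on a dense grid. And your explanation of why $L=1$ forces $p<\infty$ is off: Theorem~\ref{satz:hardylittlewood} does hold at $p=\infty$. The actual obstruction is the $L_p$-boundedness of the Dirichlet partial sum operator (Remark~\ref{rem512}(iii)), used through Lemma~\ref{lem:i1rep} to reduce $I^1_{\zb j}$ to the quadratically decaying auxiliary operator $\tilde I^2_{\zb j}$; this requires $1<p<\infty$.
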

\begin{proof}
	The proof is a consequence of Theorem \ref{satz:bandlimrep11} together with Theorem \ref{satz:bandlimpresbesov}. For the case $L=1$ we refer to Theorem \ref{satz:bandlimpresdirichletb}.
\end{proof}

\begin{rem}
	In case of $\srptb$ with $p\geq 1$ and $\boldr >1/p$ similar characterizations were proved by Dinh D\~ung \cite{Di91,Di92,Di01} 
	using the following variant of the de la Vall\'{e}e-Poussin kernel 
	\be V_j(x)=\frac{\sin(2^{j-1}x)\sin(32^{j-1}x)}{2^{2j}3\sin^2(\frac{x}{2})}\label{eq:poussin}\,,
	\ee
	which yields to an interpolation operator on $3\cdot 2^j$ equidistant nodes. We can reproduce and extend 
	this result to the Triebel-Lizorkin scale as well as to $p>1/2$ with straight-forward modifications of the 
	arguments used in Theorems \ref{satz:bandlimpresbesov} below. Note, that our proof only uses a reproduction and a 
	decay property of the kernel. Also the de la Vall{\'ee} Poussin sampling operator $R_m$ used by Temlyakov in \cite[I.6]{Tem93} 
	is admissible here. 
\end{rem}

\subsection{The case of quadratically decaying kernels with $L\geq 2$}
Let us first deal with kernels providing at least a quadratic decay according to Lemma \ref{lem:coredecreasing}.
We introduce the characteristic function $\chi_{j,u}$ of the dyadic interval $[2\pi u/2^j, 2\pi (u+1)/2^j]$ indexed by $j\in \N_0$ and $u\in \zz$.
For $\zb j\in\N_0^d$ and $\zb u\in\Z$ we denote with
$$ \chi_{\zb j,\zb u}(\zb x):= \prod_{i=1}^d\chi_{j_i,u_i}(x_i)$$
the characteristic function of the respective parallelepiped.
The following lemma represents the ``hyperbolic'' version  of \cite[Lem.\ 3, 7]{Ke10}. The lemma is
originally due to Kyriazis \cite[Lem.\ 7.1]{Ky03}. For the convenience of the reader we will give a proof.

\begin{lem}\label{lem:kyriazis}
	Let $0<\lambda\leq 1$ and $L>\frac{1}{\lambda}$. For any sequence $(\lambda_{\zb u})_{ \zb u\in A_{\zb j}(\zb x)}$ of complex numbers and every $\zb j \in \N_0^d$ we have
	\be
	\sum_{\zb u \in \Z}|\lambda_{\zb j, \zb u}|\prod_{i=1}^d (1+2^{j}|x_i-x^{j_i}_{u_i}|)^{-L}\leq C \Big[M\Big|\sum_{\zb u\in \Z}\lambda_{\zb j, \zb u}\chi_{\zb j,\zb u}\Big|^\lambda(\zb x)\Big]^{\frac{1}{\lambda}}
	\ee
	with a constant C independent of $\zb j$, $(\lambda_{\zb j, \zb u})_{\zb u}$ and $\zb x$.
\end{lem}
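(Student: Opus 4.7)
The plan is to decompose the sum over $\zb u\in\Z$ into dyadic shells (one index per coordinate) based on the distance from $\zb x$ to the grid point $\zb x_{\zb u}^{\zb j}$, exploit the product structure of the weight, apply the elementary embedding $\ell_{\lambda}\hookrightarrow \ell_1$ (valid since $0<\lambda\leq 1$), and finally reinterpret the resulting shell sums as averages of a non-negative step function over axis-parallel boxes centered at $\zb x$, which are dominated by an iterated directional Hardy--Littlewood type maximal operator.

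Concretely, for each $\zb m\in\N_0^d$ I would introduce
\[
B_{\zb j,\zb m}(\zb x):=\{\zb u\in\Z\,:\, 2^{m_i}\leq 1+2^{j_i}|x_i-x^{j_i}_{u_i}|<2^{m_i+1},\ i=1,\ldots,d\},
\]
so that on $B_{\zb j,\zb m}(\zb x)$ the weight $\prod_i(1+2^{j_i}|x_i-x^{j_i}_{u_i}|)^{-L}$ is comparable to $2^{-L|\zb m|_1}$. This yields
\[
\sum_{\zb u\in\Z}|\lambda_{\zb j,\zb u}|\prod_{i=1}^d(1+2^{j_i}|x_i-x^{j_i}_{u_i}|)^{-L}\lesssim \sum_{\zb m\in\N_0^d}2^{-L|\zb m|_1}\sum_{\zb u\in B_{\zb j,\zb m}(\zb x)}|\lambda_{\zb j,\zb u}|,
\]
and since $0<\lambda\leq 1$ the inner sum is bounded by $\bigl(\sum_{\zb u\in B_{\zb j,\zb m}(\zb x)}|\lambda_{\zb j,\zb u}|^{\lambda}\bigr)^{1/\lambda}$ via $\ell_{\lambda}\hookrightarrow\ell_1$.

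Because the supports $\supp\chi_{\zb j,\zb u}$ are pairwise disjoint of common measure $(2\pi)^d 2^{-|\zb j|_1}$, the non-negative step function
\[
g(\zb y):=\Bigl|\sum_{\zb v\in\Z}\lambda_{\zb j,\zb v}\chi_{\zb j,\zb v}(\zb y)\Bigr|^{\lambda}=\sum_{\zb v\in\Z}|\lambda_{\zb j,\zb v}|^{\lambda}\chi_{\zb j,\zb v}(\zb y)
\]
satisfies $\sum_{\zb u\in B_{\zb j,\zb m}(\zb x)}|\lambda_{\zb j,\zb u}|^{\lambda}=(2\pi)^{-d}2^{|\zb j|_1}\int_{E_{\zb j,\zb m}(\zb x)}g(\zb y)\,d\zb y$, where $E_{\zb j,\zb m}(\zb x):=\bigcup_{\zb u\in B_{\zb j,\zb m}(\zb x)}\supp\chi_{\zb j,\zb u}$ lies in an axis-parallel box around $\zb x$ with side lengths $\lesssim 2^{m_i-j_i}$, and hence has measure $\lesssim 2^{|\zb m|_1-|\zb j|_1}$. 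Interpreting $M$ as the iterated directional Hardy--Littlewood operator $M_d\circ\cdots\circ M_1$ from Definition \ref{dirHL} (which dominates averages over arbitrary axis-parallel boxes) gives $\sum_{\zb u\in B_{\zb j,\zb m}(\zb x)}|\lambda_{\zb j,\zb u}|^{\lambda}\lesssim 2^{|\zb m|_1}Mg(\zb x)$.

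Assembling the three estimates produces
\[
\sum_{\zb u\in\Z}|\lambda_{\zb j,\zb u}|\prod_{i=1}^d(1+2^{j_i}|x_i-x^{j_i}_{u_i}|)^{-L}\lesssim\bigl(Mg(\zb x)\bigr)^{1/\lambda}\sum_{\zb m\in\N_0^d}2^{|\zb m|_1(1/\lambda-L)},
\]
and the geometric series converges precisely because $L>1/\lambda$. This is the exact balance between the shell-cardinality factor $2^{|\zb m|_1/\lambda}$ produced by the $\ell_{\lambda}$-trick and the decay $2^{-L|\zb m|_1}$ of the kernel. The only technical delicacy lies in the anisotropy of the shells: the bounding boxes have side lengths $2^{m_i-j_i}$ which differ per direction, so an isotropic Hardy--Littlewood maximal function would not suffice and one must use the iterated/product maximal operator. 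Modulo that bookkeeping, the resulting constant is manifestly independent of $\zb j$, $\zb x$ and the sequence $(\lambda_{\zb j,\zb u})$, as required.
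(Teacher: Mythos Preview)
Your proof is correct and follows essentially the same route as the paper: dyadic shell decomposition in each coordinate, the $\ell_{\lambda}\hookrightarrow\ell_1$ embedding, reinterpretation of the shell sums as averages of the step function $g$, and domination by a maximal operator. The only cosmetic differences are that the paper absorbs the convergent geometric series by first passing to a supremum over shells (splitting $2^{-L|\zb m|_1}=2^{-\delta|\zb m|_1}2^{-|\zb m|_1/\lambda}$ with $\delta=L-1/\lambda$) rather than summing at the end, and that the paper invokes directly the maximal operator $M$ from Definition~\ref{def:hl}, which is already taken over axis-parallel \emph{cuboids} and therefore handles the anisotropic side lengths $2^{m_i-j_i}$ without any iteration; your remark that ``one must use the iterated/product maximal operator'' is thus unnecessary, though of course the iterated operator works just as well.
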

\begin{proof} We follow the proof given in \cite[Lem.\ 7]{Ke10}. Let $\delta = L-1/\lambda>0$. We define the following decomposition of $\Z$. Let $\zb x\in \R$ then
	$$
	\Omega_{\zb k}(\zb x):=\Omega_{k_1}\cdot...\cdot \Omega_{k_d},\quad \zb k \in \N_0^d\,,
	$$
	with 
	\begin{equation}\nonumber
		\begin{split}
			\Omega_{k_i}&:=\{u_i \in A_{j_i}(x_i)~:~\pi 2^{k_i-1}<2^{j_i}|x_i-x^{j_i}_{u_i}| \leq
			\pi 2^{k_i}\}\\
			\Omega_0&:=\big\{u_i \in A_{j_i}(x_i)~:~2^{j_i}|x_i-x^{j_i}_{u_i}| \leq
			\pi\big\}\,.
		\end{split}
	\end{equation}
	We estimate 
	\begin{equation}\label{est1}
		\begin{split}
			&\sum\limits_{\zb u \in \Z} |\lambda_{\zb j,\zb u}|\prod\limits_{i=1}^d
			(1+2^{j_i}|x_i-x^{j_i}_{u_i}|)^{-L}\\
			&~~~~~=\sum\limits_{\zb k \in \N_0^d}\sum\limits_{\zb u \in \Omega_{\zb k}(\zb x)}
			|\lambda_{\zb j,\zb u}|\prod\limits_{i=1}^d
			(1+2^{j_i}|x_i-x^{j_i}_{u_i}|)^{-L}\\
			&~~~~~\lesssim \sum\limits_{\zb k \in \N_0^d}\sum\limits_{\zb u \in \Omega_{\zb k}(\zb x)}
			|\lambda_{\zb j,\zb u}|2^{-\delta|\zb k|_1-|\zb k|_1/\lambda} \lesssim \sup\limits_{\substack{ k_i\leq j_i\\ i\in[d]}} \Big(\sum\limits_{\zb u \in
				\Omega_{\zb k}(\zb x)}|\lambda_{\zb j,\zb u}|\Big)2^{-|\zb k|_1/\lambda}\\
			&~~~~~\lesssim \Big(\sup\limits_{\zb k \in \N_0^d}2^{-|\zb k|_1}\sum\limits_{\zb u \in 
				\Omega_{\zb k}(\zb x)}|\lambda_{\zb j,\zb u}|^\lambda\Big)^{1/\lambda}\,.
		\end{split}
	\end{equation}
	Note, that 
	$$
	\int\limits_{\bigcup\limits_{\zb u \in \Omega_{\zb k}(\zb x)}Q_{\zb j,\zb u}} \sum\limits_{\zb \omega \in \Omega_{\zb k}(\zb x)}
	|\lambda_{\zb j,\zb \omega}|^\lambda\chi_{\zb j,\zb \omega}(\zb y)\, d\zb y = 2^{-|\zb j|_1}\sum\limits_{\zb u \in \Omega_{\zb k}(\zb x)}
	|\lambda_{\zb j,\zb u}|^\lambda
	$$
	and hence
	\begin{equation}\label{est2}
		2^{-|\zb k|_1}\sum\limits_{\zb u \in 
			\Omega_{\zb k}(\zb x)}|\lambda_{\zb j,\zb u}|^\lambda = 2^{-|\zb k|_1+|\zb j|_1}\int\limits_{\bigcup\limits_{\zb u \in
				\Omega_{\zb k}(\zb x)}Q_{\zb j,\zb u}} \sum\limits_{\zb \omega \in \Omega_{\zb k}(\zb x)}
		|\lambda_{\zb j,\zb \omega}|^\lambda\chi_{\zb j,\zb \omega}(\zb y)\, d \zb y\,.
	\end{equation}
	We further observe that for $Q(\zb x):=\bigcup\limits_{\zb u \in
		\Omega_{\zb k}(\zb x)}Q_{\zb j,\zb u}$ we have 
	$$
	|Q(\zb x)| \asymp 2^{|\zb k|_1-|\zb j|_1}.
	$$
	and $\zb x\in Q(\zb x)$\,. Recalling the definition of the Hardy-Littlewood maximal functions in Definition \ref{def:hl} we obtain
	\begin{equation}\nonumber
		\begin{split}      
			\int_{Q(\zb x)} \sum\limits_{\zb u \in \Omega_{\zb k}(\zb x)}
			|\lambda_{\zb j,\zb u}|^\lambda\chi_{\zb j,\zb u}(\zb y)\,d\zb y &\lesssim  2^{|\zb k|_1-|\zb j|_1}
			\frac{1}{|Q(\zb x)|}\int_{Q(\zb x)} \sum\limits_{\zb u \in \Omega_{\zb k}(\zb x)} |\lambda_{\zb j,\zb u}|^\lambda\chi_{\zb j,\zb u}(\zb y)\,d \zb y\\
			&\leq 2^{|\zb k|_1-|\zb j|_1}M\Big|\sum\limits_{\zb u\in \Z}
			\lambda_{\zb j,\zb u}\chi_{\zb j,\zb u}\Big|^\lambda(\zb x)\,.
		\end{split}
	\end{equation}
	Putting this into \eqref{est2} we obtain
	$$
	2^{-|\zb k|_1}\sum\limits_{\zb u \in 
		\Omega_{\zb k}(\zb x)}|\lambda_{\zb j,\zb u}|^\lambda \lesssim M\Big|\sum\limits_{\zb u\in \Z}
	\lambda_{\zb j,\zb u}\chi_{\zb j,\zb u}\Big|^\lambda(\zb x)\,.
	$$
	Finally, we plug this estimate into \eqref{est1} and obtain the desired result. 
\end{proof}

\begin{prop}\label{lem:samplingvspeetre}
	Let $\zb \ell,\zb j\in \N_0^d$, $0<\lambda\leq 1$, $L\in \N$ with $L>\frac{1}{\lambda}$ and $a>0$. Let further $f\in C(\T)$. 
	\begin{enumerate}
		\item  Then $$|I^L_{\zb j}[f](\zb x)|\lesssim 2^{a|\zb \ell|_1}[M|P_{\zb 2^{\zb j+\zb \ell},a}f|^{\lambda}(\zb x)]^{\frac{1}{\lambda}}$$ 
		\item and also $$|q^L_{\zb j}[f](\zb x)|\lesssim 2^{a|\zb \ell|_1}[M|P_{\zb 2^{\zb j+\zb \ell},a}f|^{\lambda}(\zb x)]^{\frac{1}{\lambda}}$$ 
	\end{enumerate}
	holds with a constant independent of $\zb \ell,\zb j,\zb x$ and $f$.
\end{prop}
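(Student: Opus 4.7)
The plan is to combine the pointwise kernel decay from Lemma~\ref{lem:coredecreasing} with a cellwise comparison of the sample values $f(\zb x^{\zb j}_{\zb u})$ to the Peetre maximal function, and then invoke the discrete maximal inequality of Lemma~\ref{lem:kyriazis} in order to convert the resulting weighted sum of point values into a Hardy--Littlewood maximal function. Note that the hypothesis $L>1/\lambda$ with $0<\lambda\leq 1$ forces $L\geq 2$, so we are indeed in the setting of this subsection and Lemma~\ref{lem:coredecreasing} applies in each coordinate.

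For part (i), a tensorized application of Lemma~\ref{lem:coredecreasing} first yields
$$
|I^L_{\zb j}[f](\zb x)| \lesssim \sum_{\zb u \in A_{\zb j}} |f(\zb x^{\zb j}_{\zb u})| \prod_{i=1}^d (1+2^{j_i}|x_i - x^{j_i}_{u_i}|)^{-L}.
$$
To control the sample values, I would observe that for every $\zb y$ lying in the dyadic cell $Q_{\zb j, \zb u}$ on which $\chi_{\zb j,\zb u}$ is supported, one has $|x^{j_i}_{u_i}-y_i|\lesssim 2^{-j_i}$, hence $(1+2^{j_i+\ell_i}|x^{j_i}_{u_i}-y_i|)^a\lesssim 2^{a\ell_i}$. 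Writing $f(\zb x^{\zb j}_{\zb u})=f(\zb y+(\zb x^{\zb j}_{\zb u}-\zb y))$ and using the defining inequality of the Peetre maximal function at the base point $\zb y$ with weight $\zb b=\zb 2^{\zb j+\zb \ell}$ then gives
$$
|f(\zb x^{\zb j}_{\zb u})| \lesssim 2^{a|\zb \ell|_1}\, P_{\zb 2^{\zb j + \zb \ell}, a}f(\zb y) \qquad\text{for every } \zb y \in Q_{\zb j, \zb u}.
$$

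Because the cells $Q_{\zb j, \zb u}$ are pairwise disjoint, raising to the power $\lambda$ turns this into the pointwise bound $|\sum_{\zb u} f(\zb x^{\zb j}_{\zb u}) \chi_{\zb j, \zb u}(\zb y)|^\lambda \lesssim 2^{a\lambda|\zb \ell|_1} |P_{\zb 2^{\zb j + \zb \ell}, a}f(\zb y)|^\lambda$ for every $\zb y$, and monotonicity of $M$ preserves this after evaluation at $\zb x$. Since $L>1/\lambda$, Lemma~\ref{lem:kyriazis} applied with $\lambda_{\zb j, \zb u}:=f(\zb x^{\zb j}_{\zb u})$ bounds the weighted sum of sample values by $[M|\sum_{\zb u} f(\zb x^{\zb j}_{\zb u}) \chi_{\zb j, \zb u}|^\lambda(\zb x)]^{1/\lambda}$, and chaining the estimates yields (i).

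For (ii), I would expand $q^L_{\zb j}[f] = \sum_{\zb b \in \{-1, 0\}^d} \varepsilon_{\zb b}\, I^L_{\zb j + \zb b}[f]$ via \eqref{eq:qjasijinterpret} (with the convention $I^L_{-1}:=0$) and apply (i) to each term with the shifted vector $\zb \ell':=\zb \ell - \zb b \in \N_0^d$, so that $(\zb j + \zb b) + \zb \ell' = \zb j + \zb \ell$ and $|\zb \ell'|_1 \leq |\zb \ell|_1 + d$; the constant $2^{a|\zb \ell'|_1}$ is then absorbed into $2^{a|\zb \ell|_1}$ up to a factor depending only on $a$ and $d$. The main technical point of the proof is the cellwise trade-off in the previous paragraph: the sampling scale $\zb j$ and the auxiliary scale $\zb j+\zb \ell$ of the Peetre denominator must be balanced so that the geometry of $Q_{\zb j,\zb u}$ produces precisely the factor $2^{a|\zb \ell|_1}$; once this is in place, everything else is a straightforward invocation of Lemmas~\ref{lem:coredecreasing} and \ref{lem:kyriazis}.
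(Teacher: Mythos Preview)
Your proposal is correct and follows essentially the same route as the paper: bound the kernel via Lemma~\ref{lem:coredecreasing}, apply Lemma~\ref{lem:kyriazis} to the resulting weighted sum of sample values, and compare the cellwise step function against the Peetre maximal function (the paper passes through the local supremum $\lambda_{\zb j,\zb u}:=\sup_{|y_i-x^{j_i}_{u_i}|<2\pi 2^{-j_i}}|f(\zb y)|$ rather than using $f(\zb x^{\zb j}_{\zb u})$ directly, but this is inessential). One small technicality you glossed over: to invoke Lemma~\ref{lem:coredecreasing} you need $|x_i-x^{j_i}_{u_i}|\leq\pi$, which the paper arranges by first using periodicity to replace the index set $A_{\zb j}$ by $A_{\zb j}(\zb x)$.
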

\begin{proof}
	We start proving (i). Recall the notation from \eqref{Aj(x)}. Periodicity of $f$ and $K^{L}_{\pi,j}$ yields
	\beqq
	I^L_{\zb j}f(\zb x)|&=& \sum_{\zb u \in A_{\zb j}}f(\zb x^{\zb j}_{\zb u}) K_{\pi^d,\zb j}^L(\zb x-\zb x^{\zb j}_{\zb u})\\
	&=& \sum_{\zb u \in A_{\zb j}(\zb x)}f(\zb x^{\zb j}_{\zb u}) K_{\pi^d,\zb j}^L(\zb x-\zb x^{\zb j}_{\zb u}).
	\eeqq
	Lemma \ref{lem:coredecreasing} with $|x_i-x_{u_i}^{j_i}|\leq \pi$ gives
	\beqq
	|I^L_{\zb j}f(\zb
	x)|&\leq&\sum_{\zb u \in A_{\zb j}(\zb x)}\left|f(\zb x^{\zb j}_{\zb u})\prod_{i=1}^d K_{\pi,j_i}^L(x_i-x^{j_i}_{u_i})\right|\\
	&\lesssim&\sum_{\zb u\in \Z}|\lambda_{\zb j,\zb u}|\prod_{i=1}^d (1+2^{j_i}|x_i-x^{j_i}_{u_i}|)^{-L}\,,
	\eeqq
	where we used the notation  
	$$ \lambda_{\zb j,\zb u} := \sup_{\substack{\zb y:|y_i-x^{j_i}_{u_i}|< \frac{2\pi}{2^{j_i}}\\i\in[d]}}|f(\zb y)|\,.$$
	Applying Lemma \ref{lem:kyriazis} gives
	\be
	|I^L_{\zb j}f(\zb x)|\lesssim \Big[M\Big|\sum_{\zb u\in \Z}\lambda_{\zb j,\zb u}\chi_{\zb j,\zb k}\Big|^{\lambda}(\zb x)\Big]^{\frac{1}{\lambda}}.\label{eq:kyriazisest}
	\ee
	Taking $\zb z\in \supp \chi_{\zb j,\zb u^*}$ gives for any $a>0$
	\beqq
	\Big|\sum_{\zb u\in \Z}\lambda_{\zb j,\zb u}\chi_{\zb j,\zb u}(\zb z)\Big|=|\lambda_{\zb j,\zb u^*}|
	&=&\sup_{\substack{\zb y:|y_i-x^{j_i}_{u^*_i}|< \frac{2\pi}{2^{j_i}}\\i\in[d]}}|f(\zb y)|\\
	&\lesssim& \sup_{\substack{\zb y:|y_i- z_i|< \frac{4\pi}{2^{j_i}}\\i\in[d]}}\frac{|f(\zb y)|}{\prod_{i=1}^d(1+2^{j_i}|y_i-z_i|)^a}\,.\\
	\eeqq
	Finally, Lemma \ref{lem:peetreenv} yields
	\be
	\Big|\sum_{\zb u\in \Z}\lambda_{\zb j,\zb u}\chi_{\zb j,\zb u}(\zb z)\Big| \lesssim 2^{|\zb \ell|_1 a} P_{\zb 2^{\zb j},a}f(\zb z).\label{eq:peetreest}
	\ee
	Inserting \eqref{eq:peetreest} into \eqref{eq:kyriazisest} finishes the proof of (i).
	The bound in (ii) is a trivial consequence of applying triangle inequality to  \eqref{eq:qjasijinterpret} and (i)
	\beq
	|q^L_{\zb j}[f](\zb x)|&\leq&\sum_{\zb b\in \{-1,0\}^d}|I^L_{{\zb j+\zb b}}[f](\zb x)|.\label{eq:ijtriangle}
	\eeq
\end{proof}

\begin{satz}\label{satz:bandlimpres2}
	Let $0<p,\theta\leq \infty$ ($p<\infty$), $L>\max\{\frac{1}{p},\frac{1}{\theta},1\}$ and $\zb r >\max\{\frac{1}{p},\frac{1}{\theta}\}.$ 
	\begin{enumerate}
		\item Then every $f\in \srptf$ admits the representation
		\be f=\sum_{\zb j\in \N_0^d}q^L_{\zb j}[f], \label{eq:qkseries}\ee
		
		with unconditional convergence in $\srptf$ in case $0<\theta<\infty$ and 
		with unconditional convergence in $S^{\zb {\tilde{r}}}_{p,\nu}F(\T)$ for every $\zb r > \zb {\tilde{r}}$ and $0<\nu\leq\infty$ in case $\theta=\infty.$ 
		
		\item There is a constant $C>0$ independent of $f$ such that
		\be\|2^{\zb r \cdot \zb j}q^L_{\zb j}(f)|L_p(\ell_{\theta})\|\leq C \|f|\sabpqf\|\label{eq:repest2}\ee
		holds for all $f\in\srptf$.
	\end{enumerate}
\end{satz}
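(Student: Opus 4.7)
My plan is to establish the norm estimate (ii) first and then deduce the unconditional convergence (i) from (ii) together with Theorem \ref{satz:bandlimrep1} via density of trigonometric polynomials. Since $\zb r > 1/p$, Lemma \ref{lem:embeddings} gives $\srptf \hookrightarrow C(\tor^d)$, so point evaluations are well defined; moreover the classical Littlewood--Paley expansion $f = \sum_{\zb k \in \N_0^d}\delta_{\zb k}[f]$ converges unconditionally in $C(\tor^d)$ (passing through an intermediate $S^{\tilde{\zb r}}_{p,\nu}F(\T)$ with $\tilde{\zb r} < \zb r$ when $\theta = \infty$), which legalizes the pointwise interchange $q^L_{\zb j}[f] = \sum_{\zb k}q^L_{\zb j}[\delta_{\zb k}[f]]$.

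\textbf{Cancellation and pointwise bound.} Because $\delta_{\zb k}[f]$ has Fourier support in a product of dyadic annuli of size $\asymp 2^{k_i}$ in the $i$-th direction, the tensor structure $q^L_{\zb j} = \bigotimes_{i=1}^{d}(I^L_{j_i} - I^L_{j_i-1})$ combined with the univariate reproduction in Corollary \ref{lem:trigreprod} forces $q^L_{\zb j}[\delta_{\zb k}[f]] \equiv 0$ as soon as $k_i \leq j_i - C$ for some $i$, where $C = C(L) \in \N$ depends only on $L$ and the support of the decomposition of unity. Hence the pointwise sum reduces to the range $\zb k \geq \zb j - C\mathbf{1}$. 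For the surviving terms I invoke Proposition \ref{lem:samplingvspeetre}(ii) applied to $\delta_{\zb k}[f]$ with the shift chosen coordinate-wise as $\ell_i := (k_i - j_i)_+$, and exploit monotonicity of the Peetre maximal function in its scale argument to obtain
\begin{equation*}
|q^L_{\zb j}[\delta_{\zb k}[f]](\zb x)| \lesssim 2^{a|(\zb k - \zb j)_+|_1}\Bigl[M\bigl|P_{\zb 2^{\zb k},a}\delta_{\zb k}[f]\bigr|^{\lambda}(\zb x)\Bigr]^{1/\lambda}
\end{equation*}
for any $a > 0$ and any $\lambda \in (0,1]$ with $L > 1/\lambda$.

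\textbf{Summation via maximal inequalities.} Multiplying by $2^{\zb r \cdot \zb j}$ and reindexing via $\zb n := \zb k$, $\zb m := \zb k - \zb j$, the $\mu$-triangle inequality in $L_p(\ell_\theta)$ with $\mu := \min\{p,\theta,1\}$ yields
\begin{equation*}
\|2^{\zb r \cdot \zb j}q^L_{\zb j}[f]\,|\,L_p(\ell_\theta)\|^\mu \lesssim \Bigl(\sum_{\zb m \geq -C\mathbf{1}} 2^{\mu(a|\zb m_+|_1 - \zb r \cdot \zb m)}\Bigr)\Bigl\|2^{\zb r \cdot \zb n}\bigl[M|P_{\zb 2^{\zb n},a}\delta_{\zb n}[f]|^\lambda\bigr]^{1/\lambda}\,\Big|\,L_p(\ell_\theta)\Bigr\|^\mu.
\end{equation*}
Choose $\lambda \in (1/L,\min\{p,\theta\})$ (possible since $L > \max\{1/p,1/\theta,1\}$) and $a \in (\max\{1/p,1/\theta\}, r_{\min})$ (possible since $\zb r > \max\{1/p,1/\theta\}$); the geometric series over $\zb m$ is then finite. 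For the remaining term, rewriting it as an $L_{p/\lambda}(\ell_{\theta/\lambda})$-norm permits the application of an iterated directional Fefferman--Stein inequality (Theorem \ref{satz:feffstein2}) to absorb $M$, followed by the vector-valued Peetre maximal inequality (Theorem \ref{satz:peetremaximalineq}), giving
\begin{equation*}
\Bigl\|2^{\zb r \cdot \zb n}\bigl[M|P_{\zb 2^{\zb n},a}\delta_{\zb n}[f]|^\lambda\bigr]^{1/\lambda}\,\Big|\,L_p(\ell_\theta)\Bigr\| \lesssim \|2^{\zb r \cdot \zb n}\delta_{\zb n}[f]\,|\,L_p(\ell_\theta)\| \asymp \|f\,|\,\srptf\|,
\end{equation*}
which proves (ii).

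\textbf{Convergence and main difficulty.} For (i), the trigonometric polynomials $f_M := \sum_{|\zb k|_\infty \leq M}\delta_{\zb k}[f]$ satisfy $\sum_{\zb j}q^L_{\zb j}[f_M] = f_M$ by finite telescoping and the reproduction property $I^L_{\zb N}[f_M] = f_M$ for $\zb N$ large. Combining (ii) applied to $f - f_M$ with Theorem \ref{satz:bandlimrep1} gives $\|\sum_{\zb j}q^L_{\zb j}[f - f_M]\,|\,\srptf\| \lesssim \|f - f_M\,|\,\srptf\| \to 0$ (in $S^{\zb s}_{p,1}F(\T)$ for some $\tilde{\zb r} < \zb s < \zb r$ when $\theta = \infty$), hence $f = \sum_{\zb j}q^L_{\zb j}[f]$; unconditional convergence then follows by Lebesgue dominated convergence applied to the sequence $(2^{\zb r \cdot \zb j}q^L_{\zb j}[f])_{\zb j}$, exactly as in Step 3 of Theorem \ref{satz:bandlimrep1}. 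The main technical obstacle is the simultaneous coordination of the parameters $\lambda$, $a$, $L$ together with the quasi-Banach $\mu$-inequality: $\lambda$ must fit in $(1/L, \min\{p,\theta\})$ for Proposition \ref{lem:samplingvspeetre} and the Fefferman--Stein rescaling, and $a$ must lie in $(\max\{1/p,1/\theta\}, r_{\min})$ for the Peetre inequality and the geometric summability. The combined hypothesis $L, \zb r > \max\{1/p, 1/\theta\}$ is precisely what renders all these choices admissible at once.
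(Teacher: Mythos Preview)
Your proposal is correct and follows essentially the same route as the paper's proof: decompose $f$ into Littlewood--Paley blocks $\delta_{\zb k}[f]$, exploit the reproduction property (Proposition \ref{lem:qkfreprod}) to restrict the surviving indices, apply the pointwise bound of Proposition \ref{lem:samplingvspeetre}(ii), and conclude via Fefferman--Stein plus the vector-valued Peetre maximal inequality with the same parameter window for $a$ and $\lambda$; part (i) is then deduced from (ii), Theorem \ref{satz:bandlimrep1}, and density of trigonometric polynomials, just as in the paper. The only cosmetic differences are that the paper normalizes the decomposition of unity so that $\delta_{\zb j}[f]\in\mathcal{T}^L_{\zb j}$ (avoiding your offset constant $C$), invokes the standard Fefferman--Stein inequality rather than the iterated directional one, and uses an arbitrary approximating polynomial $t$ rather than your specific $f_M$.
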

\begin{proof}{\em Step 1.} We prove \eqref{eq:repest2}. To begin with we choose $a>0$ such that $\zb r>a>\max\{\frac{1}{p},\frac{1}{q}\}$ is fulfilled. Let $f\in\srptf$.
	We start for $\zb j\in \N_0^d$ with the Fourier decomposition
	\be f(\zb x) = \sum_{\zb \ell \in \Z}\delta_{\zb j+\zb \ell}[f](\zb x),\label{eq:deltaseries}\ee
	cf. \eqref{eq:fourierdecomp}, where $\delta_{\zb j}[f]:=0$ for $\zb j\in \Z\backslash
	\N_0^d$ . This series converges unconditionally in $C(\T)$, due to the embedding $\srptf\hookrightarrow \C(\T)$.
	That yields the point-wise estimate
	\beqq
	|q^L_{\zb j}[f](\zb x)|
	\leq \sum_{\zb \ell\in\Z}|q^L_{\zb j}[\delta_{\zb j+\zb \ell}[f]](\zb x)|.
	\eeqq
	For the sake of simplicity we assume that the constants $A,B,C$ in Definition \ref{def:varphi} are chosen in such a way that 
	$\delta_{\zb j}[f]\in\mathcal{T}^{L}_{\zb j}$ is fulfilled for all $\zb j\in \N_0^d$. 
	Then Proposition \ref{lem:qkfreprod} implies
	$$|q^L_{\zb j}[f](\zb x)|
	\leq \sum_{\zb \ell\geq 0}|q^L_{\zb j}[\delta_{\zb j+\zb \ell}[f]](\zb x)|.$$
	Applying Proposition \ref{lem:samplingvspeetre}, (ii) we obtain
	\beqq
	|q^L_{\zb j}[f](\zb x)|&\lesssim& \sum_{\zb \ell\geq 0}2^{a|\zb \ell|_1}\Big[M|P_{\zb 2^{\zb j+\zb \ell},a}\delta_{\zb j+\zb \ell}[f]|^{\lambda}(\zb x)\Big]^{\frac{1}{\lambda}}.
	\eeqq
	Multiplying with the weight $2^{\zb r \cdot \zb j}$ we find the point-wise estimate 
	\be
	2^{\zb r \cdot \zb j}|q^L_{\zb j}[f](\zb x)|\lesssim \sum_{\zb \ell\geq 0}2^{ (a\zb 1-\zb r)\cdot \zb \ell}2^{\zb r \cdot (\zb j+\zb \ell)}\Big[M|P_{\zb 2^{\zb j+\zb \ell},a}\delta_{\zb j+\zb \ell}[f]|^{\lambda}(\zb x)\Big]^{\frac{1}{\lambda}}.\label{eq:estm99}
	\ee
	where $\lambda$ is chosen as $L>\frac{1}{\lambda}>\frac{1}{\min\{p,\theta\}}$ ($\lambda=1$ in case $\min\{p,\theta\}>1$). The parameter $a$ will be fixed later. 
	Now we take the $L_p(\ell_{\theta})$ (quasi)-norm on both sides. Due to $u$-triangle inequality in $L_p(\ell_{\theta})$ with $u=\min\{p,\theta,1\}$ we obtain
	\be
	\|2^{\zb r \cdot \zb j}q^L_{\zb j}[f]|L_p(\ell_{\theta})\|\lesssim \Big(\sum_{\zb \ell\geq 0}2^{(a\zb 1-\zb r)\cdot \zb \ell u}\Big\|2^{\zb r \cdot (\zb j+\zb \ell)}\Big[M|P_{\zb 2^{\zb j+\zb \ell},a}\delta_{\zb j+\zb \ell}[f]|^{\lambda}\Big]^{\frac{1}{\lambda}}\Big|L_p(\ell_{\theta})\Big\|^u\Big)^{\frac{1}{u}}.\label{eq:estfgh}\\
	\ee
	Since $\lambda < \min\{p,\theta\}$ in case $\min\{p,\theta\}\leq 1$ a trick similar to \ref{eq:h11} yields
	\beqq
	\Big\|2^{\zb r \cdot (\zb j+\zb \ell)}\Big[M|P_{\zb 2^{\zb j+\zb \ell},a}\delta_{\zb j+\zb \ell}[f]|^{\lambda}\Big]^{\frac{1}{\lambda}}\Big|L_p(\ell_{\theta})\Big\|&=& \Big\|2^{\lambda\zb r \cdot (\zb j+\zb \ell)}M|P_{\zb 2^{\zb j+\zb \ell},a}\delta_{\zb j+\zb \ell}[f]|^{\lambda}\Big|L_{\frac{p}{\lambda}}(\ell_{\frac{\theta}{\lambda}})\Big\|^{\frac{1}{\lambda}}.
	\eeqq
	This allows us to apply Fefferman-Stein maximal inequality (Theorem \ref{thm:feffermanstein})
	\beqq
	\Big\|2^{\zb r \cdot (\zb j+\zb \ell)}\Big[M|P_{\zb 2^{\zb j+\zb \ell},a}\delta_{\zb j+\zb \ell}[f]|^{\lambda}\Big]^{\frac{1}{\lambda}}\Big|L_p(\ell_{\theta})\Big\|&\lesssim&\|2^{\zb r \cdot (\zb j+\zb \ell)}P_{\zb 2^{\zb j+\zb \ell},a}\delta_{\zb j+\zb \ell}[f]|L_p(\ell_{\theta})\|.
	\eeqq
	Next we choose $a$ such that $\zb r> a>\max\{\frac{1}{p},\frac{1}{\theta}\}$ holds. Then applying Peetre maximal inequality (Theorem \ref{satz:peetremaximalineq}) gives
	\beqq
	\|2^{\zb r \cdot (\zb j+\zb \ell)}P_{\zb 2^{\zb \ell+\zb j},a}\delta_{\zb \ell+\zb j}[f]|L_p(\ell_{\theta})\|\lesssim \|2^{\zb r \cdot (\zb j+\zb \ell)}\delta_{\zb \ell+\zb j}[f]|L_p(\ell_{\theta})\|.
	\eeqq
	Obviously, we have
	\beqq\|2^{\zb r \cdot (\zb j+\zb \ell)}\delta_{\zb \ell+\zb j}[f]|L_p(\ell_{\theta})\|\leq \|2^{\zb r \cdot \zb j}\delta_{\zb j}[f]|L_p(\ell_{\theta})\|.\eeqq
	Inserting this into \eqref{eq:estfgh} yields
	\beqq
	\|2^{\zb r \cdot \zb j}q^L_{\zb j}[f]|L_p(\ell_{\theta})\|&\lesssim& \|2^{\zb r \cdot \zb j}\delta_{\zb j}[f]|L_p(\ell_{\theta})\|\Big(\sum_{\zb \ell\geq 0}2^{(a\zb 1-\zb r)\cdot \zb \ell u}\Big)^{\frac{1}{u}}\\
	&\lesssim& \|2^{\zb r \cdot \zb j}\delta_{\zb j}[f]|L_p(\ell_{\theta})\|,
	\eeqq
	where the choice of $a$ ensures the convergence of the series to an absolute constant.
	{\em Step 2.} We prove (i). The equation \eqref{eq:repest2} implies 
	$$\|2^{\zb r \cdot \zb j}q^L_{\zb j}[f]|L_p(\ell_{\theta})\|<\infty.$$
	Then Theorem \ref{satz:bandlimrep1} yields unconditional convergence of the series
	$\sum_{\zb j\in \N_0^d}q^L_{\zb j}[f]$.
	We show in case $0<\theta<\infty$ $$\Big\|f-\sum_{|\zb j|_1<n}q^L_{\zb j}[f]\Big|\srptf\Big\|\longrightarrow 0 \quad(n\to \infty). $$
	As a consequence of Definition \ref{def:srptf} trigonometric polynomials are dense in $\srptf$ if $\theta<\infty $. For that reason we find for every $\varepsilon>0$ a trigonometric polynomial $t$ such that $$\|f-t|\srptf\|<\varepsilon.$$
	The $u$-triangle inequality gives
	\beqq
	\Big\|f-\sum_{|\zb j|_1<n}q^L_{\zb j}[f]\Big|\sabpqf\Big\|^u&\leq& \|f-t|\srptf\|^u+\Big\|t-\sum_{|\zb j|_1<n}q^L_{\zb j}[f]\Big|\sabpqf\Big\|^u.
	\eeqq
	For $n$ sufficiently large we obtain by Lemma \ref{lem:trigreprod_hyp} 
	\beqq
	t-\sum_{|\zb j|_1<n}q^L_{\zb j}[f]=\sum_{|\zb j|_1<n}q^L_{\zb j}(t-f).
	\eeqq
	Applying Theorem \ref{satz:bandlimrep1} we have
	\beqq
	\Big\|\sum_{|\zb j|_1<n}q^L_{\zb j}(t-f)\Big|\srptf\Big\|\lesssim \Big\|\Big(\sum_{|\zb j|_1<n} 2^{\theta\zb r \cdot \zb j}|q^L_{\zb j}(t-f)|^{\theta}\Big)^{\frac{1}{\theta}}\Big\|_{p}.
	\eeqq
	Finally, Step 1 yields
	\beqq
	\Big\|\Big(\sum_{|\zb j|_1<n} 2^{\theta\zb r \cdot \zb j}|q^L_{\zb j}(t-f)|^{\theta}\Big)^{\frac{1}{\theta}}\Big\|_p\lesssim \|t-f|\srptf\|
	\eeqq
	and hence, there is a constant $C>0$ independent of $n,f$ and $t$ such that
	\beqq
	\Big\|f-\sum_{|\zb j|_1<n}q^L_{\zb j}[f]\Big|\srptf\Big\|\leq C 2\varepsilon.
	\eeqq
	The case $\theta=\infty$ is based on the embedding
	$$S^{\zb r}_{p,\infty} F(\T)\hookrightarrow S^{\zb s}_{p,p}F(\T)\hookrightarrow S^{\zb {\tilde{r}}}_{p,\nu}F(\T)$$
	with $\zb r>\zb s >\frac{1}{p}$, $\zb s>\zb {\tilde{r}}$ and $0<\nu<\infty$ where the density argument from above is applied to $ S^{\zb s}_{p,p}F(\T)$.
\end{proof}

\begin{rem}
	The recent result in \cite[Rem.\ 7.3]{SeUl15_1}, see also \cite{SeUl15_2}, indicates that a corresponding characterization in case of small smoothness, i.e. $\frac{1}{p}<r\leq \frac{1}{\theta}$ may fail.
\end{rem}

\begin{satz}\label{satz:bandlimpresbesov}
	Let $0<p,\theta\leq \infty$, $ L>\max\{\frac{1}{p},1\}$, $\zb r >\frac{1}{p}.$ 
	\begin{enumerate}
		\item Then every $f\in \srptb$  can be represented by
		$$ f=\sum_{\zb j\in \N_0^d}q^L_{\zb j}(f),$$
		
		with unconditional convergence in $\srptb$ in case $\max\{p,\theta\}<\infty,$
		and with unconditional convergence in $S^{\zb {\tilde{r}}}_{p,\nu}B(\T)$ for every $\zb r > \zb {\tilde{r}}$ and $0<\nu\leq \infty$ in case $\max\{p,\theta\}=\infty.$ 
		\item There is a constant $C>0$ independent of $f$ such that
		\beqq\|2^{\zb r \cdot \zb j}q^L_{\zb j}[f]|\ell_{\theta}(L_p(\T))\|\leq C \|f|\srptb\|\eeqq
		holds for all $f\in\srptb$.
	\end{enumerate}
	
\end{satz}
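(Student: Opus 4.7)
The plan is to mimic the proof of Theorem \ref{satz:bandlimpres2} line by line, replacing the vector-valued maximal inequalities in $L_p(\ell_\theta)$ by the scalar-valued versions applied inside the outer $\ell_\theta$-norm. This parallels the modification already indicated between Theorems \ref{satz:bandlimrep1} and \ref{satz:bandlimrep11}.

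\textbf{Step 1 (norm estimate).} Fix $f\in \srptb$. Choose $a$ with $\boldr>a\boldone>1/p \cdot \boldone$ componentwise, and choose $\lambda$ with $L>1/\lambda>1/p$ (put $\lambda=1$ if $p>1$). By a harmless adjustment of the constants in Definition \ref{def:varphi} we may assume $\delta_{\boldj}[f]\in \mathcal{T}^L_{\boldj}$ for all $\boldj$. Starting from the Fourier decomposition \eqref{decunity} and applying Proposition \ref{lem:qkfreprod}, only summands with $\boldell\geq \zb 0$ survive, so
\[
|q^L_{\boldj}[f](\boldx)|\leq \sum_{\boldell\geq \boldzero}|q^L_{\boldj}[\delta_{\boldj+\boldell}[f]](\boldx)|.
\]
Proposition \ref{lem:samplingvspeetre}\,(ii) bounds each summand pointwise by $2^{a|\boldell|_1}[M|P_{\boldsymbol{2}^{\boldj+\boldell},a}\delta_{\boldj+\boldell}[f]|^{\lambda}(\boldx)]^{1/\lambda}$. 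Multiplying by $2^{\boldr\cdot \boldj}$, taking the $L_p$-norm and using the $\min\{p,1\}$-triangle inequality leads to a sum over $\boldell\geq \boldzero$ of factors $2^{(a\boldone-\boldr)\cdot\boldell}$ times $\|2^{\boldr\cdot(\boldj+\boldell)}[M|P_{\boldsymbol{2}^{\boldj+\boldell},a}\delta_{\boldj+\boldell}[f]|^{\lambda}]^{1/\lambda}\|_p$. Rewriting the latter as $\|M|\,\cdot\,|^{\lambda}\|_{p/\lambda}^{1/\lambda}$ and invoking the scalar Hardy-Littlewood maximal inequality (which is legitimate since $p/\lambda>1$), followed by the non-vector-valued Peetre inequality of Theorem \ref{satz:peetremaximalineqnonvec} (using $a>1/p$), yields
\[
\|2^{\boldr\cdot\boldj} q^L_{\boldj}[f]\|_p \lesssim \sum_{\boldell\geq \boldzero} 2^{(a\boldone-\boldr)\cdot\boldell}\,\|2^{\boldr\cdot(\boldj+\boldell)}\delta_{\boldj+\boldell}[f]\|_p.
\]
Taking now the outer $\ell_\theta$-quasi-norm in $\boldj$ and using the $\min\{\theta,1\}$-triangle inequality, an index shift in $\boldell$ gives on the right a geometric series $\sum_{\boldell\geq 0} 2^{(a\boldone-\boldr)\cdot\boldell u}$ (with $u=\min\{\theta,1\}$), which converges since $\boldr>a\boldone$. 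Using Definition \ref{def:srptf} this proves $\|2^{\boldr\cdot\boldj} q^L_{\boldj}[f]\mid\ell_\theta(L_p)\|\lesssim \|f\mid\srptb\|$, which is assertion (ii).

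\textbf{Step 2 (unconditional convergence).} Step 1 supplies the bound $\|2^{\boldr\cdot\boldj}q^L_{\boldj}[f]\mid\ell_\theta(L_p)\|<\infty$, so that Theorem \ref{satz:bandlimrep11} yields unconditional convergence of $\sum_{\boldj\in\N_0^d} q^L_{\boldj}[f]$ in $\srptb$ (resp.\ in $S^{\tilde{\boldr}}_{p,\nu}B(\T)$ if $\max\{p,\theta\}=\infty$). It remains to identify the sum with $f$. In the range $\max\{p,\theta\}<\infty$ trigonometric polynomials are dense in $\srptb$: given $\varepsilon>0$, pick $t$ with $\|f-t\mid\srptb\|<\varepsilon$ and choose $n$ so large that $\sum_{|\boldj|_1<n}q^L_{\boldj}[t]=t$, which is possible by Lemma \ref{lem:trigreprod_hyp} since $t$ lies in a $\mathcal{T}^L_{\Delta}$ for a sufficiently large solid $\Delta$. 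The $\min\{\theta,p,1\}$-triangle inequality then gives
\[
\Big\|f-\sum_{|\boldj|_1<n}q^L_{\boldj}[f]\,\Big|\,\srptb\Big\| \lesssim \|f-t\mid\srptb\| + \Big\|\sum_{|\boldj|_1<n} q^L_{\boldj}[t-f]\,\Big|\,\srptb\Big\|,
\]
and the second summand is controlled by Theorem \ref{satz:bandlimrep11} and Step 1 applied to $t-f$. In the remaining case $\max\{p,\theta\}=\infty$ we factor the embedding $S^{\boldr}_{p,\infty}B(\T)\hookrightarrow S^{\boldsymbol{s}}_{p,p}B(\T)\hookrightarrow S^{\tilde{\boldr}}_{p,\nu}B(\T)$ with $\boldr>\boldsymbol{s}>1/p$ and $\boldsymbol{s}>\tilde{\boldr}$ and run the density argument in the middle Banach/quasi-Banach space $S^{\boldsymbol{s}}_{p,p}B(\T)$.

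\textbf{Main obstacle.} The only delicate point is synchronizing the parameters $a,\lambda$ with the condition $L>\max\{1/p,1\}$, $\boldr>1/p$ so that both the scalar Hardy-Littlewood inequality (requiring $p/\lambda>1$) and the non-vector-valued Peetre inequality ($a>1/p$) are simultaneously available while the geometric series $\sum 2^{(a\boldone-\boldr)\cdot\boldell u}$ still converges; since $\boldr>1/p$ strictly, one can slide $a$ into $(1/p,\min_i r_i)$ and $1/\lambda$ into $(1/p,L)$, leaving room for both. Everything else is a routine transcription of the $F$-case argument, with $L_p(\ell_\theta)$ replaced by $\ell_\theta(L_p)$ and Fefferman-Stein replaced by the classical Hardy-Littlewood inequality.
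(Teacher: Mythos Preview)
Your proposal is correct and follows essentially the same approach as the paper: the pointwise bound via Proposition \ref{lem:samplingvspeetre}, the scalar Hardy--Littlewood and Peetre maximal inequalities (Theorems \ref{satz:hardylittlewood} and \ref{satz:peetremaximalineqnonvec}) in place of Fefferman--Stein, and the density argument transplanted from Theorem \ref{satz:bandlimpres2}. The only cosmetic refinement needed is in Step~2 when $p=\infty$: route the embedding through $S^{\boldsymbol{s}}_{p,1}B(\T)$ rather than $S^{\boldsymbol{s}}_{p,p}B(\T)=S^{\boldsymbol{s}}_{\infty,\infty}B(\T)$, so that the intermediate space still admits density of trigonometric polynomials (and note that when $p=\infty$ but $\theta<\infty$ the direct density argument already applies).
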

\begin{proof}
	Concerning representation and unconditional convergence we follow the proof of Theorem \ref{satz:bandlimpres2} line by line with the obvious modifications for the $B$-case. The inequality in (ii) can be proved by the following arguments.
	We take the $\ell_{\theta}(L_{p}(\T))$ (quasi)-norm on both sides of the estimate in \eqref{eq:estm99}. Due to $u$-triangle inequality in $\ell_{\theta}(L_{p}(\T))$ with $u=\min\{p,\theta,1\}$ we obtain
	\be
	\|2^{\zb r \cdot \zb j}q^L_{\zb j}[f]|\ell_{\theta}(L_{p}(\T))\|\lesssim \Big(\sum_{\zb \ell\geq0}2^{(a\zb 1-\zb r)\cdot \zb \ell u}\Big\|2^{\zb r \cdot (\zb j+\zb \ell)}\Big[M|P_{\zb 2^{\zb j+\zb \ell},a}\delta_{\zb j+\zb \ell}[f]|^{\lambda}\Big]^{\frac{1}{\lambda}}\Big|\ell_\theta(L_p(\T))\Big\|^u\Big)^{\frac{1}{u}}\label{eq:tzu}
	\ee
	with $\zb r>a>\frac{1}{p}$ and $0<\lambda<p$ ($\lambda=1$ if $p>1$). In case $p\leq 1$ a trick similar to \ref{eq:h11} yields
	\beqq
	\Big\|2^{\zb r \cdot (\zb j+\zb \ell)}\Big[M|P_{\zb 2^{\zb j+\zb \ell},a}\delta_{\zb j+\zb \ell}[f]|^{\lambda}\Big]^{\frac{1}{\lambda}}\Big|\ell_{\theta}(L_p(\T)\Big\|&=&\Big(\sum_{\zb j\in\N_0^d} 2^{ \zb r \cdot (\zb j+\zb \ell) \theta} \Big\|M|P_{\zb 2^{\zb j+\zb \ell},a}\delta_{\zb j+\zb \ell}[f]|^{\lambda}\Big\|_{\frac{p}{\lambda}}^{\frac{\theta}{\lambda}}\Big)^{\frac{1}{\theta}}.
	\eeqq
	This allows us to apply Hardy-Littlewood maximal inequality (Theorem \ref{satz:hardylittlewood}). We obtain
	\beqq
	\Big\|2^{\zb r \cdot (\zb j+\zb \ell)}\Big[M|P_{\zb 2^{\zb j+\zb \ell},a}\delta_{\zb j+\zb \ell}[f]|^{\lambda}\Big]^{\frac{1}{\lambda}}\Big|\ell_{\theta}(L_p(\T)\Big\|&\lesssim&\Big(\sum_{\zb j\in\N_0^d} 2^{ \zb r \cdot (\zb j+\zb \ell) \theta}\Big\|P_{\zb 2^{\zb j+\zb \ell},a}\delta_{\zb j+\zb \ell}[f]|\Big\|_{p}^{\theta}\Big)^{\frac{1}{\theta}}.
	\eeqq
	Inserting this into \eqref{eq:tzu} and applying (non-vector valued) Peetre maximal inequality (Theorem \ref{satz:peetremaximalineqnonvec}) gives
	\beqq
	\|2^{\zb r \cdot \zb j}q^L_{\zb j}[f]|\ell_{\theta}(L_{p}(\T))\|&\lesssim& \Big(\sum_{\zb \ell\geq 0}2^{(a\zb 1-\zb r)\cdot \zb \ell u}\|2^{\zb r \cdot (\zb j+\zb \ell)}\delta_{\zb \ell+\zb j}[f]|\ell_{\theta}(L_{p}(\T))\|^u\Big)^{\frac{1}{u}}\\
	&\leq& \Big(\sum_{\zb \ell\geq 0}2^{(a\zb 1-\zb r)\cdot \zb \ell u}\Big)^{\frac{1}{u}}\|2^{\zb r \cdot \zb j}\delta_{\zb j}[f]|\ell_{\theta}(L_{p}(\T))\|,
	\eeqq
	where the term inside the $\ell_{\theta}(L_{p}(\T))$ norm does not depend any longer on $\ell$. Therefore the sum over $\zb \ell$ converges to a constant depending only on $a$, $\zb r$ and the dimension $d$. Finally, we obtain
	\beqq\|2^{\zb r \cdot \zb j}q^L_{\zb j}[f]|\ell_{\theta}(L_p(\T))\|\lesssim \|f|\srptb\|.\eeqq
\end{proof}

\subsection{The case of the Dirichlet kernel where $L=1$}
In this subsection we study sampling representations based on the Dirichlet kernel $K^1_{\pi,j}$. 
Its slow decay causes some difficulties. We define an auxiliary kernel
$$\tilde{K}^{2}(x):=\sqrt{2\pi}\mathcal{F}^{-1}\big(4\chi_{{[-\frac{5}{8},\frac{5}{8}]}} \ast \chi_{[-\frac{1}{8},\frac{1}{8}]}\big)(x) = 16\frac{\sin\big(\frac{5}{8}x\big)\sin\big(\frac{1}{8}x\big)}{x^2}\in L_1(\R)$$
and its periodization
$$\tilde{K}^2_{\pi,j}(x):= \sum_{k=-\infty}^{\infty}\tilde{K}^2(2^j(x+2\pi k)).$$
Similar to Lemma \ref{lem:coredecreasing} we can show for $|x|<\pi$  the following decay property
\be |\tilde{K}^2_{\pi,j}(x)|\lesssim \frac{1}{(1+2^j|x|)^2}.\label{eq:core2decreasing}\ee
Note, that the corresponding operator $\tilde{I}^2_j$ defined via \eqref{I_j} is a sampling but not an interpolation operator. 
However, Lemma \ref{lem:fourierformula} still holds true. 
According to Subsection \ref{sec:22} we define the multivariate sampling operator $\tilde{I}^2_{\zb j}f$ based on the tensorized kernel $\tilde{K}^2_{\pi^d,\zb j}$. 

The following formula is a counterpart of a similar formula used by Temlyakov in \cite[Lem.\ I.6.2]{Tem93}\,. Taking \eqref{Dirichlet} into account 
we denote
$$
\mathcal{D}^1_{\zb j} = \mathcal{D}^1_{j_1} \otimes \cdots \otimes \mathcal{D}^1_{j_d}\quad,\quad \zb j \in \N_0^d\,.
$$
\begin{lem}\label{lem:i1rep} Let $f\in C(\T)$. Then
	\be I^1_{\zb j} f = (2\pi)^{-d}\mathcal{D}^1_{\zb j}\ast \tilde{I}_{\zb j}^2 f \label{eq:temlyakovdirichtletrep}\ee
	for all $\zb j\in \N_0^d$.
\end{lem}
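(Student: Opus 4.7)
The plan is to reduce the claim to a kernel-level identity and then verify it by comparing Fourier coefficients. Since $\tilde I^2_{\zb j}f(\zb z)=\sum_{\zb u\in A_{\zb j}}f(\zb x^{\zb j}_{\zb u})\tilde K^2_{\pi^d,\zb j}(\zb z-\zb x^{\zb j}_{\zb u})$, linearity of convolution together with its commutation with translations gives
$$
\mathcal{D}^1_{\zb j} * \tilde I^2_{\zb j}f(\zb x) = \sum_{\zb u\in A_{\zb j}} f(\zb x^{\zb j}_{\zb u})\, \bigl(\mathcal{D}^1_{\zb j} * \tilde K^2_{\pi^d,\zb j}\bigr)(\zb x-\zb x^{\zb j}_{\zb u}).
$$
Because $K^1_{\pi^d,\zb j}=\prod_{i=1}^d K^1_{\pi,j_i}=2^{-|\zb j|_1}\mathcal{D}^1_{\zb j}$, matching this with the definition of $I^1_{\zb j}f$ reduces the lemma to the single identity
$$
(2\pi)^{-d}\mathcal{D}^1_{\zb j} * \tilde K^2_{\pi^d,\zb j} = 2^{-|\zb j|_1}\mathcal{D}^1_{\zb j}.
$$

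To check this, introduce the asymmetric Dirichlet cube $\mathcal R_{\zb j}:=\prod_{i=1}^d\{-2^{j_i-1},\ldots,2^{j_i-1}-1\}$. By the paper's convention $f*g(\zb x)=\int f(\zb y)g(\zb x-\zb y)d\zb y$ a routine Fubini argument gives $\widehat{f*g}(\zb\ell)=(2\pi)^d\widehat{f}(\zb\ell)\widehat{g}(\zb\ell)$, and $\widehat{\mathcal{D}^1_{\zb j}}(\zb\ell)=\mathbf{1}_{\mathcal R_{\zb j}}(\zb\ell)$ by the definition of the Dirichlet kernel. For the periodized kernel $\tilde K^2_{\pi^d,\zb j}$ the Poisson-summation step carried out in the proof of Lemma \ref{lem:fourierformula}, combined with the dilation property of $\mathcal F$, yields
$$
\widehat{\tilde K^2_{\pi^d,\zb j}}(\zb \ell) = \frac{1}{(\sqrt{2\pi})^d\,2^{|\zb j|_1}}\, \mathcal F\tilde K^2_{\otimes}\!\left(\frac{\zb\ell}{\zb 2^{\zb j}}\right),
$$
where $\mathcal F\tilde K^2_{\otimes}(\zb\xi)=\prod_i\mathcal F\tilde K^2(\xi_i)$. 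By construction $\mathcal F\tilde K^2(\xi)=\sqrt{2\pi}$ for $|\xi|\le 1/2$, and $\zb\ell\in\mathcal R_{\zb j}$ forces $|\ell_i/2^{j_i}|\le 1/2$, so on the cube $\widehat{\tilde K^2_{\pi^d,\zb j}}(\zb\ell)=2^{-|\zb j|_1}$.

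Assembling these facts,
$$
\widehat{(2\pi)^{-d}\mathcal{D}^1_{\zb j} * \tilde K^2_{\pi^d,\zb j}}(\zb\ell) = \widehat{\mathcal{D}^1_{\zb j}}(\zb\ell)\,\widehat{\tilde K^2_{\pi^d,\zb j}}(\zb\ell) = 2^{-|\zb j|_1}\mathbf{1}_{\mathcal R_{\zb j}}(\zb\ell) = \widehat{2^{-|\zb j|_1}\mathcal D^1_{\zb j}}(\zb\ell).
$$
Both functions involved are trigonometric polynomials with Fourier support in $\mathcal R_{\zb j}$ (the support of the right hand side is obvious; on the left, the factor $\widehat{\mathcal D^1_{\zb j}}$ kills everything outside $\mathcal R_{\zb j}$, and $\tilde K^2_{\pi^d,\zb j}\in L_1(\tor^d)$ makes the convolution well defined), so equality of Fourier coefficients forces equality of functions, and the first display then gives the lemma. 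The only obstacle is bookkeeping: one must track the $(2\pi)^d$ convolution normalization, the $2^{|\zb j|_1}$ coming from the dyadic dilation, and verify that the asymmetric range of the Dirichlet cube (no $+2^{j_i-1}$) is compatible with the closed plateau $[-1/2,1/2]$ of $\mathcal F\tilde K^2$ — which it is, making the argument go through without any genuinely delicate step.
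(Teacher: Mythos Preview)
Your proof is correct and follows essentially the same route as the paper: both arguments compare Fourier coefficients and hinge on the fact that $\mathcal F\tilde K^2(\ell_i/2^{j_i})=\sqrt{2\pi}$ on the frequency support of $\mathcal D^1_{\zb j}$. The only cosmetic difference is that you first factor out the sample values $f(\zb x^{\zb j}_{\zb u})$ to reduce to the kernel identity $(2\pi)^{-d}\mathcal D^1_{\zb j}*\tilde K^2_{\pi^d,\zb j}=2^{-|\zb j|_1}\mathcal D^1_{\zb j}$, whereas the paper computes the Fourier coefficients of $I^1_{\zb j}f$ and of $\mathcal D^1_{\zb j}*\tilde I^2_{\zb j}f$ directly with $f$ still present.
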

\begin{proof}
	We prove the identity by comparing the Fourier series for arbitrary continuous functions $f$. \eqref{eq:fseries} implies
	\be\label{eq0000}\widehat{I^1_{\zb j}f}(\zb \ell)= \Big(\prod_{i\in[d]}\chi_{[-2^{j_i-1},2^{j_i}-1-1]}(\ell_i)\Big) \sum_{\zb u\in A_{\zb j}} f(\zb x_{\zb u}^{\zb j})e^{-i\zb x_{\zb u}^{\zb j} \cdot \zb \ell}.\ee
	Additionally, the same computation as used in Lemma \ref{lem:fourierformula} shows
	$$\widehat{\tilde{I}_{\zb j}^2[f]}(\zb \ell)=\frac{1}{(2\pi)^{d/2}}\Big(\prod_{i=1}^d \mathcal{F}\tilde{K}^2\Big(\frac{\ell_i}{2^{j_i}}\Big)\Big)\sum_{\zb u\in A_{\zb j}} f(\zb x_{\zb u}^{\zb j})e^{-i\zb x_{\zb u}^{\zb j} \cdot \zb \ell}.$$
	Clearly,
	\be(2\pi)^{-d}\widehat{ \mathcal{D}^1_{\zb j}\ast \tilde{I}_{\zb j}^2 f}(\zb \ell)=
	\widehat{\mathcal{D}^1_{\zb j}}(\zb \ell )\widehat{\tilde{I}_{\zb j}^2 f}(\zb \ell) = 
	\widehat{\mathcal{D}^1_{\zb j}}(\zb \ell )
	\sum_{\zb u\in A_{\zb j}} f(\zb x_{\zb u}^{\zb j})e^{-i\zb x_{\zb u}^{\zb j} \cdot \zb \ell}\label{eq0001}\ee
	since
	$$\mathcal{F}\tilde{K}^2\Big(\frac{\ell_i}{2^{j_i}}\Big)=\sqrt{2\pi}$$
	for $\ell_i \in [-2^{j_i},2^{j_i}), i\in[d]$. Comparing \eqref{eq0000} and \eqref{eq0001} yields the claim. 
\end{proof}
\begin{lem}
	\label{lem:itjasij}
	Let $\zb \ell,\zb j\in \N_0^d$, $a>0$ and $1/2 < \lambda\leq 1$. Furthermore, let $f\in C(\T)$. Then 
	$$|\tilde{I}^2_{\zb j}[f](\zb x)|\lesssim 2^{a|\zb \ell|_1}[M|P_{\zb 2^{\zb j+\zb \ell},a}f|^{\lambda}(\zb x)]^{\frac{1}{\lambda}}$$ 
	holds with a constant independent of $\zb \ell,\zb j,\zb x$ and $f$.
\end{lem}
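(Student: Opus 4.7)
The plan is to follow the strategy of Proposition \ref{lem:samplingvspeetre}(i) essentially verbatim, since the only ingredient of that proof (beyond generalities) was the pointwise decay $|K^L_{\pi,j}(x)|\lesssim (1+2^j|x|)^{-L}$ from Lemma \ref{lem:coredecreasing} combined with Lemma \ref{lem:kyriazis} requiring $L>1/\lambda$. Here the analogous decay for $\tilde{K}^2_{\pi,j}$ is already given by \eqref{eq:core2decreasing}, and tensorization yields the multivariate bound $|\tilde{K}^2_{\pi^d,\zb j}(\zb x)|\lesssim \prod_{i=1}^d (1+2^{j_i}|x_i|)^{-2}$ on the fundamental domain.

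Concretely, first I would use periodicity of $f$ and $\tilde{K}^2_{\pi^d,\zb j}$ to reduce the sum defining $\tilde{I}^2_{\zb j}[f](\zb x)$ to indices $\zb u\in A_{\zb j}(\zb x)$, then pass to a uniform local supremum by introducing
$$
\lambda_{\zb j,\zb u}:=\sup_{\substack{\zb y:\,|y_i-x^{j_i}_{u_i}|<2\pi/2^{j_i}\\ i\in[d]}}|f(\zb y)|,
$$
so that $|\tilde{I}^2_{\zb j}[f](\zb x)|\lesssim \sum_{\zb u\in\Z}|\lambda_{\zb j,\zb u}|\prod_{i=1}^d(1+2^{j_i}|x_i-x^{j_i}_{u_i}|)^{-2}$.

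Next I would invoke Lemma \ref{lem:kyriazis} with $L=2$, which is legitimate precisely under the hypothesis $\lambda>1/2$, giving
$$
|\tilde{I}^2_{\zb j}[f](\zb x)|\lesssim \Big[M\Big|\sum_{\zb u\in\Z}\lambda_{\zb j,\zb u}\chi_{\zb j,\zb u}\Big|^{\lambda}(\zb x)\Big]^{1/\lambda}.
$$
Finally, exactly as in Proposition \ref{lem:samplingvspeetre}(i), for $\zb z\in\supp\chi_{\zb j,\zb u^\ast}$ I would estimate $\lambda_{\zb j,\zb u^\ast}$ by a Peetre-type maximal function at scale $\zb 2^{\zb j}$ and then move to scale $\zb 2^{\zb j+\zb \ell}$ via Lemma \ref{lem:peetreenv}, picking up the factor $2^{a|\zb \ell|_1}$. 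Substituting yields the claimed pointwise bound.

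The only subtle point is that $\tilde{I}^2_j$ is a sampling rather than an interpolation operator (so Lemma \ref{lem:interpolation} is unavailable), but since the present lemma is purely about pointwise domination by a maximal function, interpolation is never needed. The restriction $\lambda>1/2$ comes exactly from the quadratic (rather than higher order) decay of $\tilde{K}^2$, matching the $L>1/\lambda$ threshold in Lemma \ref{lem:kyriazis}, so the argument goes through without obstacle.
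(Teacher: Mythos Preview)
Your proposal is correct and follows exactly the same approach as the paper's own proof, which simply refers back to Proposition~\ref{lem:samplingvspeetre} and observes that the only ingredient needed is the kernel decay \eqref{eq:core2decreasing}. You have in fact spelled out more detail than the paper does, including the correct explanation of why the restriction $\lambda>1/2$ arises from applying Lemma~\ref{lem:kyriazis} with $L=2$.
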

\begin{proof} We refer to the proof of Proposition \ref{lem:samplingvspeetre}. Recognizing, that the only property of $\tilde{I}^2_{\zb j}$ we need  is the decay of the underlying kernel $\tilde{K}^2_{\pi^d,\zb j}$ provided in \eqref{eq:core2decreasing}.
\end{proof}

\begin{rem}\label{rem512} {\em (i)} The estimates in Lemmas \ref{lem:samplingvspeetre}, \ref{lem:itjasij} are pointwise and very useful for 
	$L_p(\tor^d,\ell_\theta)$ estimates. In case one is interested in (scalar) $L_p$ estimates, similar as in \cite[Lem.\ I.6.2]{Tem93}, then 
	Lemmas \ref{lem:samplingvspeetre} and \ref{lem:itjasij} together with the maximal inequalities Theorems \ref{satz:hardylittlewood}, 
	\ref{satz:peetremaximalineqnonvec} imply for $0<p\leq \infty$, $L>\max\{1/p,1\}$ and any $a>1/p$
	\be\label{L_pbound}
	\|I^L_{\zb j}f\|_p \lesssim_{L,a} 2^{|\zb \ell|_1a}\|f\|_p  \quad,\quad f\in \mathcal{T}_{\zb j + \zb \ell}^0	
	\ee
	(similar for $\tilde{I}^2_{\zb j}$).\\
	
	{\em (ii)} There is a different technique based 
	on periodic versions of Plancherel-Polya inequalities (Marcinkiewicz-Zygmund inequalities) 
	for $0<p\leq \infty$, see \cite[Thms.\ 6,10]{SchmSi2000}. A straight-forward modification of the argument in \cite[Lem.\ 13,(ii)]{SchmSi2000}
	gives for $0<p\leq \infty$ and $L>\max\{1/p,1\}$
	\be\label{planch-polya}
	\|I^L_{\zb j}f\|_p \lesssim_{p} 2^{|\zb \ell|_1/p}\|f\|_p  \quad,\quad f\in \mathcal{T}_{\zb j + \zb \ell}^0
	\ee
	(similar for $\tilde{I}^2_{\zb j}$). In case $L=2$ (de la Vall{\'ee} Poussin) this yields an extension of \cite[Lem.\ I.6.2]{Tem93} to the range $1/2<p\leq \infty$.\\

	{\em (iii)} 
	By Lemma \ref{lem:i1rep} and the uniform boundedness of the 
	multivariate Fourier partial sum operator in $L_p(\tor^d)$, $1<p<\infty$, we obtain from \eqref{L_pbound} and \eqref{planch-polya} corresponding estimates also 
	for $\|I^1_{\zb j}f\|_p$\,.
\end{rem}

\begin{satz}\label{satz:bandlimpresdirichletf}
	Let $1<p,\theta< \infty$  and $\zb r >\max\{\frac{1}{p},\frac{1}{\theta}\}.$ 
	\begin{enumerate}
		\item Then every $f\in \srptf$ admits the representation
		$$ f=\sum_{\zb j\in \N_0^d}q^1_{\zb j}[f], $$
		with unconditional convergence in $\srptf$.
		\item There is a constant $C>0$ independent of $f$ such that
		$$\|2^{\zb r \cdot \zb j}q^1_{\zb j}(f)|L_p(\ell_{\theta})\|\leq C \|f|\sabpqf\|$$
		holds for all $f\in\srptf$.
	\end{enumerate}
\end{satz}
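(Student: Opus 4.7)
The plan is to mirror the proof of Theorem \ref{satz:bandlimpres2}, overcoming the failure of pointwise decay of the Dirichlet kernel $\mathcal{D}^1_{j}$ via the factorization $I^1_{\zb j} = (2\pi)^{-d}\mathcal{D}^1_{\zb j}\ast \tilde{I}^2_{\zb j}$ furnished by Lemma \ref{lem:i1rep}. This identity isolates the problematic part of $K^1_{\pi^d,\zb j}$ as a $d$-fold dyadic Fourier partial sum projection composed with the sampling operator $\tilde{I}^2_{\zb j}$, whose kernel $\tilde{K}^2$ decays quadratically (cf.\ \eqref{eq:core2decreasing}) and thus admits the usual pointwise Peetre-type bound of Lemma \ref{lem:itjasij}. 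The hypothesis $1 < p, \theta < \infty$ will enter through the vector-valued Marcinkiewicz multiplier theorem, which is exactly what is needed to control the Dirichlet factor in $L_p(\tor^d, \ell_\theta)$.

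To prove (ii), first expand $f = \sum_{\zb \ell \in \zz^d}\delta_{\zb j+\zb \ell}[f]$ (with $\delta_{\zb m}[f] := 0$ for $\zb m\notin \N_0^d$); this series converges in $C(\T)$ by the embedding $\srptf\hookrightarrow C(\T)$ from Lemma \ref{lem:embeddings}(ii). Arranging the resolution of unity in Definition \ref{def:varphi} so that $\delta_{\zb m}[f]\in \mathcal{T}^1_{\zb m}$, Proposition \ref{lem:qkfreprod} truncates the expansion of $q^1_{\zb j}[f]$ to indices $\zb \ell \geq \zb 0$. Expanding via \eqref{eq:qjasijinterpret} and substituting Lemma \ref{lem:i1rep} then yields
$$
q^1_{\zb j}[f] = (2\pi)^{-d}\sum_{\zb \ell \geq \zb 0}\sum_{\zb b \in \{-1,0\}^d}\varepsilon_{\zb b}\,\mathcal{D}^1_{\zb j + \zb b}\ast \tilde{I}^2_{\zb j + \zb b}[\delta_{\zb j + \zb \ell}[f]].
$$
Since $\min\{p,\theta,1\} = 1$, the ordinary triangle inequality in $L_p(\tor^d,\ell_\theta)$ reduces matters to a uniform (in $\zb \ell,\zb b$) estimate of the quantity $\|(2^{\zb r \cdot \zb j}\mathcal{D}^1_{\zb j + \zb b}\ast \tilde{I}^2_{\zb j + \zb b}[\delta_{\zb j+\zb \ell}[f]])_{\zb j}\|_{L_p(\ell_\theta)}$ with acceptable decay in $\zb \ell$.

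Now the operator $g \mapsto (2\pi)^{-d}\mathcal{D}^1_{\zb m}\ast g$ is the multivariate dyadic Fourier partial sum projection, which tensorizes as a product of univariate such projections. The vector-valued Marcinkiewicz multiplier theorem (equivalently, the $L_p(\tor^d,\ell_\theta)$-boundedness of the Hilbert transform applied in each coordinate) guarantees uniform boundedness of this family on $L_p(\tor^d, \ell_\theta)$ precisely for $1 < p, \theta < \infty$; this is the one essential use of the hypothesis on $p$ and $\theta$. Eliminating this factor reduces the task to controlling $\|(2^{\zb r \cdot \zb j}\tilde{I}^2_{\zb j+\zb b}[\delta_{\zb j+\zb \ell}[f]])_{\zb j}\|_{L_p(\ell_\theta)}$, which is treated by the pointwise bound of Lemma \ref{lem:itjasij} with $\lambda = 1$ (admissible as $\min\{p,\theta\} > 1$), followed by the vector-valued Hardy--Littlewood and Peetre maximal inequalities (Theorems \ref{satz:feffstein2}, \ref{satz:peetremaximalineq}) for some $a$ with $\zb r > a > \max\{1/p, 1/\theta\}$, exactly as in Step 1 of the proof of Theorem \ref{satz:bandlimpres2}. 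The resulting prefactor $2^{(a-\zb r)\cdot \zb \ell}\,\|f|\srptf\|$ produces a convergent geometric series over $\zb \ell \geq \zb 0$, establishing (ii). Assertion (i) then follows from (ii) together with Theorem \ref{satz:bandlimrep1} (applicable since $\theta<\infty$), the density of trigonometric polynomials in $\srptf$, and Lemma \ref{lem:trigreprod_hyp}, as in Step 2 of the proof of Theorem \ref{satz:bandlimpres2}. The main obstacle---the lack of any pointwise analogue of Proposition \ref{lem:samplingvspeetre}(ii) for $L=1$---is precisely what the splitting from Lemma \ref{lem:i1rep} combined with the vector-valued multiplier theorem repairs, at the cost of the restrictions $1 < p, \theta < \infty$.
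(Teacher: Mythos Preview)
Your proof is correct and follows essentially the same route as the paper's: factor $I^1_{\zb j}$ via Lemma \ref{lem:i1rep}, remove the Dirichlet convolution by a vector-valued Fourier multiplier theorem on $L_p(\tor^d,\ell_\theta)$ (the paper cites Lizorkin's theorem from \cite{Li67} and \cite[Thm.\ 3.4.2]{ST87} rather than invoking the Marcinkiewicz formulation, but the content is the same), then apply Lemma \ref{lem:itjasij} with $\lambda=1$ and finish as in Theorem \ref{satz:bandlimpres2}. Your write-up is in fact somewhat more detailed than the paper's own terse account.
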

\begin{proof}
	The proof of (i) is similar to Theorem \ref{satz:bandlimpres2}, (i). We prove (ii) here. 
	Inserting the decomposition \eqref{decunity}, applying triangle inequality and afterwards Proposition \ref{lem:qkfreprod} gives
	\beqq
	\|2^{\zb r \cdot \zb j}q^1_{\zb j}[f]|L_p(\ell_{\theta}(\N_0^d))\|\lesssim \sum_{\zb \ell \geq 0}2^{-\zb \ell\cdot \zb r }\|2^{\zb r \cdot (\zb j+\zb \ell)}q^1_{\zb j}[\delta_{\zb j+\zb \ell}[f]]|L_p(\ell_{\theta}(\N_0^d))\|.
	\eeqq
	The relation in \eqref{eq:ijtriangle} shows
	\beqq
	\|2^{\zb r \cdot \zb j}q^1_{\zb j}[f]|L_p(\ell_{\theta}(\N_0^d))\|\lesssim \sum_{\zb b\in\{-1,0\}^d}\sum_{\zb \ell \geq 0}2^{-\zb \ell\cdot \zb r}\|2^{\zb r \cdot (\zb j+\zb \ell)}I^1_{\zb j+\zb b}[\delta_{\zb j+\zb \ell}[f]]|L_p(\ell_{\theta}(\N_0^d))\|.
	\eeqq
	Hence, Lemma \ref{lem:i1rep} yields
	\be
	\|2^{\zb r \cdot \zb j}q^1_{\zb j}[f]|L_p(\ell_{\theta}(\N_0^d))\|\lesssim 
	\sum_{\zb b\in\{-1,0\}^d}\sum_{\zb \ell \geq 0}2^{-\zb \ell\cdot \zb r}\|2^{\zb r 
		\cdot (\zb j+\zb \ell)}\mathcal{D}^1_{\zb j+\zb b}\ast \tilde{I}_{\zb j+\zb b}^2 [\delta_{\zb j+\zb \ell}[f]]|L_p(\ell_{\theta}(\N_0^d))\|.\label{Liz}
	\ee
	Lizorkin presented in \cite[p. 241, Thm.\ 5]{Li67} a theorem on Fourier multipliers for the $L_p(\ell_\theta)$ situation. The result in 
	\cite[Thm.\ 3.4.2]{ST87} transfers this to the periodic setting. Referring to a comment in 
	\cite[2.5.4]{Tr83}  the Fourier partial sum with respect to 
	a parallelepiped fulfills the requirements of this theorem and we get rid of $\mathcal{D}^1_{\zb j+\zb b}$ in \eqref{Liz}. This gives
	\beqq
	\|2^{\zb r \cdot \zb j}q^1_{\zb j}[f]|L_p(\ell_{\theta}(\N_0^d))\|&\lesssim& \sum_{\zb b\in\{-1,0\}^d}\sum_{\zb \ell \geq 0}2^{-\zb \ell\cdot \zb r}\|2^{\zb r 
		\cdot (\zb j+\zb \ell)} \tilde{I}_{\zb j+ \zb b}^2 \delta_{\zb j+\zb \ell}[f]|L_p(\ell_{\theta}(\N_0^d))\|.
	\eeqq
	Lemma \ref{lem:itjasij} with $\lambda = 1$ yields
	\beqq
	\|2^{\zb r \cdot \zb j}q^1_{\zb j}[f]|L_p(\ell_{\theta}(\N_0^d))\|&\lesssim& \sum_{\zb \ell \geq 0}2^{\zb \ell\cdot (a\zb 1-\zb r)}\|2^{\zb r \cdot (\zb j+\zb \ell)}M|P_{\zb 2^{\zb j+\zb \ell},a}f_{ j+\zb \ell}|(\zb x)|L_p(\ell_{\theta}(\N_0^d))\|.
	\eeqq
	We finish the proof by following the estimates in the proof of Theorem \ref{satz:bandlimpres2} beginning from \eqref{eq:estfgh}.
\end{proof}
\begin{satz}\label{satz:bandlimpresdirichletb}
	Let $1<p<\infty,0<  \theta\leq  \infty$  and $\zb r >\frac{1}{p}.$ 
	\begin{enumerate}
		\item Then every $f\in \srptb$  can be represented by
		$$ f=\sum_{\zb j\in \N_0^d}q^1_{\zb j}(f),$$
		with unconditional convergence in $\srptb$ in case $\theta<\infty,$
		and with unconditional convergence in $S^{\zb {\tilde{r}}}_{p,\nu}B(\T)$ for every $\zb r > \zb {\tilde{r}}$ and $0 <\nu\leq \infty$ in case $\theta=\infty.$ 
		\item There is a constant $C>0$ independent of $f$ such that
		\beqq\|2^{\zb r \cdot \zb j}q^1_{\zb j}[f]|\ell_{\theta}(L_p(\T))\|\leq C \|f|\srptb\|\eeqq
		holds for all $f\in\srptb$.
	\end{enumerate}
\end{satz}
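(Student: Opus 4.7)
The plan is to parallel the proof of Theorem \ref{satz:bandlimpresbesov}, adjusting for the slow decay of the Dirichlet kernel in exactly the same way Theorem \ref{satz:bandlimpresdirichletf} modified the proof of Theorem \ref{satz:bandlimpres2}.

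First I would address (ii). Starting from the dyadic Fourier decomposition \eqref{decunity}, shifted so that $f = \sum_{\boldell\in\Z}\delta_{\boldj+\boldell}[f]$, I take the $\ell_\theta(L_p)$ quasi-norm and use the $u$-triangle inequality together with the reproduction property in Proposition \ref{lem:qkfreprod} (choosing the Fourier cutoffs in $\Phi(\re)$ so that $\delta_{\boldj+\boldell}[f]\in\mathcal{T}^1_{\boldj+\boldell}$) to restrict to $\boldell\geq 0$. Expanding $q^1_{\boldj}$ via \eqref{eq:qjasijinterpret} reduces the task to estimating
$$
\sum_{\boldb\in\{-1,0\}^d}\sum_{\boldell\geq 0}2^{-\boldell\cdot\boldr}\bigl\|2^{\boldr\cdot(\boldj+\boldell)}I^1_{\boldj+\boldb}[\delta_{\boldj+\boldell}[f]]\bigm|\ell_\theta(L_p(\T))\bigr\|.
$$
Now I apply Lemma \ref{lem:i1rep} to factor $I^1_{\boldj+\boldb} = (2\pi)^{-d}\mathcal{D}^1_{\boldj+\boldb}\ast\tilde I^{2}_{\boldj+\boldb}$. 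The convolution with $\mathcal{D}^1_{\boldj+\boldb}$ is a rectangular Fourier partial sum, which, for each fixed $\boldj$, acts boundedly on $L_p(\T)$ uniformly in $\boldj$ because $1<p<\infty$ (this is the scalar $L_p$-multiplier statement from \cite[2.5.4]{Tr83}; since we work in $\ell_\theta(L_p)$, no vector-valued extension is required here, unlike in the $F$-case).

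With the Dirichlet factor removed, I am left with the $\tilde I^{2}_{\boldj+\boldb}$ terms and I apply Lemma \ref{lem:itjasij} with $\lambda=1$ (permissible since $p>1$, so we may choose $\lambda=1$) to obtain the pointwise bound
$$
\bigl|\tilde I^{2}_{\boldj+\boldb}[\delta_{\boldj+\boldell}[f]](\boldx)\bigr|\lesssim 2^{a|\boldell|_1}\,M\bigl|P_{\boldsymbol{2}^{\boldj+\boldell},a}\delta_{\boldj+\boldell}[f]\bigr|(\boldx),
$$
for any $a>1/p$ to be fixed so that $a<\min_i r_i$. From this point on the argument is identical to the one in the proof of Theorem \ref{satz:bandlimpresbesov}: after taking the $\ell_\theta(L_p)$ norm and using the $u$-triangle inequality, the scalar Hardy--Littlewood maximal inequality (Theorem \ref{satz:hardylittlewood}) applied in each $L_p$ and the (non-vector-valued) Peetre maximal inequality (Theorem \ref{satz:peetremaximalineqnonvec}) yield
$$
\bigl\|2^{\boldr\cdot(\boldj+\boldell)}\tilde I^{2}_{\boldj+\boldb}[\delta_{\boldj+\boldell}[f]]\bigm|\ell_\theta(L_p)\bigr\|\lesssim 2^{a|\boldell|_1}\,\|f|S^{\boldr}_{p,\theta}B(\T)\|.
$$
An index shift absorbs the $\boldell$-offset into the Besov norm, and the remaining sum $\sum_{\boldell\geq 0}2^{(a\boldone-\boldr)\cdot\boldell u}$ converges to a constant by the choice $a<\min_i r_i$. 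This establishes (ii).

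For (i), once (ii) is known, the hypothesis $\|2^{\boldr\cdot\boldj}q^1_{\boldj}[f]\mid\ell_\theta(L_p)\|<\infty$ of Theorem \ref{satz:bandlimrep11} is satisfied, which gives unconditional convergence of $\sum_{\boldj\in\N_0^d}q^1_{\boldj}[f]$ in $S^{\boldr}_{p,\theta}B(\T)$ when $\theta<\infty$, respectively in $S^{\tilde\boldr}_{p,\nu}B(\T)$ when $\theta=\infty$. That the limit equals $f$ follows by a density argument exactly as in Step 2 of the proof of Theorem \ref{satz:bandlimpres2}: approximate $f$ in $S^{\boldr}_{p,\theta}B$ (resp.\ in $S^{\boldsymbol{s}}_{p,p}B$ after using the chain of embeddings for $\theta=\infty$) by trigonometric polynomials $t$, use Lemma \ref{lem:trigreprod_hyp} to see that $t-\sum_{|\boldj|_1<n}q^1_{\boldj}[f]=\sum_{|\boldj|_1<n}q^1_{\boldj}[t-f]$ for large $n$, and close the estimate by applying (ii) to $t-f$. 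The main technical obstacle is the removal of the Dirichlet partial-sum operator in the third step, but because $p\in(1,\infty)$ and the outer norm is $\ell_\theta$ rather than $L_p(\ell_\theta)$, this reduces to the classical scalar multiplier theorem and does not require the full Lizorkin machinery used in the $F$-case.
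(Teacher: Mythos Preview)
Your proof is correct and follows essentially the same route as the paper: factor $I^1_{\boldj}$ via Lemma \ref{lem:i1rep}, remove the Dirichlet partial sum by the scalar $L_p$ multiplier bound (no vector-valued Lizorkin theorem needed here), and then finish as in Theorem \ref{satz:bandlimpresbesov}. The only difference is cosmetic: the paper packages the $L_p$ estimate for $\tilde I^2_{\boldj}$ applied to band-limited blocks into Remark \ref{rem512} (equations \eqref{L_pbound}, \eqref{planch-polya}) and cites that, whereas you unfold this by invoking Lemma \ref{lem:itjasij} and the scalar Hardy--Littlewood and Peetre maximal inequalities directly.
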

\begin{proof}
	To prove (i) we follow the proof of Theorem \ref{satz:bandlimpresbesov}, (i). The assertion 
	(ii) can be obtained following the proof of Theorem 
	\ref{satz:bandlimpresdirichletf} where we replace $\|\cdot|L_p(\ell_\theta(\N_0))\|$ by 
	$\|\cdot|\ell_{\theta}(L_p(\T))\|$. Now we use the estimates in Remark \eqref{L_pbound}, \eqref{planch-polya} from Remark \ref{rem512}\,.
\end{proof}
\begin{rem}
	Similar (but not nested) Dirichlet kernels were studied in \cite{BDSU15} connected with sampling representations in case $p=\theta=2$.
\end{rem}
\section{Interpolation on Smolyak grids}
In this section we analyze a direction-wise modified version of Smolyak's algorithm, cf. \eqref{f0}, given by

\be T^{L,\zb \eta}_mf:=\sum_{\frac{1}{\eta_1}  \zb \eta\cdot \zb j\leq m}q^L_{\zb j}[f].\label{eq:anisosmolop}\ee
The parameter $\zb \eta >0$ allows to control the level of refinement in single directions. 
A comparatively large value of $\zb \eta$ in the $s$-th component ends up in a small refinement in the $s$-th direction. 
The interpolation operator $T_m^{L,\zb \eta}f$ maps a continuous function to a trigonometric polynomial with frequencies in an anisotropic hyperbolic cross \beqq AH^{d,\zb \eta}_m:=\bigcup_{\{\zb j:\;\frac{1}{\eta_1} \zb \eta \cdot \zb j\leq m\}}\mathcal{P}^0_{j}.\eeqq According to Lemma \ref{lem:triginterpol} the operator $T_m^{L,\zb \eta}$ interpolates functions on an anisotropic sparse grid \be AG^{d,\zb \eta}_m:=\bigcup_{\frac{1}{\eta_1}  \zb \eta\cdot \zb j\leq m}\Big\{\zb x_{\zb u}^{\zb j}:\;\zb u\in\Z,\;-2^{j_i-1}\leq u_i\leq 2^{j_i-1}-1,\;i\in [d]\Big\}\label{eq:anisogrid}.\ee
\begin{lem}\label{lem:ansiopnumberoffuncvalues}
	Let $\zb \eta\in\R$ with 
	$$ 0<\eta_1=\ldots=\eta_{\mu} <\eta_{\mu+1}\leq \ldots\leq \eta_{d}<\infty.$$ Then	
	\be|AG^{d,{\zb \eta}}_m|\asymp  m^{\mu-1}2^{m} \label{eq:anisogridnumber} \ee
	holds for all $m\geq1$.
\end{lem}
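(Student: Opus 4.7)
The plan is to establish the two matching bounds separately by reducing to a counting problem on the index set $\Delta_m:=\{\boldj\in\N_0^d:\tfrac{1}{\eta_1}\boldeta\cdot\boldj\le m\}$ and exploiting that only the first $\mu$ coordinates, where $\eta_i/\eta_1=1$, contribute the logarithmic factor.

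For the upper bound I would simply union-bound
$$
|AG^{d,\boldeta}_m|\le\sum_{\boldj\in\Delta_m}|\{\boldx^{\boldj}_{\boldu}:\boldu\in A_{\boldj}\}|=\sum_{\boldj\in\Delta_m}2^{|\boldj|_1}.
$$
Splitting the summation into the ``isotropic block'' $(j_1,\ldots,j_\mu)$ and the ``tail'' $(j_{\mu+1},\ldots,j_d)$, fixing the tail and summing the geometric-times-binomial series
$$
\sum_{j_1+\cdots+j_\mu\le M}2^{j_1+\cdots+j_\mu}\asymp 2^M M^{\mu-1}\quad(M\ge 1),
$$
with $M=m-\sum_{i>\mu}(\eta_i/\eta_1)j_i$, yields
$$
\sum_{\boldj\in\Delta_m}2^{|\boldj|_1}\lesssim 2^m m^{\mu-1}\sum_{j_{\mu+1},\ldots,j_d\ge 0}2^{-\sum_{i>\mu}(\eta_i/\eta_1-1)j_i}.
$$
Since $\eta_i/\eta_1>1$ for $i>\mu$, the trailing sum is a convergent geometric series, giving the desired $O(2^m m^{\mu-1})$ bound.

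For the matching lower bound I would embed the $\mu$-dimensional isotropic Smolyak grid as a slice of $AG^{d,\boldeta}_m$. Precisely, every $\boldj'=(j_1,\ldots,j_\mu,0,\ldots,0)$ with $j_1+\cdots+j_\mu\le m$ lies in $\Delta_m$, so
$$
AG^{d,\boldeta}_m\supseteq\bigl\{(x^{j_1}_{u_1},\ldots,x^{j_\mu}_{u_\mu},0,\ldots,0):j_1+\cdots+j_\mu\le m,\; u_i\in\{-2^{j_i-1},\ldots,2^{j_i-1}-1\}\bigr\}.
$$
The cardinality of the right-hand side equals $|G^{\mu}_{m}|$, the size of the standard isotropic Smolyak grid of level $m$ in dimension $\mu$ built from the nested sample sets $\{2\pi u/2^j\}$. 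By the classical counting argument (using that these univariate grids are nested with cardinality $2^j$, so new points at level $j$ number $2^{j-1}$), one obtains $|G^{\mu}_{m}|\asymp 2^m m^{\mu-1}$, which yields the lower bound $|AG^{d,\boldeta}_m|\gtrsim 2^m m^{\mu-1}$.

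The only mildly delicate point is establishing the isotropic asymptotics $|G^{\mu}_{m}|\asymp 2^m m^{\mu-1}$ with sharp constants in both directions; the upper bound is immediate from the sum-over-$\boldj$ estimate above, while the lower bound rests on the observation that the ``new'' nodes appearing at multi-level $\boldj$ (those not present at any coarser multi-level $\boldj'\le\boldj$, $\boldj'\ne\boldj$) already number $\asymp 2^{|\boldj|_1}$, so that summing over $|\boldj|_1=m$ with $\boldj\in\N_0^\mu$ yields $\asymp \binom{m+\mu-1}{\mu-1}2^m\asymp 2^m m^{\mu-1}$. This is a well-known fact on sparse grids; everything else reduces to elementary manipulations of geometric sums.
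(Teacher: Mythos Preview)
Your proof is correct. The upper bound is essentially the same as the paper's: both start from the union bound $|AG^{d,\boldeta}_m|\le\sum_{\boldj\in\Delta_m}2^{|\boldj|_1}$; the paper then invokes the appendix estimate Lemma~\ref{lem:ansiopnumberoffuncvaluessum}, whereas you carry out the split into the isotropic block $(j_1,\ldots,j_\mu)$ and the geometrically decaying tail explicitly. This is really the same computation written out.

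The lower bound is where the arguments differ. You embed the $\mu$-dimensional isotropic Smolyak grid as a slice of $AG^{d,\boldeta}_m$ and count nodes combinatorially via the ``new points per level'' identity. The paper instead uses a linear-algebra argument: by Lemma~\ref{lem:trigreprod_hyp} the operator $T^{L,\boldeta}_m$ reproduces all trigonometric polynomials with frequencies in the hyperbolic cross $AH^{d,\boldeta}_m$, and since $T^{L,\boldeta}_m$ is a linear operator using only function values at $AG^{d,\boldeta}_m$, one must have $|AG^{d,\boldeta}_m|\ge |AH^{d,\boldeta}_m|\asymp m^{\mu-1}2^m$. Your approach is more elementary and self-contained; the paper's exploits the interpolation structure already established and avoids the (mildly fiddly) disjointness bookkeeping for new nodes across different multi-levels.
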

\begin{proof}
	Due to \eqref{eq:anisogrid} an upper bound for the cardinality of $AG^{d,{\zb \eta}}_m$ is provided by
	$$|AG^{d,{\zb \eta}}_m|\leq \sum_{\frac{1}{\eta_1}  \zb \eta \cdot \zb j \leq m}2^{|\zb j|_1}.$$
	Therefore Lemma \ref{lem:ansiopnumberoffuncvaluessum} in the appendix provides the upper bound in
	\eqref{eq:anisogridnumber}. A trivial lower bound of $2^m$ is provided by simply counting the sampling nodes of $q_{\zb
		j}[f]$ of the level $j=(m,0,\ldots,0)$. A sharp bound can be obtained  by using reproduction properties of $T_m^{L,\zb
		\eta}$ for trigonometric polynomials (cf. Lemma \ref{lem:trigreprod_hyp}) with frequencies in $AH^{d,\zb \eta}_m$. The
	dimension of $AH^{d,\zb \eta}_m$ is given by $\sum_{\frac{1}{\eta_1} \zb \eta \cdot \zb j \leq m}2^{|\zb j|_1}$.
\end{proof}
\begin{rem}
	Comparing this estimate to uniformly refined sparse grids  ($\zb \eta=\zb 1$) we recognize that the underlying dimension of the space plays no role for the asymptotic bound. The dimension dependence is replaced by the $\mu$ largest refinement directions. 
\end{rem}
\begin{satz}\label{satz:lqsampaniso}
	Let $0<p<q<\infty$ and $0< \theta\leq \infty$. Additionally let $L>\frac{1}{q}$ and the smoothness vector $ \zb r>\frac{1}{p}$ with \eqref{eq:smoothnessvector}. Then 
	\beqq
	\|f-T^{L,\zb \eta}_mf\|_q&\lesssim& 2^{-m(r_1-\frac{1}{p}+\frac{1}{q})} \|f|\srptf\|\\
	\eeqq
	holds for all $m> 0$.  The operator generating vector $\zb \eta\in\R$ 
	is chosen as $\zb \eta=\zb r-\frac{1}{p}+\frac{1}{q}.$
\end{satz}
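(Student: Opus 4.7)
The plan is to truncate the sampling representation of $f$ and exploit the elementary identity
\[
(1/p-1/q)|\boldj|_1 \;=\; \boldr\cdot \boldj - \boldeta\cdot \boldj,
\]
which follows directly from $\boldeta = \boldr - (1/p-1/q)\boldone$. This identity converts the defining constraint $\boldeta\cdot \boldj > m\eta_1$ on the tail into precisely the factor $2^{-m\eta_1}$ required by the statement. The delicate point is that the $F$-type sampling characterizations of Theorems \ref{maincharF} and \ref{satz:bandlimpres2} demand $L>\max\{1/p,1/\theta\}$, whereas the present hypothesis only guarantees $L>1/q$. Since $p<q$ implies $1/p>1/q$, this does not imply $L>1/p$ in general, so I will split the argument into the cases $p>1$ and $p\leq 1$.

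\emph{Case $p>1$.} Here $L\geq 1 > 1/p$ automatically. I pick an auxiliary fine index $\theta^\ast\geq \theta$ with $1/\theta^\ast < \min\{L,r_1\}$ so that Theorem \ref{satz:bandlimpres2} applies to $S^{\boldr}_{p,\theta^\ast}F(\tor^d)$. Lemma \ref{lem:embeddings}(iv) gives $\srptf\hookrightarrow S^{\boldr}_{p,\theta^\ast}F(\tor^d)$, and both spaces embed into $C(\tor^d)\subset L_q(\tor^d)$ by Lemma \ref{lem:embeddings}(ii). Thus $f=\sum_{\boldj\in \N_0^d} q^L_{\boldj}[f]$ converges in $L_q$ and so $f-T^{L,\boldeta}_m f$ equals the tail sum. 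Using the diagonal embedding Lemma \ref{lem:qjdiagembedding} with $\nu=1$ and exponent $\theta^\ast$,
\[
\|f-T^{L,\boldeta}_m f\|_q \leq \Big\|\sum_{\boldeta\cdot\boldj > m\eta_1}|q^L_{\boldj}[f]|\Big\|_q \lesssim \Big\|\Big(\sum_{\boldeta\cdot\boldj > m\eta_1} 2^{\theta^\ast |\boldj|_1(1/p-1/q)}|q^L_{\boldj}[f]|^{\theta^\ast}\Big)^{1/\theta^\ast}\Big\|_p,
\]
I substitute $|\boldj|_1(1/p-1/q)=\boldr\cdot\boldj-\boldeta\cdot\boldj$, use $2^{-\theta^\ast \boldeta\cdot\boldj}\leq 2^{-\theta^\ast m\eta_1}$ on the summation domain, extend the sum to all $\boldj\in\N_0^d$, and finally invoke Theorem \ref{satz:bandlimpres2}(ii) together with the embedding to arrive at $2^{-m\eta_1}\|f|S^{\boldr}_{p,\theta^\ast}F(\tor^d)\| \lesssim 2^{-m\eta_1}\|f|\srptf\|$.

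\emph{Case $p\leq 1$.} The $F$-characterization is unavailable, so I detour through the Besov scale using Jawerth-Franke (Lemma \ref{lem:jf}(i)), which yields $\srptf\hookrightarrow S^{\boldeta}_{q,p}B(\tor^d)$. Because $L>1/q$ and $\eta_1 = r_1-1/p+1/q > 1/q$ -- the latter being equivalent to the standing assumption $r_1>1/p$ -- Theorem \ref{maincharB} supplies both the sampling representation in $S^{\boldeta}_{q,p}B(\tor^d)$ and the equivalence $\|f|S^{\boldeta}_{q,p}B(\tor^d)\|^p \asymp \sum_{\boldj\in\N_0^d} 2^{p\boldeta\cdot\boldj}\|q^L_{\boldj}[f]\|_q^p$. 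Since $p\leq \min\{q,1\}$, the $p$-th power triangle inequality $\|\sum_{\boldj} g_{\boldj}\|_q^p \leq \sum_{\boldj}\|g_{\boldj}\|_q^p$ holds (combine the usual $L_q$ triangle inequality with $(\sum a_{\boldj})^p\leq \sum a_{\boldj}^p$ for non-negative $a_{\boldj}$), giving
\[
\|f-T^{L,\boldeta}_m f\|_q^p \leq \sum_{\boldeta\cdot\boldj > m\eta_1}\|q^L_{\boldj}[f]\|_q^p \leq 2^{-pm\eta_1}\|f|S^{\boldeta}_{q,p}B(\tor^d)\|^p \lesssim 2^{-pm\eta_1}\|f|\srptf\|^p,
\]
and extracting the $p$-th root completes the case.

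The main obstacle will be the regime $p\leq 1$ under the weak assumption $L>1/q$: a naive H\"older argument on the Besov side produces an unwanted $m^{(\mu-1)/p'}$ logarithmic loss, so the clean $2^{-m\eta_1}$ bound forces me to use either the diagonal embedding (which only applies when the $F$-characterization holds, effectively demanding $L>1/p$) or the $p$-th power triangle inequality (which only applies when $p\leq \min\{q,1\}$). The two regimes match up precisely under the stated hypotheses, and the Jawerth-Franke step hinges critically on $\eta_1>1/q$, i.e.\ exactly on $r>1/p$.
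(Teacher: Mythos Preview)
Your argument is correct, and it reaches the same conclusion by a route that differs from the paper's in an instructive way.

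The paper gives a single argument valid for all $0<p<q<\infty$: after pulling out $2^{-m\eta_1}$ pointwise, it applies Lemma~\ref{lem:qjdiagembedding} to pass from $L_q$ to $L_{\tilde q}$ for an intermediate $p<\tilde q<q$ with $L>1/\tilde q$, invokes the $F$-type sampling characterization at integrability $\tilde q$ to land in $S^{\tilde{\boldr}}_{\tilde q,\infty}F(\tor^d)$ with $\tilde{\boldr}=\boldr-1/p+1/\tilde q$, and closes with the diagonal embedding $\srptf\hookrightarrow S^{\tilde{\boldr}}_{\tilde q,\infty}F(\tor^d)$ from Lemma~\ref{lem:embeddings}(vii). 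The intermediate $\tilde q$ is precisely what sidesteps the obstruction you noticed: $L>1/q$ need not give $L>1/p$, but it does give $L>1/\tilde q$ for $\tilde q$ close to $q$, so the sampling characterization is available at $\tilde q$ regardless of how small $p$ is.

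Your Case $p>1$ is essentially the paper's argument specialized to $\tilde q=p$ (admissible here because $L\geq 1>1/p$); one small correction is that you should invoke Theorem~\ref{maincharF} rather than Theorem~\ref{satz:bandlimpres2}, since the latter literally requires $L>1$ and hence misses $L=1$, whereas Theorem~\ref{maincharF} handles $L=1$ through the Dirichlet result provided the fine index is finite --- which your choice $\theta^\ast<\infty$ ensures. Your Case $p\leq 1$ is a genuinely different detour: instead of an intermediate $\tilde q$ you pass via Jawerth--Franke to $S^{\boldeta}_{q,p}B(\tor^d)$ and then use the $B$-characterization (Theorem~\ref{maincharB}) at integrability $q$, for which $L>1/q$ and $\eta_1>1/q$ suffice, followed by the $p$-th power triangle inequality. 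This is slightly more elementary (no intermediate scale, no diagonal $F$-embedding) at the price of the case split. Both approaches are clean; the paper's buys uniformity, yours buys transparency about why only $L>1/q$ is needed.
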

\begin{proof} We start expanding $f$ into the series \eqref{eq:qkseries}. This allows us to estimate
	\beqq
	\|f-T_m^{L,\zb \eta}f\|_{q}&\leq& \Big\|\sum_{\frac{1}{\eta_1}  \zb \eta \cdot \zb j> m}|q_{\zb j}[f]|\Big\|_q
	\leq 2^{-(r_1-\frac{1}{p}+\frac{1}{q})m}\Big\|\sum_{\zb j \in \N_0^d}2^{(\zb r -\frac{1}{p}+\frac{1}{q})\cdot \zb j}|q_{\zb
		j}[f]|\Big\|_q.
	\eeqq
	We choose some parameters. Since $L>\frac{1}{q}+1$ we find $\tilde{q}\in\re$ with $p<\tilde{q}<q$ such that
	$L>\frac{1}{\tilde{q}}+1$ is fulfilled. Let ${\bf \tilde{r}}:=\zb r-\frac{1}{p}+\frac{1}{\tilde{q}}$. Applying Lemma
	\ref{lem:qjdiagembedding} yields
	\beqq
	\|f-T_m^{L,\zb \eta}f\|_{q}
	&\lesssim& 2^{-(r_1-\frac{1}{p}+\frac{1}{q})m}\Big\|\sup_{\zb j \in \N_0^d}2^{{\bf \tilde{r}}\cdot \zb{j}}|q_{\zb j}[f]|\Big\|_{\tilde{q}}
	\eeqq
	Theorem \ref{satz:bandlimpres2} yields
	\beqq \|f-T^{L,\zb \eta}_mf\|_{q}\lesssim 2^{-(r_1-\frac{1}{p}+\frac{1}{q})m}\|f|S^{\bf \tilde{r}}_{\tilde{q},\infty}F(\T)\|.\eeqq
	Finally, using the diagonal embedding stated in Lemma \ref{lem:embeddings}, (vi) gives
	\beqq \|f-T_m^{L,\zb \eta}f\|_{q}\lesssim 2^{-(r_1-\frac{1}{p}+\frac{1}{q})m}\|f|S^{\zb r}_{p,\theta}F(\T)\|,\eeqq
	which finishes the proof.
\end{proof}
For $\theta=2$ we can reproduce and generalize a result due to Temlyakov \cite{Tem93complexity}.
\begin{satz}\label{satz:linftysamplingmu}
	Let $0< p< \infty$, $0<\theta\leq \infty$.
	Additionally, let $L\geq 1$ and the smoothness vector $ \zb r>\frac{1}{p}$ with \eqref{eq:smoothnessvector}. Then 
	\beqq \|f-T^{L,\zb \eta}_mf\|_{\infty}&\lesssim &
	m^{(\mu-1)(1-\frac{1}{p})_+}2^{-m(r_1-\frac{1}{p})} \|f|\srptf\|
	\eeqq
	holds for all $m\geq 0$. The operator generating vector $\zb \eta\in\R$ is chosen as $\zb \eta={\zb \nu}-\frac{1}{p}$, where ${\zb \nu}\in\R$ with
	$$r_s=\nu_s,\quad s=1,\ldots,\mu\quad\mbox{and}\quad r_1<\nu_s< r_s,\quad s=\mu+1,\ldots,d.$$
\end{satz}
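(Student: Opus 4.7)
The strategy is to combine the sampling representation with a Jawerth--Franke embedding into a $B$-space of integrability $\infty$ (this is the key refinement over the simpler route via $\srptf\hookrightarrow S^{\zb r}_{p,\infty}B$, which would only produce the suboptimal factor $m^{\mu-1}$ via classical Nikolskii). First, from Theorems \ref{satz:bandlimpres2}/\ref{satz:bandlimpresdirichletf} together with the embedding $\srptf\hookrightarrow C(\tor^d)$ (Lemma \ref{lem:embeddings}(ii), using $r>1/p$), the representation $f=\sum_{\zb j}q^L_{\zb j}[f]$ converges in $L_\infty(\tor^d)$, and the standard triangle inequality reduces the problem to estimating $\sum_{\frac{1}{\eta_1}\zb\eta\cdot\zb j>m}\|q^L_{\zb j}[f]\|_\infty$.

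Setting $\tilde{\zb r}:=\zb r-\frac{1}{p}\zb 1$, the Jawerth--Franke embedding $\srptf\hookrightarrow S^{\tilde{\zb r}}_{\infty,p}B(\tor^d)$ (Lemma \ref{lem:jf}(i) with $q=\infty$) combined with the $B$-characterization (Theorem \ref{maincharB}, applicable since $\tilde{\zb r}>\zb 0$) yields
\[
\Big(\sum_{\zb j\in\N_0^d}2^{p\tilde{\zb r}\cdot\zb j}\|q^L_{\zb j}[f]\|_\infty^{p}\Big)^{1/p}\lesssim \|f|\srptf\|.
\]
Writing $\|q^L_{\zb j}[f]\|_\infty=2^{-\tilde{\zb r}\cdot\zb j}\cdot(2^{\tilde{\zb r}\cdot\zb j}\|q^L_{\zb j}[f]\|_\infty)$, I apply H\"older's inequality with dual exponents $(p',p)$ in the case $p\geq 1$, and the quasi-triangle inequality $(\sum c_{\zb j})^p\leq\sum c_{\zb j}^p$ together with a supremum bound in the case $p\leq 1$. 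This reduces matters to estimating the tail sum $\sum_{\zb\eta\cdot\zb j>m\eta_1}2^{-p'\tilde{\zb r}\cdot\zb j}$ (for $p>1$) or the supremum $\sup_{\zb\eta\cdot\zb j>m\eta_1}2^{-\tilde{\zb r}\cdot\zb j}$ (for $p\leq 1$).

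The choice of $\zb\nu$ forces $\eta_s=\tilde r_s$ for $s\leq\mu$ and $\eta_s<\tilde r_s$ for $s>\mu$, so the linear constraint is oriented toward the first $\mu$ equal-smoothness directions. Splitting $\zb j=(j_1,\dots,j_\mu,j_{\mu+1},\dots,j_d)$, the inner sum over the first block under $j_1+\cdots+j_\mu>N$ with $N=m-\frac{1}{\eta_1}\sum_{s>\mu}\eta_s j_s$ contributes the polynomial factor
\[
\sum_{K>N}K^{\mu-1}2^{-\sigma\tilde r_1 K}\asymp (1+N_+)^{\mu-1}2^{-\sigma\tilde r_1 N_+};
\]
using $\tilde r_1=\eta_1$ the $N$-dependence rearranges to a convergent product $\prod_{s>\mu}2^{-\sigma(\tilde r_s-\eta_s)j_s}$, giving
\[
\sum_{\zb\eta\cdot\zb j>m\eta_1}2^{-\sigma\tilde{\zb r}\cdot\zb j}\lesssim m^{\mu-1}2^{-\sigma\tilde r_1 m}\quad (\sigma>0),\qquad \sup_{\zb\eta\cdot\zb j>m\eta_1}2^{-\tilde{\zb r}\cdot\zb j}\asymp 2^{-\tilde r_1 m}.
\]
Plugging in $\sigma=p'$ produces $m^{(\mu-1)/p'}2^{-\tilde r_1 m}=m^{(\mu-1)(1-1/p)}2^{-(r_1-1/p)m}$ for $p>1$ and the same bound without polynomial factor for $p\leq 1$, both matching $m^{(\mu-1)(1-1/p)_+}2^{-(r_1-1/p)m}\|f|\srptf\|$.

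The decisive step is the Jawerth--Franke upgrade from $\ell_\infty$- to $\ell_p$-summability of the weighted blocks $2^{\tilde{\zb r}\cdot\zb j}\|q^L_{\zb j}[f]\|_\infty$; the subsequent H\"older / quasi-triangle trick trades this extra integrability for the saved $(\mu-1)/p$ power of $m$, which is the source of the sharpened logarithmic factor compared with \eqref{f04_1}.
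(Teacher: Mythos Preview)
Your approach mirrors the paper's: expand, apply the triangle inequality, split via H\"older (respectively $\ell_p\hookrightarrow\ell_1$ for $p\le 1$), estimate the tail weight sum by Lemma \ref{lem:anisosmolsum}, and close with a Jawerth--Franke embedding. The overall structure and the resulting exponents are correct.

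There is, however, one genuine gap. You invoke the $B$-characterization (Theorem \ref{maincharB}) at integrability $\infty$ to obtain
\[
\Big(\sum_{\zb j\in\N_0^d}2^{p\tilde{\zb r}\cdot\zb j}\|q^L_{\zb j}[f]\|_\infty^p\Big)^{1/p}\lesssim\|f|S^{\tilde{\zb r}}_{\infty,p}B(\tor^d)\|.
\]
But Theorem \ref{maincharB} explicitly excludes the case $p=\infty$ when $L=1$ (the parenthetical ``$L=1$ requires $p<\infty$''), and the underlying Theorem \ref{satz:bandlimpresdirichletb} is stated only for $1<p<\infty$; the Dirichlet-kernel route passes through Lemma \ref{lem:i1rep} and the $L_p$-boundedness of the Fourier partial sum operator, which fails at the endpoint $p=\infty$. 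Since the statement you are proving allows $L=1$, your argument as written does not cover that case.

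The paper avoids this by inserting a Nikolskii step \emph{before} the sampling representation: pick any $\max\{p,1\}<\tilde p<\infty$, use $\|q^L_{\zb j}[f]\|_\infty\lesssim 2^{|\zb j|_1/\tilde p}\|q^L_{\zb j}[f]\|_{\tilde p}$, then apply Theorem \ref{satz:bandlimpresbesov} (or Theorem \ref{satz:bandlimpresdirichletb} if $L=1$) at integrability $\tilde p$ to reach $\|f|S^{\zb r-1/p+1/\tilde p}_{\tilde p,p}B(\tor^d)\|$, and only then invoke Jawerth--Franke in the form $\srptf\hookrightarrow S^{\zb r-1/p+1/\tilde p}_{\tilde p,p}B(\tor^d)$. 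With this detour through a finite $\tilde p$ your argument goes through for all $L\ge 1$; the H\"older/quasi-triangle split and the anisotropic tail-sum computation you sketched are otherwise the same as in the paper.
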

\begin{proof}
	{\em Step 1.} We prove 	\be
	\|f-T_m^{L,\zb \eta}f\|_{\infty}\lesssim \|f|S^{\zb r-\frac{1}{p}+\frac{1}{\tilde{p}}}_{\tilde{p},p}B(\T)\|\begin{cases}
		2^{-m(r_1-\frac{1}{p})}&:\quad 0<p\leq 1,\\
		m^{(\mu-1)(1-\frac{1}{p})}2^{-m(r_1-\frac{1}{p})}&:\quad p>1,
	\end{cases}\label{eq:h14}\ee
	where $\tilde{p}$ is chosen such that $\max\{p,1\}<\tilde{p}<\infty$ is fulfilled.
	Expanding into \eqref{eq:qkseries} and using triangle inequality yields
	\beqq
	\|f-T_m^{L,\zb \eta}f\|_{\infty}&=&\Big\|\sum_{\frac{1}{\eta_1}  \zb \eta \cdot \zb j> m}q^L_{\zb j}[f]\Big\|_{\infty}
	\leq\sum_{\frac{1}{\eta_1}  \zb \eta \cdot \zb j> m}\|q^L_{\zb j}[f]\|_{\infty}.
	\eeqq
	We have to distinguish the cases $0<p\leq 1$ and the case $p>1$. We start with $0<p\leq 1$. The elementary embedding $\ell_p(\N_0^d)\hookrightarrow\ell_1(\N_0^d)$ yields
	\beqq
	\|f-T_m^{L,\zb \eta}f\|_{\infty}&\leq&2^{-m(r_1-\frac{1}{p})}\sum_{\frac{1}{\eta_1}  \zb \eta \cdot \zb j> m}2^{(\zb r-\frac{1}{p})\cdot \zb j}\|q^L_{\zb j}[f]\|_{\infty}\leq 2^{-m(r_1-\frac{1}{p})}\Big(\sum_{\frac{1}{\eta_1}  \zb \eta \cdot \zb j> m}2^{p(\zb r-\frac{1}{p})\cdot \zb j}\|q^L_{\zb j}[f]\|_{\infty}^p\Big)^{\frac{1}{p}}.\\
	\eeqq	
	In case $p>1$ we apply H\"older's inequality with $1=\frac{1}{p}+\frac{1}{p'}$ and obtain
	\beqq
	\|f-T_m^{L,\zb \eta}f\|_{\infty}&\leq& \Big(\sum_{\frac{1}{\eta_1}  \zb \eta \cdot \zb j> m}2^{-p'(\zb r-\frac{1}{p})\cdot \zb j}\Big)^{\frac{1}{p'}}\Big(\sum_{\frac{1}{\eta_1}  \zb \eta \cdot \zb j> m}2^{p(\zb r-\frac{1}{p})\cdot\zb j}\|q^L_{\zb j}[f]\|_{\infty}^p\Big)^{\frac{1}{p}}.\\
	\eeqq
	Lemma \ref{lem:anisosmolsum} yields
	\beqq
	\|f-T_m^{L,\zb \eta}f\|_{\infty}\leq m^{(\mu-1)(1-p)}2^{-m(r_1-\frac{1}{p})}\Big(\sum_{\frac{1}{\eta_1}  \zb \eta \cdot \zb j> m}2^{p(\zb r-\frac{1}{p})\cdot \zb j}\|q^L_{\zb j}[f]\|_{\infty}^p\Big)^{\frac{1}{p}}.
	\eeqq
	Nikolskij's inequality (special case of Lemma \ref{lem:qjdiagembedding}) gives
	\beqq
	\|f-T_m^{L,\zb \eta}f\|_{\infty}\leq m^{(\mu-1)(1-p)}2^{-m(r_1-\frac{1}{p})}\Big(\sum_{\frac{1}{\eta_1}  \zb \eta \cdot \zb j> m}2^{p(\zb r-\frac{1}{p}+\frac{1}{\tilde{p}})\cdot \zb j}\|q^L_{\zb j}[f]\|_{\tilde{p}}^p\Big)^{\frac{1}{p}}.
	\eeqq
	In both cases Theorem \ref{satz:bandlimpresbesov} yields \eqref{eq:h14}.
	\\{\em Step 2.} The Jawerth-Franke type embedding implies
	$$\srptf\hookrightarrow S^{\zb r-\frac{1}{p}+\frac{1}{\tilde{p}}}_{\tilde{p},p}B(\T)$$ (cf. Lemma \ref{lem:jf}). Applying this we obtain
	\beqq 
	\|f-T_m^{L,\zb \eta}f\|_{\infty}\lesssim	m^{(\mu-1)(1-\frac{1}{p})_+}2^{-m(r_1-\frac{1}{p})} \|f|\srptf\|
	,
	\eeqq
	which proves the claim.
\end{proof}
\begin{rem}
	It is remarkable that Theorem \ref{satz:lqsampaniso} allows to use the Smolyak algorithm based on the 
	classical (nested) trigonometric interpolation (Dirichlet kernel) in case $1<q\leq \infty$ although $p<q$ may be less than one. 
	A similar observation has been made recently in \cite[Rem.\ 6.12]{BDSU15}.
\end{rem}

In the remainder of this section we deal with Besov spaces $\srptb$. 
A similar result as stated here was obtained by Dinh D\~ung in \cite{Di11}, see also \cite{Di01}. 
We contribute the case $\min\{p,\theta\}<1$ for the Fourier analytical approach and allow the Dirichlet kernel ($L=1$) for $q>1$.
\begin{satz}\label{satz:lqsampanisonikolskij}
	Let $0<p<q<\infty$ and $0<\theta\leq \infty$. Additionally let $L>\frac{1}{q}$ and the smoothness vector $ \zb r>\frac{1}{p}$ with \eqref{eq:smoothnessvector}. Then 
	\beqq
	\|f-T^{L,\zb \eta}_mf\|_q&\lesssim& 2^{-m(r_1-\frac{1}{p}+\frac{1}{q})}m^{(\mu-1)(\frac{1}{q}-\frac{1}{\theta})_+} \|f|\srptb\|\\
	\eeqq
	holds for all $m> 0$.  The operator generating vector $\zb \eta\in\R$ is chosen as $\zb \eta={\zb \nu}-\frac{1}{p}+\frac{1}{q}$, where ${\zb \nu}\in\R$ with
	$$r_s=\nu_s,\quad s=1,\ldots,\mu\quad\mbox{and}\quad r_1<\nu_s< r_s,\quad s=\mu+1,\ldots,d.$$
\end{satz}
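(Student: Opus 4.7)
The plan is to follow the proof of Theorem~\ref{satz:lqsampaniso} (the $F$-case) with the $F$-space ingredients (Theorem~\ref{maincharF} and the diagonal $F$-embedding) replaced by their $B$-analogues (Theorem~\ref{maincharB} and Lemma~\ref{lem:embeddings}(vii)). By Theorem~\ref{satz:bandlimpresbesov} we have the unconditional representation $f = \sum_{\boldj} q^L_\boldj[f]$ in $\srptb$, so with $A := \{\boldj\in\N_0^d : \boldeta\cdot\boldj/\eta_1 > m\}$ the error satisfies
\[
    \|f - T^{L,\boldeta}_m f\|_q \;\leq\; \Big\|\sum_{\boldj\in A} |q^L_\boldj[f]|\Big\|_q.
\]
The choice $\eta_s = \nu_s - 1/p + 1/q$ with $\nu_s = r_s$ for $s \leq \mu$ and $r_1 < \nu_s < r_s$ for $s > \mu$ ensures $\gamma_s/\eta_s \geq \gamma_1/\eta_1$ for every $s\in[d]$, where $\boldgamma := \boldr - 1/p + 1/q$. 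Hence $\boldgamma\cdot\boldj \geq (\gamma_1/\eta_1)\boldeta\cdot\boldj > m\gamma_1$ on $A$, which provides the base decay $2^{-m(r_1 - 1/p + 1/q)}$.

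To capture the correct logarithmic factor I would decompose $A$ into dyadic shells $A_k := \{\boldj : k-1 < \boldeta\cdot\boldj/\eta_1 \leq k\}$ for $k > m$, with $|A_k| \asymp k^{\mu-1}$ by the count from Lemma~\ref{lem:ansiopnumberoffuncvalues}, and establish the per-shell estimate
\[
    \Big\|\sum_{\boldj\in A_k} q^L_\boldj[f]\Big\|_q \;\lesssim\; 2^{-k\gamma_1}\, k^{(\mu-1)(1/q - 1/\theta)_+}\Big(\sum_{\boldj\in A_k} a_\boldj^\theta\Big)^{1/\theta},
\]
where $a_\boldj := 2^{\boldr\cdot\boldj}\|q^L_\boldj[f]\|_p$. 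Summing over $k > m$ using the $\min\{q,1\}$-triangle inequality in $L_q$ and the geometric factor $2^{-k\gamma_1}$ (the dominant contribution coming from $k \sim m$) yields the assertion, with Theorem~\ref{maincharB} identifying the $\ell_\theta$-norm of $(a_\boldj)$ with $\|f|\srptb\|$ up to a multiplicative constant.

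The per-shell bound itself is obtained from Lemma~\ref{lem:qjdiagembedding} (a vector-valued Nikolskii inequality) applied with inner index $\theta' = \min\{\theta,p\}$, which allows the exchange $L_p(\ell_{\theta'}) \to \ell_{\theta'}(L_p)$ by Minkowski's inequality, combined with the pointwise estimate $\boldgamma\cdot\boldj \geq (k-1)\gamma_1$ on $A_k$ and a final H\"older step $\ell_{\theta'}\hookrightarrow\ell_\theta$ on the finite set $A_k$ that contributes $|A_k|^{1/\theta'-1/\theta}$ whenever $\theta > p$. The main obstacle is to sharpen this crude exponent $(1/p-1/\theta)_+$ down to the optimal $(1/q-1/\theta)_+$; the Nikolskii gap $1/p - 1/q$ is closed by invoking the hyperbolic-cross Nikolskii inequality on the shell $V_k = \bigcup_{\boldj\in A_k}\mathcal{P}^0_\boldj$ (which has cardinality $\asymp k^{\mu-1}2^k$) together with a Plancherel--P\'olya-type $L_p$-bound for $\sum_{\boldj\in A_k} q^L_\boldj[f]$ coming from Theorem~\ref{satz:bandlimpresbesov}. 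The quasi-Banach range $\min\{p,\theta,q\} < 1$ and the Dirichlet case $L = 1$ (admissible only for $q > 1$, which is precisely the content of the assumption $L > 1/q$) are handled by the $\min\{p,\theta,q,1\}$-triangle inequality throughout and, when $L=1$, by Theorem~\ref{satz:bandlimpresdirichletb} in place of Theorem~\ref{satz:bandlimpresbesov}.
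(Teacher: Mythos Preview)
Your shell decomposition is a natural idea, but the crucial step---sharpening the exponent $(1/p-1/\theta)_+$ to $(1/q-1/\theta)_+$---is not carried out, and your proposed fix via a hyperbolic-cross Nikolskii inequality on $V_k$ does not work as stated: applying Nikolskii to $\sum_{\boldj\in A_k}q^L_\boldj[f]$ as a whole would only \emph{add} a factor $(k^{\mu-1}2^k)^{1/p-1/q}$, not remove the surplus $k^{(\mu-1)(1/p-1/q)}$ already incurred by the $L_p(\ell_{\theta'})\to\ell_{\theta'}(L_p)$ swap with $\theta'=p$. The loss happens because your route forces an $\ell_p$-type outer sum (via Minkowski with $\theta'\le p$), and there is no way to recover the missing $1/p-1/q$ afterwards by a scalar Nikolskii step.

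The paper avoids this obstacle entirely by \emph{not} going through $L_p(\ell_{\theta'})$ at all. Instead it first applies the Jawerth--Franke embedding of Lemma~\ref{lem:jf},(ii),
\[
S^{1/\tilde q-1/q}_{\tilde q,q}B(\T)\hookrightarrow L_q(\T),
\]
with some $\tilde q\in(p,q)$ satisfying $L>1/\tilde q$. Combined with Theorem~\ref{satz:bandlimrep11} this gives directly
\[
\|f-T^{L,\boldeta}_m f\|_q\;\lesssim\;\Big(\sum_{\frac{1}{\eta_1}\boldeta\cdot\boldj>m}2^{q|\boldj|_1(1/\tilde q-1/q)}\|q^L_{\boldj}[f]\|_{\tilde q}^{\,q}\Big)^{1/q},
\]
i.e.\ an $\ell_q$-outer sum from the start. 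Now a single H\"older step between $\ell_q$ and $\ell_\theta$ over the index set $\{\boldj:\boldeta\cdot\boldj/\eta_1>m\}$, together with Lemma~\ref{lem:anisosmolsum}, produces exactly $m^{(\mu-1)(1/q-1/\theta)_+}2^{-m(r_1-1/p+1/q)}$ (the case $\theta\le q$ uses $\ell_\theta\hookrightarrow\ell_q$ instead and yields no logarithm). One finishes with Theorem~\ref{satz:bandlimpresbesov} and the diagonal Besov embedding of Lemma~\ref{lem:embeddings},(vii) to pass from $S^{\boldr-(1/p-1/\tilde q)}_{\tilde q,\theta}B$ back to $\srptb$. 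The case $q\le 1$ is simpler: the $q$-triangle inequality already gives an $\ell_q$-outer sum without Jawerth--Franke, and one takes $\tilde q=q$. No shell decomposition is needed anywhere.
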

\begin{proof}  First we prove the case $q>1$ with $\theta<\infty$. We find $\tilde{q}<q$ such that $L>\frac{1}{\tilde{q}}>\frac{1}{q}$ holds.
	The Jawerth-Franke embedding $S^{\frac{1}{\tilde{q}}-\frac{1}{q}}_{\tilde{q},q}B(\T)\subset L_q(\T)$ (cf. Lemma \ref{lem:jf}) yields
	$$\|f-T^{L,\zb \eta}f\|_{q}\leq \|f-T^{L,\zb \eta}f|S^{\frac{1}{\tilde{q}}-\frac{1}{q}}_{\tilde{q},q}B(\T)\|$$
	Expanding $f$ into the series \eqref{eq:qkseries} and applying Theorem \ref{satz:bandlimrep11} gives
	\be\|f-T^{L,\zb \eta}f\|_{q}\leq \Big(\sum_{\frac{1}{\eta_1}  \zb \eta \cdot \zb j> m}2^{q \zb j\cdot \zb 1 (\frac{1}{\tilde{q}}-\frac{1}{q})}\|q_{\zb j}[f]\|^q_{\tilde{q}}\Big)^{\frac{1}{q}}.\label{eq:continue1}\ee
	In case $\infty >\theta>q$ this can be estimated by using H\"older's inequality
	$$\|f-T^{L,\zb \eta}f\|_{q}\leq \Big(\sum_{\frac{1}{\eta_1}  \zb \eta \cdot \zb j> m}2^{-\frac{q\theta}{\theta-q} \zb j\cdot (\zb r-\frac{1}{p}+\frac{1}{q})}\Big)^{\frac{1}{q}-\frac{1}{\theta}} \Big(\sum_{\frac{1}{\eta_1}  \zb \eta \cdot \zb j> m} 2^{\theta\zb j\cdot (\zb r-(\frac{1}{p}-\frac{1}{\tilde{q}}))}\|q_{\zb j}[f]\|^{\theta}_{\tilde{q}}\Big)^{\frac{1}{\theta}}.$$
	The estimate for the sum in Lemma \ref{lem:anisosmolsum} gives
	$$\|f-T^{L,\zb \eta}f\|_{q}\leq 2^{-m(r_1-\frac{1}{p}+\frac{1}{q})}m^{(\mu-1)(\frac{1}{q}-\frac{1}{\theta})} \Big(\sum_{\frac{1}{\eta_1}  \zb \eta \cdot \zb j> m} 2^{\theta\zb j\cdot (\zb r-(\frac{1}{p}-\frac{1}{\tilde{q}}))}\|q_{\zb j}[f]\|^{\theta}_{\tilde{q}}\Big)^{\frac{1}{\theta}}.$$
	In case $\theta\leq q$ we use the embedding $\ell_{\theta}\hookrightarrow \ell_{q}$ and obtain
	$$\|f-T^{L,\zb \eta}f\|_{q}\leq 2^{-m(r_1-\frac{1}{p}+\frac{1}{q})}\Big(\sum_{\frac{1}{\eta_1}  \zb \eta \cdot \zb j> m} 2^{\theta\zb j\cdot (\zb r-(\frac{1}{p}-\frac{1}{\tilde{q}}))}\|q_{\zb j}[f]\|^{\theta}_{\tilde{q}}\Big)^{\frac{1}{\theta}}.$$
	Theorem \ref{satz:bandlimpresbesov} allows to estimate
	$$\|f-T^{L,\zb \eta}f\|_{q}\leq 2^{-m(r_1-\frac{1}{p}+\frac{1}{q})}m^{(\mu-1)(\frac{1}{q}-\frac{1}{\theta})_+}\|f|S^{\zb r - (\frac{1}{p}-\frac{1}{\tilde{q}})}_{\tilde{q},\theta}B(\T)\|.$$
	Finally, the diagonal embedding stated in Lemma \ref{lem:embeddings}, (vi) yields
	$$\|f-T^{L,\zb \eta}f\|_{q}\leq 2^{-m(r_1-\frac{1}{p}+\frac{1}{q})}m^{(d-1)(\frac{1}{q}-\frac{1}{\theta})_+}\|f|\srptb\|.$$
	The case $q\leq 1$ is simpler. We expand $f$ into the series \eqref{eq:qkseries}. Then $q$-triangle inequality yields
	$$\|f-T^{L,\zb \eta}_m f\|_{q}\leq \Big(\sum_{\frac{1}{\eta_1}  \zb \eta \cdot \zb j> m}\|q_{\zb j}[f]\|^q_{q}\Big)^{\frac{1}{q}}.$$ 
	The same case study as in the lines after \eqref{eq:continue1} with $\tilde{q}=q$ finishes the proof. As usual in case $\theta=\infty$ we have to replace the corresponding sum by $\sup$.
\end{proof}
\section{Linear sampling recovery}\label{sec:sharplin}
In this section we consider the optimality of convergence rates for linear sampling algorithms in case of Triebel-Lizorkin and H\"older-Nikolskij spaces with mixed smoothness, we abbreviate by \boldF. As a benchmark quantity we study linear sampling widths, cf. \eqref{samplingnumber} in the introduction,
$$\varrho_n^{\text{lin}}(\boldF,L_q(\T)):=\inf_{\substack{(\zb \xi_i)_{i=1}^n\subset \T\\(\psi_i)_{i=1}^n\subset L_q(\T)}}\sup_{\|f|\boldF\|\leq 1}\Big\|f-\sum_{i=1}^n f(\zb \xi_i)\psi_i\Big\|_q.$$
This quantity can be interpreted as the minimal worst case error for the approximation of functions from the unit ball of $\boldF$ by linear algorithms using $n$ function evaluations and where the error is measured in $L_q(\T)$. In case of $\boldF=\srptf$ with $\theta=2$ and $1<p<\infty$ 
we have the coincidence $\srpw=\srptf$. This case is of special interest in this section because it denotes the probably most famous representative of the $F$-scale. Choosing $m$ in \eqref{eq:anisosmolop} such that $n\gtrsim m^{\mu-1}2^m $ an upper bound for $\varrho_n^{\text{lin}}(\boldF,L_q(\T))$ is provided by
$$\varrho_n^{\text{lin}}(\boldF,L_q(\T))\lesssim \sup_{\|f|\boldF\|\leq 1}\|f-T^{L,\zb \eta}_{m}f\|_q.$$
Approximation with general linear information in case of mixed order Sobolev spaces $\svrpw$ and H\"older-Nikolskij spaces $\srpib$ has been intensively studied in the past. We recall the concept of linear $n$-widths:
$$\lambda_n(\boldF,L_q(\T)):=\inf_{\substack{A:\boldF\to L_q(\T)\\\rank A\leq n}}\sup_{\|f|\boldF\|\leq 1}\|f-A(f)\|_q.$$
In comparison to $\varrho_n^{\text{lin}}(\boldF,L_q(\T))$ this quantity allows to benchmark linear operators using $n$ pieces of linear information. Function evaluations are also linear information. Therefore, we have the relation
\beqq \lambda_n(\boldF,L_q(\T))\leq \varrho_n^{\text{lin}}(\boldF,L_q(\T)).\eeqq
That means linear $n$-widths can serve as lower bounds for linear sampling $n$-widths.

\begin{cor}\label{satz:lqsampanisosampnumb}
	Let $0<p<q<\infty$ and $0< \theta\leq \infty$. Additionally, the smoothness vector $ \zb r>\frac{1}{p}$ is supposed to satisfy \eqref{eq:smoothnessvector}.
	Then
	\beqq
	\varrho_n^{\text{lin}}(\srptf,L_q(\T))&\lesssim& (n^{-1}\log^{(\mu-1)}n)^{r_1-\frac{1}{p}+\frac{1}{q}}
	\eeqq
	holds for all $n> 0$.
\end{cor}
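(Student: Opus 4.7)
The plan is to combine the error bound from Theorem \ref{satz:lqsampaniso} with the grid-size estimate from Lemma \ref{lem:ansiopnumberoffuncvalues} and then optimize the level parameter $m$ so that the total number of sampling nodes matches $n$.

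First I would fix an integer $L \geq 1$ with $L > 1/q$ (this is possible since $q>0$), and take $\zb\eta = \zb r - 1/p + 1/q > 0$ as prescribed by Theorem \ref{satz:lqsampaniso}. For any $m \in \N$ the Smolyak-type interpolation operator $T^{L,\zb \eta}_m$ in \eqref{eq:anisosmolop} is a linear algorithm whose evaluation requires function values only on the anisotropic sparse grid $AG^{d,\zb\eta}_m$. By Lemma \ref{lem:ansiopnumberoffuncvalues} the cardinality satisfies $|AG^{d,\zb\eta}_m| \asymp m^{\mu-1} 2^m$, and by Theorem \ref{satz:lqsampaniso} we have the uniform bound
$$\sup_{\|f|\srptf\|\leq 1}\|f - T^{L,\zb\eta}_m f\|_q \lesssim 2^{-m(r_1 - 1/p + 1/q)}.$$
Hence $T^{L,\zb\eta}_m$ is an admissible linear sampling algorithm in the definition of $\varrho^{\text{lin}}$ with $\asymp m^{\mu-1} 2^m$ nodes.

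Next I would calibrate $m$ to the budget $n$. Given $n$, choose $m = m(n)$ to be the largest integer such that $C \cdot m^{\mu-1} 2^m \leq n$ for a suitable constant $C>0$ controlling the implicit constant in Lemma \ref{lem:ansiopnumberoffuncvalues}. Then $m \asymp \log n$ and
$$2^{-m} \asymp \frac{m^{\mu-1}}{n} \asymp \frac{(\log n)^{\mu-1}}{n}.$$
Raising this to the power $r_1 - 1/p + 1/q > 0$ (which is positive since $\zb r > 1/p$ and $q>p$) and inserting into the error bound yields
$$\varrho_n^{\text{lin}}(\srptf, L_q(\T)) \leq \sup_{\|f|\srptf\|\leq 1}\|f - T^{L,\zb\eta}_m f\|_q \lesssim \Big(\frac{(\log n)^{\mu-1}}{n}\Big)^{r_1 - 1/p + 1/q},$$
which is the desired estimate.

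There is essentially no obstacle here; this corollary is a bookkeeping consequence of Theorem \ref{satz:lqsampaniso} and Lemma \ref{lem:ansiopnumberoffuncvalues}. The only mild subtlety is checking that the implied constants in the cardinality estimate and the error bound are uniform in $m$, so that the substitution $m \asymp \log n$ is legitimate; this is ensured because both estimates hold for all $m \geq 1$ with constants depending only on $d, p, q, \theta, \zb r, L$. Also one should note that the condition $L > 1/q$ can always be met (e.g.\ $L=1$ whenever $q>1$), so that the hypothesis of Theorem \ref{satz:lqsampaniso} is satisfied.
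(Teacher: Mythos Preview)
Your proposal is correct and follows exactly the approach of the paper: combine the error estimate of Theorem \ref{satz:lqsampaniso} with the grid-size bound of Lemma \ref{lem:ansiopnumberoffuncvalues} and calibrate $m$ so that $m^{\mu-1}2^m \asymp n$. The paper's own proof is a one-line reference to these two results; you have simply spelled out the routine details.
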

\begin{proof}
	The proof follows by Theorem \ref{satz:lqsampaniso} with the estimate from Lemma \ref{lem:ansiopnumberoffuncvalues} for the number of function evaluations used by $T^{L,\zb \eta}_m$.
\end{proof}
\begin{cor}\label{cor:sharp}
	Let $\zb r>\frac{1}{p}$ fulfilling \eqref{eq:smoothnessvector}.
	Furthermore, let $1<p<q\leq 2$, $1\leq \theta\leq \infty$ or $2\leq p<q<\infty$, $2\leq \theta\leq \infty$. Then
	$$\lambda_n(\srptf,L_q(\T))\asymp \varrho_n^{\text{lin}}(\srptf,L_q(\T))\asymp (n^{-1}\log^{(\mu-1)}n)^{(r_1-\frac{1}{p}+\frac{1}{q})}$$
	for all $n\in\N$.
\end{cor}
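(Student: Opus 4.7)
My strategy is to sandwich both widths between the upper bound produced by the Smolyak interpolation of Corollary~\ref{satz:lqsampanisosampnumb} and a matching lower bound on $\lambda_n$ imported from the literature, transferred to $\srptf$ by monotonicity of $\lambda_n$ under continuous embeddings of the source space. First, Corollary~\ref{satz:lqsampanisosampnumb} gives
\[
\varrho_n^{\text{lin}}(\srptf,L_q(\T)) \lesssim (n^{-1}\log^{\mu-1} n)^{r_1-1/p+1/q}
\]
under the weaker hypothesis $0<p<q<\infty$, $0<\theta\leq\infty$, $\zb r>1/p$, hence in particular in both cases of the corollary. Since $\srptf\hookrightarrow C(\T)$ by Lemma~\ref{lem:embeddings},(ii), every point evaluation is a continuous linear functional, so any linear sampling algorithm is in particular a linear recovery algorithm of rank $\leq n$. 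Therefore $\lambda_n(\srptf,L_q(\T))\leq \varrho_n^{\text{lin}}(\srptf,L_q(\T))$, which propagates the upper bound to $\lambda_n$.

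For the matching lower estimate on $\lambda_n$ I would distinguish the two parameter regimes. In Case~2 ($2\leq p<q<\infty$, $2\leq\theta\leq\infty$) the monotonicity embedding $\srpw = S^{\zb r}_{p,2}F\hookrightarrow \srptf$ of Lemma~\ref{lem:embeddings},(iv) yields $\lambda_n(\srpw,L_q(\T))\lesssim \lambda_n(\srptf,L_q(\T))$, and the sharp lower bound $\lambda_n(\srpw,L_q(\T))\gtrsim (n^{-1}\log^{\mu-1} n)^{r_1-1/p+1/q}$ due to Galeev~\cite{Ga87,Ga96} and Romanyuk~\cite{Ro01,Ro08} closes this case. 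In Case~1 ($1<p<q\leq 2$, $1\leq\theta\leq\infty$) I would use the sandwich $S^{\zb r}_{p,\min\{p,\theta\}}B\hookrightarrow \srptf$ from Lemma~\ref{lem:embeddings},(v). Since $\min\{p,\theta\}\leq p<q$ holds throughout Case~1 (in both subcases $\theta\leq p$ giving $\min=\theta$ and $\theta>p$ giving $\min=p$), the relevant Besov source integrability is strictly less than $q$, so that $(1/q-1/\min\{p,\theta\})_+=0$. The benchmark lower bound $\lambda_n(S^{\zb r}_{p,\min\{p,\theta\}}B,L_q(\T))\gtrsim (n^{-1}\log^{\mu-1} n)^{r_1-1/p+1/q}$ supplied by the same references, together with Malykhin and Ryutin~\cite{MaRu16} for the residual cases, thus carries the plain rate with no logarithmic loss and transfers to $\srptf$ as required.

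The single non-mechanical step is the sharp lower bound on linear widths of the reference Sobolev or Besov space, whose proofs in the cited papers rest on carefully designed fooling functions and are taken here as external input; everything else reduces to bookkeeping with the embedding relations of Lemma~\ref{lem:embeddings}. Concatenating the resulting chain
\[
(n^{-1}\log^{\mu-1} n)^{r_1-1/p+1/q} \lesssim \lambda_n(\srptf,L_q(\T)) \leq \varrho_n^{\text{lin}}(\srptf,L_q(\T)) \lesssim (n^{-1}\log^{\mu-1} n)^{r_1-1/p+1/q}
\]
yields the claimed equivalence.
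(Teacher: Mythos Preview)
Your proposal is correct and follows essentially the same route as the paper: the upper bound comes from Corollary~\ref{satz:lqsampanisosampnumb}, and the lower bound on $\lambda_n$ is obtained by embedding a reference space into $\srptf$ and invoking the known linear-width results of Galeev and Romanyuk---exactly what the paper does, only packaged there as Theorem~\ref{satz:linwidthsf}. Two small remarks: in Case~1 your phrase ``Besov source integrability'' should read ``Besov fine index'' (the space $S^{\zb r}_{p,\min\{p,\theta\}}B$ still has integrability $p$; it is the third parameter that equals $\min\{p,\theta\}$), and the Malykhin--Ryutin reference is not needed here---Romanyuk~\cite{Ro01} already covers the range $1<p<q\leq 2$ with fine index $\geq 1$.
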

\begin{proof}
	The upper bound for $\varrho_n^{\text{lin}}$ follows by Corollary \ref{satz:lqsampanisosampnumb}. The lower bounds for $\lambda_n$ are provided in Theorem \ref{satz:linwidthsf}. 
\end{proof}
\begin{rem}
	The result stated above is not completely new. In case $2\leq p,\theta<q$, ($\theta=q$) and $1<p,q<2$ with $\theta<q$ the upper bounds can be obtained with the help of Besov space results proven by Dinh D{\~{u}}ng  in \cite{Di11,Di15} using the embedding relation $\srptf \hookrightarrow S^{\zb r}_{p,\max\{p,\theta\}}B(\T)$. Nevertheless, the cases $1<p<q<2$ with $\theta>q$ and $2\leq p<q<\theta$ are new. Compared to Besov spaces in that range of parameters we do not observe an additional logarithmic factor in the convergence rate. This parameter range includes the situation of Sobolev spaces in case $1<p<q<2$.
\end{rem}
The following result is based on an observation by Novak/Triebel \cite{NoTr05} for the univariate situation.
\begin{satz}\label{satz:sampineqapprox}Let $1<p<2<q<\infty$ and $\zb r>\begin{cases}
	\frac{1}{p}&:\; \frac{1}{p}+\frac{1}{q}\geq 1,\; 1\leq \theta\leq \infty,\\
	\max\{\frac{1}{p},1-\frac{1}{q}\}&:\;\frac{1}{p}+\frac{1}{q}\leq 1,\;1\leq \theta\leq \infty,
	\end{cases}$ \\with \eqref{eq:smoothnessvector}.
	Then 
	$$ \lambda_n(\srptf,L_q(\T)) = o(\varrho_n^{\text{lin}}(\srptf,L_q(\T))),$$
	or more precisely
	\beqq\lambda_n(\srptf,L_q(\T))  \lnsim   n^{-(r_1-\frac{1}{p}+\frac{1}{q})}\lesssim  \varrho_n^{\text{lin}}(\srptf,L_q(\T))\eeqq
	holds for all $n>0$.
\end{satz}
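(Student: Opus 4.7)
The plan is to prove the two asserted inequalities separately. The upper bound $\lambda_n(\srptf,L_q) \lnsim n^{-(r_1-1/p+1/q)}$ is essentially a bookkeeping exercise invoking the existing benchmark literature on linear widths in the off-diagonal regime $1<p<2<q<\infty$, while the lower bound $\varrho_n^{\text{lin}}(\srptf,L_q) \gtrsim n^{-(r_1-1/p+1/q)}$ is the substantive contribution and adapts the localization (``fooling function'') argument of Novak--Triebel \cite{NoTr05} from the univariate setting to the mixed-smoothness situation by dilating in the coordinate direction $x_1$, where the smoothness index is smallest according to \eqref{eq:smoothnessvector}.

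For the lower bound I would produce, for every $m \in \N$, a family of $2^m$ anisotropically scaled and translated bumps $\{\tilde f_{m,k}\}_{k=0}^{2^m-1}$ with pairwise disjoint supports, normalized so that $\|\tilde f_{m,k}|\srptf\| \lesssim 1$ and $\|\tilde f_{m,k}\|_q \asymp 2^{-m(r_1-1/p+1/q)}$. Fix $\phi \in C^{\infty}(\tor^d)$ with $\phi \not\equiv 0$ and $\supp \phi \subset (-\pi/2,\pi/2) \times \tor^{d-1}$, set $f_m(\zb x) := \phi(2^m x_1,x_2,\ldots,x_d)$ (well-defined on $\tor^d$ since $2^m \in \N$), and put $\tilde f_{m,k}(\zb x) := 2^{-m(r_1-1/p)}\,f_m(x_1 - 2\pi k\,2^{-m},x_2,\ldots,x_d)$. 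The central technical step is the scaling identity $\|f_m|\srptf\| \asymp 2^{m(r_1-1/p)}\|\phi|\srptf\|$, which I would verify directly from the Fourier characterization in Definition \ref{def:srptf}: since $\widehat{f_m}$ is supported on $2^m\zz \times \zz^{d-1}$, the dyadic block $\delta_{\zb j}[f_m]$ vanishes unless $j_1 = 0$ (a contribution uniformly bounded by a constant multiple of $\|\phi|\srptf\|$) or $j_1 \geq m$; in the latter case the shift $\zb j' := \zb j - (m,0,\ldots,0)$ together with the $L_p$ change of variables $\|g(2^m\cdot_1,\cdot)\|_p = 2^{-m/p}\|g\|_p$ supplies the scaling $2^{mr_1}\cdot 2^{-m/p}$, and the hypothesis $r_1 > 1/p$ makes the $j_1 = 0$ part negligible compared to the $j_1 \geq m$ contribution for $m$ large. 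The $L_q$ estimate $\|f_m\|_q = 2^{-m/q}\|\phi\|_q$ is immediate by the one-dimensional change of variables.

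With this family at hand, the fooling-function argument is standard. Given any linear sampling algorithm $S_n f = \sum_{i=1}^n f(\zb \xi_i)\psi_i$, choose $m := \lceil \log_2(2n+1)\rceil$, so $2^m > 2n$. Since the $\tilde f_{m,k}$ have disjoint $x_1$-projections (intervals of length $\asymp 2^{-m}$), each sample node $\zb \xi_i$ lies in the support of at most one $\tilde f_{m,k}$; hence there exists $k_0$ with $\tilde f_{m,k_0}(\zb \xi_i) = 0$ for every $i$, so $S_n \tilde f_{m,k_0} \equiv 0$ and
$$\sup_{\|f|\srptf\|\leq 1}\|f-S_n f\|_q \gtrsim \|\tilde f_{m,k_0}\|_q \asymp 2^{-m(r_1-1/p+1/q)} \asymp n^{-(r_1-1/p+1/q)}.$$
For the complementary upper bound on $\lambda_n$, I rely on the benchmark results of Galeev \cite{Ga87, Ga96}, Romanyuk \cite{Ro01, Ro08}, and Malykhin--Ryutin \cite{MaRu16}, which in the off-diagonal regime $1<p<2<q<\infty$ yield a rate of the form $\lambda_n(\srptf,L_q) \lesssim n^{-(r_1-1/p+1/2)}(\log n)^{A}$ for some constant $A = A(d,p,q,\theta,\zb r) \geq 0$ (a Kashin/Maiorov phenomenon driven by the embeddings $\ell_p^N \hookrightarrow \ell_q^N$ for $p<2<q$). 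Since $1/2 > 1/q$, this rate decays strictly faster than $n^{-(r_1-1/p+1/q)}$, giving $\lambda_n \lnsim n^{-(r_1-1/p+1/q)}$ and hence $\lambda_n = o(\varrho_n^{\text{lin}})$. The main obstacles are the scaling identity for $\|f_m|\srptf\|$ (careful treatment of the anisotropic Fourier support) and extracting the benchmark linear-width upper bound in precisely the parameter range of the statement, the extra smoothness requirement $r_1 > 1-1/q$ when $1/p+1/q \leq 1$ being dictated by the hypotheses under which the cited $\lambda_n$-results apply.
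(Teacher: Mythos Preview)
Your proposal is correct and follows essentially the same two-ingredient strategy as the paper: the upper bound on $\lambda_n$ via known linear-width results in the regime $1<p<2<q<\infty$, and the lower bound on $\varrho_n^{\text{lin}}$ via the Novak--Triebel fooling-function argument. The paper's proof is terser on both points: for $\lambda_n$ it routes explicitly through the embedding $\srptf\hookrightarrow S^{\zb r}_{p,\infty}B(\T)$ and then invokes Galeev's results \cite{Ga96} (see also \cite[Thm.~4.46]{DTU16}), rather than citing the broader benchmark literature; and for $\varrho_n^{\text{lin}}$ it simply says the bound ``follows from the univariate situation'' in \cite[Thm.~23]{NoTr05}, i.e.\ it reduces to the one-dimensional case by taking test functions depending only on $x_1$, which spares you the anisotropic $d$-variate scaling computation you outline. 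Your direct multivariate construction with $\phi(2^m x_1,x_2,\ldots,x_d)$ is more explicit but also more laborious than necessary---taking $\phi$ to depend on $x_1$ alone collapses the $\srptf$-norm estimate to a purely univariate $F^{r_1}_{p,\theta}(\tor)$ computation, which is exactly what Novak--Triebel already did.
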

\begin{proof} The bounds for $\lambda_n(\srptf,L_q(\T))$ come from the embedding $\srptf\hookrightarrow S^{\zb r}_{p,\infty}B(\T)$ that yields
	\beqq
	\lambda_n(\srptf,L_q(\T))\leq \lambda_n(S^{\zb r}_{p,\infty}B(\T),L_q(\T)).
	\eeqq
	and the results from \cite{Ga96}, see also \cite[Thm.\ 4.46]{DTU16}. 
	The proof for the (non-sharp) lower bound of $\varrho_n^{\text{lin}}(\srptf,L_q(\T))$ follows from the univariate situation considered in \cite[Theorem 23]{NoTr05}. 
\end{proof}
\begin{rem}
	The fact that the exponents of the main rate and the exponent of the logarithm in the upper bound obtained in Corollary \ref{satz:lqsampanisosampnumb} coincide and additionally the main rate is sharp seems to be a strong indication for the conjecture
	
	$$\varrho_n^{\text{lin}}(\srptf,L_q(\T))\asymp (n^{-1}\log^{\mu-1}n)^{r_1-\frac{1}{p}+\frac{1}{q}}$$
	in case $1<p<q<\infty$, $1\leq \theta\leq \infty$ and $\zb r>\frac{1}{p}$ with \eqref{eq:smoothnessvector}.
\end{rem}
Sharp lower bounds for $\lambda_n(\srpib,L_q(\T))$  obtained in \cite{MaRu16} yield the following observation for H\"older-Nikolskij spaces.
\begin{cor}\label{cor:sharpboundnikolskij}
	Let $1<p<q\leq 2$ or $2\leq p<q<\infty$ and $ \zb r>\frac{1}{p}$ is supposed to satisfy \eqref{eq:smoothnessvector}.
	Then
	\beqq
	\varrho_n^{\text{lin}}(\srpib,L_q(\T))&\asymp&\lambda_n(\srpib,L_q(\T))\asymp(n^{-1}\log^{(\mu-1)}n)^{r_1-\frac{1}{p}+\frac{1}{q}}(\log n)^{\frac{\mu-1}{q}}
	\eeqq
	holds for all $n> 0$.
\end{cor}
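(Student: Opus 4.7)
The plan is to combine three ingredients: the constructive upper bound already established in Theorem \ref{satz:lqsampanisonikolskij} specialized to $\theta=\infty$, the trivial comparison $\lambda_n(\boldF,L_q(\T))\leq \varrho_n^{\text{lin}}(\boldF,L_q(\T))$, and the sharp lower bound for the linear widths $\lambda_n(\srpib,L_q(\T))$ obtained recently by Malykhin and Ryutin \cite{MaRu16}, which is cited as a black box.

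First I would produce the upper bound for $\varrho_n^{\text{lin}}(\srpib,L_q(\T))$. Choosing any kernel order $L>1/q$ (which is compatible with the assumption $1<q<\infty$, so in particular $L=1$ is admissible when $q>1$), Theorem \ref{satz:lqsampanisonikolskij} with $\theta=\infty$ yields
$$\|f-T^{L,\zb \eta}_mf\|_q \lesssim 2^{-m(r_1-\frac{1}{p}+\frac{1}{q})}\,m^{(\mu-1)/q}\,\|f|\srpib\|,$$
for $f$ in the unit ball of $\srpib$, with the anisotropic direction vector $\zb\eta$ prescribed in that theorem. By Lemma \ref{lem:ansiopnumberoffuncvalues} the operator $T^{L,\zb\eta}_m$ samples on at most $n\asymp m^{\mu-1}2^m$ nodes, hence $m\asymp \log n$ and $2^{-m}\asymp (\log n)^{\mu-1}/n$. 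Inserting these relations transforms the error bound into
$$\varrho_n^{\text{lin}}(\srpib,L_q(\T)) \lesssim \Bigl(\tfrac{(\log n)^{\mu-1}}{n}\Bigr)^{r_1-\frac{1}{p}+\frac{1}{q}}(\log n)^{(\mu-1)/q},$$
which is the claimed upper bound on $\varrho_n^{\text{lin}}$.

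Second, since point evaluations are continuous linear functionals on $\srpib$ under the condition $\zb r>1/p$ (embedding into $C(\T)$ from Lemma \ref{lem:embeddings}, (ii)), any linear sampling algorithm is a linear algorithm using rank $n$ information, and therefore
$$\lambda_n(\srpib,L_q(\T))\leq \varrho_n^{\text{lin}}(\srpib,L_q(\T)).$$
The matching lower bound
$$\lambda_n(\srpib,L_q(\T)) \gtrsim \Bigl(\tfrac{(\log n)^{\mu-1}}{n}\Bigr)^{r_1-\frac{1}{p}+\frac{1}{q}}(\log n)^{(\mu-1)/q}$$
in exactly the regimes $1<p<q\leq 2$ and $2\leq p<q<\infty$ is the main result of \cite{MaRu16}. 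Chaining these bounds pins down $\lambda_n$ and $\varrho_n^{\text{lin}}$ to the same asymptotic order and finishes the proof.

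The argument requires no new technical computation beyond what is already present in the paper; the one genuinely hard input is the Malykhin--Ryutin lower bound, whose proof is external to this work and is the only obstacle requiring substantial effort. Everything else is an immediate assembly of the interpolation estimate of Section 6 with the universal comparison between linear widths and linear sampling widths.
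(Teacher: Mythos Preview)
Your proposal is correct and follows essentially the same approach as the paper: the upper bound comes from Theorem \ref{satz:lqsampanisonikolskij} with $\theta=\infty$ combined with the node count in Lemma \ref{lem:ansiopnumberoffuncvalues}, and the lower bound is inherited from $\lambda_n$ via the trivial comparison. The only minor discrepancy is bibliographic: the paper attributes the lower bound for $\lambda_n$ in the regime $1<p<q\leq 2$ to Galeev \cite{Ga96}, reserving the Malykhin--Ryutin reference \cite{MaRu16} for the case $2\leq p<q<\infty$, whereas you credit both regimes to \cite{MaRu16}.
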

\begin{proof}The upper bound was originally obtained by Dinh D\~ung in \cite{Di91}. The lower bound for linear widths 
	is due to Galeev \cite{Ga96}. In our context the upper bound for $\varrho_n^{\text{lin}}$ follows by Theorem \ref{satz:lqsampanisonikolskij} with the estimate from 
	Lemma \ref{lem:ansiopnumberoffuncvalues} for the number of function evaluations used by $T^{L,\zb \eta}_m$. 
	The lower bound for $\lambda_n$ in the second case was proven recently by Malykhin and Ryutin \cite{MaRu16},  
	see also \cite{Ga96} and \cite[Thm.\ 4.46]{DTU16}.
\end{proof}
\begin{cor}\label{satz:sampineqapproxbn}Let $1<p<2<q<\infty$ and $\zb r>\begin{cases}
	\frac{1}{p}&:\; \frac{1}{p}+\frac{1}{q}\geq 1,\\
	\max\{\frac{1}{p},1-\frac{1}{q}\}&:\;\frac{1}{p}+\frac{1}{q}< 1,
	\end{cases}$ \\fulfilling \eqref{eq:smoothnessvector}.
	Then 
	$$\lambda_n(\srpib,L_q(\T))=o(\varrho_n^{\text{lin}}(\srpib,L_q(\T))),$$
	or more precisely
	\beqq\lambda_n(\srpib,L_q(\T))  \lnsim   n^{-(r_1-\frac{1}{p}+\frac{1}{q})}\lesssim  \varrho_n^{\text{lin}}(\srpib,L_q(\T))\eeqq
	holds for all $n>0$.
\end{cor}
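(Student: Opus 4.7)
The plan is to mirror the argument of Theorem \ref{satz:sampineqapprox} and apply both ingredients directly to the H\"older-Nikolskij scale $\srpib$ rather than passing through the Besov embedding of the $F$-scale. The key identity to keep in mind is that the desired sandwich
\beqq
\lambda_n(\srpib,L_q(\T))  \lnsim   n^{-(r_1-\frac{1}{p}+\frac{1}{q})}\lesssim  \varrho_n^{\text{lin}}(\srpib,L_q(\T))
\eeqq
has two parts, and each part has already appeared (in substance) in the literature cited earlier in the paper.

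For the upper bound on the linear widths, I would invoke the classical results of Galeev \cite{Ga96} (see also the survey \cite[Thm.~4.46]{DTU16}). In the parameter range $1<p<2<q<\infty$ with the smoothness restriction on $\zb r$ stated in the corollary, these results give $\lambda_n(\srpib,L_q(\T))\lnsim n^{-(r_1-1/p+1/q)}$ without any logarithmic improvement of the power rate. Note that this is precisely the bound implicitly used in the proof of Theorem \ref{satz:sampineqapprox}, where it was applied via the embedding $\srptf\hookrightarrow S^{\zb r}_{p,\infty}B(\T)$; here we simply apply it to $S^{\zb r}_{p,\infty}B(\T)$ itself.

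For the lower bound on the linear sampling widths, I would invoke the univariate construction of Novak and Triebel \cite[Thm.~23]{NoTr05}. They show that whenever $p<2<q$, linear sampling recovery of a $p$-based smoothness class in $L_q$ cannot beat the pure power rate $n^{-(r_1-1/p+1/q)}$, and their argument is insensitive to the fine index, hence applies to $\srpib$ in dimension one. Lifting this to the mixed smoothness multivariate setting is done by a tensor product fooling construction, exactly as in Theorem \ref{satz:sampineqapprox}: one tensorizes a univariate extremal bump in a coordinate of minimal smoothness $r_1$ against fixed smooth factors in the remaining coordinates, controls the resulting $\srpib$-norm through Theorem \ref{satz:bdifferences}, and transports the univariate lower bound to the $d$-dimensional class.

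Combining the two bounds yields the chain and in particular $\lambda_n(\srpib,L_q(\T))=o(\varrho_n^{\text{lin}}(\srpib,L_q(\T)))$. The main obstacle, entirely contained in the Novak-Triebel paper, is the sampling lower bound itself: one must show that when $p<2<q$, function evaluations are genuinely weaker than general linear functionals, by producing, for every configuration of $n$ sampling nodes, a unit-ball element of the smoothness class that vanishes on these nodes yet retains a large $L_q$-norm. The mismatch $p<2<q$ is essential: in this regime sampling functionals fail to span a good approximation subspace of the dual of $\srpib$, which is exactly what forces the strict separation between $\lambda_n$ and $\varrho_n^{\text{lin}}$.
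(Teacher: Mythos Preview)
Your proposal is correct and follows essentially the same approach as the paper: the upper bound on $\lambda_n$ is taken from Galeev \cite{Ga96}, and the lower bound on $\varrho_n^{\text{lin}}$ is reduced to the univariate result of Novak and Triebel \cite[Thm.~23]{NoTr05}. The only difference is that the paper leaves the passage from the univariate lower bound to the multivariate one implicit, whereas you spell out a tensorization argument; in fact it suffices to embed a univariate extremal function as a function of $x_1$ alone, without tensoring against nontrivial smooth factors, since the $\srpib$-norm of such a function reduces to the one-dimensional $B^{r_1}_{p,\infty}(\tor)$-norm.
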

\begin{proof} The bounds for $\lambda_n(\srpib,L_q(\T))$ come from \cite{Ga96}.
	The proof for the (non-sharp) lower bounds for $\varrho_n^{\text{lin}}(\srpib,L_q(\T))$ follow from the univariate situation considered in \cite[Theorem 23]{NoTr05}. 
\end{proof}
\begin{cor}
	Let $0< p,\theta< \infty$ ($\theta=\infty$) and the smoothness vector $ \zb r>\frac{1}{p}$  which is supposed to satisfy \eqref{eq:smoothnessvector} be given. Then 
	$$\varrho_n^{\text{lin}}(\srptf,L_{\infty}(\T))\lesssim (n^{-1}\log^{\mu-1}n)^{r_1-\frac{1}{p}}	\log^{(\mu-1)(1-\frac{1}{p})_+}n$$
	holds for all $n>0$.	
\end{cor}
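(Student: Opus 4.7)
The proof will follow by a direct combination of Theorem \ref{satz:linftysamplingmu} with the cardinality count in Lemma \ref{lem:ansiopnumberoffuncvalues}, in the exact spirit of Corollary \ref{satz:lqsampanisosampnumb}. First, I would take as the candidate linear sampling scheme the anisotropic Smolyak interpolation operator $T^{L,\zb\eta}_m$ defined in \eqref{eq:anisosmolop}, with $L\geq 1$ and refinement vector $\zb\eta = \zb\nu - \tfrac{1}{p}$ chosen as in Theorem \ref{satz:linftysamplingmu}. Because $T^{L,\zb\eta}_m$ uses only the function values on the anisotropic sparse grid $AG^{d,\zb\eta}_m$, its worst-case $L_\infty$-error over the unit ball of $\srptf$ is an admissible competitor in the definition \eqref{samplingnumber} of $\varrho_n^{\text{lin}}$ whenever $|AG^{d,\zb\eta}_m|\leq n$.

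Second, I would invoke Lemma \ref{lem:ansiopnumberoffuncvalues}, which yields $|AG^{d,\zb\eta}_m|\asymp m^{\mu-1}2^m$. For each sufficiently large $n\in\N$, I would then choose $m\in\N$ maximal with $c\,m^{\mu-1}2^m\leq n$ for a suitable constant $c>0$; this choice satisfies
\[
m\asymp \log n\qquad\text{and}\qquad 2^{-m}\asymp \frac{(\log n)^{\mu-1}}{n}.
\]

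Finally, applying Theorem \ref{satz:linftysamplingmu} yields
\[
\sup_{\|f|\srptf\|\leq 1}\|f-T^{L,\zb\eta}_m f\|_\infty \;\lesssim\; m^{(\mu-1)(1-1/p)_+}\,2^{-m(r_1-1/p)},
\]
and substituting the above relations for $m$ and $2^{-m}$ produces exactly the claimed bound
\[
\varrho_n^{\text{lin}}(\srptf,L_\infty(\T))\;\lesssim\;\bigl(n^{-1}\log^{\mu-1} n\bigr)^{r_1-1/p}\log^{(\mu-1)(1-1/p)_+} n.
\]

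There is no substantive obstacle: the heavy analysis (the discrete Littlewood--Paley representation and the Jawerth--Franke embedding into a Besov space, which are the ingredients in the proof of Theorem \ref{satz:linftysamplingmu}) has already been carried out. The only minor care is in (i) verifying that $L\geq 1$ is indeed allowed in Theorem \ref{satz:linftysamplingmu} (which it is, so that the classical Dirichlet-kernel Smolyak interpolation suffices) and (ii) tracking the logarithmic factors correctly when passing from the level $m$ to the sample size $n$ through the sparse-grid cardinality $m^{\mu-1}2^m$.
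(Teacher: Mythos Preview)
Your proposal is correct and matches the paper's own proof essentially verbatim: the paper simply states that the upper bound follows from Theorem \ref{satz:linftysamplingmu} together with the cardinality estimate in Lemma \ref{lem:ansiopnumberoffuncvalues}. Your write-up just makes explicit the choice of $m$ in terms of $n$ and the resulting substitution, which is the intended routine step.
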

\begin{proof}
	The upper bound follows by Theorem \ref{satz:linftysamplingmu} with the estimate from Lemma \ref{lem:ansiopnumberoffuncvalues} for the  number of used function evaluations.
\end{proof}
Based on a recent observation of Nguyen in \cite[Theorem 2.15]{VKN16} we can state the following theorem:
\begin{cor}\label{cor:sampineqapprox}Let $1<p< 2$ and $\zb r>1$ fulfilling \eqref{eq:smoothnessvector}.
	Then 
	$$\lambda_n(\srpw,L_{\infty}(\T))=o(\varrho_n^{\text{lin}}(\srpw,L_{\infty}(\T))),$$
	or more precisely
	\beqq\lambda_n(\srpw,L_{\infty}(\T))\asymp n^{-(r_1-\frac{1}{2})}(\log^{\mu-1} n)^{r_1}  \lnsim   n^{-(r_1-\frac{1}{p})}\lesssim  \varrho_n^{\text{lin}}(\srpw,L_{\infty}(\T))\eeqq
	holds for all $n>0$.
\end{cor}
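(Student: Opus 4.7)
The plan is to combine a recent result on linear widths with a straightforward univariate reduction for the sampling lower bound, in direct analogy with the proofs of Theorem \ref{satz:sampineqapprox} and Corollary \ref{satz:sampineqapproxbn}.

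\emph{Upper bound for $\lambda_n$.} First, I would invoke Nguyen's result \cite[Thm.\ 2.15]{VKN16} to record the sharp two-sided asymptotic
\[
\lambda_n(\srpw,L_{\infty}(\T)) \asymp n^{-(r_1-1/2)}(\log n)^{(\mu-1)r_1}.
\]
Because $1<p<2$, the exponent $1/p-1/2$ is strictly positive, hence
\[
\frac{n^{-(r_1-1/2)}(\log n)^{(\mu-1)r_1}}{n^{-(r_1-1/p)}} \;=\; n^{-(1/p-1/2)}(\log n)^{(\mu-1)r_1} \;\longrightarrow\; 0,
\]
which is precisely the middle $\lnsim$ relation in the statement.

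\emph{Lower bound for $\varrho_n^{\text{lin}}$.} Next, I reduce to the univariate case, following the template already used in Theorem \ref{satz:sampineqapprox} and Corollary \ref{satz:sampineqapproxbn}. Fix a nontrivial $\phi\in C^{\infty}(\tor)$ with $\phi(0)\neq 0$ and consider the isometric (up to a fixed multiplicative constant) embedding
\[
\iota \colon W^{r_1}_p(\tor)\hookrightarrow \srpw, \qquad (\iota g)(\boldx):=g(x_1)\prod_{i=2}^{d}\phi(x_i).
\]
For any linear sampling algorithm $(X_n,\Psi_n)$ for $\srpw$, evaluating $\iota g$ at the nodes $\boldx^i=(\xi^i_1,\ldots,\xi^i_d)\in X_n$ factors as $g(\xi^i_1)\prod_{j\geq 2}\phi(\xi^i_j)$, so the restriction of the scheme to the univariate slice $x_2=\cdots=x_d=0$ yields a linear sampling scheme for $W^{r_1}_p(\tor)$ with at most $n$ nodes and $L_\infty(\tor)$-error controlled by the multivariate $L_\infty(\T)$-error. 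Hence
\[
\varrho_n^{\text{lin}}(W^{r_1}_p(\tor),L_\infty(\tor)) \;\lesssim\; \varrho_n^{\text{lin}}(\srpw,L_\infty(\T)).
\]
Applying the univariate Novak--Triebel lower bound \cite[Thm.\ 23]{NoTr05}, which gives $\varrho_n^{\text{lin}}(W^{r_1}_p(\tor),L_\infty(\tor))\gtrsim n^{-(r_1-1/p)}$ for $1<p<2$ and $r_1>1/p$, delivers the desired lower bound for $\varrho_n^{\text{lin}}(\srpw,L_\infty(\T))$.

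\emph{Obstacles.} The only mildly delicate point is the trace reduction: one must verify that the $L_\infty(\tor)$-norm of $g-\tilde\varphi$, with $\tilde\varphi$ the induced univariate reconstruction, is indeed controlled by the $L_\infty(\T)$-norm of $\iota g-\varphi$. Choosing $\phi$ with $\phi(0)=1$ and evaluating along $x_2=\cdots=x_d=0$ makes this immediate. The hypothesis $\zb r>\boldone$ (stronger than the usual $\zb r>1/p$) is exactly what one needs to import the univariate lower bound; the other smoothness components do not enter because we only test on the first coordinate slice.
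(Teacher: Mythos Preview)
Your proposal is correct and follows essentially the same approach as the paper: the $\lambda_n$ asymptotic is taken from Nguyen's result (Theorem~\ref{satz:VKN}), and the lower bound for $\varrho_n^{\text{lin}}$ is obtained by reducing to the univariate Novak--Triebel estimate \cite[Thm.\ 23]{NoTr05}. One small remark: the hypothesis $\zb r>1$ is actually dictated by the range of validity of Theorem~\ref{satz:VKN} rather than by the univariate sampling lower bound (which only requires $r_1>1/p$), so your closing comment slightly misattributes where that condition enters.
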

\begin{proof} The bound for $\lambda_n(\srpw,L_{\infty}(\T))$ comes from Theorem \ref{satz:VKN}. The proof for the (non-sharp) lower bound for $\varrho_n^{\text{lin}}(\srpw,L_{\infty}(\T))$ follows from the univariate situation considered in \cite[Theorem 23]{NoTr05}. 
\end{proof}
\section{Sampling recovery and Gelfand $n$-widths}\label{sec:sharpgelf}
The considerations above cover linear algorithms in the classical sense. 
Last but not least we consider an extension of this concept, so-called approximation using standard information, cf. \cite{NoWo08,NoWo10}. This means we consider algorithms that are defined as a composition of a linear information map and a possibly non-linear reconstruction operator. To avoid further technicalities we restrict to Banach spaces $\boldF$ that are either Sobolev spaces $\svrpw$ or H\"older-Nikolskij spaces $\srpib$ in this subsection.
The non-linear sampling widths were defined in \eqref{sampling}. The following relation clearly holds true
\beqq\varrho_n(\boldF,L_q(\T)\leq \varrho_n^{\text{lin}}(\boldF,L_q(\T))\,.\eeqq
Therefore (possibly non-sharp) upper bounds for sampling widths are always provided by linear sampling widths. To consider questions on optimality of these bounds we consider Gelfand $n$-widths
\begin{equation}\label{gelf}
	c_n(\boldF,L_q(\T)):=\inf_{\substack{B:\;\boldF \to \C^n\\\mbox{linear} }}\sup_{\substack{\|f|\boldF\|\leq 1\\f\in \ker B}}\|f\|_q.
\end{equation}
Here $B$ denotes a general linear mapping $B:\boldF \to \C^n$. 
This means $c_n$ measures the minimal (over all information mappings) worst case distance of elements in the unit ball of $\boldF$ which can not be distinguished by the information mapping $B$.
This immediately gives
$$c_n(\boldF,L_q(\T))\lesssim \varrho_n(\boldF,L_q(\T)).$$
Note that \eqref{gelf} is actually the definition of the $n$th ``Gelfand numbers'', which we call ``Gelfand $n$-width'' here. 
For a thorough discussion on the relation between Gelfand numbers and suitable worst-case errors we refer to the recent paper 
\cite[Rem.\ 2.3]{DiUl17}. Since Gelfand widths for embeddings $\id:\svrpw \to L_q(\T)$ are not studied directly we use a duality relation to Kolmogorov $n$-widths, cf. \eqref{def:kolmogorov}.
\begin{lem}\label{lem:gelfduality} The following duality relation holds true
	$$d_n(T:X\to Y) = c_n(T':Y'\to X'),$$
	where $T'$ denotes the adjoint operator of $T$ and $X'$, $Y'$ the topological dual spaces of $X$ and $Y$.
\end{lem}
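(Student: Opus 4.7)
The plan is to prove both inequalities between $d_n(T:X\to Y)$ and $c_n(T':Y'\to X')$ by exploiting the Hahn--Banach theorem, which identifies the distance of an element from a finite-dimensional subspace with the sup of dual pairings against its annihilator. The key object on both sides is an $n$-dimensional subspace $L_n\subset Y$ together with its annihilator $L_n^\perp\subset Y'$, which has codimension at most $n$.

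For the inequality $c_n(T':Y'\to X')\leq d_n(T:X\to Y)$, I would fix $\varepsilon>0$ and choose an $n$-dimensional $L_n\subset Y$ with $\sup_{\|x\|_X\leq 1}\mathrm{dist}(Tx,L_n)\leq d_n(T)+\varepsilon$. Picking a basis $v_1,\ldots,v_n$ of $L_n$ I define the linear information $B:Y'\to\C^n$ by $B(y')=(y'(v_1),\ldots,y'(v_n))$, so that $\ker B=L_n^\perp$. For $y'\in \ker B$ with $\|y'\|_{Y'}\leq 1$ and any $x\in X$ with $\|x\|_X\leq 1$, writing $(T'y')(x)=y'(Tx)=y'(Tx-z)$ for arbitrary $z\in L_n$ and taking the infimum over $z$ yields $\|T'y'\|_{X'}\leq d_n(T)+\varepsilon$. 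Letting $\varepsilon\to 0$ proves this direction.

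For the reverse $d_n(T:X\to Y)\leq c_n(T':Y'\to X')$, I would fix $\varepsilon>0$ and a linear map $B:Y'\to\C^n$ with $\sup\{\|T'y'\|_{X'}:y'\in\ker B,\ \|y'\|_{Y'}\leq 1\}\leq c_n(T')+\varepsilon$. Writing the coordinate functionals of $B$ as $\langle \cdot,v_i\rangle$ with $v_i\in Y''$, one represents $\ker B$ as the annihilator of $L_n:=\mathrm{span}(v_1,\ldots,v_n)$ (viewed inside $Y''$). For $\|x\|_X\leq 1$, Hahn--Banach gives
\begin{equation*}
\mathrm{dist}(Tx,L_n)=\sup\bigl\{|y'(Tx)|:y'\in L_n^\perp,\ \|y'\|_{Y'}\leq 1\bigr\}=\sup\bigl\{|(T'y')(x)|:y'\in\ker B,\ \|y'\|\leq 1\bigr\},
\end{equation*}
which is bounded by $c_n(T')+\varepsilon$, whence $d_n(T)\leq c_n(T')+\varepsilon$.

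The only subtle point is the representation of $B$ in the second step: in general the coordinate functionals of $B$ live in $Y''$ rather than $Y$, so the ``subspace $L_n$'' one wants to use for the Kolmogorov width sits in $Y''$. This is harmless for the specific applications in the paper, where $X=\svrpw$ and $Y=L_q(\T)$ are reflexive (for $1<p,q<\infty$), so $Y''=Y$ and the argument closes cleanly; in the general (non-reflexive) case one either approximates $L_n\subset Y''$ by a subspace of $Y$ using a weak-$\ast$ density/perturbation argument, or appeals to the classical statement of the duality in Pietsch's or Pinkus's monographs. I would therefore state the lemma in the reflexive setting relevant to the applications and refer to the standard references for the general formulation.
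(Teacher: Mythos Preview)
Your argument is the standard one and is correct in the reflexive setting, which is all the paper actually needs. The paper itself does not give a proof at all: it simply writes ``We refer to \cite[Theorem~6.2]{Pi85}.'' So you have supplied considerably more than what the paper contains.

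The one genuine subtlety you correctly isolate is in the direction $d_n(T)\leq c_n(T')$: the coordinate functionals of an arbitrary bounded linear $B:Y'\to\C^n$ live in $Y''$, so the candidate approximating subspace $L_n$ sits a priori in $Y''$ rather than in $Y$, and the Hahn--Banach identity for the distance then naturally produces functionals in $Y'''$ rather than in $Y'$. Your resolution---restricting to reflexive $Y$, which covers $L_q(\T)$ for $1<q<\infty$ and hence every application in Section~\ref{sec:sharpgelf}---is exactly right, and your fallback reference to Pinkus for the general statement matches what the paper does anyway. In short: your proof is more informative than the paper's, and the only delicate point is one you already handle appropriately.
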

\begin{proof}
	We refer to \cite[Theorem 6.2]{Pi85}.
\end{proof}
\begin{cor}\label{prop:gelfand}
	Let $1<p,q<\infty$ and $\zb r>\begin{cases}
	\frac{1}{2}&:\quad 1<p<q\leq2,\\
	1-\frac{1}{q}&:\quad p < 2 < q,\\
	(\frac{1}{p}-\frac{1}{q})_+&:\quad\mbox{otherwise,}
	\end{cases}$ \\with
	\begin{equation} r_1=\ldots=r_{\mu} <r_{\mu+1}\leq \ldots\leq r_{d}<\infty\label{eq:smoothvec}.\end{equation}
	Then
	$$c_n(\svrpw,L_q(\T))\asymp (n^{-1}\log^{\mu-1} n)^{r_1-(\min\{\frac{1}{p},\frac{1}{2}\}-\frac{1}{q})_+}$$
	for all $n\in\N$.
\end{cor}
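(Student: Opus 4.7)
The plan is to combine the Pietsch-type duality of Lemma \ref{lem:gelfduality} with the sharp classical asymptotics for Kolmogorov $n$-widths of mixed Sobolev embeddings. Since $1<p,q<\infty$, both $\svrpw$ and $L_q(\T)$ are reflexive, and under the standard identification $(\svrpw)'=S^{-\boldr}_{p'}W(\T)$ (valid for $1<p<\infty$ through the usual duality for Triebel-Lizorkin spaces with $\theta=2$) the adjoint of $\id:\svrpw\to L_q(\T)$ is the embedding $\id:L_{q'}(\T)\to S^{-\boldr}_{p'}W(\T)$. Applying Lemma \ref{lem:gelfduality} to this dual operator and using reflexivity gives
\begin{equation*}
c_n\bigl(\svrpw,L_q(\T)\bigr)=d_n\bigl(L_{q'}(\T)\to S^{-\boldr}_{p'}W(\T)\bigr).
\end{equation*}

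Next, I would exploit the Bessel-potential lift: for every admissible $s$ and $1<u<\infty$ there is an isomorphism $S^{s}_uW(\T)\cong S^{s-\boldr}_uW(\T)$ induced by convolution with the corresponding Bessel kernel. Applied to $L_{q'}(\T)=S^{0}_{q'}W(\T)$ on the source and to $S^{-\boldr}_{p'}W(\T)$ on the target, it transports the embedding on the right-hand side above into a Sobolev embedding with positive smoothness, yielding
\begin{equation*}
d_n\bigl(L_{q'}(\T)\to S^{-\boldr}_{p'}W(\T)\bigr)\asymp d_n\bigl(S^{\boldr}_{q'}W(\T),L_{p'}(\T)\bigr).
\end{equation*}
Combined with the previous display, this reduces the statement to evaluating the classical Kolmogorov width of the mixed Sobolev embedding with the dual parameter pair $(q',p')$.

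The final step is to invoke the sharp asymptotics for $d_n(S^{\boldr}_u W(\T),L_v(\T))$ obtained by Galeev \cite{Ga87,Ga96} and Romanyuk \cite{Ro01,Ro08} (see also \cite[Sect.\ 4]{DTU16}), and to carry out a case analysis on $(u,v)=(q',p')$. The three cases of the corollary correspond to the parameter regions $2\leq q'<p'<\infty$, $1<q'<2<p'<\infty$, and the remaining regions (where either $1<q'<p'\leq 2$ or $q'\geq p'$), and in each of them the known Kolmogorov width produces the exponent $r_1-(\min\{1/p,1/2\}-1/q)_+$ asserted in the statement. The smoothness thresholds $r>1/2$, $r>1-1/q$, and $r>(1/p-1/q)_+$ are precisely the assumptions required for the respective Kolmogorov width bound to apply; in particular $r>1-1/q$ translates via $r>1/q'$ into the continuity/embedding threshold $S^{\boldr}_{q'}W(\T)\hookrightarrow L_{p'}(\T)$ on the dual side, compare the Jawerth-Franke embedding in Lemma \ref{lem:jf}.

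The main obstacle is not the duality or the lifting, both of which are routine one-line reductions, but the careful matching of each parameter region to the corresponding classical Kolmogorov width result; these sharp bounds are distributed across several papers and formulated in somewhat different forms, and in particular they reflect the Kashin/Gluskin-type phenomenon on the dual side which accounts for the $\min\{1/p,1/2\}$ appearing in place of the naive $1/p$ in the exponent.
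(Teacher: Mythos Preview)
Your proposal is correct and follows essentially the same route as the paper: apply the duality relation of Lemma \ref{lem:gelfduality} to pass from Gelfand to Kolmogorov widths of the adjoint embedding, use the Bessel-potential lift $I_{\zb r}$ to convert $d_n(L_{q'}(\T),S^{-\zb r}_{p'}W(\T))$ into $d_n(S^{\zb r}_{q'}W(\T),L_{p'}(\T))$, and then read off the asymptotics from the known Kolmogorov width results (the paper packages these as Theorem \ref{satz:kolmogorov} rather than citing Galeev/Romanyuk directly, but the content is the same).
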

\begin{proof}
	The proof follows by the duality relation stated in Lemma \ref{lem:gelfduality} and a lifting argument. The topological dual spaces of $\srpw$ and $L_q(\T)$ are the spaces $S^{ -\zb r}_{p'}W(\T)$ and $L_{q'}(\T)$ with $1=\frac{1}{p}+\frac{1}{p'}=\frac{1}{q}+\frac{1}{q'}$. Lemma \ref{lem:gelfduality}
	yields
	$$c_n(\svrpw,L_q(\T))=d_n(L_{q'}(\T),S^{-\zb r}_{p'}W(\T)).$$
	Finally we show the identity
	$$d_n(L_{q'}(\T),S^{-\zb r}_{p'}W(\T))\asymp d_n(S^{\zb r}_{q'}W(\T),L_{p'}(\T)).$$
	For that reason we consider the lifting operator $I_{\zb r}$ in $D'(\T)$ given by
	\beqq I_{\zb r}:f=\sum_{\zb k\in\Z}\widehat{f}(\zb k)e^{i\zb k \zb x} \mapsto \sum_{\zb k\in\Z}\widehat{f}(\zb k)\Big(\prod_{i=1}^d(1+|k_i|^{2})^{-\frac{r_i}{2}}\Big)e^{i\zb k \zb x}.\eeqq
	It is easy to check that this is an isometry that maps $f\in S^{\alpha}_pW$ to $I_{\zb r}f\in S^{\zb \alpha+\zb r}_pW$, $\alpha\in \re$ with $(I_{\zb r})^{-1}=I_{-\zb r}$. Therefore we may use the commutative diagram,
	\[
	\begin{diagram}
	\node{L_{q'}(\T)}
	\arrow{s,r}{I_{\zb r}}
	\arrow{e,t}{\id_1}
	\node{S^{-\zb r}_{p'}W(\T)}
	\\
	\node{S^{\zb r}_{q'}W(\T)}
	\arrow{e,t}{\id_2}
	\node{L_{p'}(\T)}
	\arrow{n,r}{I_{-\zb r}}
	\end{diagram}
	\]
	which allows to describe the operators $\id_1$, $\id_2$ by
	$$\id_1=I_{-\zb r} \circ \id_2\circ I_{\zb r}\quad\mbox{and}\quad\id_2=I_{\zb r} \circ\id _1 \circ I_{-\zb r}.$$
	Kolmogorov widths are $s$-numbers and fulfill a multiplicativity property that yields
	$$d_n(\id_1)=d_n(I_{-\zb r} \circ \id_2\circ I_{\zb r})\leq \|I_{-\zb r}\|d_n(\id_2)\|I_{\zb r}\|\asymp d_n(\id_2)$$
	and
	$$d_n(\id_2)=d_n(L_{\zb r} \circ \id_1\circ L_{- \zb{r}})\leq \|L_{\zb r}\|d_n(\id_1)\|I_{-\zb r}\|\asymp d_n(\id_1).$$
	Inserting the result from Theorem \ref{satz:kolmogorov} finishes the proof.
\end{proof}
Surprisingly, a new result in \cite{MaRu16} allows us to prove the following results for Gelfand $n$-widths of H\"older spaces $\srpib$.
\begin{satz}\label{satz:gelfandnikolskij}
	Let $1<p<q<\infty$ and $\zb r$ with
	\beqq (1/p - 1/q)_+<r_1=\ldots=r_{\mu} <r_{\mu+1}\leq \ldots\leq r_{d}<\infty\eeqq
	then
	$$c_n(S^{\zb r}_{p,\infty}B(\T),L_q(\T))\asymp \begin{cases}
	\left(\frac{\log^{\mu-1} n}{n}\right)^{r-\frac{1}{2}+\frac{1}{q}}(\log n)^{\frac{\mu-1}{q}}&:\quad \frac{1}{p}+\frac{1}{q}<1,\;p\leq 2,\;r_1>1-\frac{1}{q},\\
	\left(\frac{\log^{\mu-1} n}{n}\right)^{r-\frac{1}{p}+\frac{1}{q}}(\log n)^{\frac{\mu-1}{q}}&:\quad 2\leq p<q.
	\end{cases}$$
\end{satz}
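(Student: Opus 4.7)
The proof establishes matching upper and lower bounds in each of the two parameter regimes.

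\textbf{Upper bounds.} I would invoke the standard inequality $c_n(T) \le \lambda_n(T)$, valid for any operator between Banach spaces, reducing the question to the linear widths of the embedding $\id: S^{\zb r}_{p,\infty}B(\T) \hookrightarrow L_q(\T)$. In the regime $2 \le p < q$, Corollary \ref{cor:sharpboundnikolskij} identifies these linear widths as exactly the stated rate $(n^{-1}\log^{\mu-1} n)^{r-1/p+1/q}(\log n)^{(\mu-1)/q}$, so the desired upper bound on $c_n$ follows at once; note that the required upper bound is already constructive, realised by the Smolyak interpolant of Theorem \ref{satz:lqsampanisonikolskij}. In the remaining regime $p \le 2 < q$ with $\frac{1}{p}+\frac{1}{q}<1$ and $r>1-\frac{1}{q}$, where the main exponent jumps from $r-\frac{1}{p}+\frac{1}{q}$ to $r-\frac{1}{2}+\frac{1}{q}$, the sharp asymptotics of $\lambda_n(\srpib,L_q)$ are due to Galeev \cite{Ga96}, again giving the matching upper bound for $c_n$.

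\textbf{Lower bounds.} The plan is to combine the duality $c_n(\id) = d_n(\id')$ from Lemma \ref{lem:gelfduality} with the lifting argument of Corollary \ref{prop:gelfand}. Since $(S^{\zb r}_{p,\infty}B(\T))' = S^{-\zb r}_{p',1}B(\T)$, an application of the Fourier multiplier isometry $I_{\zb r}$ (which extends verbatim from the Sobolev to the Besov scale by the same Mihlin-type multiplier arguments) produces the identification
$$c_n(\srpib,L_q) \;\asymp\; d_n\bigl(S^{\zb r}_{q',1}B(\T),L_{p'}(\T)\bigr).$$
The main exponent then becomes $r - 1/q' + 1/p' = r - 1/p + 1/q$ in Case 2 (where $1 < q' < p' \leq 2$) and $r - 1/q' + 1/2 = r - 1/2 + 1/q$ in Case 1 (where $q' < 2 \leq p'$), so the two exponents in the statement are exactly what one expects from known Kolmogorov-width rates for mixed Besov scales. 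The logarithmic exponent $(\mu-1)/q$ matches the one already present in the upper bound on $\varrho_n^{\text{lin}}$ in Corollary \ref{cor:sharpboundnikolskij}.

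\textbf{Main obstacle.} The hard part is supplying the sharp lower bounds on the Kolmogorov widths above, and this is where the recent method of Malykhin and Ryutin \cite{MaRu16} enters. They construct a finite family of trigonometric polynomials concentrated on the $\mu$ critical dyadic layers whose span has large dimension, and show that any linear subspace of codimension $\le n$ contains an element with controlled Besov-type norm but $L_q$-norm at least the claimed size; because the construction relies only on the linearity of the information map, it transfers from the setting of linear widths (for which it was designed) to the Gelfand/Kolmogorov setting after the duality identification above. The technical point is accounting carefully for the number of dyadic blocks on the critical layer $|\zb j|_1 \asymp \log n$ with $\mu$ free coordinates and for the interaction of the $\ell_\infty$-in-$\zb j$ index with the kernel condition; Theorem \ref{maincharB} provides the block-wise characterization of $\srpib$ compatible with the construction and suffices to close the argument in both regimes.
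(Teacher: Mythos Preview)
Your upper-bound argument matches the paper's: both invoke $c_n\le\lambda_n$ and cite Galeev \cite{Ga96} (with Corollary~\ref{cor:sharpboundnikolskij} covering the case $2\le p<q$).

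For the lower bounds, however, your route diverges from the paper's and contains a gap. The paper does \emph{not} dualize at the function-space level and then seek Kolmogorov widths of $S^{\zb r}_{q',1}B\to L_{p'}$. Instead it discretizes first: using Galeev's construction one obtains
\[
c_n(\srpib,L_q)\ \gtrsim\ 2^{u(-r+\frac12-\frac1q)}\,c_n\!\bigl(\ell_\infty^{u^{\mu-1}}(\ell_2^{2^u}),\,\ell_q^{u^{\mu-1}2^u}\bigr),
\]
then applies duality (Lemma~\ref{lem:gelfduality}) at the \emph{finite-dimensional} level, followed by a H\"older step to land exactly on the Malykhin--Ryutin estimate
\[
d_{\lfloor NM/2\rfloor}\!\bigl(\ell_\infty^{M}(\ell_1^{N}),\,\ell_1^{M}(\ell_2^{N})\bigr)\ \asymp\ M.
\]
With $n\asymp u^{\mu-1}2^u$ this gives the first case directly; the second case ($2\le p<q$) is then deduced from the first via the embedding $S^{\zb r-(1/2-1/p)}_{2,\infty}B\hookrightarrow S^{\zb r}_{p,\infty}B$.

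Two points in your proposal are therefore off. First, your description of \cite{MaRu16} is inaccurate: the result there is a purely finite-dimensional Kolmogorov-width estimate for mixed $\ell_p$ spaces, not a construction of trigonometric polynomials, and Theorem~\ref{maincharB} plays no role in its application. Second, and more substantively, your reduction to $d_n(S^{\zb r}_{q',1}B,L_{p'})$ does not close: sharp lower bounds for these Kolmogorov widths with the correct $\log$-exponent $(\mu-1)/q$ are not cited, and obtaining them would in any case require the same discretization-plus-\cite{MaRu16} machinery that the paper applies directly to the Gelfand side. The function-space duality and lifting you propose are formally valid, but they simply postpone the real work rather than replacing it.
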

\begin{proof}
	The upper bounds follow from the results for linear widths in \cite{Ga96}. The lower bounds are new. Malykhin and Ryutin proved in \cite{MaRu16} the following bound on Kolmogorov $n$-widths for finite dimensional normed spaces $\ell^M_p(\ell^N_q)$
	\be d_{\lfloor\frac{NM}{2}\rfloor}(\ell^M_{\infty}(\ell^N_{1}),\ell_{1}^M(\ell_{2}^N))\asymp M.\label{eq:maru16}\ee
	In the first case the technique for the lower bounds on linear widths presented in \cite{Ga96} works well also for Gelfand $n$-widths. The discretization stated there yields
	$$ c_n(\srpib,L_q(\T))\gtrsim 2^{u(-r+\frac{1}{2}-\frac{1}{q})}c_n(\ell_{\infty}^{u^{\mu-1}}(\ell_2^{2^u}),\ell_{q}^{u^{\mu-1}2^{u}}).$$
	The duality relation in Lemma \ref{lem:gelfduality} gives 
	$$ c_n(\srpib,L_q(\T))\gtrsim 2^{u(-r+\frac{1}{2}-\frac{1}{q})}d_n(\ell_{q'}^{u^{\mu-1}2^{u}},\ell_1^{u^{\mu-1}}(\ell_2^{2^u})).$$
	Applying H\"olders inequality in finite dimensional spaces $\ell^M_p(\ell^N_q)$ yields the following estimate
	$$ c_n(\srpib,L_q(\T))\gtrsim 2^{u(-r+\frac{1}{2}-\frac{1}{q})}u^{-\frac{\mu-1}{q'}}d_n(\ell_{\infty}^{u^{\mu-1}}(\ell_{1}^{2^{u}}),\ell_1^{u^{\mu-1}}(\ell_2^{2^u})).$$
	Choosing $n\asymp u^{\mu-1}2^{u}$ then the relation in \eqref{eq:maru16} implies
	$$ c_n(\srpib,L_q(\T))\gtrsim 2^{u(-r+\frac{1}{2}-\frac{1}{q})}u^{\frac{\mu-1}{q}}\asymp \left(\frac{\log^{\mu-1} n}{n}\right)^{r-\frac{1}{2}+\frac{1}{q}}(\log n)^{\frac{\mu-1}{q}}.$$
	The second case is obtained by the embedding
	$$S^{\zb r-(\frac{1}{2}-\frac{1}{p})}_{2,\infty}B(\T)\hookrightarrow S^{\zb r}_{p,\infty}B(\T)$$
	together with the result from the first case.
\end{proof}

\begin{cor}\label{cor:nonlinsampeqnonlinappr}
	Let $2\leq p<q<\infty$ and $\zb r>\frac{1}{p}$ fulfilling \eqref{eq:smoothnessvector}. Then
	\begin{enumerate}
		\item	 
		\beqq \varrho_n(\svrpw,L_q(\T))&\asymp& c_n(\svrpw,L_q(\T)) \asymp \varrho_n^{\text{lin}}(\svrpw,L_q(\T))\\&\asymp&
		\lambda_n(\svrpw,L_q(\T))\asymp (n^{-1}\log^{\mu-1}n)^{r_1-\frac{1}{p}+\frac{1}{q}},\eeqq
		\item	\beqq \varrho_n(\srpib,L_q(\T))&\asymp& c_n(\srpib,L_q(\T)) \asymp \varrho_n^{\text{lin}}(\srpib,L_q(\T))\\&\asymp&
		\lambda_n(\srpib,L_q(\T))\asymp (n^{-1}\log^{\mu-1}n)^{r_1-\frac{1}{p}+\frac{1}{q}}(\log^{\frac{\mu-1}{q}}n),\eeqq
	\end{enumerate}
	holds for all $n\in \N$.
\end{cor}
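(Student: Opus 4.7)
The plan is a short sandwich argument stringing together three ingredients established earlier. The universal ordering
$$c_n(\boldF,L_q(\T))\leq \varrho_n(\boldF,L_q(\T))\leq \varrho_n^{\text{lin}}(\boldF,L_q(\T))$$
together with $\lambda_n(\boldF,L_q(\T))\leq \varrho_n^{\text{lin}}(\boldF,L_q(\T))$ holds for any Banach space $\boldF\hookrightarrow C(\T)$, so it suffices to match an upper bound on $\varrho_n^{\text{lin}}$ with a lower bound on $c_n$ (and, separately, to cite the matching lower bound on $\lambda_n$).

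For assertion (i) with $\svrpw=S^{\zb r}_{p,2}F(\T)$ and $2\leq p<q<\infty$, $\zb r>1/p$, I would first apply Corollary \ref{satz:lqsampanisosampnumb} (taking $\theta=2$ and $L>1/q$) to obtain $\varrho_n^{\text{lin}}(\svrpw,L_q(\T))\lesssim (n^{-1}\log^{\mu-1}n)^{r_1-1/p+1/q}$. For the Gelfand bound I would invoke Corollary \ref{prop:gelfand}: since $p\geq 2$ gives $\min\{1/p,1/2\}=1/p$, and since $r_1>1/p\geq (1/p-1/q)_+$ the hypotheses of that corollary are satisfied, yielding the matching $c_n(\svrpw,L_q(\T))\asymp (n^{-1}\log^{\mu-1}n)^{r_1-1/p+1/q}$. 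Inserting both into the sandwich forces $c_n$, $\varrho_n$ and $\varrho_n^{\text{lin}}$ to be equivalent to this rate, while the corresponding asymptotic for $\lambda_n$ is already contained in Corollary \ref{cor:sharp} (applied with $\theta=2$).

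For assertion (ii) the scheme is identical. The upper bound $\varrho_n^{\text{lin}}(\srpib,L_q(\T))\lesssim (n^{-1}\log^{\mu-1}n)^{r_1-1/p+1/q}(\log n)^{(\mu-1)/q}$ is contained in Corollary \ref{cor:sharpboundnikolskij} (which is obtained from Theorem \ref{satz:lqsampanisonikolskij} via the cardinality estimate of Lemma \ref{lem:ansiopnumberoffuncvalues}); the matching lower bound on $c_n$ in the regime $2\leq p<q<\infty$ is provided by Theorem \ref{satz:gelfandnikolskij}. The same three-term sandwich therefore closes, and the asymptotic for $\lambda_n$ is once more already part of Corollary \ref{cor:sharpboundnikolskij}.

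There is essentially no obstacle: all the hard work --- the constructive upper bound via Smolyak interpolation on (anisotropic) sparse grids in Section \ref{sec:sharplin} and the Malykhin--Ryutin-type discretization together with the duality trick in Section \ref{sec:sharpgelf} --- has already been carried out. The only minor checks are that $p\geq 2$ activates the $\min\{1/p,1/2\}=1/p$ branch of Corollary \ref{prop:gelfand} and that $r_1>1/p$ suffices for both Corollary \ref{prop:gelfand} and Theorem \ref{satz:gelfandnikolskij}, both of which are immediate.
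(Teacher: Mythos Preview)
Your proposal is correct and matches the paper's own (one-line) proof, which simply cites Theorems \ref{satz:lqsampaniso}, \ref{satz:lqsampanisonikolskij}, \ref{satz:gelfandnikolskij} and Corollary \ref{prop:gelfand}. You have merely made the sandwich argument and the parameter checks explicit, and replaced the direct theorem citations by their corollary forms from Section~\ref{sec:sharplin}, which is harmless.
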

\begin{proof}
	The proof follows by Theorems \ref{satz:lqsampaniso}, \ref{satz:lqsampanisonikolskij}, \ref{satz:gelfandnikolskij} and Corollary \ref{prop:gelfand}.
\end{proof}
\begin{rem}
	In the parameter range $2<p<q<\infty$ permitting non-linear reconstruction operators does not yield better results. Optimal rates can be achieved by completely linear sampling algorithms.
\end{rem}
We obtain the following counterpart of Theorem \ref{satz:sampineqapprox} for non-linear sampling.
\begin{cor}\label{prop:sampineqproxnonlin}Let $1<p<2<q<\infty$ and $\zb r>\max\{\frac{1}{p},1-\frac{1}{q}\}$ fulfilling \eqref{eq:smoothnessvector}. Additionally let $\boldF$ denote either $\srpw$ or $\srpib$.
	Then 
	$$c_n(\boldF,L_q(\T))=o(\varrho_n(\boldF,L_q(\T))),$$
	or more precisely
	$$c_n(\boldF,L_q(\T))\lnsim n^{-(r_1-\frac{1}{p}+\frac{1}{q})}\lesssim \varrho_n(\boldF,L_q(\T))$$
	holds for all $n\in\N$.
\end{cor}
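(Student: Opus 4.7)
The plan is to establish the two inequalities
\[
c_n(\boldF, L_q(\tor^d)) \lnsim n^{-(r_1 - 1/p + 1/q)} \lesssim \varrho_n(\boldF, L_q(\tor^d))
\]
separately, relying on already-stated results.

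\emph{Upper bound on $c_n$.} The universal comparison $c_n \leq \lambda_n$ (factor any rank-$n$ linear approximation as $A = P\circ B$ with $B: \boldF \to \mathbb{C}^n$ linear, so that $Bf = 0$ forces $Af = 0$ and hence $\|f - Af\|_q = \|f\|_q$) reduces matters to the asymptotic $\lambda_n(\boldF, L_q) \lnsim n^{-(r_1-1/p+1/q)}$. For $\boldF=\svrpw$ this is Corollary \ref{satz:sampineqapprox} applied with $\theta=2$, and for $\boldF=\srpib$ it is Corollary \ref{satz:sampineqapproxbn}.

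\emph{Lower bound on $\varrho_n$.} I would lift the univariate Novak--Triebel bound to $d$ variables. Under the present hypothesis, \cite[Thm.\ 23]{NoTr05} produces, for any collection of $n$ nodes $\{\xi^i\}\subset\tor$, a function $g \in W^{r_1}_p(\tor)$ of unit norm with $g(\xi^i)=0$ for all $i$ and $\|g\|_{L_q(\tor)}\gtrsim n^{-(r_1-1/p+1/q)}$. Given multivariate sampling nodes $\{\zb \xi^i\}_{i=1}^n \subset \tor^d$, I apply the univariate statement to the first-coordinate projections $\{\xi^i_1\}$ and set $f(\zb x) := g(x_1)$. The support properties of the dyadic partition of unity (Definition \ref{def:varphi}) force $\delta_{\zb j}[f] = 0$ unless $j_2 = \cdots = j_d = 0$, in which case $\delta_{\zb j}[f](\zb x)$ reduces to a univariate Littlewood--Paley block of $g$ times a dimensional constant. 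Reading off the characterization of $\svrpw$ through such blocks (Definition \ref{def:srptf} with $\theta=2$) yields
\[
\|f|\svrpw\| \asymp \|g|W^{r_1}_p(\tor)\|, \qquad \|f\|_{L_q(\tor^d)} = (2\pi)^{(d-1)/q}\|g\|_{L_q(\tor)}.
\]
Thus, after a dimensional rescaling, $f$ lies in the unit ball of $\svrpw$, vanishes on every $\zb \xi^i$, and has $L_q(\tor^d)$-norm of order $n^{-(r_1-1/p+1/q)}$. For any (possibly non-linear) reconstruction $\varphi : \mathbb{C}^n \to L_q(\tor^d)$ the inputs $X_n(f)$ and $X_n(-f)$ both equal the zero vector, so $\varphi$ outputs the same element on $f$ and on $-f$; by the triangle inequality the worst-case error must be at least $\|f\|_{L_q(\tor^d)}$. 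Infimising over all sampling schemes gives the claim for $\boldF=\svrpw$. The bound for $\srpib$ is then inherited from the embedding $W^{r_1}_p(\tor) \hookrightarrow B^{r_1}_{p,\infty}(\tor)$ (Lemma \ref{lem:embeddings} (iv),(v) with $\theta=2$ and $p<2$): its unit ball contains a scaled copy of the Sobolev unit ball, whence $\varrho_n(\srpib,L_q) \gtrsim \varrho_n(\svrpw, L_q)$.

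\emph{Main obstacle.} The decisive point is that the Novak--Triebel theorem, although originally applied to linear sampling, is driven by the construction of a unit-norm univariate function \emph{annihilated} by the prescribed point evaluations; this vanishing feature is precisely what powers the $\pm f$ indistinguishability trick and therefore transfers without change to non-linear reconstructions. Once this is recognised, the tensor computation for $f(\zb x)=g(x_1)$ and the Sobolev-to-Besov embedding are routine.
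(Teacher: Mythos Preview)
Your argument is correct and matches the paper's proof in its essentials: the lower bound on $\varrho_n$ is obtained exactly as in the paper via the Novak--Triebel construction, recognising (as the paper states explicitly) the inequality
\[
\varrho_n(\boldF,L_q(\T))\geq \inf_{(\zb\xi_k)_{k=1}^n\subset \T} \sup_{\substack{\|f|\boldF\|\leq 1\\f(\zb\xi_k)=0}} \|f\|_q,
\]
which is precisely your $\pm f$ indistinguishability observation. Your lift $f(\zb x)=g(x_1)$ and the embedding $\srpw\hookrightarrow\srpib$ for the H\"older--Nikolskij case are standard and correct.

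The only difference is in the upper bound for $c_n$. You pass through linear widths via $c_n\leq\lambda_n$ and invoke Theorem~\ref{satz:sampineqapprox} (with $\theta=2$) and Corollary~\ref{satz:sampineqapproxbn}, whereas the paper quotes the sharp Gelfand width asymptotics directly from Corollary~\ref{prop:gelfand} and Theorem~\ref{satz:gelfandnikolskij}. Both routes yield $c_n(\boldF,L_q)\lnsim n^{-(r_1-1/p+1/q)}$; the paper's route additionally records the precise order of $c_n$, but that extra information is not needed for the corollary as stated.
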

\begin{proof}
	The proof can be obtained by following the construction of the lower bound for the univariate situation in \cite{NoTr05}, where we recognize that the stronger inequality
	$$\varrho_n(\boldF,L_q(\T))\geq \inf_{(\xi_k)_{k=1}^n\subset \T} \sup_{\substack{\|f|\boldF\|\leq 1\\f(\xi_k)=0,\;k=1,\ldots,n}} \|f\|_q$$
	holds. The estimates for $c_n(\svrpw,L_q(\T))$ were obtained in Corollary \ref{prop:gelfand}. For $\srpib$ we refer to Theorem \ref{satz:gelfandnikolskij}.
\end{proof}
\begin{rem}
	As a consequence of the lower bound in Corollary \ref{prop:sampineqproxnonlin} for $\varrho_n(\boldF,L_q(\T))$, we obtain that in the parameter range $1<p<2<q<\infty$ even linear approximation behaves significantly better than sampling recovery with a possibly non-linear reconstructing operator.
\end{rem}

\setcounter{section}{0}
\renewcommand{\thesection}{\Alph{section}}
\renewcommand{\theequation}{\Alph{section}.\arabic{equation}}
\renewcommand{\thesatz}{\Alph{section}.\arabic{satz}}
\renewcommand{\thesubsection}{\Alph{section}.\arabic{subsection}}
\section{Appendix: Tools from Fourier analysis}
The following Lemma collects trivial properties of the Fourier transform and Fourier coefficients, cf. \eqref{eq:fouriertransform}.

\begin{lem}\label{lem:wienalg}
	Let $f\in L_1(\tor)$ with $\sum_{\ell\in\zz}|\widehat{f}(\ell)|<\infty$. Then
	$$f(\cdot)=\sum_{\ell\in\zz}\widehat{f}(\ell)e^{i\ell \cdot}$$
	in C(\tor).
\end{lem}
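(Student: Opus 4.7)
The plan is to define the candidate sum, verify its continuity, and then match Fourier coefficients.

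First, I would set $g(x):=\sum_{\ell\in\zz}\widehat{f}(\ell)e^{i\ell x}$. Since by assumption $\sum_{\ell\in\zz}|\widehat{f}(\ell)|<\infty$ and each term $\widehat{f}(\ell)e^{i\ell x}$ is continuous on $\tor$ with $|\widehat{f}(\ell)e^{i\ell x}|=|\widehat{f}(\ell)|$, the Weierstrass M-test yields uniform convergence of the partial sums on $\tor$. Hence $g\in C(\tor)$ and in particular $g\in L_1(\tor)$.

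Next, I would compute the $k$-th Fourier coefficient of $g$ for each $k\in\zz$. Using uniform convergence to interchange summation and integration,
$$
\widehat{g}(k)=\frac{1}{2\pi}\int_{\tor}g(x)e^{-ikx}\,dx
=\sum_{\ell\in\zz}\widehat{f}(\ell)\frac{1}{2\pi}\int_{\tor}e^{i(\ell-k)x}\,dx=\widehat{f}(k),
$$
by orthogonality of the trigonometric system.

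Consequently $f-g\in L_1(\tor)$ has all Fourier coefficients equal to zero. Invoking the standard uniqueness theorem for Fourier series on $L_1(\tor)$ (functions with vanishing Fourier coefficients are zero a.e.), we conclude $f=g$ almost everywhere. Since $g$ is continuous, $f$ coincides almost everywhere with the continuous function $g$, so identifying $f$ with its unique continuous representative gives $f=g$ in $C(\tor)$, which is the claim. The only non-trivial input is the $L_1$-uniqueness of Fourier coefficients; every other step is routine manipulation permitted by absolute and uniform convergence.
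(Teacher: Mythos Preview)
Your proof is correct and is the standard argument for this classical fact about the Wiener algebra. The paper itself does not give a proof of this lemma; it is listed in the appendix among basic tools from Fourier analysis and is stated without argument, so there is nothing to compare against beyond noting that your reasoning is exactly the textbook route one would expect.
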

\begin{lem}[Poisson summation]\label{lem:poissonsum}
	Let $f\in L_1(\re)$. Then its periodization $\sum_{k\in \zz}f(\cdot+2\pi k)$ converges absolutely in the norm of $L_1([-\pi,\pi])$. Furthermore  its formal Fourier series is given by
	$$\sum_{k\in \zz}f(\cdot+2\pi k)=\frac{1}{\sqrt{2\pi}}\sum_{\ell\in\zz}\mathcal{F}f(\ell)e^{i\ell\cdot}
	$$
\end{lem}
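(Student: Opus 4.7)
The plan is to prove absolute convergence of the periodization first and then compute the Fourier coefficients of the resulting $L_1$ function directly from its definition as a series.

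First, I would establish absolute convergence in $L_1([-\pi,\pi])$. By monotone convergence (or Tonelli) together with the translation invariance of Lebesgue measure,
\[
\sum_{k\in\zz}\|f(\cdot+2\pi k)\|_{L_1([-\pi,\pi])}=\sum_{k\in\zz}\int_{-\pi}^{\pi}|f(x+2\pi k)|\,dx=\int_{\re}|f(y)|\,dy=\|f\|_{L_1(\re)}<\infty.
\]
Since $L_1([-\pi,\pi])$ is complete, this implies that the series $F(x):=\sum_{k\in\zz}f(x+2\pi k)$ converges absolutely in $L_1([-\pi,\pi])$, and $F$ is $2\pi$-periodic almost everywhere once extended to $\re$.

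Next, I would compute the $\ell$-th Fourier coefficient of $F$ for $\ell\in\zz$. Using the absolute convergence established above to justify the interchange of sum and integral via Fubini's theorem, and using $e^{-i\ell(x+2\pi k)}=e^{-i\ell x}$:
\[
\widehat{F}(\ell)=\frac{1}{2\pi}\int_{-\pi}^{\pi}F(x)e^{-i\ell x}\,dx=\frac{1}{2\pi}\sum_{k\in\zz}\int_{-\pi}^{\pi}f(x+2\pi k)e^{-i\ell(x+2\pi k)}\,dx.
\]
The change of variable $y=x+2\pi k$ turns the $k$-th integral into an integral over $[-\pi+2\pi k,\pi+2\pi k]$, and these intervals tile $\re$ up to null sets. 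Hence
\[
\widehat{F}(\ell)=\frac{1}{2\pi}\int_{\re}f(y)e^{-i\ell y}\,dy=\frac{1}{\sqrt{2\pi}}\,\mathcal{F}f(\ell),
\]
where the last equality uses the normalization of $\mathcal{F}$ fixed in \eqref{eq:fouriertransform}. Writing out the formal Fourier series of $F$ then yields the stated identity
\[
\sum_{k\in\zz}f(\cdot+2\pi k)\;=\;\sum_{\ell\in\zz}\widehat{F}(\ell)e^{i\ell\cdot}\;=\;\frac{1}{\sqrt{2\pi}}\sum_{\ell\in\zz}\mathcal{F}f(\ell)e^{i\ell\cdot},
\]
understood as the formal Fourier series of the $L_1$-function $F$ (no pointwise convergence is claimed, consistent with the hypothesis $f\in L_1(\re)$ alone).

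There is really no serious obstacle here; the only point requiring care is the application of Fubini to interchange the summation and integral in the Fourier coefficient computation, which is licensed by the absolute $L_1$-summability established in the first step. No decay or smoothness beyond $f\in L_1(\re)$ is needed for this formal-series statement, so no further tools beyond change of variables and Fubini are required.
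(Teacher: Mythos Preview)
Your proof is correct and self-contained. The paper does not actually prove this lemma but simply refers to \cite[p.~252]{StWe71}; your argument is precisely the standard one given there (Tonelli for the $L_1$-convergence of the periodization, then Fubini plus tiling of $\re$ by the intervals $[-\pi+2\pi k,\pi+2\pi k]$ to identify the Fourier coefficients), so there is nothing to compare.
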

\begin{proof}
	We refer to \cite[p. 252]{StWe71}.
\end{proof}

\begin{defi}[Hardy-Littlewood maximal function]\label{def:hl}
	Let $f\in L_1^{loc}(\tor^d)$. Then we define the Hardy-Littlewood maximal function as
	$$Mf(\zb x):=\sup_{Q\ni \zb x}\frac{1}{|Q|}\int_{Q}|f(\zb x)|d\zb x,$$
	where the $Q$ are cuboids centered in $\zb x$.
\end{defi}

\begin{satz}[Hardy-Littlewood maximal inequality]\label{satz:hardylittlewood}
	Let $1<p\leq \infty$ and $f\in L_p(\T)$. Then we have
	$$\|Mf|L_p(\T)\|\lesssim \|f|L_p(\T)\|.$$
\end{satz}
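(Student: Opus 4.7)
The plan is to follow the classical Calder\'on--Zygmund route: first dispose of the endpoint $p=\infty$, then prove the weak-type $(1,1)$ inequality via a Vitali covering argument, and finally invoke the Marcinkiewicz interpolation theorem to pass to the full range $1<p<\infty$.

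First I would handle the endpoint $p=\infty$, which is immediate from Definition \ref{def:hl}: every cube average of $|f|$ is pointwise bounded by $\|f|L_{\infty}(\T)\|$, so $\|Mf|L_{\infty}(\T)\|\leq\|f|L_{\infty}(\T)\|$ with constant one. The substantive step is the weak-type estimate
$$|\{\zb x\in\T : Mf(\zb x)>\lambda\}|\,\lesssim\,\lambda^{-1}\|f|L_1(\T)\|,\qquad \lambda>0,\ f\in L_1(\T).$$
This I would prove in the standard way: for each $\zb x$ in the level set select a cube $Q_{\zb x}\ni\zb x$ on which the average of $|f|$ exceeds $\lambda$; apply a Vitali-type covering lemma to extract a countable disjoint subfamily $\{Q_k\}$ whose $5$-fold dilates still cover the level set; then combine the individual cube bounds $|Q_k|<\lambda^{-1}\int_{Q_k}|f(\zb y)|\,d\zb y$ with the disjointness of $\{Q_k\}$ and the factor $5^d$ from the dilates.

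The one genuinely technical point will be transferring the Vitali argument from $\R^d$ to the torus $\T$. This is circumvented by restricting the supremum in the definition of $M$ to cubes of side length below a fixed fraction of $\pi$; cubes of larger side length contribute only an $O(\|f|L_1(\T)\|)$ term to $Mf$ and are thus absorbed into the constant, while small cubes lift isometrically to genuine Euclidean cubes in $\R^d$, so that the standard Vitali covering lemma applies verbatim.

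With the weak-$(1,1)$ and strong-$(\infty,\infty)$ bounds both in hand, the Marcinkiewicz interpolation theorem (see e.g.\ \cite{StWe71}) yields the strong-type $(p,p)$ estimate for every $1<p<\infty$, with constant depending only on $p$ and $d$, completing the argument.
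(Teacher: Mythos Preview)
Your argument is correct and follows the classical textbook route (trivial $L_\infty$ bound, weak-$(1,1)$ via Vitali covering, then Marcinkiewicz interpolation). Note, however, that the paper does not actually prove this statement: it appears in the appendix as a standard tool from Fourier analysis, stated without proof alongside the Fefferman--Stein inequality (for which the paper merely cites \cite[Prop.~3.2.4]{ST87}). So there is no ``paper's own proof'' to compare against; your proposal simply supplies the well-known argument the authors took for granted.
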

For the following periodic version of the Fefferman-Stein maximal inequality we refer to \cite[Prop. 3.2.4]{ST87}.
\begin{satz}[Fefferman-Stein maximal inequality]\label{thm:feffermanstein}
	Let $1<p<\infty$, $1<\theta\leq \infty$ and $(f_{\zb k})_{k}\subset L_p(\tor^d,\ell_{\theta})$. Then we have
	$$\|Mf_{k}|L_p(\tor^d,\ell_{\theta})\|\lesssim \|f_{k}|L_p(\tor^d,\ell_{\theta})\|.$$
\end{satz}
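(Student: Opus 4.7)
I would reduce the periodic statement to the classical vector-valued Fefferman--Stein inequality on $\R$, which I take as a black box: for $1<p<\infty$, $1<\theta\leq\infty$ and $(g_k)_k\subset L_p(\R,\ell_\theta)$,
$$\big\|(M_{\R}g_k)_k\,\big|\,L_p(\R,\ell_\theta)\big\|\lesssim \big\|(g_k)_k\,\big|\,L_p(\R,\ell_\theta)\big\|,$$
where $M_{\R}$ denotes the centered Hardy--Littlewood maximal operator on $\R$. The standard proof of this Euclidean fact relies on a Calder\'on--Zygmund decomposition combined with a good-$\lambda$ argument (alternatively, $A_p$-extrapolation from the scalar weighted estimate).

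\noindent\textbf{Step 1 (pointwise comparison).} Identify each $f_k$ with its $2\pi$-periodic extension to $\R$ and set $\tilde f_k := f_k\,\chi_{[-3\pi,3\pi]^d}$. Fix $\boldx\in[-\pi,\pi]^d$ and split the supremum in Definition~\ref{def:hl} according to the side length $r$ of the centered cube $Q\ni\boldx$. For $r\leq 2\pi$ one has $Q\subset[-3\pi,3\pi]^d$, so the contribution to $Mf_k(\boldx)$ is dominated by $M_{\R}\tilde f_k(\boldx)$. For $r>2\pi$, the cube $Q$ contains at least one full fundamental period in each coordinate, so by periodicity
$$\frac{1}{|Q|}\int_Q|f_k(\boldy)|\,d\boldy\leq C_d\,(2\pi)^{-d}\|f_k\|_{L_1(\T)}.$$
Combining the two cases,
$$Mf_k(\boldx)\leq M_{\R}\tilde f_k(\boldx)+C_d\,\|f_k\|_{L_1(\T)}\qquad\text{for every } \boldx\in[-\pi,\pi]^d.$$

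\noindent\textbf{Step 2 (norm transfer).} Taking first $\ell_\theta$-norms in $k$ and then $L_p([-\pi,\pi]^d)$-norms, the triangle inequality in $L_p(\T,\ell_\theta)$ (valid since $p,\theta\geq 1$) gives
$$\big\|Mf_k|L_p(\T,\ell_\theta)\big\|\lesssim \big\|M_{\R}\tilde f_k|L_p(\R,\ell_\theta)\big\|+\big\|(\|f_k\|_{L_1(\T)})_k|\ell_\theta\big\|.$$
The Euclidean Fefferman--Stein inequality applied to $(\tilde f_k)_k$ bounds the first summand by $\|\tilde f_k|L_p(\R,\ell_\theta)\|$, which is equivalent to $\|f_k|L_p(\T,\ell_\theta)\|$ since $[-3\pi,3\pi]^d$ is a union of $3^d$ translates of the fundamental domain. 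For the second summand, Minkowski's integral inequality (for $\theta\geq 1$) followed by the embedding $L_p(\T)\hookrightarrow L_1(\T)$ yields
$$\big\|(\|f_k\|_{L_1(\T)})_k|\ell_\theta\big\|\leq \big\|f_k|L_1(\T,\ell_\theta)\big\|\lesssim \big\|f_k|L_p(\T,\ell_\theta)\big\|,$$
and the claim follows.

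\noindent\textbf{Main obstacle.} The entire depth of the statement is concentrated in the Euclidean vector-valued Fefferman--Stein inequality: it cannot be deduced from Theorem~\ref{satz:hardylittlewood} by pointwise manipulations alone, since one has neither $\sup_Q\sum_k\leq\sum_k\sup_Q$ nor a trivial reduction of the mixed norm $L_p(\ell_\theta)$ to the scalar case. The transference step above is routine provided one carefully checks that the dimensional constants in the small/large-cube dichotomy depend only on $d$, $p$, $\theta$, which is immediate; this reproduces the reference \cite[Prop.\ 3.2.4]{ST87}.
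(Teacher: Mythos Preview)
The paper does not actually prove this theorem; immediately before the statement it writes ``For the following periodic version of the Fefferman--Stein maximal inequality we refer to \cite[Prop.\ 3.2.4]{ST87}'' and offers no further argument. Your transference from the Euclidean vector-valued inequality is correct and is in fact the standard route by which the periodic statement is derived.

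One minor point worth tightening: Definition~\ref{def:hl} in the paper takes the supremum over ``cuboids'', which may be read as general axis-parallel rectangles rather than cubes. If that reading is intended, your dichotomy in Step~1 should also cover rectangles where only some side lengths exceed $2\pi$. This is harmless: by periodicity, replacing any side length $r_i>2\pi$ by $2\pi$ (same center) decreases the average by at most a factor $2$, so after absorbing a factor $2^d$ one is reduced to rectangles contained in $[-2\pi,2\pi]^d\subset[-3\pi,3\pi]^d$, and your argument applies verbatim. Your diagnosis that the entire depth resides in the Euclidean Fefferman--Stein inequality is accurate.
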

The following pointwise bound is proved in \cite[Lemma 3.3.1]{TDiff06}.
\begin{lem}\label{lem:diffversuspeetre}
	Let $a,b>0$ and
	$$f=\sum_{|k|<b}\hat{f}(k)e^{ ikx}:\quad x\in \tor$$
	be a univariate trigonometric polynomial with frequencies in $[-b,b]$. Then there exists a constant $C>0$ such that
	$$
	|\Delta_h^m(f,x)|\leq C\min\{1,|bh|^m\}\max\{1,|bh|^a\}P_{b,a}f(x)
	$$
	holds.
\end{lem}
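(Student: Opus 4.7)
The strategy is to split into the regimes $|bh|\geq 1$ and $|bh|<1$, which are precisely the two cases picked out by the weight $\min\{1,|bh|^m\}\max\{1,|bh|^a\}$. The large-$|bh|$ regime is elementary: from $\Delta_h^m f(x)=\sum_{k=0}^m(-1)^{m-k}\binom{m}{k}f(x+kh)$ and the definition of the Peetre maximal function one has $|f(x+kh)|\leq(1+b|kh|)^a P_{b,a}f(x)\leq(1+m|bh|)^a P_{b,a}f(x)\lesssim|bh|^a P_{b,a}f(x)$ whenever $|bh|\geq 1$, and summing the finitely many binomial coefficients absorbs into the constant $C$.

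In the delicate regime $|bh|<1$ I would instead pass to the integral representation
\[\Delta_h^m f(x)=\int_0^h\!\cdots\!\int_0^h f^{(m)}(x+t_1+\cdots+t_m)\,dt_1\cdots dt_m,\]
so that $|\Delta_h^m f(x)|\leq|h|^m\sup_{|\tau|\leq m|h|}|f^{(m)}(x+\tau)|$, and the problem reduces to the pointwise Bernstein-type inequality $|f^{(m)}(x+\tau)|\lesssim b^m P_{b,a}f(x)$ uniformly for $|\tau|\leq m|h|$. To establish this I would build a reproducing kernel adapted to the spectrum of $f$: choose $\eta\in C_c^\infty(\re)$ even with $\eta\equiv 1$ on $[-1,1]$ and $\supp\eta\subset[-2,2]$, and set $V_b(y):=\sum_{k\in\zz}\eta(k/b)e^{iky}$, a trigonometric polynomial with $\widehat{V_b}(k)=1$ on $[-b,b]\cap\zz$ and therefore $V_b\ast f=c\,f$ for an absolute normalizing constant $c$. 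Poisson summation (Lemma \ref{lem:poissonsum}) identifies $V_b$ as the $2\pi$-periodization of $\sqrt{2\pi}\,b\,\ell(b\,\cdot\,)$ for the Schwartz function $\ell:=\cfi\eta$; differentiating $m$ times, unfolding the periodization via periodicity of $f$, and substituting $u=bz$ rewrites the identity $V_b^{(m)}\ast f=c\,f^{(m)}$ as
\[f^{(m)}(x+\tau)=c_1\,b^m\int_{\re}\ell^{(m)}(u)\,f(x+\tau-u/b)\,du\]
for some absolute $c_1$. Bounding $|f(x+\tau-u/b)|\leq(1+|b\tau|+|u|)^a P_{b,a}f(x)\lesssim(1+|u|)^a P_{b,a}f(x)$ via $|b\tau|\leq m|bh|<m$ reduces matters to the convergent integral $\int_{\re}|\ell^{(m)}(u)|(1+|u|)^a\,du<\infty$, finite by the Schwartz decay of $\ell^{(m)}$, and yields the desired Bernstein bound.

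The main obstacle is the pointwise Bernstein step, specifically the clean passage from the periodic convolution $V_b^{(m)}\ast f$ to the non-periodic integral against $\ell^{(m)}(b\,\cdot\,)$, so that the rapid decay of $\ell^{(m)}$ dominates the polynomial weight $(1+|u|)^a$ uniformly in $b\geq 1$ (the case $b\leq 1$ being trivial, as the spectrum condition $|k|<b$ forces $f$ to be constant and $\Delta_h^m f\equiv 0$ for $m\geq 1$). Once the Bernstein bound is secured, multiplying by $|h|^m$ delivers the $|bh|^m P_{b,a}f(x)$ estimate in the small-$|bh|$ regime, which combined with the large-$|bh|$ bound from the first paragraph gives the asserted uniform inequality.
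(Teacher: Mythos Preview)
Your argument is correct. The paper itself does not prove this lemma but only cites \cite[Lemma~3.3.1]{TDiff06}, so there is nothing to compare line by line; your two-regime split with the elementary translation bound for $|bh|\geq 1$ and the mean-value/Bernstein route for $|bh|<1$ is precisely the standard way such pointwise estimates are obtained, and would match what one finds in that reference. The one place worth tightening in a write-up is the ``unfolding'' step: make explicit that for periodic $f$ and $g=\sqrt{2\pi}\,b\,\ell(b\,\cdot)$ one has
\[
\int_{-\pi}^{\pi} V_b^{(m)}(y)\,f(x+\tau-y)\,dy
=\int_{-\pi}^{\pi}\sum_{k\in\zz} g^{(m)}(y+2\pi k)\,f(x+\tau-y)\,dy
=\int_{\re} g^{(m)}(y)\,f(x+\tau-y)\,dy,
\]
the interchange being justified by the Schwartz decay of $g^{(m)}$; after the substitution $u=by$ this is exactly your displayed identity with the correct power $b^m$ coming from $g^{(m)}(y)=\sqrt{2\pi}\,b^{m+1}\ell^{(m)}(by)$ and the Jacobian $dy=b^{-1}du$.
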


\begin{lem}\label{lem:peetreenv}
	Let $a>0$, $b>0$ and $f\in C(\tor)$. 
	\begin{enumerate}
		\item If $|x-x_0|<\frac{1}{b}$ then
		$$|f(x_0)| \leq 2^a P_{b,a}f(x)$$
		holds.
		\item Furthermore let $b'>b>0$. Then
		$$P_{b,a}f(x) \leq \Big(\frac{b'}{b}\Big)^a P_{b',a}f(x).$$
	\end{enumerate}
\end{lem}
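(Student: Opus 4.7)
Both assertions follow directly from the definition of the Peetre maximal function (Definition \ref{def:peetremax}) by a short algebraic manipulation of the denominator weight $(1+b|y|)^a$. The plan is to interpret $|f(x_0)|$ and $P_{b,a}f(x)$ as unweighted/weighted suprema over translations $y = x_0-x$ and then compare the weights.

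For (i), I would write $x_0 = x + y_0$ with $y_0 := x_0 - x$, so that the hypothesis $|x-x_0| < 1/b$ reads $b|y_0| < 1$ and hence $(1+b|y_0|)^a < 2^a$. The identity
\[
|f(x_0)| \;=\; (1+b|y_0|)^a \cdot \frac{|f(x+y_0)|}{(1+b|y_0|)^a}
\;\leq\; 2^a \sup_{y \in \re}\frac{|f(x+y)|}{(1+b|y|)^a} \;=\; 2^a\,P_{b,a}f(x)
\]
then gives the claim. (In the multivariate version one argues coordinatewise with each factor $(1+b_i|y_i|)^a$ in the product bounded by $2^a$, and the overall constant becomes $2^{da}$; in fact this is how (i) is used in \eqref{eq:peetreest} in the proof of Proposition \ref{lem:samplingvspeetre}.)

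For (ii), the key elementary inequality is
\[
\frac{1+b'|y|}{1+b|y|} \;\leq\; \frac{b'}{b}\qquad\text{whenever } b'\geq b>0,\ y\in \re,
\]
which follows by cross-multiplying: the inequality $(1+b'|y|)b \leq (1+b|y|)b'$ reduces to $b\leq b'$ since the common term $bb'|y|$ cancels. Raising both sides to the $a$-th power and multiplying by $|f(x+y)|/(1+b'|y|)^a$ yields
\[
\frac{|f(x+y)|}{(1+b|y|)^a} \;\leq\; \Big(\frac{b'}{b}\Big)^a \frac{|f(x+y)|}{(1+b'|y|)^a} \;\leq\; \Big(\frac{b'}{b}\Big)^a P_{b',a}f(x),
\]
and taking the supremum over $y$ on the left produces the stated inequality. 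There is no genuine obstacle: the only mild thing to notice is the monotonicity $(1+b't)/(1+bt) \leq b'/b$ for $t\ge 0$, and part (i) reduces to observing that $1+b|y_0|<2$ under the diameter hypothesis.
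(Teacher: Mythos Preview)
Your proof is correct and is precisely the ``easy exercise playing with the definition of the Peetre maximal function'' that the paper alludes to; the paper does not spell out any details beyond that remark, and your algebraic manipulation of the weight $(1+b|y|)^a$ is the intended argument.
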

\begin{proof}
	The proof is an easy exercise playing with the definition of the Peetre maximal function. 
	% 	\beqq
	% 	|f(x_0)|&\leq& \frac{|f(x_0)|}{(1+|x-x_0|b)^a}(1+|x-x_0|b)^a\leq 2^a\sup_{x_0\in\re}\frac{|f(x_0)|}{(1+|x-x_0|b)^a}
	% 	= 2^a P_{b,a}f(x).
	% 	\eeqq
	% 	We prove (ii). The trivial estimation $\frac{p+1}{q+1}\leq \frac{p}{q}$ for $p>q>0$ yields
	% 	\beqq
	% 	P_{b,a}f(x)&=&\sup_{y\in\re}\frac{|f(y)|}{(1+b|x+y|)^a}\leq \sup_{y\in\re}\frac{|f(y)|}{(1+b'|x+y|)^a}\frac{(1+b'|x+y|)^a}{(1+b|x+y|)^a} \leq \Big(\frac{b'}{b}\Big)^a P_{b',a}f(x).
	% 	\eeqq
\end{proof}
\section{Appendix: Some multi-indexed geometric sums}

\begin{lem}\label{lem:anisosmolsum}
	Let $\zb r,\zb \eta\in\R$ with $0<r_1=\eta_1=\ldots=r_{\mu}=\eta_\mu<r_{\mu+1}\leq\ldots\leq r_{d}$ and $r_1<\eta_s<r_s$ for $s=\mu+1,\ldots,d$.  Then
	\beqq
	\sum_{\frac{1}{\eta_1}  \zb \eta \cdot \zb j > m}2^{-\zb r \cdot \zb j } \lesssim m^{\mu-1}2^{- r_1m} 
	\eeqq
	holds for all $m\geq 1$.
\end{lem}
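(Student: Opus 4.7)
The plan is to decompose the summation variables into the ``symmetric'' inner block $(j_1,\ldots,j_\mu)$, on which $\boldr$ and $\boldeta$ agree ($r_i=\eta_i=r_1$), and the outer block $(j_{\mu+1},\ldots,j_d)$, on which we have the strict slack $r_i>\eta_i>r_1$. Using $r_1=\eta_1$, the constraint $\frac{1}{\eta_1}\boldeta\cdot\boldj>m$ rewrites as $\sum_{i\le\mu}j_i>m-N$ with $N:=\frac{1}{\eta_1}\sum_{i>\mu}\eta_ij_i$. For a fixed outer multi-index, the inner sum is a standard tail of a weighted geometric series with polynomial multiplicity,
$$\sum_{s>m-N}\binom{s+\mu-1}{\mu-1}2^{-r_1s},$$
which I would bound by an absolute constant when $N\ge m$ and by $C(1+m-N)^{\mu-1}2^{-r_1(m-N)}$ when $0\le N<m$.

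I would then split the remaining outer sum according to the same cases. On $\{N\ge m\}$, a Markov-type argument using the pointwise bound $\mathbf{1}\{\sum_{i>\mu}\eta_ij_i\ge r_1m\}\le 2^{\sum_{i>\mu}\eta_ij_i-r_1m}$ yields
$$\sum_{\substack{j_{\mu+1},\ldots,j_d\ge 0\\ N\ge m}}2^{-\sum_{i>\mu}r_ij_i}\le 2^{-r_1m}\prod_{i>\mu}\sum_{j_i\ge 0}2^{-(r_i-\eta_i)j_i}\lesssim 2^{-r_1m},$$
where convergence of the product rests on $r_i>\eta_i$ for $i>\mu$. On $\{N<m\}$, one multiplies by the tail bound $(1+m-N)^{\mu-1}2^{-r_1(m-N)}$ and observes the key algebraic identity $r_1N=\sum_{i>\mu}\eta_ij_i$ (which is just $r_1=\eta_1$), so that the combined exponent on the outer variables collapses to $-\sum_{i>\mu}(r_i-\eta_i)j_i$. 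Then
$$2^{-r_1m}(m+1)^{\mu-1}\prod_{i>\mu}\sum_{j_i\ge 0}2^{-(r_i-\eta_i)j_i}\lesssim m^{\mu-1}\,2^{-r_1m}.$$
Adding the two contributions gives the claim.

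The only genuinely subtle point is spotting the cancellation $r_1N=\sum_{i>\mu}\eta_ij_i$: this is what converts the ``surplus'' factor $2^{r_1N}$ (from unfolding $2^{-r_1(m-N)}$) into $\prod_{i>\mu}2^{\eta_ij_i}$, which then combines with $2^{-r_ij_i}$ to produce convergent geometric series in each outer coordinate. Everything else is routine: the inner tail bound is standard, and the logarithmic factor $m^{\mu-1}$ arises precisely from the multiplicity $\mu$ of the smallest component of $\boldr$, as expected.
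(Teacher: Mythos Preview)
Your argument is correct. The decomposition into the inner block (coordinates $1,\ldots,\mu$) and outer block (coordinates $\mu+1,\ldots,d$), the standard tail estimate $\sum_{s>k}\binom{s+\mu-1}{\mu-1}2^{-r_1 s}\lesssim (1+k)^{\mu-1}2^{-r_1 k}$, the Markov-type bound $\mathbf{1}\{x\ge a\}\le 2^{x-a}$, and the cancellation $r_1 N=\eta_1 N=\sum_{i>\mu}\eta_i j_i$ all work exactly as you describe, and the two cases together yield $m^{\mu-1}2^{-r_1 m}$.

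The paper does not give its own proof of this lemma; it simply cites \cite[p.~9, Lemma~B]{Tem86}. Your write-up therefore supplies a self-contained argument where the paper has none. The approach you take---separating the coordinates on which $\boldr$ and $\boldeta$ agree from those on which $r_i>\eta_i$, and exploiting the latter gap to sum out the outer variables geometrically---is the natural one and is essentially how Temlyakov's lemma is proved as well.
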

\begin{proof} We refer to \cite[p. 9, Lemma B]{Tem86}.
\end{proof}

\begin{lem}\label{lem:ansiopnumberoffuncvaluessum}
	Let $\zb r\in\R$ with 
	\beqq  0<r_1=\ldots=r_{\mu} <r_{\mu+1}\leq \ldots\leq r_{d}<\infty\eeqq
	and $\mu\leq d$.
	Then	
	\beqq \sum_{\frac{1}{r_1}  \zb r \cdot \zb j \leq m}2^{|\zb j|_1}\asymp m^{\mu-1}2^{m} \eeqq
	holds for all $m\geq1$.
\end{lem}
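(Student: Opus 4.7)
The plan is to prove the matching upper and lower bounds separately, after a harmless rescaling. Set $\rho_i := r_i/r_1$ so that $\rho_1 = \cdots = \rho_\mu = 1$ and $\rho_i > 1$ for $i > \mu$; the condition $\frac{1}{r_1}\mathbf{r}\cdot\mathbf{j} \leq m$ becomes $\sum_{i=1}^d \rho_i j_i \leq m$. I will also use the standard fact that for $K \geq 1$
$$\sum_{k=0}^{K} \binom{k+\mu-1}{\mu-1} 2^k \asymp K^{\mu-1}\, 2^K,$$
with the dominant contribution coming from the top end $k = K$.

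For the lower bound I restrict the sum to indices with $j_{\mu+1} = \cdots = j_d = 0$. The remaining constraint is simply $j_1 + \cdots + j_\mu \leq m$, so
$$\sum_{\frac{1}{r_1}\mathbf{r}\cdot\mathbf{j}\leq m} 2^{|\mathbf{j}|_1} \;\geq\; \sum_{k=0}^{m} \binom{k+\mu-1}{\mu-1}\, 2^k \;\gtrsim\; m^{\mu-1}\,2^m,$$
which is the desired lower estimate.

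For the upper bound I group the indices by the tail $\mathbf{j}' := (j_{\mu+1},\ldots,j_d) \in \mathbb{N}_0^{d-\mu}$ and set $\widetilde m(\mathbf{j}') := \sum_{i>\mu} \rho_i j_i$. When $\widetilde m(\mathbf{j}') \leq m$, the leading block $(j_1,\ldots,j_\mu)$ ranges over non-negative integers with sum at most $m - \widetilde m(\mathbf{j}')$, and the standard estimate above gives
$$\sum_{\substack{j_1,\ldots,j_\mu \geq 0 \\ j_1 + \cdots + j_\mu \leq m - \widetilde m(\mathbf{j}')}} 2^{j_1+\cdots+j_\mu} \;\lesssim\; \bigl(m - \widetilde m(\mathbf{j}')\bigr)^{\mu-1}\, 2^{m-\widetilde m(\mathbf{j}')} \;\leq\; m^{\mu-1}\, 2^{m-\widetilde m(\mathbf{j}')}.$$
Multiplying by $2^{|\mathbf{j}'|_1}$ and summing over all admissible $\mathbf{j}'$ yields
$$\sum_{\frac{1}{r_1}\mathbf{r}\cdot\mathbf{j}\leq m} 2^{|\mathbf{j}|_1} \;\lesssim\; m^{\mu-1}\, 2^m \sum_{\mathbf{j}' \in \mathbb{N}_0^{d-\mu}} 2^{|\mathbf{j}'|_1 - \widetilde m(\mathbf{j}')} \;=\; m^{\mu-1}\, 2^m \prod_{i=\mu+1}^{d} \frac{1}{1 - 2^{\,1-\rho_i}}.$$
The product is finite because every $\rho_i > 1$ for $i > \mu$, and in the degenerate case $\mu = d$ the product is empty; either way we obtain $\lesssim m^{\mu-1}\,2^m$.

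The only delicate point in the argument is the decoupling step that replaces $(m - \widetilde m(\mathbf{j}'))^{\mu-1}$ by $m^{\mu-1}$: this is harmless here but is the reason one cannot simply drop the $(\mu-1)$-logarithmic factor. Apart from that, the whole computation is a clean two-stage geometric sum, exactly in the spirit of \cite[p.\ 9, Lemma B]{Tem86}.
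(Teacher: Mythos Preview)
Your argument is correct. The paper does not actually give a proof here; it simply cites \cite[p.\ 10, Lemma D]{Tem86}, so your self-contained two-stage geometric computation (restrict to $j_{\mu+1}=\cdots=j_d=0$ for the lower bound; freeze the tail $\mathbf{j}'$ and sum the geometric decay $2^{|\mathbf{j}'|_1-\widetilde m(\mathbf{j}')}$ for the upper bound) supplies what the paper omits. One cosmetic point: the intermediate bound $(m-\widetilde m(\mathbf{j}'))^{\mu-1}2^{m-\widetilde m(\mathbf{j}')}$ is not literally valid when $m-\widetilde m(\mathbf{j}')<1$ (the inner sum equals $1$ there), but since you immediately relax to $m^{\mu-1}2^{m-\widetilde m(\mathbf{j}')}$ the chain is unaffected. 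Also, your closing reference should be to Lemma~D rather than Lemma~B of \cite{Tem86}; Lemma~B is the companion estimate used for Lemma~\ref{lem:anisosmolsum}.
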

\begin{proof}
	We refer to \cite[p. 10, Lemma D]{Tem86}.
\end{proof}
\section{Appendix: Known results on linear and Kolmogorov-widths}
\begin{satz} \label{satz:linwidths}
	Let $1 < p < \infty$, $1 \leq q<\infty$ and $\boldr>(1/p - 1/q)_+$ with \eqref{eq:smoothnessvector}\,.
	Then we have 
	\beqq \label{lambda_nW}
	\lambda_n(\svrpw,L_q(\T))
	\ \asymp \
	\begin{cases}
		\left(\frac{(\log n)^{\mu-1}}{n}\right)^{r_1 - (1/p - 1/q)_+} &:\quad  
		\quad q \le 2, \ \mbox{or} \quad p \ge 2,
		\\[1ex]
		\left(\frac{(\log n)^{\mu-1}}{n}\right)^{r_1 - 1/p + 1/2} &:\quad
		\quad 1/p + 1/q \ge 1, \ q > 2, \ \zb r > 1/p,
		\\[1ex]
		\left(\frac{(\log n)^{\mu-1}}{n}\right)^{r_1 - 1/2 + 1/q} &:\quad
		\quad 1/p + 1/q \le 1, \ p < 2, \ \zb r > 1 - 1/q.
	\end{cases}
	\eeqq
\end{satz}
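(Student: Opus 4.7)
The plan is to prove matching upper and lower bounds for each of the three parameter subregions. For the upper bounds in Case 1 ($q \le 2$ or $p \ge 2$), the rate $r_1 - (1/p - 1/q)_+$ is the natural Sobolev embedding rate and can be realized by hyperbolic-cross Fourier partial sums $S_m f := \sum_{|\boldj|_1 \le m} \delta_{\boldj}[f]$ (or the Smolyak-type operator $T^{L,\boldeta}_m$ in \eqref{eq:anisosmolop}), with $m \asymp \log n$ chosen so that $S_m$ has rank $\asymp n$; standard Littlewood-Paley arguments combined with the embedding $\svrpw = S^{\boldr}_{p,2}F \hookrightarrow L_q$ from Lemma \ref{lem:embeddings} give the claim. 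In Cases 2 and 3 (both with $p<2<q$) the stated rates are actually \emph{better} than the naive Sobolev rate $r_1 - 1/p + 1/q$, and cannot be achieved by Fourier or Smolyak methods. Here the construction is Kashin-type: one applies, on each Littlewood-Paley block separately, the optimal finite-dimensional linear approximant for $\ell_p^N \hookrightarrow \ell_q^N$ (which beats the trivial bound precisely when $p<2<q$, by Garnaev-Gluskin-Kashin), and sums the block-wise estimates using the Jawerth-Franke embedding of Lemma \ref{lem:jf} to allocate degrees of freedom across blocks.

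For the lower bounds, I would apply the standard discretization, restricting to functions $f$ whose Fourier support lies in a single Littlewood-Paley block at a level $\boldj$ with $|\boldj|_1 \asymp \log n$. Such blocks carry $N \asymp 2^{|\boldj|_1}$ degrees of freedom, and after normalization by $2^{-\boldr \cdot \boldj}$ and the Nikol'skij-type factor $2^{|\boldj|_1(1/q - 1/p)_+}$ from Lemma \ref{lem:qjdiagembedding}, the linear width reduces to a finite-dimensional analogue: $\lambda_n(\ell_p^N, \ell_q^N)$ in Case 1 (classical Kashin-Garnaev-Gluskin bounds), $\lambda_n(\ell_p^M(\ell_2^N), \ell_q^M(\ell_q^N))$ for Case 2 (Galeev \cite{Ga87, Ga96}), and $\lambda_n(\ell_\infty^M(\ell_1^N), \ell_1^M(\ell_2^N))$ for Case 3 (the recent Malykhin-Ryutin estimate \cite{MaRu16} displayed in \eqref{eq:maru16}). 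Optimizing over the anisotropy-compatible multi-indices $\boldj$ with $|\boldj|_1 = m$ yields the $(\log n)^{\mu-1}$ factor via the combinatorial count in Lemma \ref{lem:ansiopnumberoffuncvaluessum}; the assumption \eqref{eq:smoothnessvector} ensures that only $\mu$ coordinates actively contribute.

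The main obstacle is that the theorem aggregates several deep external results, each substantial in its own right: the sharp Case 3 lower bound relies on the Malykhin-Ryutin matrix-norm width estimate, whose proof employs delicate random-construction arguments not reproducible by elementary means; and the sharp Case 2 upper bound requires Kashin-type random linear projections (rather than explicit Fourier algorithms) applied block-wise. Since this theorem appears in the appendix as a reference result, my proof would primarily consist of carefully compiling the cited classical and recent results for the relevant finite-dimensional widths, and performing the discretization-and-lifting bookkeeping needed to transfer them to the anisotropic periodic mixed-smoothness Sobolev setting while preserving the correct powers of $\log n$.
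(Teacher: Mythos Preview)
The paper's own ``proof'' of this appendix result is purely bibliographic: it cites Galeev \cite{Ga87,Ga96} for the range $1<q<\infty$, Romanyuk \cite{Ro08} for $q=1$, and refers to \cite[Theorem 4.39]{DTU16} for further discussion. Your proposal correctly identifies the theorem as a compiled reference result and goes further by sketching the standard discretization-plus-finite-dimensional-widths machinery that underlies those cited works; in that sense you and the paper agree on treating this as known, with your write-up supplying methodological color the paper omits.

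One correction on attribution: you assign the Case~3 lower bound to the Malykhin--Ryutin estimate \eqref{eq:maru16}, but the paper credits all three cases for $1<q<\infty$, including Case~3, to Galeev \cite{Ga87,Ga96}, which predates \cite{MaRu16} by two decades. In this paper the Malykhin--Ryutin bound \eqref{eq:maru16} is a statement about \emph{Kolmogorov} widths of mixed-norm balls and is invoked only in Theorem~\ref{satz:gelfandnikolskij} to handle \emph{Gelfand} widths of H\"older--Nikolskij spaces $S^{\boldr}_{p,\infty}B$, not the linear widths of $S^{\boldr}_pW$ treated here. So while your overall discretization strategy is the right template, the specific finite-dimensional input you name for Case~3 is not the one actually used; Galeev's own mixed-norm linear-width estimates suffice.
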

\begin{proof} The case $1<q<\infty$ was proven by Galeev\cite{Ga87,Ga96}, see also \cite{EdTr89,EdTr92}. The case $q=1$ by Romanyuk \cite{Ro08}. Additionally we refer to \cite[Theorem 4.39]{DTU16} and the comments therein.
\end{proof}
\begin{satz} \label{satz:linwidthsf}
	Let $\zb r$ as in \eqref{eq:smoothvec}.
	Let additionally $1<p<q\leq 2$  and $1\leq \theta\leq \infty$ or $2\leq p<q<\infty$ and $\theta\geq 2$.
	Then we have 
	\beqq
	\lambda_n(S^{\zb r}_{p,\theta}F(\T),L_q(\T))
	\asymp 
	\left(\frac{(\log n)^{\mu-1}}{n}\right)^{r_1 - 1/p - 1/q}\quad,\\
	\eeqq
	for all $n\in\N$.
	
\end{satz}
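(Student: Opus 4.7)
The equivalence decomposes into matching upper and lower bounds. The upper bound is essentially free from the sampling estimates; the lower bound requires an embedding argument that I would split into two cases depending on $\theta$.

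For the upper bound, every linear sampling operator is a linear operator using $n$ pieces of linear information, hence $\lambda_n(\srptf,L_q(\tor^d)) \leq \varrho_n^{\text{lin}}(\srptf,L_q(\tor^d))$. The hypotheses of Corollary \ref{satz:lqsampanisosampnumb}, namely $0<p<q<\infty$, $0<\theta\leq\infty$ and $\zb r>1/p$ satisfying \eqref{eq:smoothnessvector}, are met in both parameter regimes of the theorem, so the corollary yields $\varrho_n^{\text{lin}}(\srptf,L_q) \lesssim (n^{-1}\log^{\mu-1} n)^{r_1-1/p+1/q}$, which is exactly the desired upper asymptotic.

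For the lower bound, I would find, in each regime, a space $X\hookrightarrow\srptf$ with known matching linear widths and use the monotonicity $X\hookrightarrow Y \Rightarrow \lambda_n(X,L_q)\lesssim\lambda_n(Y,L_q)$. When $\theta\geq 2$ (covering all of the second regime and the tail $\theta\geq 2$ of the first), the fine-index monotonicity of the $F$-scale (Lemma \ref{lem:embeddings}, (iv)) gives
$$\svrpw \;=\; S^{\zb r}_{p,2}F(\tor^d) \;\hookrightarrow\; S^{\zb r}_{p,\theta}F(\tor^d),$$
so $\lambda_n(\svrpw,L_q)\lesssim\lambda_n(\srptf,L_q)$, and the benchmark Theorem \ref{satz:linwidths}, valid for $1<p<q\leq 2$ and $2\leq p<q<\infty$ under the hypothesis $\zb r>1/p$, provides the lower bound $\lambda_n(\svrpw,L_q)\gtrsim (n^{-1}\log^{\mu-1} n)^{r_1-1/p+1/q}$ in both regimes.

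The only remaining case is $1<p<q\leq 2$ with $1\leq\theta<2$, where $F$-scale monotonicity runs in the wrong direction. Here I would instead invoke the $B$-into-$F$ embedding from Lemma \ref{lem:embeddings}, (v),
$$S^{\zb r}_{p,\nu}B(\tor^d) \;\hookrightarrow\; S^{\zb r}_{p,\theta}F(\tor^d), \qquad \nu\;:=\;\min\{p,\theta\},$$
and the classical Besov linear-width asymptotic of Galeev \cite{Ga87,Ga96} (see also Romanyuk \cite{Ro01,Ro08})
$$\lambda_n\!\left(S^{\zb r}_{p,\nu}B(\tor^d),L_q(\tor^d)\right) \;\asymp\; \Big(\tfrac{(\log n)^{\mu-1}}{n}\Big)^{r_1-1/p+1/q}(\log n)^{(\mu-1)(1/q-1/\nu)_+}.$$
Since $\nu=\min\{p,\theta\}\leq p<q$, one has $(1/q-1/\nu)_+=0$ automatically, and the extra logarithmic factor drops out, transferring the correct lower bound to $\srptf$. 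The main obstacle is precisely this last sub-case: the $F$-scale monotonicity fails in the right direction, so one has to detour through a Besov space, and everything hinges on the observation that the parameter $\nu=\min\{p,\theta\}$ is strictly below $q$, which is what prevents a spurious log-factor from appearing and destroying the matching asymptotic.
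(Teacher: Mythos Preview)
Your proof is correct and follows essentially the same approach as the paper. The upper bound via Corollary~\ref{satz:lqsampanisosampnumb} and the lower bound via the Sobolev embedding for $\theta\geq 2$ match exactly; for $1<p<q\leq 2$ with $\theta<2$ the paper also passes through a Besov space and cites \cite{Ro01}, but splits explicitly into the sub-cases $\theta<p$ (using $S^{\zb r}_{p,\theta}B\hookrightarrow S^{\zb r}_{p,\theta}F$) and $\theta\geq p$ (using $S^{\zb r}_{p,p}B\hookrightarrow S^{\zb r}_{p,\theta}F$), which your choice $\nu=\min\{p,\theta\}$ handles uniformly and with the added observation that $\nu<q$ kills the potential extra $\log$-factor.
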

\begin{proof} The upper bound can be obtained for instance by sampling recovery, cf. Theorem \ref{satz:lqsampanisosampnumb}. We focus on lower bounds. In case $\theta\geq 2$ the embedding $\srpw\hookrightarrow\srptf$ yields
	\beqq
	\lambda_n(\srptf,L_q(\T))\geq \lambda_n(\srpw,L_q(\T))
	\eeqq
	The results stated in Theorem \ref{satz:linwidths} provide the correct order. In case $\theta<p$ the embedding $\srptb\hookrightarrow\srptf$ yields
	\beqq
	\lambda_n(\srptf,L_q(\T))\geq \lambda_n(\srptb,L_q(\T)).
	\eeqq
	This gives the right order in cases $1<p<q\leq 2$ and $2\leq q<p$, cf. \cite{Ro01}.
	Finally for $\theta\geq p$ we stress on the embedding
	$$S^{\zb r}_{p,p}B(\T)\hookrightarrow\srptf$$ with
	\beqq
	\lambda_n(\srptf,L_q(\T))\geq \lambda_n(S^{\zb r}_{p,p}B(\T),L_q(\T)).
	\eeqq
	This provides the lower bound in case $1<p<q\leq 2$. We refer again to \cite{Ro01}.\end{proof}
The following is known for Kolmogorov widths in case of Sobolev spaces $\svrpw$ defined by
\be d_n(\svrpw,L_q(\T))=\inf_{\substack{A\subset L_q(\T)\\\dim A \leq n}} \sup_{\|f|\svrpw\|\leq 1} \inf_{g\in A} \|f-g\|_q.\label{def:kolmogorov}\ee
\begin{satz}\label{satz:kolmogorov}Let $1<p,q<\infty$ and $\zb r>\begin{cases}
	(\frac{1}{p}-\frac{1}{q})_+&:\quad 1\leq p\leq q \leq 2 \mbox{ or } 1\leq q \leq p <\infty,\\
	\max\{\frac{1}{2},\frac{1}{p}\}&:\quad otherwise,
	\end{cases}$ as in \eqref{eq:smoothvec}. Then
	$$d_n(\svrpw,L_q(\T))\asymp (m^{-1}\log^{\mu-1}m)^{r_1-(\frac{1}{p}-\max\{\frac{1}{2},\frac{1}{q}\})_+}.$$
\end{satz}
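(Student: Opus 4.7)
The estimate is a synthesis of classical results on Kolmogorov widths of mixed Sobolev embeddings, so the plan is to outline a matching two-sided proof via the standard discretisation scheme of Temlyakov. The idea is to reduce the continuous problem to a family of finite-dimensional Kolmogorov widths $d_k(B(\ell_p^N),\ell_q^N)$ on dyadic blocks, then book-keep the resulting dimensions across the hyperbolic cross. The logarithmic factor $(\log n)^{\mu-1}$ will be produced by the $\mu$ directions of minimal smoothness in \eqref{eq:smoothvec}, while the three exponent regimes of the theorem correspond exactly to the three regimes of the finite-dimensional widths: diagonal embedding, Kashin, and Garnaev--Gluskin.

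For the upper bound I would start from the Fourier-block decomposition $f=\sum_{\zb j}\delta_{\zb j}[f]$ from \eqref{eq:fourierdecomp}, truncate to a hyperbolic cross $\{|\zb j|_1\leq N\}$, and control the discarded tail by Lemma \ref{lem:embeddings} and the $\svrpw$-norm. Each block $\delta_{\zb j}[f]$ lives in a space of trigonometric polynomials of dimension $M_{\zb j}\asymp 2^{|\zb j|_1}$ which, via a Marcinkiewicz--Zygmund equivalence, carries norms comparable to those of $\ell_p^{M_{\zb j}}$ on the $\svrpw$-side and $\ell_q^{M_{\zb j}}$ on the $L_q$-side. Choosing for each $\zb j$ an optimal $n_{\zb j}$-dimensional approximant provided by the finite-dimensional widths $d_{n_{\zb j}}(B(\ell_p^{M_{\zb j}}),\ell_q^{M_{\zb j}})$ and balancing the budget $\sum_{|\zb j|_1\leq N} n_{\zb j}\asymp n$ against the bias in $N$ yields the claimed rate. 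The distinction between the three regimes in the exponent appears exactly at this Lagrange-type optimisation step, depending on whether $p\geq q$, $p\leq q\leq 2$, or $p<2<q$.

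For the lower bound I would use the classical fooling-polynomial construction: consider trigonometric polynomials with Fourier support in a single block $\mathcal{P}^0_{\zb j}$ at the critical level $|\zb j|_1=N$ with $\mu-1$ coordinates left free, so that the relevant embedded finite-dimensional ball has $M\asymp n/(\log n)^{\mu-1}$. A Marcinkiewicz--Zygmund inequality then transfers the Kolmogorov width of this set in $L_q(\T)$ to $d_n(B(\ell_p^M),\ell_q^M)$. Applying the sharp finite-dimensional estimates (diagonal embedding for $q\leq 2$ or $p\geq q$; Garnaev--Gluskin for $p<2<q$; Kashin/Gluskin in the remaining ranges) produces the matching lower bound.

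The main obstacle is not any single step but the simultaneous balancing of the $n_{\zb j}$ across all blocks and the careful tracking of the $\mu$ directions of minimal smoothness so that the sharp power $(\log n)^{\mu-1}$ (and not a spurious larger one) is obtained. Since this balancing and the corresponding finite-dimensional lower bounds have been carried out in detail in the literature, in an appendix section of this character it is most efficient to cite \cite{Ga87,Ga96,Ro01,Tem86,Tem93} together with the comprehensive treatment in \cite[Ch.~4]{DTU16}, rather than reproduce the computation here.
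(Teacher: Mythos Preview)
Your outline is correct and describes exactly the standard Temlyakov discretisation scheme by which these results are proved in the literature. The paper, however, offers no argument at all: its proof of this appendix theorem consists of a single sentence pointing to the twenty-year history of the problem and referring to \cite[Section~4.3]{DTU16} for an overview. So you have supplied considerably more than the paper does---a genuine sketch of the upper and lower bound machinery (hyperbolic block decomposition, Marcinkiewicz--Zygmund transfer to $\ell_p^{M_{\zb j}}$, budget balancing, and the three finite-dimensional regimes of Kashin/Gluskin/Garnaev--Gluskin)---whereas the paper treats the statement as a pure citation of a known result. Your closing remark, that in an appendix of this character one should cite \cite{Ga87,Ga96,Ro01,Tem86,Tem93} and \cite[Ch.~4]{DTU16} rather than reproduce the balancing computation, is precisely what the paper does, only without your preceding sketch.
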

\begin{proof}
	The proof with every single case has a history of more than 20 years. For an overview we refer to \cite[Section 4.3]{DTU16}.
\end{proof}
\begin{satz}\label{satz:VKN}
	Let $1<p\leq 2$ and $\zb r>1$ satisfying \eqref{eq:smoothvec}.
	Then
	$$\lambda_n(\srpw,L_{\infty}(\T))\asymp n^{-(r_1-\frac{1}{2})}\log^{(\mu-1)r_1}n.$$
\end{satz}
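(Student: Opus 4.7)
The goal is the two-sided bound $\lambda_n(\srpw,L_\infty(\T))\asymp n^{-(r_1-1/2)}(\log n)^{(\mu-1)r_1}$ for $1<p\leq 2$ and $\boldr>1$ satisfying \eqref{eq:smoothvec}. I would treat the two inequalities separately.

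\textbf{Upper bound.} The natural linear method is the hyperbolic–cross Fourier projector $T_Nf:=\sum_{|\boldj|_1\leq N}\delta_{\boldj}[f]$, whose range has dimension $\asymp 2^NN^{\mu-1}$, so the choice $n\asymp 2^NN^{\mu-1}$ yields $N\asymp\log n$. Writing $f-T_Nf=\sum_{|\boldj|_1>N}\delta_{\boldj}[f]$ and invoking Nikolskii's inequality $\|\delta_{\boldj}[f]\|_\infty\lesssim 2^{|\boldj|_1/2}\|\delta_{\boldj}[f]\|_2$ reduces the problem to controlling $\sum_{|\boldj|_1>N}2^{|\boldj|_1/2-\boldr\cdot\boldj}\cdot 2^{\boldr\cdot\boldj}\|\delta_{\boldj}[f]\|_2$ in terms of $\|f\|_{\srpw}$. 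For $p=2$ this is straightforward: Cauchy–Schwarz in the index $\boldj$ together with the anisotropic geometric sum $\sum_{|\boldj|_1>N}2^{|\boldj|_1-2\boldr\cdot\boldj}\asymp N^{\mu-1}2^{-N(2r_1-1)}$ (Lemma~\ref{lem:anisosmolsum}) gives exactly $n^{-(r_1-1/2)}(\log n)^{(\mu-1)r_1}$. For $p<2$ the bare $L_2$ norms are not directly dominated by the ambient $L_p(\ell_2)$ norm of $\srpw$; one repairs this by a randomised Littlewood–Paley argument (Khintchine's inequality applied to $\sum \varepsilon_{\boldj}\delta_{\boldj}[f]$), which converts the pointwise $\ell_2$ sum of the dyadic pieces into an $L_p$ expression dominated by $\|f\|_{\srpw}$. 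This is the decisive point in the Nguyen observation \cite[Thm.\ 2.15]{VKN16} and preserves the exponent $r_1-1/2$.

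\textbf{Lower bound.} Because $\T^d$ has finite measure, $L_2(\T)\hookrightarrow L_p(\T)$ for $1<p\leq 2$, hence $\|f\|_{\srpw}\leq C\|f\|_{S^{\boldr}_2 W}$ and therefore $B(S^{\boldr}_2W)\subset CB(\srpw)$. Monotonicity of $\lambda_n$ with respect to the source class gives
\[
\lambda_n(\srpw,L_\infty)\gtrsim \lambda_n(S^{\boldr}_2W,L_\infty),
\]
and the right-hand side admits the classical matching lower bound of order $n^{-(r_1-1/2)}(\log n)^{(\mu-1)r_1}$ (Temlyakov). The standard proof restricts to the block $\bigcup_{|\boldj|_1=N}\mathcal{P}^0_{\boldj}$ of dimension $\asymp 2^NN^{\mu-1}$, lifts a Bernoulli/Rademacher polynomial onto this block, and uses Kahane–Salem–Zygmund type estimates to show that any linear operator of rank less than half the dimension must leave a deviation of the claimed size.

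\textbf{Main obstacle.} The genuine difficulty is the upper bound when $p<2$. Nikolskii's inequality $L_2\to L_\infty$ forces one into the $L_2$ framework, while the natural norm of $\srpw$ lives in $L_p(\ell_2)$ with $p<2$. Replacing $L_2$ by $L_p$ naively would produce the weaker exponent $r_1-1/p$ (compare the lower bound in Corollary~\ref{cor:sampineqapprox}); closing this gap while respecting the anisotropic log factor $(\log n)^{(\mu-1)r_1}$ is exactly where Nguyen's Khintchine/Littlewood–Paley argument is needed.
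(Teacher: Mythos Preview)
The paper does not actually prove this statement: its entire proof consists of the reference ``We refer to \cite[Theorem 2.14]{VKN16}.'' So there is no in-paper argument to compare against; the result is quoted from Nguyen's work as a black box for later use in Corollary~\ref{cor:sampineqapprox}.

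Your sketch is a reasonable reconstruction of what such a proof would look like, and you yourself point to the same source \cite{VKN16} for the decisive step. Your lower bound argument (use $S^{\boldr}_2W\hookrightarrow\srpw$ on the compact torus to reduce to the classical $p=2$ case) is clean and correct. For the upper bound, your $p=2$ computation via the hyperbolic cross projector, Nikolskii, Cauchy--Schwarz and Lemma~\ref{lem:anisosmolsum} is standard and accurate. The part that remains a genuine sketch is the passage for $p<2$: you say Khintchine ``converts the pointwise $\ell_2$ sum of the dyadic pieces into an $L_p$ expression dominated by $\|f\|_{\srpw}$,'' but this sentence hides the actual work. Khintchine alone only rewrites the square function as a random-sign average; it does not by itself let you dominate the needed $\ell_2(L_2)$-type quantity $\big(\sum_{|\boldj|_1>N}2^{(1-2\boldr)\cdot\boldj}\,2^{2\boldr\cdot\boldj}\|\delta_{\boldj}[f]\|_2^2\big)^{1/2}$ by the $L_p(\ell_2)$ norm. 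One typically needs an additional ingredient---e.g.\ a Riesz--Thorin or dual extremal argument, or the Jawerth--Franke embedding into an $\ell_p$-Besov scale followed by Nikolskii---to close this step. You correctly flag this as the main obstacle, but as written the proposal does not supply the mechanism, so it remains a plan rather than a proof.
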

\begin{proof}
	We refer to \cite[Theorem 2.14]{VKN16}.
\end{proof}
%  Recently, Malykhin and Ryutin contributed in \cite{MaRu16} to the picture of sharp bounds for linear widths of H\"older-Nikolskij spaces $S^{\zb r}_{p,\infty}B(\T)$ that looks now as follows
%  \begin{satz}\label{satz:linwidhtsnikolskij}
% Let $1\leq p,q\leq \infty$ and $\zb r$ as in \eqref{eq:smoothvec}  then
% \beqq
% \lambda_n(\srpib,L_q(\T))\asymp
% \begin{cases}
% \left(\frac{(\log n)^{d-1}}{n}\right)^r 
% (\log n)^{(d-1)/2} & :\quad  
% \quad 1\leq q\leq p\leq \infty,\\
% &\hspace{1cm} p\geq 2, q<\infty, 
% \\[1ex]
% \left(\frac{(\log n)^{d-1}}{n}\right)^{r - 1/p + 1/q} 
% (\log n)^{(d-1)/q} & :\quad  
% \quad 1<p \le q \le 2,  \\
% &\hspace{1cm} 2\leq p<q<\infty,
% \\[1ex]
% \left(\frac{(\log n)^{d-1}}{n}\right)^{r - 1/p + 1/2} (\log n)^{(d-1)/2} &:\quad  
% \quad 1/p + 1/q \ge 1,\\
% &\hspace{1cm} 2<q<\infty, \ r > 1/p,
% \\[1ex]
% \left(\frac{(\log n)^{d-1}}{n}\right)^{r - 1/2 + 1/q}
% (\log n)^{(d-1)/q} & :\quad  \quad
% 1/p+1/q<1, q<\infty,\\
% &\hspace{1cm} 1<p\leq 2, r>1-1/q.
% \end{cases}
% \eeqq
%  \end{satz}
%   \begin{proof}
%   The lower bound in the fourth case and in the second part of the second case is due to Malykhin and Ryutin \cite{MaRu16}. The first case was proven by Temlyakov \cite{Tem90}. Everything else is due to Galeev \cite{Ga96}.
%   \end{proof}
\section*{Acknowledgment}
The authors acknowledge the fruitful discussions with D.B. Bazarkhanov, Dinh D\~ung, A. Hinrichs, E. Novak 
and M. Ullrich on this topic. G.B and T.U. especially thank W. Sickel and V.N. Temlyakov (for asking about the Dirichlet kernel in the 
sampling representations) and the CRM Barcelona and S. Tikhonov for inviting them to the 
Intensive Research Program ``Constructive Approximation and Harmonic Analysis'' and the opportunity to speak about this work. 
 G.B. and T.U. gratefully acknowledge support by the German Research
Foundation (DFG) and the Emmy-Noether programme, Ul-403/1-1.

\end{document}